\newtheorem{mainthm}{\protect\theoremname}
\theoremstyle{plain}
\newtheorem{thm}{\protect\theoremname}[section]
  \theoremstyle{plain}
  \newtheorem{lem}[thm]{\protect\lemmaname}
  \theoremstyle{remark}
  \newtheorem*{rem*}{\protect\remarkname}
  \theoremstyle{definition}
  \newtheorem{defn}[thm]{\protect\definitionname}
  \theoremstyle{plain}
  \newtheorem{prop}[thm]{\protect\propositionname}
  \theoremstyle{plain}
  \newtheorem{cor}[thm]{\protect\corollaryname}
  \theoremstyle{remark}
  \newtheorem{claim}[thm]{\protect\claimname}
    \theoremstyle{remark}
  \theoremstyle{definition}
  \newtheorem*{example*}{\protect\examplename}
  \providecommand{\claimname}{Claim}
    \providecommand{\conjname}{Conjecture}
  \providecommand{\corollaryname}{Corollary}
  \providecommand{\definitionname}{Definition}
  \providecommand{\examplename}{Example}
  \providecommand{\lemmaname}{Lemma}
  \providecommand{\propositionname}{Proposition}
  \providecommand{\remarkname}{Remark}
\providecommand{\theoremname}{Theorem}
\begin{document}

\date{\today}

\begin{title}
{Near tangent dynamics in a class of Hamiltonian impact systems}

\end{title}
\author{M. Pnueli, V. Rom-Kedar}
\author{M. Pnueli$^1$, V. Rom-Kedar$^{1,2}$ \\
\normalsize $^1$ Department of Computer Science and Applied Mathematics, \\ \normalsize The Weizmann Institute of Science, Rehovot, Israel \\
\normalsize $^2$ The Estrin Family Chair of Computer Science and Applied Mathematics.
}
\date{\today}
\maketitle

\begin{abstract}
Tangencies correspond to singularities of  impact systems, separating between  impacting and non-impacting trajectory segments.  The closure of their orbits constitute the singularity set, which, even in the simpler billiard limit, is known to have a complex structure.  The properties of this set are studied in a class of near integrable two degrees-of-freedom Hamiltonian impact systems. For this class of systems, in the integrable limit, on iso-energy surfaces, tangency appears  at an isolated torus. We construct a piecewise smooth iso-energy return map for the perturbed flow near such a tangent torus. Away from the singularity set, this map has invariant curves, so, the  singularity set is included in a limiting singularity band. An asymptotic upper bound of this band  width   is found for both non-resonant and resonant tangent tori. In the Diophantine case the band extent to both the impacting and non-impacting regimes is, asymptotically, of order \(\epsilon\). In the resonance case its asymptotic extent to the non-impacting regime is at most of order \(\sqrt{\epsilon}\) whereas its maximal asymptotic extent to the impacting regime is of order \(\epsilon^{2/3}\). Studying numerically a truncated standard form of the return map which is integrable at \(\epsilon=0\)  and has a square-root singularity with a singularity parameter \(\alpha\), reveals that only the strongest resonance obeys this maximal extent resonance scaling. We find that for all but one resonant case the singularity band has connecting orbits from its lower to its upper boundaries. The number of iterates required for an orbit to visit close to both  the lower and upper boundaries of the singularity band diverges as \(\epsilon\alpha^{2}\) is decreased, giving rise to multitude of transient complex phenomena.   On the other hand, large  \(\alpha\) values at a fixed small \(\epsilon\) lead to a significant reduction in this  connection time.
\end{abstract}

\maketitle


\section{Background}

 Hamiltonian impact systems (HIS)  are piecewise smooth dynamical systems. The rules for evolution in time are the  smooth Hamiltonian dynamics inside a phase-space domain and some reflection law at the domains' boundary \cite{kozlov1991billiards,granados2012melnikov}.  For example, billiards are HIS where the Hamiltonian has only a diagonal kinetic energy term, the smooth dynamics is restricted, in the configuration space, to lie within a table \(D\), and  the reflection rule is  of elastic impacts at the tables' boundary. As for billiards, the existence of a  tangent trajectory segment  in an HIS is a singular phenomenon,  separating, locally, between nearby phase-space regions  - impacting and non-impacting trajectory segments. The study of the dynamics at and near all tangent orbits  is usually difficult.

In the context of general, far from integrable, piecewise smooth dynamical systems, the behavior near isolated tangent periodic and homoclinic orbits, called  grazing orbits,  have been extensively investigated, developing a non-smooth bifurcation theory for classes of such systems
  \cite{nordmark1991non,nordmark1992effects,nordmark2001existence,chillingworth2013periodic,di2001grazing,ivanov1994impact,ivanov1996bifurcations,lamba1995chaotic,turaev1998elliptic}. At the other extreme, grazing orbits in a one degree degrees-of-freedom (DOF) impact oscillator system with slowly moving walls, i.e. for a slow-fast piecewise smooth 1.5 DOF system, were shown to destroy adiabatic invariance due to the multiple crossings between impacting and non-impacting  motion \cite{neishtadt2008jump}.
KAM stability and return maps for an  harmonic oscillator subject to a discontinuous force and time-periodic forcing were analyzed in \cite{kunze1997application,kunze2001non}.

Here we study grazing orbits of a class of HIS - near integrable HIS of the mechanical type; For mechanical Hamiltonian impact systems, the smooth motion is governed by a smooth potential \(V\) and the motion  is restricted, as for billiards,  to lie within a table \(D\) in the configuration space, impacting  elastically  from the table's boundary (see, e.g. \cite{kozlov1991billiards,RK2014smooth}, for physical motivation).
For a general potential \(V\) and a general domain \(D\)  such an  HIS may be neither  integrable  nor hyperbolic. Yet, there are certain families of HIS of these two extreme types;  Hyperbolic impact systems appear, for example, at high energy, when the potential \(V\) is uniformly close to zero and the table \(D\) corresponds to  a hyperbolic billiard. Certain multi-particle systems moving in a linear potential were proved to be hyperbolic \cite{Wojtkowski1998,Wojtkowski1999} as well. At the other end,  when the the billiard table respects the continuous symmetries of the potential, one can expect  Liouville integrability \cite{pnueli2019structure}.

Once a class of Liouville integrable mechanical HIS has been identified for certain potentials and domains, it is natural to consider small deformations of either the potential or the table and study the properly defined near-integrable dynamics   \cite{pnueli2018near,pnueli2019structure}. Here we study the near tangent behavior for such near-integrable systems.  In the integrable limit, the system has an isolated tangent torus which separates between impacting and non-impacting branched leaves of \emph{level sets} on an energy surface (see \cite{pnueli2019structure} for a more general  setup).
 By constructing  a Poincar\'{e} section to the tangent torus which is transverse for all trajectories of interest - impacting, tangent and non-impacting alike - we are able to derive and analyze the piecewise smooth return map to this section. Moreover, since the section is also transverse to unperturbed tori that are bounded away  from the tangent torus,  and since under small conservative perturbation most of these tori persist  \cite{pnueli2018near},  a finite neighborhood of the tangent torus remains invariant,  separating between perturbed impacting and non-impacting regimes. We call this region the tangency band. We construct, analyse and simulate the local return map near the tangent torus, thus providing the  description of the dynamics in this band.

 The paper is ordered as follows: setup and notations are presented in section \ref{sec:intro}, along with an outline of the main results of this paper. In section \ref{sec:singulrcurve} we establish the properties of the singularity curve - the curve which separates between impacting and non-impacting returns to the  Poincar\'{e} section. In section \ref{sec:returnmap}, a piecewise smooth return map for near tangent level sets is constructed.  In section \ref{sec:stable} we establish  bounds on the tangency band widths. In section \ref{sec:truncatedmap} we present basic properties and numerical simulations for a truncated version of the return map, demonstrating the stability results of section \ref{sec:stable}, as well as proposing  various complex behaviors and long transient structures associated with near-tangent orbits. A discussion appears in section \ref{sec:discussion}.

\section{Set up and main results}\label{sec:intro}
\subsection{Setup}
Consider a smooth, separable 2 DOF mechanical Hamiltonian with a single straight wall which is perpendicular to one of the symmetry axes:
\begin{equation}
H=H(\cdot;\epsilon,b)=H_{int}(q_{1},p_{1},q_{2},p_{2})+\epsilon V_{c}(q_{1},q_{2})+b\cdot{}V_{b}(q_1;q_1^w)
\label{eq:hgeneral}
\end{equation}
\textit{The first term} $H_{int}$ represents the underlying integrable structure of the phase space:
\begin{equation}
H_{int}=\frac{||p||^{2}}{2}+V_{int}(q_{1},q_{2})=\frac{p_{1}^{2}}{2}+V_{1}(q_{1})+\frac{p_{2}^{2}}{2}+V_{2}(q_{2})
=H_{1}(q_{1},p_{1})+H_{2}(q_{2},p_{2}),
\label{eq:hint}
\end{equation}
and satisfies the S3B conditions:

\begin{defn}\label{def:s3b} \emph{Hamiltonians of the S3B (Separable, r-Smooth, Simple, Bounded level sets) class} are integrable, mechanical Hamiltonians of the form (\ref{eq:hint})
which satisfy the following conditions:
\begin{enumerate}
\item $V_{int}=V_{1}(q_{1})+V_{2}(q_{2})$ is a sum of two   potentials each depending on one variable,
\item $V_1,V_2$ are $C^{r+1}$ smooth,
\item $V_1,V_2$ have only a finite, discrete number of simple extremum points,
\item $V_1,V_2$ are bounded from below and go to infinity as $|q_1|,|q_2|\rightarrow{}\infty$ respectively.
\end{enumerate}
\end{defn}

By property 4,  S3B Hamiltonians have bounded energy surfaces. For simplicity of presentation, since all the analysis here is local near a regular torus of (\ref{eq:hgeneral}), assume hereafter that each of the potentials $V_i(q_i)$, $i=1,2$ has a single, simple minimum, located at $q_{ic}=0$ such that $V_i(q_{ic})=0$ (and so $\min H_{int}=0$), and are convex, so in particular $q_i\cdot V_i'(q_i)> 0$ for $q_i\neq 0$ (for potentials with  multiple number of extremal points, our results apply  to the  behavior near each regular torus of the smooth system belonging to a branch of the Liouville foliation of isoenergy surfaces, see  \cite{pnueli2018near}).
\\

\noindent \textit{The second term} in (\ref{eq:hgeneral}), $\epsilon V_c$, represents a small, regular perturbation by coupling and is assumed to be $ C^{r+1}$ smooth. Thus \(V_{c}(\cdot)\) is bounded on the bounded energy surfaces (see \cite{pnueli2018near}), and the bound generally depends on \(E\), the energy level. Hereafter we assume that \(E=\mathcal{O}(1)\).\\

\noindent \textit{The third term }in  (\ref{eq:hgeneral}),  the singular billiard potential $V_{b}$, corresponds  to a single vertical wall, located at the coordinate $q_1=q_{1}^w$. Motion occurs to the right of the wall, and at the wall the particle reflects elastically, so \(p_1\rightarrow-p_1\) at the wall. The elastic reflection is represented formally    as a singular energy barrier:
\begin{equation}
V_{b}(q_1;q_1^w)=\begin{cases}
0, & q_1>q_1^w\\
1, & q_1< q_1^w
\end{cases}\label{eq:billiardpot}
\end{equation}
where $b$ is either a fixed large number (representing an impassable wall for all energies of interest - can also be taken as infinity) or zero (representing the smooth Hamiltonian system without the impact).
More generally, it is possible to extend the analysis and consider a \(C^{r+1}\) smooth perturbation of the wall as in   \cite{pnueli2018near} and a steep smooth potential instead of a wall as in \cite{RK2014smooth,pnueli2018near}, however, for now, we present the near tangent analysis for the case of a single perpendicular wall and leave such  extensions to future studies.
Summarizing, for \(b>0\) we consider the   $ C^{r}$  -smooth near-integrable Hamiltonian flow  in the half plane \( q_1>q_1^w\) with  elastic impacts at the vertical wall boundary:

\begin{defn}\label{def:wallsys} \emph{The wall system} is a system of the form  (\ref{eq:hgeneral}), where \(H_{int}\) is of the S3B class, in Eq. (\ref{eq:hint}) $V_i(0)=0$ and $q_i\cdot V_i'(q_i)> 0$ for all $q_i\neq 0$, \(V_{1,2,c}\) are  $ C^{r+1}$ smooth with $r>6$, \(V_{b}\) is of the form (\ref{eq:billiardpot}),  the energy is strictly below \(b\) and \(q_1^{wall}<0\).
\end{defn}

We  denote  by \(\Phi^{\epsilon,im}_{t}\)  the wall system flow,  by  \(\mathcal{R}_{1}z^{w}=\mathcal{R}_{1}(q_{1}^{w},p^{w}_1,q^{w}_2,p^{w}_2)=(q_{1}^{w},-p^{w}_1,q^{w}_2,p^{w}_2)\) the elastic reflection from the wall (and, more generally, by \(\mathcal{R}_{i}\) the reflection of the \(i\) th momentum). We also denote by \(\Phi^{\epsilon}_{t}\)  the auxiliary smooth perturbed Hamiltonian flow associated with the wall system,  i.e.,  the smooth flow associated with  (\ref{eq:hgeneral}) when taking     \(b=0\) and  \(V_{i},i=1,2,c\) satisfy the same assumptions as in Definition \ref{def:wallsys}.

The integrable  unperturbed smooth flow, \(\Phi^{0}_{t}\)  admits global action-angle coordinates (e.g. \cite{Arnold2007CelestialMechanics}): $(J,\varphi)=S_{1}(q_{1},p_{1}), \ (I,\theta)=S_{2}(q_{2},p_{2})$, so $H_{int}(J,I)=H_{1}(J)+H_{2}(I)$, and the corresponding dynamics on level sets \((H_{1}(J),H_{2}(I))=(E_{1},E_2)\) are rotations on a torus with frequencies $\omega_1(J)=\frac{2\pi }{T_{1}(E_1)},\omega_2(I)=\frac{2\pi }{T_{2}(E_2)}$.
By the assumptions  on \(V_{i}\), the frequencies   \( \omega_i(\cdot)\) are bounded away from zero and the inverse functions \(J=H_{1}^{-1}(E_{1}), I =H_{2}^{-1}(E_{2}) \) are well defined and \(C^{r}\) smooth for all allowed level sets.
Since the wall is vertical and respects the separability symmetry of the underlying integrable Hamiltonian flow, by the rule of elastic reflection and the symmetry of the kinetic energy term, the partial energies \((E_{1},E_2)\) are preserved upon impact and the motion remains rotational on level sets.  Hence, the unperturbed wall system (\(\epsilon=0\) in   (\ref{eq:hgeneral})), is Liouville integrable \cite{pnueli2018near,pnueli2019structure}. This system has  singular tangent level sets \((E_{1},E_2)\) exactly when  $E_1=V_1(q_1^w)$ (then  $p_1=0$ at the wall), and these level sets correspond to a torus in the allowed region of motion for all  \(E_2>0\)  if and only if  $q_{1}^w<0$. We deal only with this case of a vertical wall in the left half plane (the case of   $q_{1}^w\ge0$ is left for future works). Then, on each energy surface with \(E>V_1(q_1^w)\)   there is a unique tangent level set \((E_{1},E_2)=(V_1(q_1^w),E-V_1(q_1^w))\) with the corresponding  \textit{tangency action}:
\begin{equation}\label{eq:Itan}
I_{tan}(E)=H_{2}^{-1}(E-V_{1}(q_1^w)).
\end{equation}
 of the motion in the \((q_{2},p_2) \) plane. Notably, for the unperturbed wall system, the  tangent torus divides the energy surface to impacting ($I<I_{tan}(E)$ ) and non-impacting ($I>I_{tan}(E)$ ) level sets.

\subsubsection*{The  return map}

The  cross-section $\Sigma=\{(q_1,p_1,q_2,p_2)|p_1=0,\dot{p}_1<0\}$ is transverse to the flow,  and, for any fixed \(\eta\) and sufficiently small \(\epsilon\), the condition \(\dot{p}_1<0\) is satisfied by the wall system for all \(q_{1}>\eta\).  On a given energy surface,  the coordinates $(q_2,p_2)$ on \(\Sigma_{E}=\Sigma|_{H(q,p)=E}\), are symplectic, as are the induced  action-angle coordinates $(\theta,I)=S_2(q_2,p_2)$.  The unperturbed Poincar\'{e} return map, $\mathcal{F}_0:\Sigma_{E}\rightarrow\Sigma_{E}$ on the half cylinder  \(\Sigma_{E}\) is of the form   \cite{pnueli2018near}:
\begin{equation}
\mathcal{F}_0:(\theta,I)\rightarrow(\bar{\theta},\bar{I}),\quad\ \text{ where}\qquad\ \begin{cases}
\bar{I}=I\\
\bar{\theta}=\theta+\Theta(I;E)
\end{cases}\label{eq:int_impact_returnmap}
\end{equation}
and
\begin{equation}
\Theta(I;E)=\omega_{2}(I)\cdot(T_{1}(E-H_{2}(I))-\Delta t_{travel}(E-H_{2}(I)))=\omega_2(I)\cdot\tilde{T}_1(E-H_{2}(I))
\end{equation}
with
\begin{equation}
\Delta t_{travel}(E_{1})=\begin{cases}
2\int_{q_{min}(E_{1})}^{q_1^w}\frac{dq_{1}}{\sqrt{2(E_{1}-V_{1}(q_{1}))}} & \mbox{impact }(E_{1}>V_1(q_1^w))\\
0 & \mbox{no impact, tangency,}
\end{cases}
\label{eq:deltattravel}\end{equation}
\begin{equation}
T_{1}(E_{1})=2\int_{q_{min}(E_{1})}^{q_{max}(E_{1})}\frac{dq_{1}}{\sqrt{2(E_{1}-V_{1}(q_{1}))}}, \quad\ V_1(q_{min,max}(E_{1}))=E_{1}.
\end{equation}
Here  $q_{min/max}(E_{1})$ is the minimal/maximal $q_1$ value of the  $H_1$-level set, so $q_{max}>0$ is on $\Sigma$ and, for impacting trajectories, $q_{min}<q_1^w<0$ is outside the billiard.
Notice that the choice of the cross-section \(\Sigma_{E}\) ensures that at most  a single impact with the wall may occur in between returns to the section    \cite{pnueli2018near}. With no loss of generality, we set the angle coordinates so that \(\phi=0\) on \(\Sigma\), and similarly,  \(\theta=0\) on the section \(p_{2}=0,\dot p_2<0\). Let \(z_{tan}(t,\theta;E) \) denote the unperturbed tangent trajectory with energy \(E\) and phase \(\theta\) at \(\Sigma_{E}\), i.e.  \(z_{tan}(0,\theta;E)\in\Sigma_{E}, \Pi z_{tan}(0,\theta;E)=(\theta ,I_{tan}(E))\) and \(\Pi z\) denotes hereafter the projection of \(z\) to the \((\theta,I)=S_{2}(q_2,p_2)\) plane.

The perturbed return map, $\mathcal{F}_\epsilon$, for sufficiently small \(\epsilon\) and away from tangency, i.e. for  $|I-I_{tan}(E)|>\rho$ for some positive \(\rho\), was constructed and studied in \cite{pnueli2018near}. In particular, it was established that this map is a \(C^{r}\) perturbation of the integrable map  $\mathcal{F}_0$, and that as long as  \( \rho >K\epsilon^{\alpha},\alpha<\frac{1}{2}\),  and the twist condition is satisfied, for sufficiently small \(\epsilon\) it admits KAM tori, namely invariant curves  \((\theta,I^{\epsilon}(\theta)) \) of the perturbed map     $\mathcal{F}_\epsilon$ which are bounded away from being  tangent (see Corollary 3.2  in \cite{pnueli2018near}).
Here, we study the behavior near the singularities of  $\mathcal{F}_\epsilon$, namely for trajectories which are not bounded away from tangency.

\subsection{Main results}

The non-smoothness
of  $\mathcal{F}_0$  near the  singular circle \(I_{tan}(E)\) requires special care in the construction of the perturbed dynamics, distinguishing between the return map for impacting and non-impacting trajectory segments. The dividing line between impacting and non-impacting segments is the singularity curve:
\begin{defn}
The \emph{singularity curve}, \(\sigma^{\epsilon}=\sigma^{\epsilon}(E)=\{(\theta^{\epsilon}_{tan}(s),I^{\epsilon}_{tan}(s))\}_{s\in S}\), is the set of  all initial conditions in $\Sigma_{E} $ which, under the wall system flow,  touch the wall tangentially before their first return to $\Sigma_{E}$. \end{defn}

\begin{defn}\label{def:singularityset}
 \emph{The  singularity set}, $\mathcal{S}^{\epsilon}=\bigcup_{k=-\infty}^\infty\mathcal{F}_\epsilon^k \sigma^{\epsilon}$,
is the invariant set of all backward and forward iterations of the singularity curve. Its closure is denoted by \(\mathcal{\bar S}^{\epsilon}\). \end{defn}
The parametric representation for the singularity curve  on some set \(S\) is introduced for generality of the discussion. Hereafter, we say that a parametric   curve \(\{(\theta(s),I(s))\}_{s\in S^{1}} \) is a dividing circle if \(\theta(s),I(s)\) are continuous and periodic in \(s\), the curve is not self intersecting, and  \(\{\theta(s)\}_{s\in S^{1}}=[-\pi,\pi]\).
For \(\epsilon=0\),  all the circles with a fixed \(I\) are invariant and a tangency occurs if and only if $I=I_{tan}(E)$. Thus the unperturbed  parametric singularity curve is identical to the singularity set and is given by the circle:  \(\sigma^{0}=\mathcal{\bar S}^{0}=\{(\theta_{tan}^0 ,I_{tan}^{0})|I_{tan}^{0}(s)=I_{tan}(E),\theta_{tan}^{0}(s)=s, s\in[-\pi,\pi]\}\), namely, at \(\epsilon=0\), the tangency curve is a graph. In section  \ref{sec:singulrcurve} we establish that for sufficiently small \(\epsilon\) the perturbed singularity curve and its first image are also smooth graphs:

\begin{mainthm}\label{prop:singularcurve}  For   \(E>V_1(q_1^w)\) and sufficiently small $\epsilon$, the singularity curve  \(\sigma^{\epsilon}(E)\) of the wall system is an isolated  dividing circle, dividing  the cylinder  $\Sigma_{E} $ to impacting (below the curve) and non-impacting (above the curve) regions of the first return map,  $\mathcal{F}_\epsilon $. Moreover, this circle is a graph:   $\sigma^{\epsilon}=\{(\theta,I_{tan}^{\epsilon}(\theta)),\theta\in[-\pi,\pi]\}$,  its image under the map is also  a graph,  $\mathcal{F}_\epsilon \sigma^{\epsilon}=\{(\theta,\bar{I}_{tan}^{\epsilon}(\theta)),\theta\in[-\pi,\pi]\}$ and both   \(I_{tan}^{\epsilon}(\theta)\) and \(\bar{I}_{tan}^{\epsilon}(\theta)\) are \(C^{r}\) \(\epsilon-\)close to the constant function \( I_{tan}(E)\). Finally, the singularity curve and its image are  related by the reflection symmetry:   \(\bar{I}_{tan}^{\epsilon}(\theta)=I_{tan}^{\epsilon}(-\theta)\). \end{mainthm}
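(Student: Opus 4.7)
The plan is to derive $\sigma^\epsilon$ by applying the implicit function theorem (IFT) to the grazing condition along the smooth auxiliary flow $\Phi^\epsilon_t$, to obtain $\mathcal{F}_\epsilon\sigma^\epsilon$ by continuing each grazing orbit forward to its next crossing of $\Sigma_E$, and to derive the reflection identity from the mechanical reversibility $S:(q,p)\mapsto(q,-p)$ of \eqref{eq:hgeneral}. Concretely, parameterize $\Sigma_E$ by $(\theta,I)$ so the initial data are $(q_1^0(I;E),0,S_2^{-1}(\theta,I))$ with $q_1^0>0$ determined by $V_1(q_1^0)=E-H_2(I)$; let $(q_1(t),p_1(t))$ be the $(q_1,p_1)$-components of $\Phi^\epsilon_t$ starting from this point, and set $G(t,I;\theta,\epsilon)=(q_1(t)-q_1^w,\,p_1(t))$. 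At $\epsilon=0$, $G=0$ is solved for every $\theta$ by $(t^*,I^*)=(T_1(V_1(q_1^w))/2,\,I_{tan}(E))$, since $I^*=I_{tan}(E)$ forces $E_1=V_1(q_1^w)$ and the unperturbed $(q_1,p_1)$-orbit then reaches $(q_1^w,0)$ at the half-period.

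At this base solution Hamilton's equations give $\partial_t q_1=p_1=0$ and $\partial_t p_1=-V_1'(q_1^w)$, while differentiating the unperturbed energy identity $p_1^2/2+V_1(q_1)=E_1$ at fixed $t$ and using $p_1=0$ yields $\partial_{E_1}q_1=1/V_1'(q_1^w)$, so $\partial_I q_1=-\omega_2(I_{tan})/V_1'(q_1^w)$ via $E_1=E-H_2(I)$. Therefore
\[
\det D_{(t,I)}G\big|_0=\det\begin{pmatrix}0 & -\omega_2(I_{tan})/V_1'(q_1^w)\\ -V_1'(q_1^w) & \partial_I p_1\end{pmatrix}=-\omega_2(I_{tan})\neq 0,
\]
using $\omega_2>0$ and $V_1'(q_1^w)\neq 0$ (strict convexity of $V_1$ and $q_1^w\neq 0$). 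The IFT thus yields unique $C^r$ functions $t^\epsilon(\theta),\,I_{tan}^\epsilon(\theta)$, $2\pi$-periodic in $\theta$, with $I_{tan}^0\equiv I_{tan}(E)$, giving the graph form of $\sigma^\epsilon$, its $C^r$ $\epsilon$-closeness to $I_{tan}(E)$, and its isolation in a tubular neighborhood. The image is produced by flowing each grazing orbit to its next crossing of $\Sigma_E$; $C^r$ dependence on data and transversality of this return give a parametric curve $(\bar\theta(\theta),\bar I(\theta))$ with $\bar\theta(\theta)=\theta+\Theta(I_{tan};E)+O(\epsilon)$ and $\bar I(\theta)=I_{tan}(E)+O(\epsilon)$, and since $d\bar\theta/d\theta=1+O(\epsilon)$, $\theta\mapsto\bar\theta$ is a diffeomorphism of $S^1$, so $\mathcal{F}_\epsilon\sigma^\epsilon$ is the graph $I=\bar I_{tan}^\epsilon(\bar\theta)$. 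The impacting-below/non-impacting-above split is inherited from the integrable picture, where $I<I_{tan}(E)\Leftrightarrow E_1>V_1(q_1^w)$ forces the $(q_1,p_1)$-orbit to cross $q_1^w$; continuity of the type-classification up to $\sigma^\epsilon$ preserves this dichotomy for small $\epsilon$.

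For the symmetry, $V_c$ and the wall depend only on $q$, so the Hamiltonian is $S$-reversible, $\Phi^\epsilon_t\circ S=S\circ\Phi^\epsilon_{-t}$, and the graze condition $q_1=q_1^w,\,p_1=0$ is $S$-invariant; on $\Sigma_E$ (where $p_1=0$) $S$ reduces to $(\theta,I)\mapsto(-\theta,I)$ via $\mathcal{R}_2$. Given $z_0=(\theta_0,I_{tan}^\epsilon(\theta_0))\in\sigma^\epsilon$ with $\mathcal{F}_\epsilon z_0=\bar z_0=(\bar\theta_0,\bar I_{tan}^\epsilon(\bar\theta_0))$ reached in time $T$, reversibility gives $\Phi^\epsilon_T(S\bar z_0)=Sz_0$ along a trajectory that still grazes, so $S\bar z_0\in\sigma^\epsilon$ and $\mathcal{F}_\epsilon(S\bar z_0)=Sz_0$; in coordinates this reads $I_{tan}^\epsilon(-\bar\theta_0)=\bar I_{tan}^\epsilon(\bar\theta_0)$, the claimed identity. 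The main technical obstacle is the IFT nondegeneracy: the entry $\partial_t q_1=p_1$ vanishes at graze, so all transversality must come from the energy-shift column $\partial_I q_1$, and the clean formula $\det=-\omega_2(I_{tan})$ above is the key calculation; the remaining ingredients reduce to continuity, $C^r$ smoothness of $\Phi^\epsilon_t$, and the reversibility argument.
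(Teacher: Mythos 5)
Your proof is correct, but takes a genuinely different route from the paper's. The paper (Lemmas~3.2--3.6, Cor.~3.9) first characterizes the tangency circle \emph{at the wall}, $\Xi_{wall}^{\epsilon}(E)$, as an explicit iso-energy level set in the $(q_2,p_2)$-plane, shows that the wall flow crosses the star-section $\Sigma_E^*$ exactly once per return, flows $\Xi_{wall}^{\epsilon}$ backward to $\Sigma_E$ to obtain $\sigma^{\epsilon}$ as a graph, and derives the dividing property and the image-graph property from these lemmas together with the symmetry. You instead apply the implicit function theorem directly to the two-component graze condition $G(t,I;\theta,\epsilon)=(q_1(t)-q_1^w,\,p_1(t))=0$ along the forward smooth flow from $\Sigma_E$; the nondegeneracy $\det D_{(t,I)}G|_0=-\omega_2(I_{tan})\neq 0$ is correct and is the crux --- you rightly note that $\partial_t q_1=p_1$ vanishes at graze, so all transversality must come from the $I$-column $\partial_I q_1=-\omega_2/V_1'(q_1^w)$ (using $V_1'(q_1^w)\neq 0$, since $q_1^w<0$ and $q_1V_1'(q_1)>0$). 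Both routes then show $d\bar\theta/d\theta=1+O(\epsilon)$ for the image-graph property, and both obtain $\bar I_{tan}^{\epsilon}(\theta)=I_{tan}^{\epsilon}(-\theta)$ from $S$-reversibility of the mechanical flow; your $S:(q,p)\mapsto(q,-p)$ agrees with the paper's $\mathcal{R}_2$ on $\Sigma_E$ and at the graze since $p_1=0$ there, so the two symmetry arguments are the same. What the paper's longer route buys is an explicit formula for $\sigma^{\epsilon}_{wall}$ that is reused heavily in section~4 (to parameterize the wall action-distance $\rho^w$ and to control the flight times), and a careful rigorous treatment of the dividing property. Your ``continuity of the type-classification'' step is the one place you are terse: to conclude that $\sigma^{\epsilon}$ is the \emph{entire} boundary between impacting and non-impacting initial conditions, you should additionally note that the IFT tube contains the only approach of the $(q_1,p_1)$-orbit to the graze point before the first return (this is the content of Lemma~3.4 in the paper), so there is no room for a disconnected impact/no-impact component; with that observation the dichotomy does follow from the $\epsilon=0$ picture by compactness.
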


Next, we study the extent of the singularity set. By Corollary 3.2 in \cite{pnueli2018near}, for any \(\rho>0\), for sufficiently small \(\epsilon\),
 there are KAM curves that remain at a distance \(\rho\) from \(\sigma^{0}\): the family of  KAM curves that are  \(\rho\)-below  \(\sigma^{0}\) consist of quasi-periodic orbits with transverse impacts whereas those KAM tori which are  \(\rho\)-above   \(\sigma^{0}\) consist of KAM curves of the smooth system  that remain bounded away from the wall. We thus  define:

\begin{defn}\label{def:tangencyband}
The \emph{tangency band}, \(\mathcal{B}^{\epsilon}\), is the smallest closed band,  a set enclosed in between two dividing circles, which  includes \(\mathcal{\bar S}^{\epsilon}\).  The lower boundary of \(\mathcal{B}^{\epsilon}\) is the impacting dividing circle, \(\{(I^{\epsilon}_-(s),\theta^{\epsilon}_{-}(s))\}_{s\in S^{1}} \)  and its upper boundary is the non-impacting  dividing circle \(\{(I^{\epsilon}_+(s),\theta^{\epsilon}_{+}(s))\}_{s\in S^{1}} \).

The \textit{maximal width} of the band is  \(W=\max_\theta (I^{\epsilon}_+(s)|_{\theta^{\epsilon}_{+}(s)=\theta}-I^{\epsilon}_-(s)|_{\theta^{\epsilon}_{-}(s)=\theta})\),   and its upper and lower  maximal widths are  \(W^{+}=\max_\theta (I^{\epsilon}_+(s)|_{\theta^{\epsilon}_{+}(s)=\theta}-I^{\epsilon}_{tan}(\theta))\) and  \(W^{-}=\max_\theta (I^{\epsilon}_-(s)|_{\theta^{\epsilon}_{-}(s)=\theta}-I^{\epsilon}_{tan}(\theta))\) respectively so  \(W^{\pm}\leqslant W \leqslant W^++W^-\).      \end{defn}
  Corollary 3.2  in \cite{pnueli2018near} implies that for \(\alpha<\frac{1}{2}\), for sufficiently small \(\epsilon\), there exist a constant \(C\) (depending on the parameters and energy) such that \(\mathcal{B}^{\epsilon}\subset\{(\theta,I )|I\in [I_{tan}(E)-C\epsilon^\alpha,I_{tan}(E)+C\epsilon^\alpha],\theta\in[-\pi,\pi]\}\), namely,  one can take  \(\rho \) to approach zero slower than  \(\sqrt{\epsilon}\) to establish the persistence of bounding  KAM curves which are at a distance \(\rho\) from the singularity line. Thus, there exists \(C>0\) such that
  \(W\leqslant C\epsilon^\alpha\) for all  \(\alpha<\frac{1}{2}\). Here, we construct  and study  $\mathcal{F}_\epsilon$ in the singularity band   and establish tighter bounds on the upper and lower singularity band widths.

First, we establish that the division of     $\Sigma_{E}$  to impacting and non-impacting initial conditions allows to approximate the return map in each part  by employing perturbation methods, leading to the main analytical result of this paper, established in section \ref{sec:returnmap}:

\begin{mainthm}\label{thm:mainthm}
For \(E>V_1(q_1^w)\), for sufficiently small \(\epsilon\),  the return map $\mathcal{F}_\epsilon:\Sigma_E\rightarrow\Sigma_{E} $ of the wall system flow near the singularity set can be brought, by a \(C^{r}\) smooth symplectic change of coordinates, to the symplectic form:
\begin{equation}\label{eq:pert-return}
\mathcal{F}^{loc}_\epsilon:\begin{cases}
\bar{K}=K+\epsilon f(\bar{\phi})+\mathcal{O}_{C^{r-2}}(\epsilon G_{s}(K)) \\
\bar{\phi}=\phi+\Omega+G_{s}(K)+\mathcal{O}_{C^{r-2}}(\epsilon,\epsilon G_{s}(K),K^{2})
\end{cases}
\end{equation}
where $\epsilon f(\bar{\phi})=\bar{I}^{\epsilon}_{tan}(\bar{\phi})-I^{\epsilon}_{tan}(\bar{\phi})=\epsilon\int_0^{T^{tan}_1(E)}\{I,V_c\}\mid_{z_{tan}(t,\theta;E)} dt+\mathcal{O}_{C^{r}}(\epsilon^{2})$
is the difference between the pre-image and the forward image of the tangency curve at \(\bar{\phi}\), $\Omega=\Omega(E)=2\pi\frac{T_1(V_{1}(q_1^w))}{ T_{2}(E-V_{1}(q_1^w))} =\Theta(I_{tan}(E);E)$ is the unperturbed rotation number   at the unperturbed tangent action (\ref{eq:Itan}),  $G_{s}(K)$ is a continuous piecewise smooth function with a square root singularity at the origin:
\begin{equation}
G_{s}(K)=\begin{cases}\tau(I_{tan}(E))\cdot K+\mathcal{O}_{C^{r}}(K^2) &K\geq 0 \\ -\frac{(2\omega_2(I_{tan}(E)))^{3/2}}{|V_1'(q_1^w)|}\cdot\sqrt{-K}+\tau(I_{tan}(E))\cdot K+\mathcal{O}_{C^{r-2}}((-K)^{3/2},K^2) &K<0\end{cases}\label{eq:gsindef}
\end{equation}
and
\begin{equation}\label{eq:twist}
\tau(I)=\frac{d}{dI} \left(\omega_2(I)\cdot T_1(E-H_{2}(I))\right)
\end{equation}
is the twist of the return map of the unperturbed smooth Hamiltonian system.
\end{mainthm}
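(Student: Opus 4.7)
The plan is to construct the local return map by pulling out two effects separately, \emph{the singular impact contribution} and \emph{the smooth perturbation contribution}, and then to symplectically straighten the pre-image singularity curve into the axis $K=0$. First I would introduce preliminary coordinates $\tilde K = I - I^{\epsilon}_{tan}(\theta)$, $\tilde\phi = \theta$, using the graph representation of $\sigma^{\epsilon}$ established in Theorem~\ref{prop:singularcurve}; these already split $\Sigma_{E}$ into the impacting region $\tilde K<0$ and the non-impacting region $\tilde K\ge 0$. A generating function of the form $F(\tilde\phi,\bar I) = \tilde\phi\bar I + \int^{\tilde\phi} I^{\epsilon}_{tan}(s)\,ds$ produces the symplectic version of this shift and yields final coordinates $(K,\phi)$ that are $C^{r}$ $\epsilon$-close to $(\tilde K,\tilde\phi)$. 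The map $\mathcal F_{\epsilon}$ is then analyzed piece by piece.

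On the non-impacting side $K\ge 0$ the wall system coincides with the smooth Hamiltonian flow $\Phi^{\epsilon}_t$, so the construction of the smooth perturbed return map from Corollary~3.2 of~\cite{pnueli2018near} applies and gives a $C^{r}$ symplectic map whose unperturbed angle advance is $\Theta(I;E)=\omega_2(I)\tilde T_{1}(E-H_2(I))$. Taylor expanding about $I=I_{tan}(E)$ gives $\Theta(I;E)=\Omega + \tau(I_{tan}(E))\cdot K + \mathcal O(K^{2})$, which is exactly the $K\ge 0$ branch of $G_{s}$. The $\bar K$-equation on this side reduces, after the shift, to $\bar K = K + \bigl(\bar I^{\epsilon}_{tan}(\bar\phi) - I^{\epsilon}_{tan}(\bar\phi)\bigr) + \mathcal O(\epsilon G_{s}(K))$, and the bracket is precisely $\epsilon f(\bar\phi)$; a first-order calculation along the tangent orbit $z_{tan}(t,\theta;E)$ using $\dot I = \epsilon\{I,V_{c}\}$ delivers the Melnikov-type integral representation of $f$ stated in the theorem.

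On the impacting side $K<0$, the trajectory from $\Sigma_{E}$ flows under $\Phi^{\epsilon}_{t}$ until it reaches $q_{1}=q_{1}^{w}$ with $p_{1}<0$, undergoes the reflection $\mathcal R_{1}$, and flows back to $\Sigma_{E}$. For $\epsilon=0$ the total return time is $\tilde T_{1}(E_{1})=T_{1}(E_{1})-\Delta t_{travel}(E_{1})$, and the singular contribution is the one-sided integral $\Delta t_{travel}(E_{1})$. Expanding $V_{1}(q_{1}) = V_{1}(q_{1}^{w}) + V_{1}'(q_{1}^{w})(q_{1}-q_{1}^{w}) + \mathcal O((q_{1}-q_{1}^{w})^{2})$ near the wall and performing the resulting one-sided square-root integral yields
\begin{equation*}
\Delta t_{travel}(E_{1}) = \frac{2\sqrt{2(E_{1}-V_{1}(q_{1}^{w}))}}{|V_{1}'(q_{1}^{w})|} + \mathcal O\!\bigl(E_{1}-V_{1}(q_{1}^{w})\bigr),
\end{equation*}
with the $\mathcal O(\,\cdot\,)$ term smooth in $E_{1}$. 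Substituting $E_{1}-V_{1}(q_{1}^{w}) = H_{2}(I_{tan}(E)) - H_{2}(I) = -\omega_{2}(I_{tan}(E))\cdot K + \mathcal O(K^{2})$ and multiplying by $\omega_{2}(I)$ to convert travel time into angle advance on the $(q_{2},p_{2})$-torus produces the $\sqrt{-K}$ branch of $G_{s}$ with the precise prefactor $(2\omega_{2}(I_{tan}(E)))^{3/2}/|V_{1}'(q_{1}^{w})|$; the smooth twist $\tau(I_{tan}(E))K$ is inherited from the $T_{1}$ part exactly as on the non-impacting side, so both branches share this linear contribution. The $\epsilon$-corrections to the impact time and impact location are controlled by standard smooth dependence on initial data for $\Phi^{\epsilon}_{t}$ and enter as the $\mathcal O_{C^{r-2}}(\epsilon)$ and $\mathcal O_{C^{r-2}}(\epsilon G_{s}(K))$ remainders.

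Finally, because $\mathcal F_{\epsilon}$ is the Poincar\'e return of a Hamiltonian flow composed (on the impacting side) with the symplectic reflection $\mathcal R_{1}$, and all coordinate changes are generated symplectically, $\mathcal F^{loc}_{\epsilon}$ is symplectic on each branch, which pins down the $\bar K$-equation uniquely from the $\bar\phi$-equation up to the stated orders. The reflection symmetry $\bar I^{\epsilon}_{tan}(\phi)=I^{\epsilon}_{tan}(-\phi)$ from Theorem~\ref{prop:singularcurve} then gives the identification of $\epsilon f$ as the pointwise gap between the singularity curve and its image. The main technical obstacle is the impacting-side analysis: one must show that the \emph{only} non-smooth contribution at $K=0^{-}$ is the leading $\sqrt{-K}$ term, while all higher-order corrections, including the $\epsilon$-dependent deformations of the near-wall trajectory, remain $C^{r-2}$ in $K$ away from $0$ and collectively contribute at the $\mathcal O((-K)^{3/2})$ level. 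This is handled by an Abel-type expansion of the travel-time integral, using the regularity $r>6$ of $V_{1,2,c}$ to keep enough derivatives for the composition with the perturbed dynamics.
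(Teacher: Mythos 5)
Your overall plan — symplectically straighten $\sigma^{\epsilon}$ into $K=0$ via a generating function, treat the non-impacting side ($K\ge0$) by ordinary smooth perturbation theory plus Taylor expansion of the rotation number about $I_{tan}(E)$, and on the impacting side ($K<0$) trace the square-root singularity to the wall-grazing travel time — matches the paper's strategy, and your leading-order coefficient $\frac{(2\omega_2)^{3/2}}{|V_1'(q_1^w)|}$ is computed correctly. However, there is a genuine gap in the treatment of the impacting-side remainders, and it is precisely where the paper spends most of its effort.

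You assert that the $\epsilon$-corrections to impact time and location ``are controlled by standard smooth dependence on initial data for $\Phi^{\epsilon}_t$ and enter as the $\mathcal O_{C^{r-2}}(\epsilon)$ and $\mathcal O_{C^{r-2}}(\epsilon G_s(K))$ remainders.'' Standard smooth dependence gives $O(\epsilon)$ deformations, not $O(\epsilon\sqrt{-K})$; it does not by itself force the remainder to vanish as $K\to 0^-$. The crux is the relation between the signed action-distance $\rho = -K$ at $\Sigma_E$ and the analogous distance $\rho^{w}$ from the wall-tangency circle at the moment of impact, since the flight time scales like $\sqrt{\rho^{w}}$, not like $\sqrt{\rho}$. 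One must show $\rho^{w}=\rho\,(1+\mathcal O_{C^{r-2}}(\epsilon))$ — i.e., the correction is proportional to $\rho$ — rather than the a priori plausible $\rho^{w}=\rho + \mathcal O(\epsilon\sqrt{\rho})$; the latter would not give $\sqrt{\rho^{w}}=\sqrt{\rho}(1+O(\epsilon))$ uniformly near tangency and would corrupt the stated orders of the errors in both the $\bar K$- and $\bar\phi$-equations. The paper establishes this (Lemma~\ref{lem:rhorhow}) by setting up the passage $(\theta,\rho)\mapsto(\theta^{w},\sqrt{\rho^{w}})$ as an implicit-function-theorem problem, extracting a potential $\epsilon\sqrt{\rho^{w}}$ term, and then killing it with a sign constraint plus area preservation of the flow; it needs the same argument to control the post-impact virtual distance $\hat\rho$, showing $\hat\rho-\rho=\mathcal O_{C^{r-2}}(\epsilon\rho)$. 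None of this is subsumed by the Abel-type expansion of $\Delta t_{travel}(E_1)$ you propose, which only handles the unperturbed one-sided integral. Without this step the impacting-side remainder estimates in Eq.~(\ref{eq:pert-return}) are not justified.
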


By \( \mathcal{O}_{C^{r}}(a(\epsilon),b(K),\epsilon G_{s}(K))\) we mean that for sufficiently small \(\epsilon,|K|\) this term may be represented by  a  \(C^{r}\) smooth function of its arguments,  \( g(a(\epsilon),b(K),\epsilon G_{s}(K))\), satisfying \( g(0,0,0)=0\). Namely, this term is small and its  \(j\)-th derivative w.r.t. \(K\) can grow at most as  \(\epsilon^j G_{s}^{(j)}(K)\) (and similar growth estimates apply to the singular terms in \(G_{s}(K)\)).    The coordinates $(K,\phi)$ are related to the  $(I,\theta)$ coordinates by a \(\theta\)-dependent shift of the singularity action, so that for all small \(\epsilon\) the singularity curve in these coordinates is the circle $K=0,\phi\in[-\pi,\pi]$, impacts correspond to negative \(K\),   the region near the singularity line corresponds to small \(|K|\) values, and the upper and lower widths of the singularity band in the \(K\) variable are simply the maximal positive and minimal negative \(K\) values of the singularity set.

In Theorem \ref{thm:stability} we establish that for sufficiently small \(\epsilon\), the map \(\mathcal{F}^{loc}_\epsilon\)   has  invariant KAM  curves that remain, for sufficiently small \(\epsilon\), at a signed distance  $\pm\epsilon^{\gamma_{\pm}(\Omega)}$ away from the singularity line $K=0$; namely, there exist constants \(\gamma_{\pm}(\Omega)>0\) and constants \(k_{\pm}(\Omega,\gamma_{\pm}(\Omega)),\bar k_{\pm}(\Omega,\gamma_{\pm}(\Omega))>0\), such that  \(\mathcal{F}^{loc}_\epsilon\)    has KAM  curves
\( (K_{\pm}(\phi),\phi) \), satisfying \(k_{+}\epsilon^{\gamma_{+}}<K_{+}(\phi)<\bar k_{+}\epsilon^{\gamma_{+}}\) and \(-\bar k_{-}\epsilon^{\gamma_{-}}<K_{-}(\phi)<-k_{-}\epsilon^{\gamma_{-}}\). For Diophantine  $\Omega$ values \(\gamma_{\pm}(\Omega)\lesssim1\) whereas for resonant  $\Omega$ values \(\gamma_{+}(\Omega)\lesssim1/2\)   and  \(\gamma_{-}(\Omega)\lesssim2/3\). These provide a refinement of the results of Corollary 3.2  in \cite{pnueli2018near} which were obtained by analysing the  map \(\mathcal{F}_\epsilon\) away from the tangency zone. This implies:
\begin{mainthm}\label{thm:mainthm3}
For \(E>V_1(q_1^w)\), for sufficiently small \(\epsilon\),  the width of the tangency band for a Diophantine  $\Omega$ value is
of $\mathcal{O}(\epsilon)$ whereas for resonant values of  $\Omega$ the tangency band is asymmetric, with its upper width being at most of order  $\mathcal{O}(\sqrt{\epsilon})$ and its lower width  being at most of   $\mathcal{O}(\epsilon^{2/3})$.
\end{mainthm}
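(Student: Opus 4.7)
The plan is to derive Theorem~\ref{thm:mainthm3} from Theorem~\ref{thm:stability} by a trapping argument, and then to flesh out why the exponents in that theorem take the stated values. Since the return map $\mathcal{F}^{loc}_\epsilon$ of Theorem~\ref{thm:mainthm} is area-preserving and $C^{r}$-smooth on each side of $\{K=0\}$, the KAM graphs $K=K_\pm(\phi)$ supplied by Theorem~\ref{thm:stability} are invariant curves that no orbit of $\mathcal{F}^{loc}_\epsilon$ can cross. In particular the invariant singularity set $\mathcal{S}^\epsilon=\bigcup_k \mathcal{F}_\epsilon^k\sigma^\epsilon$, and hence its closure, is trapped between $K_-$ and $K_+$, so the tangency band satisfies $W^\pm\leq \bar k_\pm(\Omega)\,\epsilon^{\gamma_\pm(\Omega)}$. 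Inserting the exponents recorded just before the theorem -- $\gamma_\pm\lesssim 1$ for Diophantine $\Omega$, $\gamma_+\lesssim 1/2$ and $\gamma_-\lesssim 2/3$ at a resonance -- yields the bounds claimed, using $W\leq W^++W^-$ in the Diophantine case.

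To see why those exponents arise I would treat each side of the normal form separately. On the non-impacting side $\{K>0\}$ the map is a $C^{r}$ twist map with twist $\tau$ and perturbation of size $\epsilon$: Moser's theorem for Diophantine rotation numbers gives an invariant curve at $K=\mathcal{O}(\epsilon)$, while at a resonance $\Omega=2\pi p/q$ the pendulum approximation of the $q$-th iterate produces an island of half-width $\sqrt{\epsilon/\tau}$ and KAM curves borderings it at $K=\mathcal{O}(\sqrt{\epsilon})$. On the impacting side $\{K<0\}$ the twist $G'_s(K)\sim c/(2\sqrt{-K})$ diverges at the singularity, which is the source of the anomalous exponent $2/3$: in the Diophantine case a rescaling $K=-\epsilon u$ keeps both the per-iterate displacement and the rotation-number shift $c\sqrt{\epsilon u}$ controllable by the Diophantine constant of $\Omega$, giving an invariant curve at $K=\mathcal{O}(\epsilon)$; in the resonant case the $q$-fold iterate near $K=0^-$ is interpolated, after averaging $\epsilon f(\bar\phi)$ along the $p/q$ rotation orbit, by a Hamiltonian whose integrable part grows like $(-K)^{3/2}$ and whose perturbation is of size $\epsilon$, so the separatrix of the resulting half-island sits at $(-K)^{3/2}\sim\epsilon$, i.e.\ $|K|\sim\epsilon^{2/3}$.

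The main obstacle is making this last step rigorous, since standard Moser-type theorems require a uniformly twisting $C^{r}$-smooth integrable part whereas here the twist blows up as $K\to 0^-$. I would handle it by passing to the rescaled variable $u=(-K)/\epsilon^{2/3}$, in which the averaged Hamiltonian becomes order one and the $\mathcal{O}_{C^{r-2}}(\epsilon,\epsilon G_s(K),K^{2})$ remainders in \eqref{eq:pert-return} are $o(1)$ uniformly, and then apply an anisochronous version of KAM to bound the half-island from below. Verifying that the rescaled remainders remain in a space of sufficient smoothness, and that the averaged perturbation $\bar f(\phi)$ is non-degenerate (so that the island actually forms at scale $\epsilon^{2/3}$ and is not pinched to a smaller size), is the principal technical work underlying Theorem~\ref{thm:stability}; once this is in place Theorem~\ref{thm:mainthm3} follows from the trapping argument outlined in the first paragraph.
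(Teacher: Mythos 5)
Your first paragraph is essentially the paper's proof: the paper derives Theorem~\ref{thm:mainthm3} from Theorem~\ref{thm:stability} exactly by the trapping argument you describe, noting additionally that $I_{tan}^{\epsilon}(\theta)$ is $O(\epsilon)$-close to $I_{tan}(E)$ so the bounds transfer from the $K$-coordinate back to $I$. Your heuristics for the exponents in the second and third paragraphs are a reasonable alternative route: for the impacting-side exponent $2/3$ at a resonance you balance the integrable part $\sim(-K)^{3/2}$ of an averaged Hamiltonian against the $O(\epsilon)$ perturbation to estimate the half-island width, whereas the paper's proof of Theorem~\ref{thm:stability} rescales $K=k_s\epsilon^\gamma(1+\frac{\epsilon^\beta}{\tau k_s}k)$, brings the map to the standard near-integrable twist form (\ref{eq:zaslavsky}) with $\tilde\epsilon=\epsilon^{1-\gamma/2}$, and then requires the frequency shift $\Delta\Omega_\epsilon=O(\epsilon^{\gamma/2})$ to exceed the $\sqrt{\tilde\epsilon}$-width of the resonance band -- a frequency-detuning argument rather than an island-width balance, yielding the same threshold $\gamma<2/3$. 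Both heuristics are consistent, and the real work (showing the remainders stay controlled in $C^{r-2}$ after rescaling, which is exactly what the paper does by tracking how the square-root singularity of $G_s$ enters the derivatives) is the same. One small omission: the theorem asserts the Diophantine width \emph{is} $\mathcal{O}(\epsilon)$, not merely bounded above by it, and the paper closes that by remarking that even for $K=0$ one iterate of (\ref{eq:pert-return}) moves $K$ by $\mathcal{O}(\epsilon)$, giving the matching lower bound; your argument supplies only the upper bound.
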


To examine whether these bounds are realized and  to study the dynamics in the tangency band we examine in section \ref{sec:truncatedmap} a  truncated version of the map   (\ref{eq:pert-return}), the truncated tangency map \(F_{TTM}\), in which we set \(g_{1}=g_2=0 \):\begin{equation}\label{eq:truncatedreturn}
F_{TTM}:\begin{cases}
\bar{K}=K+\epsilon f(\bar{\phi}) \\
\bar{\phi}=\phi+\Omega+\tau\cdot K-\alpha\sqrt{-K}\cdot Heav(-K)
\end{cases}
\end{equation}
  \(Heav(K)\) denotes the heaviside function, and by re-scaling \(K \) we take $\tau\in\{1,-1\}$.
After establishing some basic properties of this map (fixed points and twist), we examine numerically, for \(f(\bar{\phi})=\sin(\bar{\phi})\) (i.e. for the first  Fourier mode of \(f)\),  the width and structure of the tangency band of the truncated map.  We observe that the widths scaling with \(\epsilon\) near \(K=0\) for resonant and Diophantine \(\Omega\) are very similar as long as \(\Omega\) is not close to the strongest resonance cases, $\Omega\approx 2\pi \text{ (mod $2\pi$)}$. So we call the non-strong-resonance cases  general-\(\Omega\) cases. For the strong resonance cases, the map  (\ref{eq:truncatedreturn}) has fixed points which are close to the singularity line,  undergoing different types of bifurcations in  the positive shear  ($\tau=1$) and the negative shear ($\tau=-1$) cases. Thus, we divide our numerical observations to three cases: general-\(\Omega\) , positive shear strong resonance and negative shear strong resonance cases:

\begin{claim}\label{conj:bandwidth}
For the general-\(\Omega\) cases  the tangency band width is of $\mathcal{O}(\epsilon)$, whereas  for the strong resonance cases its upper width is of order  $\mathcal{O}(\sqrt{\epsilon})$ and its lower width is of   $\mathcal{O}(\epsilon^{2/3})$.
\end{claim}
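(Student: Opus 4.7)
The approach is to inherit the analytic upper bounds from Theorem \ref{thm:mainthm3} and then to refine them by identifying precisely which \(\Omega\) values saturate each scaling. Since the truncation to \(g_{1}=g_{2}=0\) preserves the leading normal form (\ref{eq:pert-return}) on which Theorem \ref{thm:mainthm3} relies, the bounds \(W\lesssim\epsilon\) (Diophantine) and \(W^{+}\lesssim\sqrt{\epsilon}\), \(W^{-}\lesssim\epsilon^{2/3}\) (resonant) transfer verbatim to \(F_{TTM}\) of (\ref{eq:truncatedreturn}). What remains is (i) to show that for \(\Omega\) bounded away from the 1:1 resonance \(\Omega\approx 2\pi\pmod{2\pi}\) the sharper bound \(W=\mathcal{O}(\epsilon)\) holds (covering both Diophantine and weakly-resonant \(\Omega\)), and (ii) to exhibit orbits realising the \(\sqrt{\epsilon}\) and \(\epsilon^{2/3}\) bounds in the two strong-resonance subcases.

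For the general-\(\Omega\) case, the key observation is that every resonance chain on which Theorem \ref{thm:mainthm3} bases its \(\sqrt{\epsilon}\) or \(\epsilon^{2/3}\) bound is centred at an action \(K_{0}\) satisfying \(\Omega+G_{s}(K_{0})\equiv 0\pmod{2\pi}\) with \(K_{0}\ne 0\), because \(G_{s}(0)=0\). When \(\Omega\) is bounded away from \(0\pmod{2\pi}\), continuity of \(G_{s}\) forces \(|K_{0}|\) to be bounded below by a constant independent of \(\epsilon\); hence the resonance islands do not intrude into an \(\mathcal{O}(\epsilon)\) neighbourhood of \(K=0\). In that neighbourhood the twist map \(F_{TTM}\) admits Diophantine KAM circles at every frequency sufficiently close to \(\Omega\), and these circles are \(\mathcal{O}(\epsilon)\)-displaced from their unperturbed counterparts, bounding \(W^{\pm}\) by \(\mathcal{O}(\epsilon)\). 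The claim is then checked numerically for \(f(\bar\phi)=\sin\bar\phi\) across a grid of \(\Omega\) values avoiding a neighbourhood of the strong resonance.

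For the strong resonance \(\Omega\approx 2\pi\pmod{2\pi}\), the 1:1 chain sits exactly at \(K=0\) for \(\epsilon=0\). On the non-impacting side, \(G_{s}(K)=\tau K+\mathcal{O}(K^{2})\), and the standard Poincar\'{e}--Birkhoff pendulum normal form at this chain gives an island of half-width \(\sqrt{2\epsilon\|f\|_{\infty}/|\tau|}=\mathcal{O}(\sqrt{\epsilon})\), which realises the upper bound for \(W^{+}\). On the impacting side the square-root singularity prevents a pendulum reduction, so I use instead the following scaling balance: an orbit drifts in action by \(\epsilon f\) per iteration, so after \(n\) steps \(|K|\sim n\epsilon\); the singular phase advance is \(\alpha\sqrt{-K}\) per iteration, so the cumulative phase error over \(n\) steps is \(\alpha n\sqrt{|K|}\). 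Requiring this error to be \(\mathcal{O}(1)\), so that the orbit remains trapped near the chain, gives \(\alpha^{2}n^{3}\epsilon\sim 1\), hence \(n\sim(\alpha^{2}\epsilon)^{-1/3}\) and \(|K|\sim(\epsilon/\alpha^{2})^{2/3}\), yielding \(W^{-}=\mathcal{O}(\epsilon^{2/3})\) along with the \(\alpha^{-2/3}\) dependence tracked in the simulations.

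The main obstacle is rigorously establishing the lower bounds on the band width in the strong-resonance case, i.e.\ showing that orbits actually reach within the predicted distances of the band boundary. On the non-impacting side the pendulum reduction makes this standard, but on the impacting side the divergent shear frustrates any direct KAM or averaging argument, so the scaling balance above is only heuristic; this is why the result is presented as a Claim rather than a Theorem. Numerical verification then proceeds by iterating \(F_{TTM}\) from a grid of initial conditions near \(K=0\), measuring the extremal \(K\) values reached by long non-escaping orbits, and matching the log-log slopes of the resulting widths against \(1\), \(1/2\), and \(2/3\). The distinct bifurcation structures in the \(\tau=+1\) and \(\tau=-1\) subcases (saddle-node creation of fixed-point pairs where \(G_{s}'\) changes sign in the negative-shear case, versus transversal crossings in the positive-shear case) are then shown to affect the interior of the band but not its leading-order width.
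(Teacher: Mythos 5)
The paper's treatment of Claim \ref{conj:bandwidth} is purely numerical: the evidence is the log-log slopes in Figure \ref{fig:bandwidthres} together with the discussion in Section \ref{sec:tangwidth}, and no analytical argument is given either for realisation of the singular scalings or for sharpening the Theorem \ref{thm:mainthm3} bound away from the strong 1-1 resonance. Your proposal augments this with analytical heuristics, two of which are correct and add value over the paper: the Poincar\'e--Birkhoff pendulum reduction on the non-impacting side correctly produces the $\sqrt{\epsilon}$ half-width for $W^{+}$ at the strong resonance, and your drift-versus-shear scaling balance on the impacting side yields $W^{-}\sim(\epsilon/\alpha^{2})^{2/3}$, recovering both the $\epsilon^{2/3}$ exponent and the $\alpha^{-2/3}$ dependence the simulations track. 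Framing these as heuristics that motivate but do not prove the scalings, and then falling back on the same numerical measurements the paper uses, is faithful to the paper's intent in calling this a Claim rather than a Theorem.

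The one genuine gap is in your general-$\Omega$ paragraph. You assert that the resonance chains responsible for the $\sqrt{\epsilon}$ and $\epsilon^{2/3}$ bounds of Theorem \ref{thm:mainthm3} sit at actions $K_{0}\ne 0$ satisfying $\Omega+G_{s}(K_{0})\equiv 0 \pmod{2\pi}$, so that once $\Omega$ is bounded away from $2\pi\mathbb{Z}$ they do not intrude into an $\mathcal{O}(\epsilon)$ neighbourhood of $K=0$. This only handles the period-one chain. If $\Omega/2\pi=p/q$ with $q\geq 2$ (e.g.\ $\Omega=\pi/3$, where $q=6$), the $q$-periodic resonance chain is centred exactly at $K=0$, which is precisely why Theorem \ref{thm:mainthm3} assigns the weaker bounds to \emph{all} near-resonant $\Omega$, not only to $\Omega\approx 2\pi j$. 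The nontrivial content of Claim \ref{conj:bandwidth} is that these $q\geq 2$ chains at $K=0$ nevertheless produce a band of width $\mathcal{O}(\epsilon)$; the heuristic one would need is that with $f(\bar\phi)=\sin\bar\phi$ the $q$-th subharmonic amplitude, hence the $q$-periodic island half-width at $K=0$, decays like $\mathcal{O}(\epsilon^{q/2})\leq\mathcal{O}(\epsilon)$ for $q\geq 2$, possibly modified by the square-root shear on the impacting side. The Diophantine-KAM argument you append in the same sentence is the $\Omega$-Diophantine case of Theorem \ref{thm:mainthm3} and does not cover resonant $\Omega$. The paper also leaves this analytically open, so your overall strategy is sound, but as written your general-$\Omega$ paragraph does not establish what it asserts.
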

\begin{claim}\label{conj:resislands}
The set  $\mathcal{B}^{\epsilon}\setminus \mathcal{\bar S}^{\epsilon}$ may include  two sided islands:  stable periodic orbits surrounded by quasi-periodic orbits that  hop between impacting and non-impacting returns to \(\Sigma_E\). \end{claim}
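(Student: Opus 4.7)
The claim is existence-type: produce an open set of parameters for $F_{TTM}$ at which a stable periodic orbit with at least one point in $\{K>0\}$ and at least one point in $\{K<0\}$ exists, surrounded by an invariant disk made of hopping KAM curves. The plan is to take the most elementary case, construct the orbit explicitly, verify ellipticity by direct linearization, and then invoke Moser's twist theorem on the iterated map $F_{TTM}^{q}$, whose invariant curves automatically decompose into $q$ arcs of curves of $F_{TTM}$ alternating between the two sides of $K=0$.

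For concreteness I would specialize to $f(\phi)=\sin\phi$, $\tau=+1$, and $\Omega=\pi+\delta$ with $|\delta|$ small, so that $2\Omega\equiv 2\delta\pmod{2\pi}$ and period-2 orbits bifurcate from the fixed point branch. Label the two points $(K_{0},\phi_{0})$ with $K_{0}>0$ and $(K_{1},\phi_{1})=F_{TTM}(K_{0},\phi_{0})$ with $K_{1}<0$. The $K$-return condition gives $\sin\phi_{0}+\sin\phi_{1}=0$, which I resolve by $\phi_{1}=\phi_{0}+\pi$. The two phase equations then yield
\begin{equation*}
K_{0}= -\delta + \mathcal{O}(\epsilon),\qquad \delta - \alpha\sqrt{-K_{1}}+K_{1}=0 \pmod{2\pi},\qquad K_{0}-K_{1}=\epsilon\sin\phi_{0},
\end{equation*}
which I would solve via the implicit function theorem in the regime $\delta\sim\epsilon$ (so $K_{0}=\mathcal{O}(\epsilon)$ and $-K_{1}=\mathcal{O}(\epsilon^{2}/\alpha^{2})$), producing a one-parameter family of period-2 orbits indexed by $\phi_{0}$ on an interval, and picking one specific representative where the next step (stability) holds. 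The same template with $q$ branches at $\Omega\approx 2\pi p/q$ produces analogous candidate orbits whenever $p/q$ is small.

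With the orbit in hand, I would compute $DF_{TTM}^{2}$ as the product of the two branch Jacobians; the nontrivial term is the $\alpha/(2\sqrt{-K_{1}})$ entry that appears only in the impacting leg. Writing out the $2\times 2$ symplectic trace and substituting the orbit's $K_{1}\sim -\epsilon^{2}/\alpha^{2}$ should give $\mathrm{tr}(DF_{TTM}^{2}) = 2+\mathcal{O}(\epsilon)$ with a sign controlled by $\cos\phi_{0}$, so by choosing $\phi_{0}$ in the appropriate half-period I secure $|\mathrm{tr}|<2$ strictly, i.e.\ an elliptic orbit, on an open set of $(\epsilon,\alpha,\delta,\phi_{0})$. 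Computing the first Birkhoff invariant of $F_{TTM}^{2}$ at this elliptic point then gives nondegenerate twist (the square-root singularity sitting at fixed distance from the orbit is precisely what makes the quadratic coefficient nonzero), and Moser's twist theorem for $C^{r}$ area-preserving maps yields a Cantor family of invariant curves surrounding the orbit. Because the orbit is bounded away from $K=0$, these curves lie in a neighborhood of the period-2 point in which $F_{TTM}^{2}$ is genuinely smooth, so no nonsmooth KAM theory is needed. Each such curve for $F_{TTM}^{2}$ splits into two curves of $F_{TTM}$ mapped to one another; by continuity these two components straddle $K=0$, giving the advertised hopping behavior.

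The main obstacle is keeping the KAM application honest near the singular line: although the orbit itself is bounded away from $K=0$, the distance is only $\mathcal{O}(\epsilon^{2}/\alpha^{2})$, so the $C^{r}$-norms of $F_{TTM}^{2}$ in any neighborhood that reaches a nontrivial invariant curve blow up like inverse powers of this distance. Moser's small-divisor estimates therefore impose a lower bound on the Diophantine constant of the rotation number, and one must check that the twist and the perturbation size permit curves with such rotation numbers to survive; this amounts to verifying that $\epsilon\alpha^{2}$ is the correct scale, in agreement with the numerical observation in section~\ref{sec:truncatedmap} that the connection time diverges precisely as $\epsilon\alpha^{2}\to 0$. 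The same scheme, carried out at higher-period resonances, extends the claim beyond $q=2$, but for each $q$ one must separately verify that the branch of candidate orbits crosses $K=0$ rather than sitting entirely on one side.
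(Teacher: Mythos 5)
The paper does not prove this claim analytically: it is labelled a ``Claim'' rather than a theorem, section~\ref{sec:truncatedmap} states explicitly that Claims \ref{conj:bandwidth}--\ref{conj:nonresconnectingorbit} are reported as numerical observations for $F_{TTM}$, and the ``proof'' offered in section~\ref{sec:tangwidth} consists of the cited figures. The analytic content the paper \emph{does} supply is Appendix~B, which locates period-$1$ centers at $(\phi^{*},K^{*}_{-,-})$ with $K^{*}_{-,-}\to 0^{-}$ as $\Omega\to 2\pi j$; it does not attempt the KAM argument that would make the surrounding islands two-sided. Your attempt --- an elliptic period-$2$ orbit straddling $K=0$ plus Moser --- is therefore a genuinely different route, aimed at a rigorous result that the paper does not claim to have.

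As written, however, the construction does not go through. With $\tau=+1$, $\Omega=\pi+\delta$ and $\phi_{1}=\phi_{0}+\pi$, the two phase equations force $G_{s}(K_{0})=G_{s}(K_{1})=-\delta\pmod{2\pi}$; requiring $K_{0}>0$ forces $\delta<0$, but then $K_{1}-\alpha\sqrt{-K_{1}}=-\delta>0$ is unsolvable for $K_{1}<0$. (The signs are only consistent for $\tau=-1$, where $K_{0}=\delta>0$ and $K_{1}\approx-\delta^{2}/\alpha^{2}$.) More importantly, because $\phi_{1}=\phi_{0}+\pi$ gives $\cos\phi_{1}=-\cos\phi_{0}$, the $\mathcal{O}(\epsilon)$ contributions to $\mathrm{tr}\,DF_{TTM}^{2}$ cancel and one is left with
\begin{equation*}
\mathrm{tr}\,DF_{TTM}^{2}=2-\epsilon^{2}\,G_{s}'(K_{0})\,G_{s}'(K_{1})\,\cos^{2}\phi_{0},
\end{equation*}
and with $\tau=-1$ the product $G_{s}'(K_{0})G_{s}'(K_{1})=(-1)\bigl(-1+\tfrac{\alpha}{2\sqrt{-K_{1}}}\bigr)$ is negative once $|K_{1}|<\alpha^{2}/4$, so the trace \emph{exceeds} $2$: this branch of period-$2$ orbits is hyperbolic, not elliptic, and ``choosing $\phi_{0}$ in the appropriate half-period'' cannot change the sign since only $\cos^{2}\phi_{0}$ enters. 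You would need the companion elliptic branch (e.g.\ the $\phi_{1}=-\phi_{0}$ type), whose existence, two-sidedness, and ellipticity have not been checked. Finally, the obstacle you correctly identify but leave open is the crux: the Moser disk around the period-$2$ point is at most $O(\epsilon^{2}/\alpha^{2})$ from the singular line, the twist there is $O(\alpha^{2}/\epsilon)$, and the $C^{r}$ norms of $F_{TTM}^{2}$ blow up as inverse powers of that distance; verifying that Moser's smallness hypothesis is met in that regime \emph{is} the proof, not a remark to be checked. In short, the proposal is a reasonable program but not a proof: it has a sign error in the setup, the branch it constructs is hyperbolic, and the decisive KAM estimate is unverified.
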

\begin{claim}\label{conj:nonresconnectingorbit}
For the general-\(\Omega\) cases  and for the positive shear strong resonance case, the tangency band includes a connecting orbit which visits arbitrarily close the upper and lower boundaries of $\mathcal{B}^{\epsilon}$, yet, the connecting time, \(N_{c}(\epsilon,\alpha,\Omega,\tau)\), diverges as \(\epsilon\) and/or \(\alpha\) are decreased.\end{claim}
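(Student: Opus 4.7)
The plan is to combine direct numerical iteration of $F_{TTM}$ with heuristic scaling arguments, since this statement is primarily a numerical observation rather than a fully rigorous theorem. A preliminary reduction: by Definition \ref{def:tangencyband} the boundaries of $\mathcal{B}^\epsilon$ are already accumulation sets of $\bigcup_k F_{TTM}^k \sigma^\epsilon$, so each boundary is separately reached by the orbit closure of $\sigma^\epsilon$. The non-trivial content is therefore the \emph{transitivity} statement: exhibiting a single orbit whose forward and backward iterates together accumulate on both boundaries. I would establish this empirically by iterating a fine mesh of initial conditions on $\sigma^\epsilon$ and on $F_{TTM}^{-1}\sigma^\epsilon$, and identifying orbits whose extrema in $K$ are arbitrarily close to both the upper and lower boundaries of $\mathcal{B}^\epsilon$.

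The heuristic explanation for the existence of such an orbit in the general-$\Omega$ case is that near $K = 0$ the phase advances as $\bar{\phi} \approx \phi + \Omega$, and for non-strong-resonant $\Omega$ the sequence $\sin(\bar{\phi}_n)$ is equidistributed with zero mean, so $K$ performs a pseudo-random walk with increments of size $O(\epsilon)$. In the positive-shear strong-resonance case one instead invokes the hyperbolic fixed points produced by the fixed-point analysis preceding Claim \ref{conj:resislands}, whose stable and unstable manifolds connect the impacting and non-impacting regions across $K = 0$. The exclusion of the negative-shear strong-resonance case is justified by exhibiting an elliptic fixed point surrounded by a KAM-type island whose outer invariant circle separates the two regimes, providing an absolute topological obstruction to connecting orbits.

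For the divergence of $N_c$ with $\epsilon$, the bound $|\bar{K} - K| \le \epsilon \|f\|_{\infty}$ is immediate from (\ref{eq:truncatedreturn}), so traversing a band of extent $W$ requires at least $W / (\epsilon \|f\|_{\infty})$ iterates. Combined with the bounds of Theorem \ref{thm:mainthm3} and Claim \ref{conj:bandwidth}, this yields $N_c \gtrsim \epsilon^{\min(\gamma_+,\gamma_-) - 1}$, which diverges as $\epsilon \to 0$ since $\gamma_\pm \le 1$. A numerical power-law fit at fixed $\alpha, \Omega, \tau$ would pin down the exponent.

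The divergence as $\alpha \to 0$ follows from the degeneration of $F_{TTM}$ at $\alpha = 0$ to a smooth near-integrable twist map, for which standard KAM theory produces invariant circles through the line $K = 0$ and hence $N_c = \infty$. For small positive $\alpha$ the square-root term destroys KAM circles only within a thin layer whose width shrinks with $\alpha$, and orbits must repeatedly traverse this layer to transit between $K > 0$ and $K < 0$. The hardest step, and the main obstacle, is obtaining a rigorous lower bound of the form $N_c \gtrsim \alpha^{-\beta}$: this would require a KAM-type estimate for a map with a weak square-root singularity of vanishing strength. I would be content here with the heuristic just sketched, supplemented by a numerical fit of $N_c(\alpha)$ at fixed $\epsilon$ and a verification of the composite scaling $\epsilon \alpha^2$ highlighted in the abstract.
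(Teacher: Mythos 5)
Your overall plan — numerical iteration of $F_{TTM}$ with $f(\bar\phi)=\sin\bar\phi$, supplemented by heuristic scaling arguments — matches the paper's treatment, which is entirely empirical (this is a Claim, not a Theorem). Your preliminary reduction (the boundaries of $\mathcal{B}^\epsilon$ are accumulation sets of the singularity set by minimality, so the content is transitivity) is correct and is implicit in the paper's choice of diagnostics: the trajectory width $W(n;\phi_0,\triangle)$ of Eq.~(\ref{eq:trajwidthdef}) and the distance-from-boundary function $d(n;\phi_0,\triangle)$ of Eq.~(\ref{eq:trajdistance}), evaluated at random phase windows and fitted to power laws. Your random-walk picture for general $\Omega$ is a reasonable motivation, though the paper does not offer it.

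However, two of your supporting arguments contain genuine gaps. First, the traversal-time lower bound $N_c\gtrsim W/(\epsilon\|f\|_\infty)$ does not establish divergence in the general-$\Omega$ case, which is the main case covered by the claim: there $W^\pm=\mathcal{O}(\epsilon)$ by Claim~\ref{conj:bandwidth}, so your estimate collapses to $N_c\gtrsim\mathcal{O}(1)$. The divergence observed in the paper is much stronger — the empirical fit in Figure~\ref{fig:widthepsdep-long} gives $N_c\approx10^{25/8}\nu^{5/2}/(\alpha^5\epsilon^{5/2})$ — and its mechanism is not band traversal but long-time stickiness to the transiently invariant bands, rings and chaotic layers described later in section~\ref{sec:tangwidth}, which your crossing-time bound cannot see. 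Second, your assertion that the negative-shear strong-resonance case admits \emph{an absolute topological obstruction} (an elliptic island whose outer invariant circle separates the regimes) is stated as fact but is not what the numerics (or the paper) establish. The observations in Figures~\ref{fig:tan41long}--\ref{fig:tan41longzoom} show separate, non-overlapping trajectories after $10^{10}$ iterates, and the paper explicitly leaves it open whether a connecting orbit exists or the singularity set splits into several invariant subsets; presenting the obstruction as proved overstates the evidence and would not survive a check of the actual phase portraits, which show chaotic layers crossing $K=0$ even in that case.
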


 The implication of these large transient connection times is that for very long periods (e.g., for \(\epsilon=0.01\) and \(\alpha=1\), for more than  \(10^7\) iterates) the density of orbits is non-uniform. During these transient times orbits   cover essentially densely  overlapping yet different parts of the same ergodic component. Interestingly, for the negative shear strong resonance case, at \(\Omega\) values which are slightly below \(2\pi k\), there are $\alpha$ values for which no such connecting orbits are found, and,  some orbits belonging to the singular set do not overlap even after \(10^{10}\) iterates. We conclude that even if there is a connecting orbit,  \(N_{c}(\epsilon,\alpha,2\pi,-1)\)  is much larger than all other cases. Whether such a connecting orbit exists, or, alternatively, the singularity set is divided to several invariant subsets for this case, remains an open question.

\section{The singularity curve\label{sec:singulrcurve}}
Recall that in the unperturbed setting, the iso-energy singularity curve, \(\sigma^{0}(E)\subset \Sigma_{E}\), is  the invariant circle
 $\sigma^{0}(E)=\{(\theta,I_{tan}(E))$, $\theta\in[-\pi,\pi]\}$, and it separates \( \Sigma_{E}\) to impacting ($I<I_{tan}(E)$) and non-impacting ($I>I_{tan}(E)$) circles. Here we prove Theorem \ref{prop:singularcurve} which asserts that  for sufficiently small \(\epsilon>0\) the singularity curve \(\sigma^{\epsilon}(E)\) (which is generically not invariant) has similar dividing properties and is a smooth graph. We first characterize the curve which divides between impacting and non-impacting points at  \(\Sigma_{E}^* \), a properly defined section near the wall,  and then integrate this wall tangent curve backward and forward to \(\Sigma_{E}\) to establish the properties of the singularity curve and its image.

\subsection{Characterization of tangency at a section near the wall}

Define an auxiliary iso-energy cross-section \(\Sigma_{E}^* \) near the wall (see Figure \ref{fig:unperttangency}): \begin{defn}
The iso-energy  \textit{star-section} $\Sigma_{E}^*$ is the  union of the inward impact wall section: $\Sigma_{E}^{w}=\{(q,p)|H(q,p)=E, q_1=q_1^w,p_1\leqslant0\}$ and the by-pass section    $\Sigma_{E}^{>}=\{(q,p)|H(q,p)=E,p_{1}=0,\dot{p}_{1}>0,q_1\geq q_1^w\}$, namely $\Sigma_{E}^*=\Sigma_{E}^{>}\cup\Sigma_{E}^w$.
The \textit{auxiliary section} (which may be crossed only by the auxiliary smooth flow) is the section    $\Sigma_{E}^{<}=\{(q,p)|H(q,p)=E,p_{1}=0,\dot{p}_{1}>0,q_1<q_1^w\}$.\end{defn}

\begin{figure}[ht]
\begin{centering}
\includegraphics[scale=0.2]{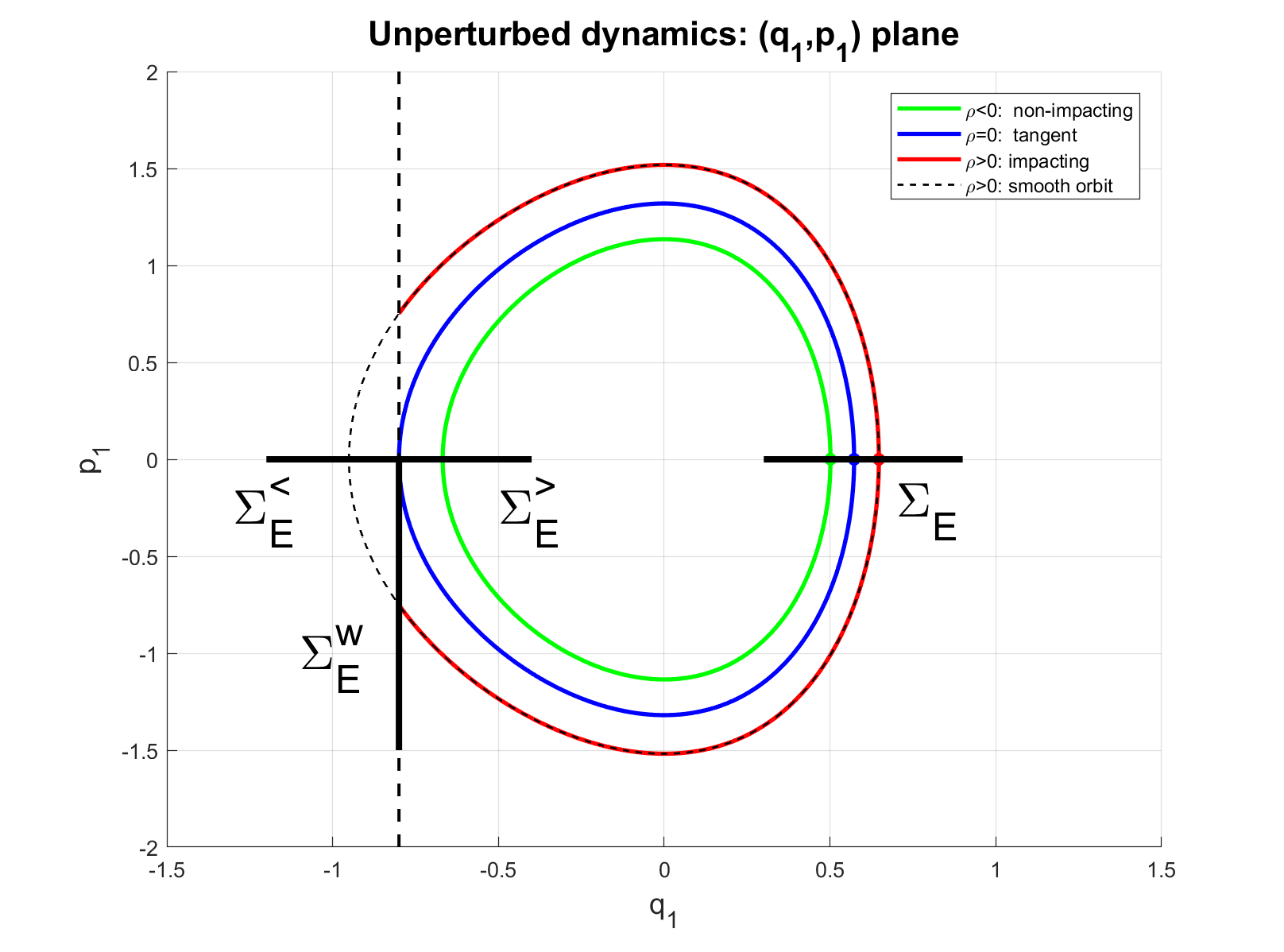}
\includegraphics[scale=0.2]{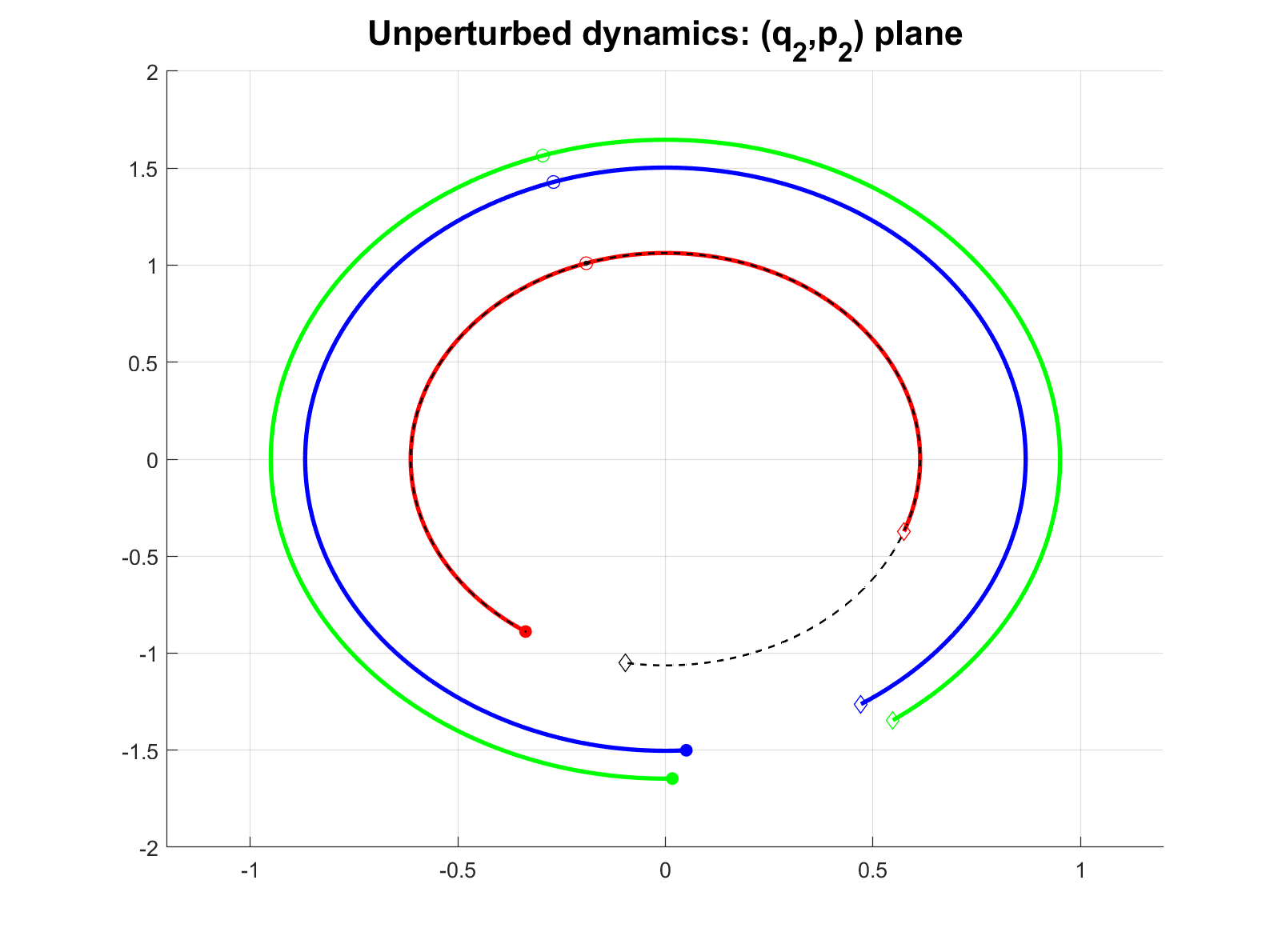}
\par\end{centering}
\protect\caption{\label{fig:unperttangency}Near tangent dynamics in the $(q_1,p_1)$ plane for a single iteration of the unperturbed return map $\Sigma_{E}\rightarrow\Sigma_{E}$. Filled circles - indicate i.c., circles - position at the wall, diamonds - end points, i.e. the image of the return map. The phase \(\theta\) gained by the impact flow (red diamond) is smaller than the phase gained by the smooth flow (black diamond).    Here  $q_1^w=-0.8$,  $V_1(q_1)=2q_1^2+q_1^3+\frac{1}{4}q_1^4$, $V_2(q_2)=\frac{3}{2}q_2^2, H=2$ and $\epsilon=0$.}
\end{figure}
For all \(E>0\), the piecewise smooth two dimensional star section \(\Sigma_{E}^*\) is well defined and is non-empty. For    \( E<V_1(q_1^w) \) and sufficiently small \(\epsilon\), the wall section, \(\Sigma_{E}^{w}\) is the empty set and the star section consists of the by-pass section alone:  for such energies all segments by-pass the wall without an impact.  For     \( E> V_1(q_1^w) \) both \(\Sigma_{E}^{>},\Sigma_{E}^w\) are non-empty and
include the wall iso-energy tangent curve: \begin{lem}\label{lem:tangentcurve}
The collection of all tangential wall positions with energy \(E>V_1(q_1^w)\) is given by:
\begin{equation}\label{eq:tancurvepert}
\Xi_{wall}^{\epsilon}(E)=\{(q_{1}=q_1^{w},p^{w}_1=0,q_2,p_2)|E-V_1(q_1^w)=\frac{p_2^2}{2}+V_2(q_2)+\epsilon\cdot V_c(q_1^w,q_2)\}.
\end{equation}
 For sufficiently small \(\epsilon\) the projection \(\Pi\) of this curve to the \((q_{2},p_2)\) plane, the iso-energy wall tangency curve,  \(\sigma_{wall}^{\epsilon}(E)=\Pi \Xi_{wall}^{\epsilon}(E)\), is a circle which can be represented as a graph in the unperturbed action angle coordinates \((I,\theta)\):
\begin{equation}\label{eq:tancurvepertitheta}
\sigma_{wall}^{\epsilon}(E)=\{(\theta ,I_{tan,\Sigma^*_E}^{\epsilon}(\theta))|\theta\in[-\pi,\pi]\}
\end{equation} where \(I_{tan,\Sigma^*_E}^{\epsilon}(\theta)=I_{tan}(E)+O_{C^{r}}(\epsilon)\) is  a \(C^{r}\)-smooth function.
\end{lem}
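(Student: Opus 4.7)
The first assertion is immediate from the definition of tangency: a tangential wall contact means $q_1 = q_1^w$ and $p_1 = 0$, and substituting these into the energy conservation $H = E$ (with $V_b = 0$ on the interior side of the wall) gives the equation defining $\Xi_{wall}^\epsilon(E)$. Since $q_1$ and $p_1$ are fixed along this set, the projection $\Pi$ is injective, so $\sigma_{wall}^\epsilon(E) = \Pi\,\Xi_{wall}^\epsilon(E)$ is precisely the level set
$$\bigl\{(q_2, p_2) : \tilde H_2(q_2, p_2) = E_2^{tan}\bigr\}, \qquad \tilde H_2 := H_2 + \epsilon V_c(q_1^w,\cdot), \qquad E_2^{tan} := E - V_1(q_1^w) > 0,$$
of a one degree-of-freedom Hamiltonian obtained by freezing $q_1 = q_1^w$ in the coupling.

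At $\epsilon = 0$ the S3B hypotheses on $V_2$ (convex, $C^{r+1}$, proper, with unique simple minimum $V_2(0) = 0$) make $H_2$ a proper Morse function, so $\{H_2 = E_2^{tan}\}$ is a smooth embedded circle in the $(q_2,p_2)$-plane. In the unperturbed action-angle chart $(\theta, I) = S_2(q_2,p_2)$ it is the graph $\{I = I_{tan}(E)\}$, where $I_{tan}(E)$ is defined in (\ref{eq:Itan}). The chart $S_2$ is $C^{r+1}$-smooth on a full neighborhood of this circle because $E_2^{tan} > 0$ keeps the level set bounded away from the action-angle singularity at the origin.

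Rewriting the level-set equation in these coordinates yields
$$G(\theta, I; \epsilon) := H_2(I) + \epsilon V_c\bigl(q_1^w, q_2(\theta, I)\bigr) - E_2^{tan} = 0,$$
with $G$ $C^{r+1}$-smooth and $2\pi$-periodic in $\theta$. Since $\partial_I G|_{\epsilon=0} = H_2'(I) = \omega_2(I)$ is strictly positive and bounded below on a neighborhood of $I_{tan}(E)$ (by the strict monotonicity of $H_2$ in $I$ guaranteed by the convexity of $V_2$), the implicit function theorem with parameter, applied uniformly over the compact circle $\theta \in S^1$, produces for all sufficiently small $\epsilon$ a unique $C^r$-smooth, $2\pi$-periodic solution $I = I_{tan,\Sigma_E^*}^\epsilon(\theta)$. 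Taylor expansion in $\epsilon$ around $\epsilon = 0$ gives $I_{tan,\Sigma_E^*}^\epsilon(\theta) = I_{tan}(E) + \mathcal{O}_{C^r}(\epsilon)$, so $\sigma_{wall}^\epsilon(E)$ is a smooth, non-self-intersecting closed curve, hence a topological circle given as a graph over $\theta$, establishing (\ref{eq:tancurvepertitheta}).

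The only delicate point is the uniformity of the implicit function theorem in $\theta$, which I expect to be the mildest of obstacles: it follows from compactness of $S^1$ together with the fact that both $\omega_2$ and its lower bound depend only on $I$ and are controlled on a neighborhood of $I_{tan}(E)$ by the S3B convexity hypothesis. No subtlety arises from $V_c$ beyond bounding $\|V_c\|_{C^{r+1}}$ on the compact set of interest, which is ensured by the smoothness assumptions in Definition \ref{def:wallsys}.
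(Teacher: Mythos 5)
Your proof is correct, and it takes a genuinely cleaner route than the paper's. The paper stays in the $(q_2,p_2)$ coordinates and runs the implicit function theorem in two separate charts — one parametrized by $q_2$ away from the turning points, and one parametrized by $p_2$ near the turning points $q_{2\min/\max}^0(E)$ where $V_2'\neq 0$ — and then passes to action-angle coordinates at the end, checking $dI/d\theta = O_{C^{r-1}}(\epsilon)$ chart by chart. You instead pull the defining equation into the action-angle chart $(\theta,I)=S_2(q_2,p_2)$ from the start, exploiting that this chart is a diffeomorphism on a neighborhood of the circle $\{I=I_{tan}(E)\}$ (because $E_2^{tan}>0$ keeps the level set away from the elliptic singularity of $S_2$ at the origin), and apply a single IFT in $I$ using $\partial_I G|_{\epsilon=0}=\omega_2(I)>0$. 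This avoids the turning-point chart entirely and gives the graph form directly, which is what the lemma actually needs. One small calibration: since the action-angle map built from $C^{r+1}$ potentials is only $C^r$ (as the paper itself notes for $H_2^{-1}$), the function $G(\theta,I;\epsilon)$ is $C^r$ rather than $C^{r+1}$ — but this does not affect the conclusion $I_{tan,\Sigma_E^*}^{\epsilon}(\theta)=I_{tan}(E)+\mathcal{O}_{C^r}(\epsilon)$, which still follows from the IFT.
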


\begin{proof}
Since the wall is the vertical line \(q_{1}=q_1^w\), a tangent wall position occurs  if and only if \(q_{1}=q_1^{w},p^{w}_1=0\), and (\ref{eq:tancurvepert}) follows from plugging these conditions in the perturbed Hamiltonian (\ref{eq:hgeneral}). The obtained curve is a circle:
from the S3B property, the potential $V_2(q_2)$ has bounded level sets, so, for all  \( H>V_1(q_1^w),  \) at \(\epsilon=0\), Eq. (\ref{eq:tancurvepert}) defines a circle in the \((q_2,p_2)\) plane:\begin{equation}\label{eq:p2taneps0}
\{(q_{2},p_{2})|p_{2}=p_2^{tan,\pm,0}(q_2;H)=\pm\sqrt{2\left(H-V_1(q_1^w)-V_2(q_2)\right)},q_2\in[ q_{2min}^{0}(E),q_{2max}^{0}(E)]\}.
\end{equation} Defining   \((I_{tan,\Sigma^*_E}^{0}(q_{2},p_{2}),\theta^{0}_{tan,\Sigma^*_E}(q_{2},p_{2}))=S_{2}(q_{2},p_2^{tan,\pm,0}(q_2;E)),\) the unperturbed singularity curve at the wall representation in the action angle coordinates  is \(I_{tan,\Sigma^*_E}^{0}(q_{2},p_{2})=I_{tan}(E)\) and \(\theta^{0}_{tan,\Sigma^*_E}\in[-\pi,\pi]\). In particular, the constant function \(I_{tan,\Sigma^*_E}^{0}(q_{2},p_{2})=I^{0}_{tan,\Sigma^*_E}(\theta^{0}_{tan,\Sigma^*_E}(q_{2},p_2^{tan,\pm,0}(q_2;E)))\) is a graph of     \(\theta^{0}_{tan,\Sigma^*_E}\), satisfying
 \(\frac{dI^{0}_{tan,\Sigma^*_E}(\theta)}{d\theta}=0.\)

For all   \( H>V_1(q_1^w) \), for $\epsilon$ sufficiently small, this property is preserved by $V_2(q_2)+\epsilon V_c(q_1^w,q_2)$; away from  \( q_{2min,2max}^{0}(E)\) the solution to  (\ref{eq:tancurvepert})  depends \(C^{r+1}\) - smoothly on \(\epsilon\) and is of the form
\((q_{2},p_2^{tan,\pm,\epsilon}(q_2;E),q_2)\):\begin{equation}
p_2^{tan,\pm,\epsilon}(q_2;E)=\pm\sqrt{2(E-V_1(q_1^w)-V_2(q_2)-\epsilon\cdot V_c(q_1^w,q_2))}.\label{eq:p2tanpm}
\end{equation} Close to the turning points, since  \(V_2'(q_{2min/max}^0(E))\neq0\), the solution to  (\ref{eq:tancurvepert})  depends \(C^{r+1}\) - smoothly on \(\epsilon\) and is of the form
\((q_2^{tan,\pm,\epsilon}(p_2;E),p_{2})\): in particular, there exist $q_{2min/max}^{\epsilon}(E)=q_2^{tan,\pm,\epsilon}(0;E)\approx q_{2min/max}^0(E)+O_{C^{r}}(\epsilon)$ such that
\(\Xi_{wall}^{\epsilon}(E)=\{(q_{1}^{w},0,q_{2},p_{2})|  p_{2}=p_2^{tan,\pm,\epsilon}(q_2;E),q_2\in[ q_{2min}^{\epsilon}(E),q_{2max}^{\epsilon}(E)]\} \),
so indeed we have a circle (both \(\sigma_{wall}^{\epsilon}(E)\) in the \((q_{2},p_2)\) plane and  \(\Xi_{wall}^{\epsilon}(E)\) in the full phase space).
Defining \((\theta_{tan,\Sigma^*_E}^{\epsilon}(q_{2}),I_{tan,\Sigma^*_E}^{\epsilon}(q_{2}))=S_{2}(q_{2},p_2^{tan,\pm,\epsilon}(q_2;E))\) in the chart of  \(q_2\) values bounded away from the turning points and \((,\theta_{tan,\Sigma^*_E}^{\epsilon}(p_{2}),I_{tan,\Sigma^*_E}^{\epsilon}(p_{2}))
=S_{2}(q_2^{tan,\pm,\epsilon}(p_2;E),p_{2})\) at the chart near the turning points establishes the existence of a \(C^{r}\) smooth action angle presentation of the tangent circle (\ref{eq:tancurvepertitheta}) for  all \(E>V_1(q_1^w)\) and sufficiently small \(\epsilon\), and  its \(C^{r}\) closeness to the circle  \((\theta ,I_{tan,\Sigma^*_E}^{0}(\theta))\). In particular, in the first chart   \(\frac{dI_{tan,\Sigma^*_E}^{\epsilon}(q_{2})}{d\theta_{tan,\Sigma^*_E}^{\epsilon}(q_{2})}=\frac{dI_{tan,\Sigma^*_E}^{\epsilon}(q_{2})/dq_{2}}{d\theta_{tan,\Sigma^*_E}^{\epsilon}(q_{2})/dq_{2}}=\frac{O_{C^{r-1}}(\epsilon)}{d\theta_{tan,\Sigma^*_E}^{0}(q_{2})/dq_{2}+O_{C^{r-1}}(\epsilon)}=O_{C^{r-1}}(\epsilon)\) (since away from the turning points \(d\theta_{tan,\Sigma^*_E}^{0}(q_{2})/dq_{2}\) is bounded away from \(0\)), and similar calculation applies to the second chart near the turning points. It thus follows that the circle can be represented as a graph in the unperturbed action angle coordinates representation.

\end{proof}

 Next, we establish that the wall tangency circle divides the plane $\Pi\Sigma_{E}^*$ to impacting and non-impacting trajectory segments:
\begin{lem}\label{lem:divideimpwall}
For sufficiently small  $\epsilon$ the $(q_2,p_2)$ coordinates  parametrize $\Sigma^*_E$: to each point $(q_2,p_2)\in\Pi\Sigma^*_E$ there exists a unique point \(z\in\Sigma^*_E\). The wall tangency circle, \(\sigma_{wall}^{\epsilon}(E)\),     divides \(\Pi\Sigma^*_E\) between  iso-energy impacting (interior points to \(\sigma_{wall}^{\epsilon}(E)\) corresponding to $z\in\Sigma^w_E$) and non-impacting points  (exterior points to \(\sigma_{wall}^{\epsilon}(E)\), corresponding to $z\in\Sigma^>_E$)  on $\Sigma_E^*$.
\end{lem}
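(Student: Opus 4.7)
My plan is to split the analysis along the decomposition $\Sigma_E^*=\Sigma_E^w\cup\Sigma_E^>$ and use the energy relation to invert $\Pi$ on each piece separately, identifying the projection images as the closed disk and the complementary region cut out by the wall tangency circle $\sigma_{wall}^\epsilon(E)$ from Lemma \ref{lem:tangentcurve}.

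On the wall section, $q_1=q_1^w$ is fixed and $p_1\le 0$, so the energy constraint
\[
\tfrac{p_1^2}{2}=E-V_1(q_1^w)-H_2(q_2,p_2)-\epsilon V_c(q_1^w,q_2)
\]
uniquely determines $p_1=-\sqrt{2(\cdot)}$ whenever the radicand is non-negative. By the very definition of $\Xi_{wall}^\epsilon(E)$ in \eqref{eq:tancurvepert}, this radicand vanishes exactly on $\sigma_{wall}^\epsilon(E)$ and is positive in the bounded region enclosed by it (using the S3B hypothesis that $V_2$ has bounded level sets, so the set $\{H_2(q_2,p_2)+\epsilon V_c(q_1^w,q_2)<E-V_1(q_1^w)\}$ is a topological disk for small $\epsilon$ by $C^{r+1}$ perturbation). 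This establishes both the injectivity of $\Pi$ on $\Sigma_E^w$ and that $\Pi\Sigma_E^w$ is precisely the closed interior of $\sigma_{wall}^\epsilon(E)$, corresponding by construction to impacting initial data.

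On the bypass section, $p_1=0$ and the energy constraint becomes $V_1(q_1)+\epsilon V_c(q_1,q_2)=E-H_2(q_2,p_2)$, with the admissibility condition $\dot{p}_1=-V_1'(q_1)-\epsilon\partial_{q_1}V_c(q_1,q_2)>0$ and $q_1\ge q_1^w$. At $\epsilon=0$, the hypothesis $q_1 V_1'(q_1)>0$ for $q_1\neq 0$ forces $V_1$ to be strictly decreasing on $[q_1^w,0)$, so the admissibility condition selects exactly this interval and $V_1$ maps it bijectively onto $(0,V_1(q_1^w)]$. By uniform continuity of $V_1'+\epsilon\partial_{q_1}V_c$ on the compact $q_1$-window and on the bounded $q_2$-window compatible with energy $E$, for sufficiently small $\epsilon$ this strict monotonicity persists uniformly in $q_2$. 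The implicit function theorem then yields a unique smooth $q_1=q_1(q_2,p_2;\epsilon)$ satisfying the energy relation and admissibility, proving injectivity of $\Pi$ on $\Sigma_E^>$. The range is exactly those $(q_2,p_2)$ with $E-H_2(q_2,p_2)<V_1(q_1^w)+\epsilon V_c(q_1^w,q_2)$ (since the left endpoint $q_1^w$ of the monotone interval is the image's upper value), which is the exterior of $\sigma_{wall}^\epsilon(E)$ in $\Pi\Sigma^*_E$.

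Finally, I would note that the two pieces match along $\sigma_{wall}^\epsilon(E)$ itself, where $p_1=0$ and $q_1=q_1^w$ simultaneously, giving a single point of $\Sigma_E^*$ for each $(q_2,p_2)$ on the circle. Combining the two parametrizations gives global injectivity of $\Pi$ on $\Sigma_E^*$, and the dichotomy interior/exterior $\leftrightarrow$ $\Sigma_E^w/\Sigma_E^>$ $\leftrightarrow$ impacting/non-impacting is the content of the lemma. The main obstacle I anticipate is the uniform-in-$q_2$ monotonicity of $V_1(q_1)+\epsilon V_c(q_1,q_2)$ on a full neighborhood of $[q_1^w,0)$; this needs the compactness of the set of admissible $q_2$-values on the energy surface together with $C^{r+1}$-smallness of the perturbation, and care is needed near the turning endpoints where the two overlapping charts of Lemma \ref{lem:tangentcurve} must be used to close the argument.
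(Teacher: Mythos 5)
Your proof reaches the same conclusion as the paper's and uses the same overall split ($\Sigma_E^w$ versus $\Sigma_E^>$, with the impact case resolved trivially by the negative square-root for $p_1$), but the argument for the bypass case $\Sigma_E^>$ is genuinely different from the paper's. The paper parametrizes exterior points by the excess $\eta>0$ from Eq.~\eqref{eq:q2p2eta}, applies the implicit function theorem near the unperturbed turning point $q_{1,\min}^{0,\eta}$ for fixed $\eta$, and then does a separate leading-order expansion of $\bar q_1 = q_{1,\min}^{\epsilon,\eta}-q_1^w = -\eta/V_1'(q_1^w)+\cdots$ to verify that this turning point remains on the correct side of the wall as $\eta\to 0^+$. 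Your replacement --- strict monotonicity of $q_1\mapsto V_1(q_1)+\epsilon V_c(q_1,q_2)$ on the admissible interval --- subsumes that $\eta\to 0^+$ expansion in one stroke: any target value strictly below the endpoint value $V_1(q_1^w)+\epsilon V_c(q_1^w,q_2)$ is automatically hit at a unique $q_1$ strictly greater than $q_1^w$. That is cleaner.

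Two small points deserve more care, though. First, strict monotonicity is claimed on $[q_1^w,0)$ uniformly in $q_2$ for small $\epsilon$, but $V_1'(q_1)\to 0$ as $q_1\to 0^-$, so $V_1'+\epsilon\partial_{q_1}V_c$ need not stay negative right up to $q_1=0$: the right endpoint of the monotone interval actually shifts to some $q_1^*(q_2;\epsilon)=\mathcal{O}(\epsilon)$, which is precisely the locus $\dot p_1 = 0$ that bounds $\Sigma_E^>$. What saves you is the \emph{convexity} of $V_1$ (assumed in Definition~\ref{def:wallsys}): $V_1''+\epsilon\partial_{q_1}^2 V_c>0$ guarantees $\dot p_1$ vanishes at most once, so the admissible set $\{\dot p_1>0\}$ is still an interval on which $V_1+\epsilon V_c$ is strictly decreasing. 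You should cite convexity here rather than uniform continuity, which by itself does not give you a sign on the derivative near $q_1=0$. Second, the obstacle you flag at the end --- needing the two charts of Lemma~\ref{lem:tangentcurve} near the $(q_2,p_2)$-turning points --- is a red herring for this lemma: those charts are only for writing the tangency circle as an $(I,\theta)$-graph; your $q_1$-inversion is done pointwise in $(q_2,p_2)$ and does not touch the action-angle parametrization at all.
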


\begin{proof}
A trajectory impacts if and only if it has sufficient kinetic energy in the $q_1$ direction to reach the wall $q_1=q_1^w$. In other words, for a given energy $E$, for a given $(q_2,p_2)$, impact happens iff:
\begin{equation}
\frac{p_1^2}{2}\overset{at\ wall}{=}E-V_1(q_1^w)-\frac{p_2^2}{2}-V_2(q_2)-\epsilon V_c(q_1^w,q_2)\geq 0
\end{equation}
Or equivalently
\begin{equation}\label{eq:insidecurveq2p2}
\frac{p_2^2}{2}+V_2(q_2)+\epsilon V_c(q_1^w,q_2)\leq E-V_1(q_1^w)
\end{equation}
In this case, $z^w=(q_1^w,p_{1}^{w}=-p^{\epsilon}_1(q_{2},p_{2},E)=-\sqrt{2(E-V_1(q_1^w)-\frac{p_2^2}{2}-V_2(q_2)-\epsilon V_c(q_1^w,q_2))},q_2,p_2)\in\Sigma_{E}^{w}$  is the unique intersection point with $\Sigma_E^*$, and the case of equality in (\ref{eq:insidecurveq2p2}) sets $p_1=0$, so for this case only, this point also belongs to $\Sigma_{E}^{>}$. It follows from (\ref{eq:insidecurveq2p2}) that in the $(q_2,p_2)$ plane such points are inside (and respectively, on) the iso-energy tangency curve.

On the other hand,  consider a $(q_2,p_2)$ point which is  outside the tangency curve, namely, there exists some \(\eta>0\) such that:
\begin{equation}\label{eq:q2p2eta}
 \frac{p_2^2}{2}+V_2(q_2)+\epsilon V_c(q_1^w,q_2)=E-V_1(q_1^w)+\eta .
\end{equation}We need to show that for such a point there exist a unique \(q_{1,min}^{\epsilon,\eta}\in(q_{1}^{w},0)\) such that  $(q_{1,min}^{\epsilon,\eta},p_1=0,q_2,p_2)\in\Sigma_{E}^{>}$. By the S3B assumption, for any \(\eta\in(0,V_{1}(q_1^{w}))\)
there exists   \(q_{1,min}^{0,\eta}\in( q_{1}^{w},0)\)  such that
\(
E-V_1(q_{1,min}^{0,\eta})=E-V_1(q_1^w)+\eta
\), i.e. \(V_1(q_{1,min}^{0,\eta})=V_1(q_1^w)-\eta\)
and  \(V_1'(q_{1,min}^{0,\eta})<0\). Thus, by the inverse function theorem, for $\epsilon$ sufficiently small, there exists a unique $q_{1,min}^{\epsilon,\eta}=q_{1,min}^{0,\eta}+O(\epsilon)$ for which:
\begin{equation}\label{eq:q1etaeps}
\frac{p_2^2}{2}+V_2(q_2)+\epsilon V_c(q_{1,min}^{\epsilon,\eta},q_2)=E-V_1(q_{1,min}^{\epsilon,\eta}).
\end{equation}For any fixed \(\eta>0 \), taking \(\epsilon=o(\eta)\) sufficiently small, we conclude that    \(q_{1,min}^{\epsilon,\eta}\in(q_{1}^{w},0)\)  and indeed $(q_{1,min}^{\epsilon,\eta},p_1=0,q_2,p_2)\in\Sigma_{E}^{>}$.  Now, we need to show that for \((q_{2},p_{2)})\) values which satisfy (\ref{eq:q2p2eta}) with arbitrary small positive  \(\eta\) the same conclusion applies, namely that the solution to (\ref{eq:q1etaeps}) satisfies \(q_{1,min}^{\epsilon,\eta}\in(q_{1}^{w},0)\). Indeed, subtracting the two equations we obtain \begin{equation}
 \eta(q_{1,min}^{\epsilon,\eta},q_{2})=V_1(q_1^w)-V_1(q_{1,min}^{\epsilon,\eta})+\epsilon( V_c(q_1^w,q_2)- V_c(q_{1,min}^{\epsilon,\eta},q_2))
\end{equation}
so \(\eta\) can vanish only if \(V_{1}'(q_1^w)=O(\epsilon)\)
which is false by the S3B assumption and the assumption that \(q_{1}^w<0\). Since for small \(\eta=\eta(\epsilon)\),   \(q_{1,min}^{0,\eta}=q_1^w+\frac{\eta }{|V_1'(q_1^w)|}+O(\eta^2)\), we conclude that  \( q_{1,min}^{\epsilon,\eta}=
q_1^w+\bar q_{1}\) where \( \bar q_{1}=O(\epsilon,\eta)\). Plugging in this expression in  (\ref{eq:q1etaeps}) and using (\ref{eq:q2p2eta}), we find that for arbitrary small positive \(\eta\), the next order term,  \( \bar q_{1}\), must be positive: \begin{equation}
\bar q_{1}=\frac{-\eta }{V_1'(q_1^w)}+O_{C^{r-1}}(\epsilon\bar q_{1}^{2})+O_{C^{r-1}}(\bar q_{1}^{2})+O_{C^{r}}(\epsilon\eta)>0
\end{equation}
so indeed, the point $z=(q_{1,min}^{\epsilon,\eta},p_1=0,q_2,p_2)\in\Sigma_{E}^{>}$.
\end{proof}
\subsection{Forward and backward flow from the near wall section.}

 We establish next that   \(\Sigma_{E}^* \) is crossed exactly once before returning to   \(\Sigma_{E}\) and that the crossing times are well defined and have a continuous limit as \(\epsilon\rightarrow0\):
   \begin{lem}\label{lem:sigstarcross}
For all \(E>0\) and sufficiently small $\epsilon$, the wall flow of an initial condition  \(z_{0}\in\Sigma_{E}\)   crosses the star section exactly once before returning to  $\Sigma_{E}$: for any \(z\in\Sigma_{E}\) there exists a unique star-crossing  time,  \(t^{-,\epsilon}(z_{0})\), such that \(\Phi^{\epsilon,im}_{t^{-,\epsilon}(z_0)}z_{0}\in\Sigma_{E}^* \) and a unique return time,     \(\tilde{T}_1^{\epsilon }(z_0)=t^{-,\epsilon}(z_0)+t^{+,\epsilon}(z_0)\) such that
\(\Phi^{\epsilon,im}_{\tilde{T}_1^{\epsilon}(z_0)}z_{0}\in\Sigma_E\). Moreover, \(\lim _{\epsilon\rightarrow0}t^{\pm,\epsilon}(z_0)=\frac{1}{2}\tilde{T}_1(J(I(q_{20},p_{20}),E))\).  \end{lem}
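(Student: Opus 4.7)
The plan is to reduce to the separable unperturbed case and then extend by transversality and Gronwall-type continuous dependence. At $\epsilon=0$, the $(q_1,p_1)$-dynamics decouples with the partial energy $E_1=E-H_2(I(q_{20},p_{20}))$ conserved, i.e.\ $J=H_1^{-1}(E_1)$ and the orbit in $(q_1,p_1)$ is $\tilde T_1(E_1)$-periodic and time-reversal symmetric about its midpoint. Starting on $\Sigma_E$ (at $q_1=q_{\max}(E_1)>0$, $p_1=0$, $\dot p_1<0$) the orbit touches $\Sigma_E^{*}$ exactly once, at the midpoint of the period: on $\Sigma_E^{w}$ at the wall impact if $E_1>V_1(q_1^{w})$, on $\Sigma_E^{>}$ at the inner turning point $q_{\min}(E_1)>q_1^{w}$ if $E_1<V_1(q_1^{w})$, and at the corner where these two pieces meet in the tangent case $E_1=V_1(q_1^{w})$. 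Time-reversal symmetry then yields $t^{-,0}=t^{+,0}=\tfrac12\tilde T_1(E_1)$, matching the claimed limit.

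For small $\epsilon>0$ I would apply a standard Gronwall estimate, combined with the fact that the elastic reflection $\mathcal R_1$ is $\epsilon$-independent, to obtain $C^{r}$-closeness of $\Phi_t^{\epsilon,im}$ to $\Phi_t^{0,im}$ on a bounded interval of length $\approx 2\tilde T_1(E_1)$. Each unperturbed crossing is transverse to its smooth piece of $\Sigma_E^{*}$ and to $\Sigma_E$: at a wall impact $\dot q_1=p_1\neq 0$, at an inner turning point $\ddot q_1=-V_1'(q_{\min})\neq 0$, and at the return to $\Sigma_E$ one has $\dot p_1=-V_1'(q_{\max})\neq 0$, all nonzero by the S3B convexity assumption. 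The implicit function theorem then produces unique nearby crossing and return times $t^{-,\epsilon}(z_0)$, $\tilde T_1^{\epsilon}(z_0)$ depending $C^{r}$-smoothly on $(\epsilon,z_0)$, whose $\epsilon\to 0$ limits are those of the unperturbed flow. Uniqueness along the entire orbit segment follows because unperturbed $q_1(t)$ is monotone on each half-period, so $\dot q_1$ has only two sign changes per period, a structural feature preserved under small perturbations; this rules out any second intersection with $\Sigma_E^{>}$ or $\Sigma_E^{w}$ between departure from and return to $\Sigma_E$.

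The delicate point, which I expect to be the main obstacle, is the tangent case $E_1=V_1(q_1^{w})$, where $\Sigma_E^{*}$ has a corner and the individual transversality conditions to $\Sigma_E^{w}$ and to $\Sigma_E^{>}$ degenerate simultaneously. I would treat $\Sigma_E^{*}$ as a single piecewise-smooth hypersurface whose tangent cone at the corner still excludes the flow direction, so the unperturbed orbit crosses $\Sigma_E^{*}$ transversely in the generalized sense; this property persists for small $\epsilon$. Lemma \ref{lem:divideimpwall} then classifies, for every perturbed initial condition, whether the first crossing lies on $\Sigma_E^{w}$ or on $\Sigma_E^{>}$ according to position relative to $\sigma_{wall}^{\epsilon}(E)$ (provided by Lemma \ref{lem:tangentcurve}), and the corresponding crossing time inherits continuity at $\epsilon=0$ because the two expressions coincide at the corner. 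Gluing the non-tangent and near-tangent regimes yields the uniform existence, uniqueness, and limit $t^{\pm,\epsilon}(z_0)\to \tfrac12\tilde T_1(E_1)$ claimed by the lemma.
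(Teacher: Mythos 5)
Your proposal identifies the delicate point correctly, but the way you propose to handle it is exactly where the argument as written breaks down, and the paper's proof takes a different and much shorter route that is automatically uniform. The paper does not perturb off the $\epsilon=0$ flow via Gronwall plus IFT on $\Sigma_E^*$; it argues directly on the perturbed trajectory using monotonicity in $q_1$: starting from $\Sigma_E$ with $\dot p_1<0$, the coordinate $q_1$ is strictly decreasing (since $\dot q_1=p_1<0$) until either $p_1=0$ (a crossing of $\Sigma_E^{>}$, possibly at $q_1=q_1^w$) or $q_1=q_1^w$ (a crossing of $\Sigma_E^{w}$), whichever comes first, which yields a unique first star-crossing time with no transversality hypothesis at all. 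After crossing, and reflecting if necessary, $\dot p_1>0$ strictly on $\Sigma_E^*$ (because $q_1\le 0$ there and $V_1'<0$ on the left half-line), so $p_1$ becomes positive and the trajectory returns to $\Sigma_E$ at a unique time. This is uniform in $z_0$ and in $\epsilon$ and treats the corner of $\Sigma_E^*$ automatically; the limit claim then follows from smooth dependence of the flow on $\epsilon$ between impacts and the $p_1\mapsto -p_1$ symmetry of the unperturbed wall flow.

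Two of your steps would not survive scrutiny without being replaced by something like the above. First, your IFT argument needs transversality of the unperturbed crossing of $\Sigma_E^*$; for $\Sigma_E^{w}$ this is $\dot q_1=p_1\neq 0$, a quantity that tends continuously to zero as $z_0$ approaches the tangency. The $\epsilon$-threshold produced by the IFT therefore shrinks to zero along the singularity curve, so the quantifier ``for sufficiently small $\epsilon$, for all $z_0\in\Sigma_E$'' is not obtained, and the remark about generalized transversality of the tangent cone at the corner names the problem without resolving it. Second, the claimed $C^r$-closeness of $\Phi_t^{\epsilon,im}$ to $\Phi_t^{0,im}$ via Gronwall fails near the singularity curve: the impact flow is $C^0$ but not $C^1$ in initial conditions there, acquiring a square-root singularity that is precisely the subject of Section 4 of the paper, so one cannot deduce crossing times by a uniform $C^r$ perturbation of the impact flow. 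The structural observation you make at the end, that $\dot q_1$ changes sign only twice per orbit segment and that this persists under perturbation, is the correct one; the fix is to apply it directly to the perturbed flow rather than derive it by perturbation from the unperturbed one, which is exactly what the paper does.
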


\begin{proof} Denote by $z=(q_{10},p_{10}=0,q_{20},p_{20})\in\Sigma_{E}$  where  \(q_{10}>0\). Then, since \(\dot q_{1}=p_1, \  \dot p_{1}|_{\Sigma_{E}}=-V_1'(q_{1max}(E))+O(\epsilon)<0\), \(q_1\) decreases until either \(p_1\) changes its sign (so   it crosses \(\Sigma^{>}\), possibly at \(q_1^w\)) or it reaches (possibly tangentially) the wall, crossing \(\Sigma^w\). Namely, there exists a unique first crossing time of the star section, \(t^{-,\epsilon}(z_0)\).  In the first case the trajectory continues without impact whereas in the second case it impacts the wall, so \(p_{1}(t^{-,\epsilon}(z_0))\rightarrow-p_1(t^{-,\epsilon}(z_0))\geq0\). In both cases, just after crossing \(\Sigma^*_E\), the horizontal velocity strictly increases and is/becomes positive (since either   \(\dot p_{1}|_{\Sigma^{>}_{E}}=-V_1'(q_{1min}(E))+O(\epsilon)>0\) or    \(\dot p_{1}|_{\Sigma^{w}_{E}}=-V_1'(q^{wall}_{1})+O(\epsilon)>0\)). This trend reverses once \(q_{1}\) becomes positive, leading to the return  to  \(\Sigma_{E}\) at a unique return time  \(\tilde{T}_1^{\epsilon}(z_0)=t^{-,\epsilon}(z_0)+t^{+,\epsilon}(z_0)\). The limit of the return time follows from the smooth dependence of the flow on \(\epsilon\) in between impacts and the symmetry in \(p_{1}\) of the unperturbed wall flow.\end{proof}

 The flow restricted to i.c. which cross through the by-pass section, and in particular, to the i.c. belonging to the singularity curve, is identical to the smooth perturbed flow which is \(C^{r}\) \(\epsilon-\)close to the unperturbed flow. On the other hand, the dependence of the return times on initial conditions is non-smooth near tangency for impacting initial conditions, i.e. for i.c. that cross \(\Sigma^w\). In section \ref{subsec:pert-return} we derive the expansion for the non-smooth part.
The tangency curve, by definition, belongs to the common boundary of   \(\Sigma_{E}^{>},\Sigma^w_{E}\), namely, \(\Xi_{wall}^{\epsilon}(E)\subset(\Sigma_{E}^{>}\cap\Sigma_{E}^w)\subset\Sigma_{E}^*\) and thus: \begin{lem}\label{lem:tancurvetwist}
For  \(E>V_1(q_1^w)\) and for sufficiently small $\epsilon$, the  iso-energy singularity curve of the wall system can be represented as a \(C^{r }\) smooth graph in the unperturbed action-angle coordinates: $\sigma^{\epsilon}(E)=\{(\theta,I_{tan}^{\epsilon}(\theta;E)),\theta\in[-\pi,\pi]\}$.\end{lem}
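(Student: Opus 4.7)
The plan is to obtain $\sigma^{\epsilon}(E)$ by flowing the wall tangency curve $\sigma_{wall}^{\epsilon}(E) \subset \Sigma_E^*$ (from Lemma \ref{lem:tangentcurve}) backward in time to $\Sigma_E$, and then to transfer the smooth graph property through this backward map using $C^r$ closeness to the unperturbed picture. The first observation I would make is that every point on $\Xi_{wall}^{\epsilon}(E)$ sits at $p_1=0$, $q_1=q_1^w$, and lies in the common boundary $\Sigma_E^{>}\cap\Sigma_E^{w}$; going backward in time from such a grazing point the horizontal momentum becomes strictly positive (since $\dot{p}_1|_{\Sigma_E^{>}}>0$) and the trajectory immediately enters the open half-plane $q_1>q_1^w$, so no further wall collision occurs on the segment back to $\Sigma_E$. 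Consequently, the wall flow $\Phi_t^{\epsilon,im}$ coincides with the smooth auxiliary flow $\Phi_t^{\epsilon}$ along all these trajectories.

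Next, using Lemma \ref{lem:sigstarcross} the star-crossing time $t^{-,\epsilon}$ is well defined and $C^r$ smooth in a neighborhood of $\sigma_{wall}^{\epsilon}(E)$ by the implicit function theorem applied to the defining condition for $\Sigma_E^*$ (noting that at grazing points $\dot{p}_1$ is bounded away from zero). Hence the backward-image map $\Psi^{\epsilon}:=\Phi_{-t^{-,\epsilon}(\cdot)}^{\epsilon}:\Sigma_E^*\to\Sigma_E$, restricted to a neighborhood of $\sigma_{wall}^{\epsilon}(E)$, is $C^r$ smooth and depends $C^r$ smoothly on $\epsilon$. Composing $\Psi^{\epsilon}$ with the $C^r$ parametrization of $\sigma_{wall}^{\epsilon}(E)$ supplied by Lemma \ref{lem:tangentcurve} and then with the action-angle chart $S_2$ yields $\sigma^{\epsilon}(E)$ as a $C^r$ parametric curve $\{(\theta_{tan}^{\epsilon}(s),I_{tan}^{\epsilon}(s))\}_{s\in S^1}$.

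To promote this parametric curve to a graph $I=I_{tan}^{\epsilon}(\theta;E)$, I would exploit the unperturbed reference picture: at $\epsilon=0$ the flow preserves $I$ exactly, so $\Psi^{0}$ sends the constant circle $\{I=I_{tan}(E)\}\subset\Sigma_E^*$ to itself, reparametrizing the angle by the rigid shift $-\tfrac{1}{2}\omega_2(I_{tan}(E))\cdot T_1(V_1(q_1^w))$. Thus $\sigma^{0}(E)=\{(\theta,I_{tan}(E))\}$ is a graph with $dI/d\theta\equiv 0$ and $d\theta_{tan}^{0}/ds$ uniformly bounded away from zero. By the $C^r$-$\mathcal{O}(\epsilon)$ estimates on $I_{tan,\Sigma_E^*}^{\epsilon}-I_{tan}(E)$ from Lemma \ref{lem:tangentcurve} combined with the $C^r$ smoothness of $\Psi^{\epsilon}$ in $\epsilon$, I obtain $I_{tan}^{\epsilon}(s)-I_{tan}(E)=\mathcal{O}_{C^r}(\epsilon)$ and $d\theta_{tan}^{\epsilon}/ds=d\theta_{tan}^{0}/ds+\mathcal{O}_{C^{r-1}}(\epsilon)$. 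The latter stays bounded away from zero for sufficiently small $\epsilon$, so the parametrization $s\mapsto\theta_{tan}^{\epsilon}(s)$ is a $C^r$ diffeomorphism onto $[-\pi,\pi]$, and inverting it expresses $I_{tan}^{\epsilon}$ as a $C^r$ function of $\theta$.

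The main technical nuisance, rather than a genuine obstacle, is the two-chart structure used in Lemma \ref{lem:tangentcurve} to describe $\sigma_{wall}^{\epsilon}(E)$ (one chart away from, and one near, the $q_2$-turning points). I would verify that the turning points on $\Sigma_E^*$ are mapped by $\Psi^{\epsilon}$ to regular points on $\Sigma_E$ where $d\theta_{tan}^{0}/ds\neq 0$, so that after pullback the two charts glue together into a single $C^r$ graph $I_{tan}^{\epsilon}(\theta;E)$ over all of $[-\pi,\pi]$; this is a routine check since the unperturbed flow preserves $I$ and shifts $\theta$ by a smooth, nonzero amount uniformly along $\sigma_{wall}^{0}(E)$.
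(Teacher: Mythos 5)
Your proposal follows essentially the same route as the paper's proof: flow the wall tangency circle $\Xi_{wall}^{\epsilon}(E)$ backward along the smooth auxiliary flow to $\Sigma_E$ (noting that grazing trajectories coincide with the smooth flow), invoke Lemma \ref{lem:sigstarcross} and smooth dependence on initial conditions and $\epsilon$ to get a $C^r$ parametric curve, and then show the angle parametrization is a $C^r$ diffeomorphism (equivalently, that $d\theta'/d\theta = 1 + \mathcal{O}_{C^{r-1}}(\epsilon)$, using the $\mathcal{O}_{C^{r-1}}(\epsilon)$ bound on $dI_{tan,\Sigma_E^*}^{\epsilon}/d\theta$ from Lemma \ref{lem:tangentcurve} together with boundedness of the twist). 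The paper makes this last step explicit via the backward-evolution formula in action-angle coordinates, while you phrase it as a perturbation of $d\theta_{tan}^0/ds$; these are the same computation in slightly different notation, and your remark on gluing the two charts of Lemma \ref{lem:tangentcurve} is consistent with what the paper does there.
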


\begin{proof}
First, notice that the singularity curve  \(\sigma^{\epsilon}(E)\) is the projection to the \((q_{2},p_2)\) plane of the backwards flow of   \(\Xi_{wall}^{\epsilon}(E)\) to \(\Sigma_{E}\).  Indeed, denote by $z_{0}=(q_{10},p_{10}=0,q_{20},p_{20})\in\Xi^{\epsilon}(E)\subset\Sigma_{E}$ an initial condition with \((q_{20},p_{20})\) belonging to \(\sigma^{\epsilon}(E)\) (since \(\Sigma_{E}\) is a transverse section \(q_{10}=q_{10}(q_{20},p_{20},E)\) is uniquely defined). Then,  by lemma \ref{lem:sigstarcross}, there exists a unique crossing time of   $t^{-,\epsilon}(z_0)$ of \(\Sigma^*_E\), and, by definition of the singularity curve the crossing of  \(\Sigma^*_E\) must be tangent to the wall, so indeed   \(z(t^{-,\epsilon}(z_0))\in\Xi_{wall}^{\epsilon}(E)\). Conversely, let  \(z^{w}= (q_1^{w},p^{w}_1=0,q_2,p_2=p_2^{tan,\pm,\epsilon}(q_2;H))\in\Xi_{wall}^{\epsilon}(E)\). Then, integrating backwards the smooth flow from this point, and noticing, as in lemma  \ref{lem:sigstarcross}, that     \(\dot p_{1}|_{\Sigma^{w}_{E}}=-V_1'(q^{wall}_{1})+O(\epsilon)>0\) (since \(q_{1}^{wall}<0\)), we obtain that there exist a unique first backward crossing time of the section  \(\Sigma_{E}\). By uniqueness of solutions we conclude  that for every \(z^{w}\in\Xi_{wall}^{\epsilon}(E)\) there exists a unique  \(z_{0}(z_w)\in\Xi ^{\epsilon}(E)\) such that  \( z^{w}=\Phi^\epsilon_{ t^{-,\epsilon}(z_0)}z_{0}\)  and thus \( z_{0}=\Phi^\epsilon_{-t^{-,\epsilon}(z_0)}z^{w}\). Since by lemma \ref{lem:tangentcurve} the tangency curve \(\Xi_{wall}^{\epsilon}(E)\) is a smooth circle, and the backward crossing time  depends smoothly on   i.c. belonging to \(\Xi_{wall}^{\epsilon}(E)\), its pre-image on   \(\Sigma_{E}\) by the smooth diffeomorphism is also a smooth circle.

Using the unperturbed transformation to action angle coordinates, the backward evolution of \(\Xi_{wall}^{\epsilon}(E)\)  from  \(\Sigma_{E}^{>}\) to the section $\Sigma_{E}$ is represented, in the \((q_{2},p_2)\) plane by the map  $\{(\theta ,I_{tan,\Sigma^*_E}^{\epsilon}(\theta)),\theta\in[-\pi,\pi]\}\rightarrow\{(\theta'(\theta;E),I'(\theta;E))|\theta\in[-\pi,\pi]\}=\sigma^{\epsilon}(E)\subset\Sigma_{E}$, where:
\begin{equation}\label{eq:tanbackwardsmap}
\begin{cases}
I'(\theta;E)-I_{tan,\Sigma^*_E}^{\epsilon}(\theta)& =\epsilon f^{-}(I_{tan,\Sigma^*_E}^{\epsilon}(\theta),\theta;\epsilon) \\
\theta'(\theta;E)-\theta&=-\frac{1}{2}\cdot T_1(E,I_{tan,\Sigma^*_E}^{\epsilon}(\theta))\cdot \omega_2(I_{tan,\Sigma^*_E}^{\epsilon}(\theta))+\epsilon g^{-}(I_{tan,\Sigma^*_E}^{\epsilon}(\theta),\theta;\epsilon).
\end{cases}
\end{equation}
The right hand sides correspond  to the backward integrals of the perturbed smooth vector field \((\frac{dI}{d(-t)} =-\epsilon\{I,V_{c}\},\frac{d\theta}{d(-t)}=-\{\theta,H\}=-\omega_2(I)-\epsilon\{\theta,V_{c}\})\) along the smooth perturbed trajectory. Since tangent trajectories do not impact,  their evolution under the perturbed smooth flow is \(C^{r}\)-close to the  integrable motion, so the right hand side in (\ref{eq:tanbackwardsmap}) and the  $f^{-},g^{-}$ perturbation terms are smooth in $\theta$ and \(\epsilon \) and \(f^{-}\) may be found to leading order by a Melnikov-type calculation (see section \ref{sec:returnmap}).
Thus, the singularity curve is given parametrically by \(\sigma^{\epsilon}(E)=\{(\theta'(\theta;E),I'(\theta;E))|\theta\in[-\pi,\pi]\}\). To complete the proof we need to show that it is a graph in the  angle coordinate, \(\theta'\), namely, we need to show that    \(\frac{d\theta'}{d\theta}\neq0\). Now,
\begin{equation}\label{eq:dthetpdthet1}\begin{split}
\frac{d\theta'}{d\theta}&=1-\frac{1}{2}\frac{d}{d\theta}\left(T_1(I_{tan,\Sigma^*_E}^{\epsilon}(\theta))\cdot \omega_2(I_{tan,\Sigma_1}^{\epsilon}(\theta))\right)+\epsilon\frac{dg^{-}}{d\theta}(I_{tan,\Sigma^*_E}^{\epsilon}(\theta),\theta;\epsilon) \\
&=1-\frac{1}{2}\tau(I_{tan,\Sigma^*_E}^{\epsilon}(\theta))\frac{dI_{tan,\Sigma^*_E}^{\epsilon}(\theta)}{d\theta}+\epsilon\frac{dg^{-}}{d\theta}(I_{tan,\Sigma^*_E}^{\epsilon}(\theta),\theta;\epsilon)
\end{split}
\end{equation}
 where $\tau(I)$ is the twist (see Eq. (\ref{eq:twist})) of the return map for the smooth Hamiltonian system. By Lemma \ref{lem:tangentcurve} (and in particular, as it implies that \(\frac{dI_{tan,\Sigma^*_E}^{\epsilon}(\theta)}{d\theta}=O_{C^{r-1}}(\epsilon)\) ), and since the unperturbed twist, \(\tau(I)\), is bounded, for sufficiently small \(\epsilon\), \(\frac{d\theta'}{d\theta}=1+O_{C^{r-1}}(\epsilon)\),  so the parametric curve \((\theta'(\theta;E),I'(\theta;E))  \) can be represented as the graph \((\theta',I_{tan}^{\epsilon}(\theta';E))\) where:\begin{equation}\begin{split}
I_{tan}^{\epsilon}(\theta';E)&=I_{tan,\Sigma^*_E}^{\epsilon}(\theta(\theta',E))+\epsilon f^{-}(I_{tan}(E),\theta(\theta',E);\epsilon)+O(\epsilon^{2}),\\ \theta&(\theta',E)=\theta'-\frac{1}{2}\cdot T_1(E,I_{tan}(E))\cdot \omega_2(I_{tan}(E)).\end{split}
\end{equation}
\end{proof}

Recall that by lemma \ref{lem:divideimpwall}, on $\Sigma^*_E$, the $(q_2,p_2)$ values inside \(\sigma_{wall}^{\epsilon}(E)\) represent impacting trajectories which reach the wall and are on $\Sigma_{E}^{w}$, whereas $(q_2,p_2)$ values outside the iso-energy tangency curve represent non-impacting trajectories crossing $\Sigma_{E}^{>}$ at $q_1$ value which is greater than $q_1^w$. The tangency is the singular curve dividing between the two different types of trajectories.
Using the unperturbed action-angle coordinates to parametrize $\Sigma^*_E$,  by (\ref{eq:insidecurveq2p2}), for impacting trajectories
\begin{equation}
H_2(I)+\epsilon V_c(q_1^w,q_2(I,\theta))\leq H-V_1(q_1^w)
\end{equation}
Hence on $\Sigma^*_E$ non-impacting trajectories are given by $(I,\theta)$ such that $I>I_{tan,\Sigma^*_E}^{\epsilon}(\theta)$ and impacting trajectories by $I<I_{tan,\Sigma^*_E}^{\epsilon}(\theta)$.
Since the backward evolution of the tangency curve \(\sigma_{wall}^{\epsilon}(E)\) under the backwards perturbed flow from $\Sigma_1$ to $\Sigma$ is the singularity curve \(\sigma^{\epsilon}(E)\) we conclude:

\begin{lem}\label{lem:tancurveseparates}
The singularity curve separates between impacting and non-impacting initial conditions on $\Sigma_{E}$.
\end{lem}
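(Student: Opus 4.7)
The plan is to pull back the division of $\Pi\Sigma^*_E$ by the wall tangency curve $\sigma^\epsilon_{wall}(E)$ (established in Lemma \ref{lem:divideimpwall}) to a division of $\Sigma_E$ by the singularity curve $\sigma^\epsilon(E)$, using the first star-crossing map supplied by Lemma \ref{lem:sigstarcross}. The three preceding lemmas of this section do essentially all the work; what remains is a short continuity and connectedness argument.

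Concretely, for sufficiently small $\epsilon$ I would define $\Psi^\epsilon : \Sigma_E \to \Pi\Sigma^*_E$ by $\Psi^\epsilon(z_0) = \Pi\Phi^{\epsilon,im}_{t^{-,\epsilon}(z_0)} z_0$, where $t^{-,\epsilon}(z_0)$ is the unique first star-crossing time of Lemma \ref{lem:sigstarcross}. Since no impact occurs strictly before $t^{-,\epsilon}(z_0)$, on this time interval $\Phi^{\epsilon,im}_t$ coincides with the smooth auxiliary flow $\Phi^\epsilon_t$, so $\Psi^\epsilon$ is continuous on $\Sigma_E$. By Lemma \ref{lem:divideimpwall}, $z_0$ produces an impact within one return iff $\Phi^{\epsilon,im}_{t^{-,\epsilon}(z_0)} z_0 \in \Sigma^w_E$, iff $\Psi^\epsilon(z_0)$ lies strictly inside $\sigma^\epsilon_{wall}(E)$, and avoids the wall iff $\Psi^\epsilon(z_0)$ lies strictly outside. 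By the construction in Lemma \ref{lem:tancurvetwist}, $\sigma^\epsilon(E) = \{z_0 \in \Sigma_E : \Psi^\epsilon(z_0) \in \sigma^\epsilon_{wall}(E)\}$ is a $C^r$ dividing-circle graph, so it splits $\Sigma_E$ into exactly two connected open components. Continuity of $\Psi^\epsilon$ then forces the preimages of the open interior and open exterior of $\sigma^\epsilon_{wall}(E)$ to coincide with these two components, giving the desired separation.

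To pin down the orientation (impacting below, non-impacting above), I would invoke continuity in $\epsilon$. At $\epsilon = 0$ the singularity curve is the horizontal circle $I = I_{tan}(E)$, below which $\frac{1}{2}p_1^2|_{q_1 = q_1^w} > 0$ and trajectories impact. Since $I^\epsilon_{tan}(\theta) = I_{tan}(E) + O_{C^r}(\epsilon)$ and $\Psi^\epsilon$ varies continuously in $\epsilon$, the assignment of the two complementary components to the impacting and non-impacting regions persists for all sufficiently small $\epsilon$. The only delicate point is continuity of $\Psi^\epsilon$ at initial conditions whose first star-crossing happens tangentially, i.e. on $\sigma^\epsilon_{wall}(E) \subset \Sigma^>_E \cap \Sigma^w_E$, where the impacting and non-impacting branches meet; this is immediate from the uniform definition of $t^{-,\epsilon}$ as the first time to reach $\Sigma^*_E = \Sigma^>_E \cup \Sigma^w_E$, together with $p_1 = 0$ on the common boundary, so both branches share the same limit at the singularity. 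No new estimates are required for the present lemma; the quantitative description of $I^\epsilon_{tan}(\theta)$ is developed in Section \ref{sec:returnmap}.
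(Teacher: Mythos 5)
Correct, and essentially the same argument as the paper: both transfer the division of $\Sigma^*_E$ by $\sigma^\epsilon_{wall}$ (Lemma \ref{lem:divideimpwall}) back to $\Sigma_E$ through the one-to-one flow correspondence of Lemma \ref{lem:sigstarcross}. The paper phrases the transfer as orientation preservation of the backward smooth flow from $\Sigma^w_E$ and $\Sigma^>_E$, while you phrase it as continuity of the forward first-star-crossing map $\Psi^\epsilon$ combined with the two-component connectedness of the complement of the dividing circle $\sigma^\epsilon(E)$, which is the same topological fact.
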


\begin{proof}
Consider separately the backward evolution of iso-energy trajectories from $\Sigma_{E}^{w}$ and from $\Sigma_{E}^{>}$ to $\Sigma_{E}$. The backward Hamiltonian flow is orientation preserving, hence the backward image of trajectories in $\Sigma_{E}^w$ (respectively $\Sigma_{E}^{>}$) are mapped to the interior (respectively exterior) of the pre-image of their boundary, i.e. the tangency curve.
\end{proof}

  Let $\rho(z)=I_{tan}^{\epsilon}(\theta)-I$ denote the action-signed-distance of an i.c.   \(z^{\theta,\rho,\epsilon}\in\Sigma_{E}\) from the singularity curve at the angle \(\theta\). By Lemma \ref{lem:tancurvetwist}, since \(I_{tan}^{\epsilon}(\theta)\) is smooth, \(\rho(z)=\rho(\theta,E)\) is well defined and is \(C^{r}\). Hereafter, we parameterize \(z^{\theta,\rho,\epsilon}\in \Sigma_E\) by \((\theta,\rho)\):\begin{equation}\label{eq:notationzerho}
z^{\theta,\rho,\epsilon}:=(q_{1}=q_{1}^{\epsilon}(I_{tan}^{\epsilon}(\theta)-\theta,\rho,E),p_{1}=0,I=I_{tan}^{\epsilon}(\theta)-\theta,\rho),
\end{equation}
so \(H(z^{\theta,\rho,\epsilon})=E\) and \(q_{1}^{\epsilon}(I,\theta,E)=q_{1,max}(E-H_2(I))+O(\epsilon)\) is found by the IFT from Eq. (\ref{eq:hgeneral}). By definition, \( \Pi z^{\theta,0,\epsilon}=(I_{tan}^{\epsilon}(\theta),\theta)\in\sigma^{\epsilon}(E)\) and the unperturbed tangent trajectory is  \(z_{tan}(t,\theta;E)=\Phi_{t}^0 z^{\theta,0,0}\) . By Lemma \ref{lem:tancurveseparates} , for all \(\epsilon\in[0,\epsilon_{1}]\), \(\rho>0\) corresponds to impacting i.c. of \(\mathcal{F}_\epsilon\)  and \(\rho\leqslant0\) correspond to non-impacting i.c. of this map. Notice that  $z^{\theta,\rho,0}=(q_{1,max}(E-H_2(I)),0,I_{tan}(E)-\theta,\rho)$, so, by lemma  \ref{lem:tancurvetwist}, \(z^{\theta,\rho,\epsilon}=z^{\theta,\rho,0}+\mathcal{O}_{C^{r}}(\epsilon)\).

\subsection{The image  of the singularity curve}
\begin{prop}\label{prop:perttan}
For sufficiently small $\epsilon$, the return map to $\Sigma_{E}$ of an initial condition on the  singularity curve,  $(\theta ,I^{\epsilon}_{tan}(\theta))$, is of the form:\begin{equation}\label{eq:tangimageform}\begin{split}
\bar{I}_{tan}^\epsilon(\bar{\theta}_{tan})&=I_{tan}^{\epsilon}(\theta)+\epsilon f(\theta;\epsilon)\equiv\mathfrak{f}_\epsilon(\theta)\\
\bar{\theta}_{tan}&=\theta+\omega_2(I_{tan}(E))\cdot T_1(I_{tan}(E);H)+\epsilon g(I_{tan}^{\epsilon}(\theta),\theta;\epsilon)
\end{split}\end{equation}
where $\bar{I}_{tan}^\epsilon(\cdot)$ is the forward image of the tangency curve on $\Sigma_{E}$, $\bar{\theta}_{tan}$ is the forward image of $\theta$,  \(f,g \) are \(C^{r}\) smooth in \(\theta,\epsilon\) and \begin{equation}\label{eq:deffoftheta}
f(\theta;\epsilon)=\int_0^{{T}_1^{\epsilon}(z^{\theta,0,\epsilon})}\{I,V_c\}\mid_{\Phi_{t}^\epsilon z^{\theta,0,\epsilon}}dt.
\end{equation}

\end{prop}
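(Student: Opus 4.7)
The plan is to exploit the fact that an initial condition on the singularity curve produces a trajectory that only touches the wall tangentially (with $p_1 = 0$ at the wall) and therefore evolves entirely under the auxiliary smooth perturbed flow $\Phi_t^{\epsilon}$, with no momentum reversal. For such initial conditions $\Phi_t^{\epsilon,im} z^{\theta,0,\epsilon} = \Phi_t^{\epsilon} z^{\theta,0,\epsilon}$ on the whole return interval, so the image of the singularity curve can be computed by integrating the smooth Hamilton equations from $t = 0$ to the first return time $T_1^{\epsilon}(z^{\theta,0,\epsilon})$ to $\Sigma_E$ established in Lemma \ref{lem:sigstarcross}.

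In the unperturbed action-angle coordinates $(I,\theta) = S_2(q_2, p_2)$ of $H_2$, and using $\{I, H_{int}\} = 0$, $\{\theta, H_{int}\} = \omega_2(I)$, the equations of motion under $H = H_{int} + \epsilon V_c$ read
\begin{equation}
\dot I = \epsilon \{I, V_c\}, \qquad \dot \theta = \omega_2(I) + \epsilon \{\theta, V_c\}.
\end{equation}
Integrating the first equation along $\Phi_t^{\epsilon} z^{\theta,0,\epsilon}$ from $0$ to $T_1^{\epsilon}(z^{\theta,0,\epsilon})$ produces directly $\bar I_{tan}^{\epsilon}(\bar \theta_{tan}) - I_{tan}^{\epsilon}(\theta) = \epsilon f(\theta;\epsilon)$ with $f$ given by (\ref{eq:deffoftheta}). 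For the angle, I would substitute $I(t) = I_{tan}^{\epsilon}(\theta) + \mathcal{O}(\epsilon)$ (implied by the first equation) together with $I_{tan}^{\epsilon}(\theta) = I_{tan}(E) + \mathcal{O}_{C^r}(\epsilon)$ from Lemma \ref{lem:tancurvetwist} and the analogous asymptotics $T_1^{\epsilon}(z^{\theta,0,\epsilon}) = T_1(I_{tan}(E); E) + \mathcal{O}(\epsilon)$ from Lemma \ref{lem:sigstarcross}. Collecting all $\mathcal{O}(\epsilon)$ remainders into a single term $\epsilon g(I_{tan}^{\epsilon}(\theta), \theta; \epsilon)$ yields the second line of (\ref{eq:tangimageform}).

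The $C^r$ smoothness of $f$ and $g$ in $(\theta,\epsilon)$ then follows from three ingredients: the $C^{r+1}$ smoothness of $V_c$ (so the Poisson brackets $\{I, V_c\}, \{\theta, V_c\}$ are $C^r$), the $C^r$ smoothness of the starting point $z^{\theta,0,\epsilon}$ on the singularity curve (Lemma \ref{lem:tancurvetwist}), and the smooth dependence of ODE solutions on initial data and parameters along a bounded time interval on which the flow is smooth. The return time $T_1^{\epsilon}(z^{\theta,0,\epsilon})$ inherits its $C^r$ dependence on $(\theta,\epsilon)$ via the implicit function theorem applied to the transverse return condition $p_1 = 0$, $\dot p_1 < 0$, the transversality being guaranteed by the S3B hypotheses exactly as in the proof of Lemma \ref{lem:sigstarcross}.

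The one delicate point, which I expect to be the main obstacle, is to verify rigorously that the tangential touch at the wall does not spoil the smooth dependence on $(\theta,\epsilon)$ — a priori, near-tangency generically produces a square-root singularity in the return map (as will appear in Theorem \ref{thm:mainthm}). This is resolved by observing that on the singularity curve itself the trajectory belongs to the common boundary $\Sigma_E^> \cap \Sigma_E^w$, and by Lemma \ref{lem:sigstarcross} it crosses $\Sigma_E^*$ exactly once, so the graph of $\Phi_t^{\epsilon} z^{\theta,0,\epsilon}$ depends smoothly on $\theta$ as long as we perturb along $\sigma^{\epsilon}(E)$ (and do not cross it). The non-smooth contribution arises only upon comparing impacting trajectories ($\rho > 0$) with the tangent one, which is outside the scope of this proposition and is treated in the later construction of $\mathcal{F}^{loc}_\epsilon$.
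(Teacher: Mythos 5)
Your proposal is correct and takes essentially the same route as the paper: a point on the singularity curve produces a tangent trajectory unaffected by the reflection, so one integrates the Poisson brackets $\{I,H_{int}+\epsilon V_c\}$ and $\{\theta,H_{int}+\epsilon V_c\}$ along the smooth perturbed flow over the finite return time from Lemma \ref{lem:sigstarcross}, using $\{I,H_{int}\}=0$, $\{\theta,H_{int}\}=\omega_2(I)$, and the $O_{C^r}(\epsilon)$ asymptotics of $I_{tan}^\epsilon(\theta)$ and $T_1^\epsilon$. The one step the paper makes explicit that you leave implicit is that $\frac{d\bar\theta_{tan}}{d\theta}=1+O_{C^r}(\epsilon)$, which justifies rewriting the computed image action $\mathfrak{f}_\epsilon(\theta)$ as a well-defined function $\bar{I}^\epsilon_{tan}(\bar\theta_{tan})$ of the image angle; this is worth noting since it is needed for the notation in (\ref{eq:tangimageform}) to make sense.
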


\begin{proof}
Let $z^{\theta,0,\epsilon}=(q_{10},p_{10}=0,q_{20},p_{20})\in\Xi^{\epsilon}(E)\subset\Sigma_{E}$, where $ S_{2}(q_{20},p_{20})=(\theta ,I_{tan}^\epsilon(\theta))$. As a tangent trajectory is not affected by the impact, integration along the smooth perturbed vector field yields:
\begin{equation}\label{eq:tangentitheta}
\begin{cases}
\bar{I}_{tan}=I+\int_0^{{T}_1^{\epsilon}(z^{\theta,0,\epsilon})}\{I,H_{int}+\epsilon V_c\}\mid_{\Phi_{t}^\epsilon z^{\theta,0,\epsilon}}dt=I_{tan}^{\epsilon}(\theta)+\epsilon f(\theta;\epsilon) \\
\bar \theta_{tan}=\theta+\int_0^{{T}_1^{\epsilon}(z^{\theta,0,\epsilon})}\{\theta,H_{int}+\epsilon V_c\}\mid_{\Phi_{t}^\epsilon z^{\theta,0,\epsilon}}dt=\theta+\omega_2(I_{tan}(E))\cdot T_1^{tan}+\epsilon g(\theta ,I_{tan}^{\epsilon}(\theta);\epsilon)
\end{cases}
\end{equation}
where $f,g$ are $C^r$ smooth, $2\pi$-periodic functions of $\theta$ (see section \ref{sec:returnmap} for the leading order calculation of \(f\)). ${T}_1^{\epsilon}(z^{\theta,0,\epsilon})$ denotes the return time to $\Sigma_{E}$ of the perturbed tangent trajectory  and $T_1^{tan}={T}_1^{0}(z^{\theta,0,0})=T_1(I_{tan}(E);E)$ is the return time of the unperturbed tangent trajectory.
By definition, $\bar{I}_{tan}=\mathfrak{f}_\epsilon(\theta)\equiv I_{tan}^{\epsilon}(\theta)+\epsilon f(I_{tan}^{\epsilon}(\theta),\theta;\epsilon)$, lies on the forward image of the tangency curve at the angle $\bar{\theta}_{tan}$, and by Eq. (\ref{eq:tangentitheta}) \(\frac{d\bar \theta_{tan}}{d\theta}=1+O_{C^{r}}(\epsilon  )\), hence $\bar{I}_{tan}=\mathfrak{f}_\epsilon(\theta)=\bar{I}_{tan}^{\epsilon}(\bar{\theta}_{tan})$.
\end{proof}

\subsection{A time-reversal symmetry of the singularity curve}
The singularity line and its image are related by a symmetry (see  \cite{lamba1995chaotic} for a similar symmetry observed  in  a 1 d.o.f harmonic oscillator impacting with a harmonic oscillating wall):

\begin{lem}\label{lem:symmetry}
The image of  the singularity curve $\mathcal{\sigma}^\epsilon$ under $\mathcal{F}_{\epsilon}$ is identical to its reflection about the $q_2$ axis, namely  $\bar{\sigma}^{\epsilon}:=\mathcal{F}_{\epsilon}\mathcal{\sigma}^\epsilon
=\{(\theta,I_{tan}^\epsilon(-\theta)),\theta\in[-\pi,\pi]\}$.
\end{lem}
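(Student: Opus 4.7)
The plan is to derive the claim from a time-reversal symmetry of the wall system. The Hamiltonian $H$ in (\ref{eq:hgeneral}) is quadratic in the momenta and the billiard potential $V_b$ depends only on $q_1$, so the smooth flow is reversible under $\mathcal{I}:(q,p)\mapsto(q,-p)$ combined with $t\mapsto -t$; the elastic reflection $\mathcal{R}_1$ at $q_1=q_1^w$ commutes with $\mathcal{I}$, so the full impact dynamics is reversible as well. The section $\Sigma_E=\{p_1=0,\dot p_1<0\}$ is $\mathcal{I}$-invariant, since $p_1=0$ is preserved by $\mathcal{I}$ and a direct computation with $\tilde p_1(s)=-p_1(-s)$ gives $\dot{\tilde p}_1(-t_0)=\dot p_1(t_0)<0$, so the orientation condition also survives. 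Restricted to $\Sigma_E$ the involution $\mathcal{I}$ acts as $\mathcal{R}_2:(q_2,p_2)\mapsto(q_2,-p_2)$, and reversibility of the wall flow yields the standard conjugacy $\mathcal{R}_2\circ \mathcal{F}_\epsilon = \mathcal{F}_\epsilon^{-1}\circ \mathcal{R}_2$.

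Using this conjugacy I would identify $\mathcal{F}_\epsilon\sigma^\epsilon$ as follows. By definition $z_0\in\sigma^\epsilon$ iff the forward wall-flow orbit from $z_0$ meets the tangency locus $\{p_1=0,q_1=q_1^w\}$ before its first return to $\Sigma_E$. Writing $z_1=\mathcal{F}_\epsilon z_0$, the backward orbit from $z_1$ is therefore wall-tangent. Applying $\mathcal{I}$ and using that the tangency locus is itself $\mathcal{I}$-invariant, the forward orbit of $\mathcal{R}_2 z_1$ is wall-tangent too, so $\mathcal{R}_2 z_1\in\sigma^\epsilon$ and hence $z_1\in\mathcal{R}_2\sigma^\epsilon$. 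Thus $\mathcal{F}_\epsilon\sigma^\epsilon\subseteq\mathcal{R}_2\sigma^\epsilon$, and the reverse inclusion follows by running the same argument with $\mathcal{F}_\epsilon^{-1}$ and the backward-image characterization of $\sigma^\epsilon$, yielding $\bar\sigma^\epsilon=\mathcal{R}_2\sigma^\epsilon$.

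It remains to rewrite this equality in the unperturbed action-angle coordinates. With the chosen normalization $\theta=0$ on $\{p_2=0,\dot p_2<0\}$, reflecting $p_2\to -p_2$ on a given $H_2$-level set reverses the direction of traversal while fixing the reference point, so $\mathcal{R}_2$ acts as $(\theta,I)\mapsto (-\theta,I)$ (immediate for the harmonic oscillator parametrization $q_2=\sqrt{2I}\cos\theta,\;p_2=-\sqrt{2I}\sin\theta$, and true in general because the reference point is the unique fixed point of the reflection on each level set). Combining this with the graph representation $\sigma^\epsilon=\{(\theta,I_{tan}^\epsilon(\theta))\}_{\theta\in[-\pi,\pi]}$ from Lemma \ref{lem:tancurvetwist} gives $\bar\sigma^\epsilon=\mathcal{R}_2\sigma^\epsilon=\{(-\theta,I_{tan}^\epsilon(\theta))\}=\{(\theta,I_{tan}^\epsilon(-\theta))\}$ after relabeling, which is the asserted formula. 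The only delicate points are being careful with the nonsmoothness at the instant of impact (handled by arguing separately on the smooth segments and at the reflection, using that $\mathcal{R}_1$ and $\mathcal{I}$ commute) and matching the $\theta$-normalization to $\mathcal{R}_2$; neither causes a substantive obstacle once the conventions are pinned down.
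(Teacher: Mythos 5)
Your argument is correct and is essentially the same as the paper's: both rely on the time-reversal symmetry $(q,p,t)\mapsto(q,-p,-t)$ of the mechanical Hamiltonian, the fact that $\mathcal{R}_1$ commutes with this reversal, the observation that on $\Sigma_E$ the reversal restricts to $\mathcal{R}_2$, and the appendix formula $S_2(q_2,-p_2)=(I,-\theta)$. The paper writes this out as a pointwise trace of a single trajectory through $z_0\to z^w\to z_1$, whereas you package it as the reversibility conjugacy $\mathcal{R}_2\circ\mathcal{F}_\epsilon=\mathcal{F}_\epsilon^{-1}\circ\mathcal{R}_2$ applied to the forward/backward characterizations of $\sigma^\epsilon$ and $\mathcal{F}_\epsilon\sigma^\epsilon$, but the two presentations carry the same content.
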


\begin{proof}
This is a result of a pointwise symmetry on the singularity curve which follows from the mechanical form of the Hamiltonian. Let  $z_{0}=(q_{10},p_{10}=0,q_{20},p_{20})\in\Xi^{\epsilon}(E)\subset\Sigma_{E},  $  let \(\mathcal{R}_{2}z_{i}=(q_{1i},p_{1i}=0,q_{2i},-p_{2i})\) denote a reflection about the \(q_{2}\) axis and let  $S_2(q_{2i},p_{2i})=S_{2}\Pi z_{i}=(I_i,\theta_i)\). Since we consider i.c. on the singularity curve, by definition, if $(I_0,\theta_0)=S_2(q_{20},p_{20})$ then $ I_0=I_{tan}^{\epsilon}(\theta_0)$.
Let   \( z^w=(q_1^w,0,q_2^w,p_2^w)=\Phi^\epsilon_{ t^{-,\epsilon}(z_0)}z_{0}\)     and \(z_{1}=(q_{11},p_{11}=0,q_{21},p_{21})=\Phi^\epsilon_{ t^{+,\epsilon}(z_0)}z^{w}=\Phi^\epsilon_{ t^{\epsilon}(z_0)}z_{0} \) denote the image of \(z_{0}\) at the star section   and under the return map respectively, so \((q_{21},p_{21})=\mathcal{F}_{\epsilon}(I_{tan}^{\epsilon}(\theta_0),\theta_0)\).
Then, from time reversal symmetry:
\begin{equation}\begin{split}
\Phi^{\epsilon}_{t^{+,\epsilon}(z_0)}(q_1^w,0,q_2^w,p_2^w)&=(q_{11},p_{11}=0,q_{21},p_{21}) \\
\Rightarrow \Phi^{\epsilon}_{-t^{+,\epsilon}(z_0)}(q_1^w,0,q_2^w,-p_2^w)&=(q_{11},-p_{11}=0,q_{21},-p_{21})=\mathcal{R}_{2}z_1
\\
\Rightarrow \Phi^{\epsilon}_{t^{+,\epsilon}(z_0)}\mathcal{R}_{2}z_1&=(q_1^w,0,q_2^w,-p_2^w)\\
\end{split}
\end{equation}
Thus, \( \mathcal{R}_{2}z_1\) also belongs to the singularity curve and \(t^{-,\epsilon}(\mathcal{R}_{2}z_1)=t^{+,\epsilon}(z_0)\). Similarly,
\begin{equation}\begin{split}
\Phi^{\epsilon}_{-t^{-,\epsilon}(z_0)}(q_1^w,0,q_2^w,p_2^w)&=(q_{10},p_{10}=0,q_{20},p_{20}) \\
\Rightarrow \Phi^{\epsilon}_{t^{-,\epsilon}(z_0)}(q_1^w,0,q_2^w,-p_2^w)&=(q_{10},-p_{10}=0,q_{20},-p_{20})=\mathcal{R}_{2}z_0
\end{split}
\end{equation}
Thus\begin{equation}
\Phi^{\epsilon}_{{t^{-,\epsilon}(z_0)+t^{+,\epsilon}(z_0)}}\mathcal{R}_{2}z_1=\mathcal{R}_{2}z_0
\end{equation}
\(\)
We  deduce that \( \Pi\mathcal{ R}_{2}z_1\in\sigma^\epsilon(E)\) and that \(\mathcal{F}_{\epsilon} \Pi\mathcal{R}_{2}z_1=\Pi\mathcal{R}_{2}z_0\in \bar \sigma^\epsilon(E)\). Since by the definition of action-angle coordinates for mechanical Hamiltonian (see appendix) $S_{2}(\Pi\mathcal{R}_{2}z_i)=S_2(q_{2i},-p_{2i})=(I_i,-\theta_i)$  we conclude that \(\Pi\mathcal{ R}_{2}z_1\in\sigma^\epsilon(E)\) implies that $ I_{1}=\bar I_{tan}^\epsilon(\theta_1)=I_{tan}^\epsilon(-\theta_1)$, and,  equivalently,
 \(\Pi\mathcal{R}_{2}z_0\in \bar \sigma^\epsilon(E)\) implies that \(I_{tan}^{\epsilon}(\theta_0)=\bar I_{tan}^{\epsilon}(-\theta_0)\), as claimed, see Figure  \ref{fig:symmetry}.

\end{proof}

\begin{figure}[ht]
\begin{centering}
\includegraphics[scale=0.2]{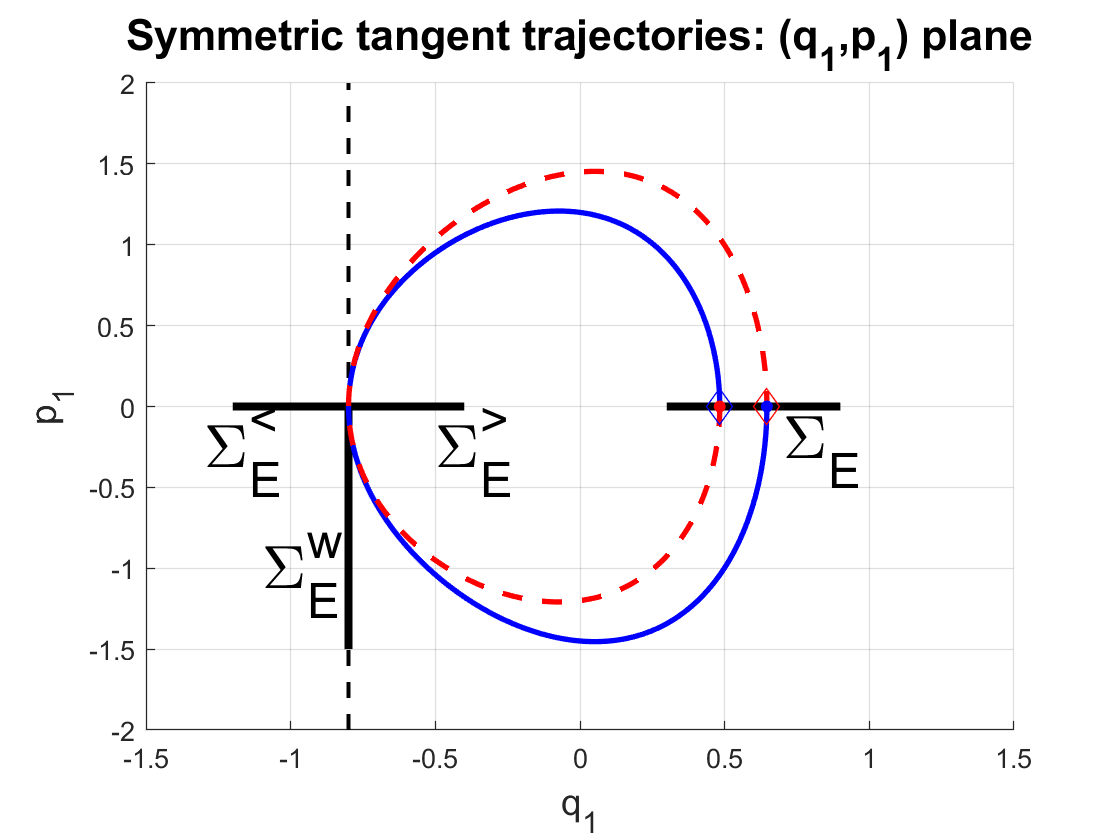}
\includegraphics[scale=0.2]{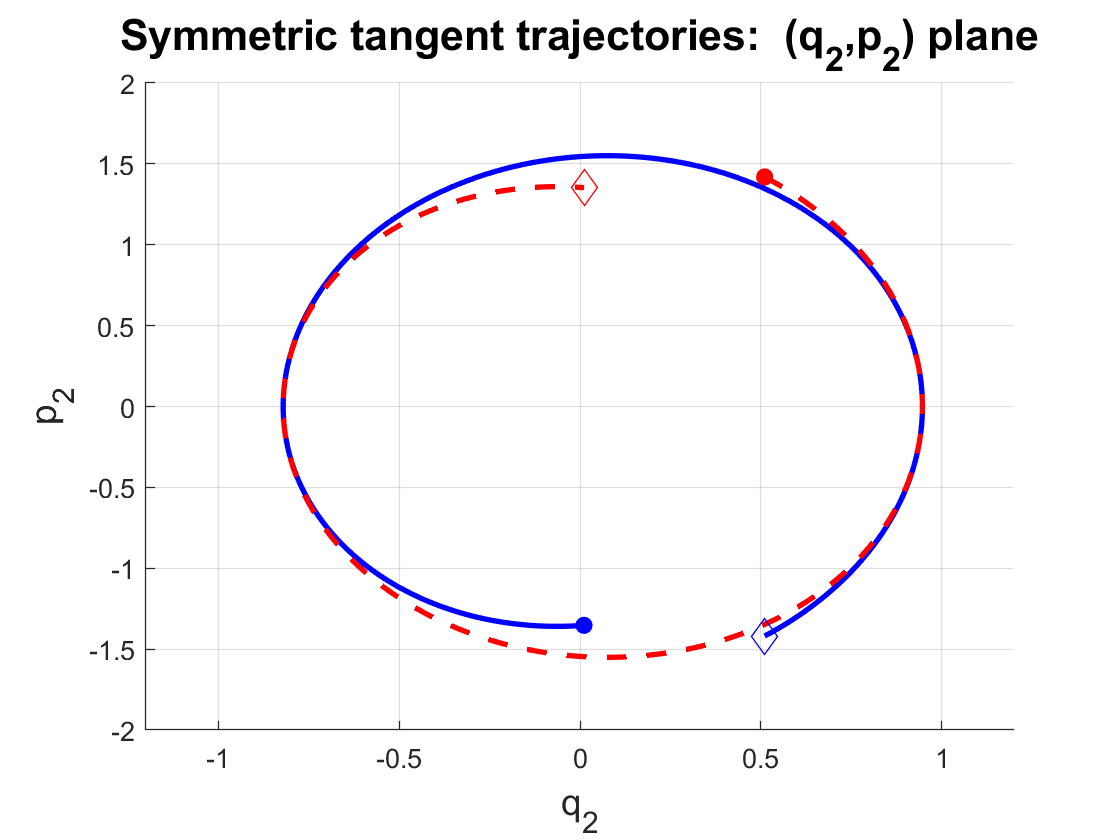}
\par\end{centering}
\protect\caption{\label{fig:symmetry}Two symmetric tangent trajectories projected to the $(q_1,p_1)$ (left) and $(q_2,p_2)$ (right) planes. In the  $(q_1,p_1)$ plane the starting point (solid circles) of a trajectory on $\Sigma_{E}$ is the ending point (diamond) of the other, whereas in the $(q_2,p_2)$ plane the starting and ending points are related by the time-reversal  symmetry in $\theta$. Here  $q_1^w=-0.8$,  $V_1(q_1)=2q_1^2+q_1^3+\frac{1}{4}q_1^4$, $V_2(q_2)=\frac{3}{2}q_2^2, H=2$, $V_c(q_1,q_2)=q_1\cdot q_2$ and $\epsilon=0.3$.}
\end{figure}
Combining this symmetry and lemma \ref{lem:tancurvetwist} we conclude:

\begin{cor}\label{cor:forwardtan}
For sufficiently small $\epsilon$, the forward image of the singularity curve can be represented as a graph in the unperturbed action-angle coordinates: $\bar \sigma^{\epsilon}(E)=\{(\theta,\bar{I}_{tan}^{\epsilon}(\theta)),\theta\in[-\pi,\pi]\}$.
\end{cor}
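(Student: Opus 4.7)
The plan is to obtain the corollary as an essentially immediate combination of Lemma \ref{lem:tancurvetwist} and Lemma \ref{lem:symmetry}, so the main work is conceptual bookkeeping rather than new estimates. First I would invoke Lemma \ref{lem:tancurvetwist}, which gives the graph representation $\sigma^{\epsilon}(E)=\{(\theta,I_{tan}^{\epsilon}(\theta;E)):\theta\in[-\pi,\pi]\}$ with $I_{tan}^{\epsilon}$ of class $C^{r}$ and $\epsilon$-close to the constant $I_{tan}(E)$. Then I would apply Lemma \ref{lem:symmetry}, which identifies the forward image $\bar{\sigma}^{\epsilon}=\mathcal{F}_{\epsilon}\sigma^{\epsilon}$ with the $\theta\mapsto -\theta$ reflection of $\sigma^{\epsilon}$, namely $\bar{\sigma}^{\epsilon}=\{(\theta,I_{tan}^{\epsilon}(-\theta)):\theta\in[-\pi,\pi]\}$.

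Combining the two, I would simply define
\begin{equation*}
\bar{I}_{tan}^{\epsilon}(\theta;E):=I_{tan}^{\epsilon}(-\theta;E).
\end{equation*}
Since $\theta\mapsto -\theta$ is a $C^{\infty}$ involution of the circle $[-\pi,\pi]$, the composition inherits the $C^{r}$ regularity and the $\epsilon$-closeness to $I_{tan}(E)$ from $I_{tan}^{\epsilon}$, and it remains $2\pi$-periodic. This immediately gives the claimed graph representation $\bar{\sigma}^{\epsilon}(E)=\{(\theta,\bar{I}_{tan}^{\epsilon}(\theta)):\theta\in[-\pi,\pi]\}$, together with the reflection relation $\bar{I}_{tan}^{\epsilon}(\theta)=I_{tan}^{\epsilon}(-\theta)$ appearing in the statement of Theorem \ref{prop:singularcurve}.

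There is essentially no hard step here: every nontrivial ingredient (smoothness, non-self-intersection, the fact that the forward image is the $q_{2}$-reflection of the pre-image, and the transverse crossing of $\Sigma_{E}$) has already been established. The only point I would make explicit in writing is that the reflection lemma is proved pointwise via the time-reversal symmetry of the mechanical Hamiltonian, so it conjugates the graph parametrization in $\theta$ of $\sigma^{\epsilon}$ to a graph parametrization in $\theta$ of $\bar{\sigma}^{\epsilon}$ without passing through an implicit function argument; thus no additional twist or non-degeneracy condition is needed beyond what was already used to prove Lemma \ref{lem:tancurvetwist}.
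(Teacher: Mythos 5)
Your proof is correct and matches the paper's reasoning exactly: the paper states Corollary \ref{cor:forwardtan} immediately after Lemma \ref{lem:symmetry}, prefaced only by the remark that it follows from combining that symmetry with Lemma \ref{lem:tancurvetwist}, which is precisely the composition you spell out. Your explicit definition $\bar{I}_{tan}^{\epsilon}(\theta):=I_{tan}^{\epsilon}(-\theta)$ and the observation that reflection preserves the $C^{r}$ graph structure is exactly the intended argument.
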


\subsection{Proof of Theorem \ref{prop:singularcurve} }
 Combining the above lemmas we establish Theorem \ref{prop:singularcurve}: by
lemma \ref{lem:tancurveseparates}, for   \(E>V_1(q_1^w)\) and sufficiently small $\epsilon$, the singularity curve  \(\sigma^{\epsilon}(E)\) is an isolated  circle, dividing  the cylinder  $\Sigma_{E} $ to impacting (below the curve) and non-impacting (above the curve) regions. By lemma \ref{lem:tancurvetwist} for sufficiently small $\epsilon$, this circle is a graph:   $\sigma^{\epsilon}(E)=\{(\theta,I_{tan}^{\epsilon}(\theta)),\theta\in[-\pi,\pi]\}$,  by corollary \ref{cor:forwardtan} its image under the map is also  a graph,  $\bar \sigma^{\epsilon}(E)=\mathcal{F}_\epsilon \sigma^{\epsilon}=\{(\theta,\bar{I}_{tan}^{\epsilon}(\theta)=I_{tan}^{\epsilon}(2\pi-\theta)),\theta\in[-\pi,\pi]\}$, and by  lemma \ref{lem:tangentcurve}, \ref{lem:tancurvetwist} and corollary \ref{cor:forwardtan} both   \(I_{tan}^{\epsilon}(\theta)\) and \(\bar{I}_{tan}^{\epsilon}(\theta)\) are \(C^{r}\) \(\epsilon-\)close to the constant function \( I_{tan}(E)\) and   \(I_{tan}^{\epsilon}(\theta)=\bar{I}_{tan}^{\epsilon}(-\theta)\).

\section{The near tangent return map}\label{sec:returnmap}
Near tangency, the unperturbed map $\mathcal{F}_0$  is symplectic and  $C^{0}$  but is not smooth due to the non-smooth dependence of the rotation number on \(I\), see Eqs. (\ref{eq:int_impact_returnmap}-\ref{eq:deltattravel}). This singularity leads to non-trivial behavior under perturbation. We first construct the perturbed near tangent return map  by separating the approximations of the perturbed near tangent trajectories to impacting and non-impacting, according to their position with respect to the singularity curve on $\Sigma_{E}$.
We then construct a change of coordinates bringing the near tangent map  to the form (\ref{eq:pert-return}).

\subsection{The perturbed near tangent return map}\label{subsec:pert-return}

\begin{prop}\label{lem:pertreturnmap}
For sufficiently small $\epsilon$,
the near-tangent local return map to the cross-section $\Sigma_{E}$, $\mathcal{F}_\epsilon^{loc}:(\theta,I)\rightarrow(\bar \theta,\bar I)$ is  of the form :
\begin{subequations}\label{eq:neartanItheta}
\begin{align}
I\geq I^\epsilon_{tan}(\theta): &\begin{cases}
\bar{I}=I+\bar{I}^{\epsilon}_{tan}(\bar{\theta})-I^\epsilon_{tan}(\theta)+\mathcal{O}_{C^{r-1}}(\epsilon(I-I^\epsilon_{tan}(\theta)))\\
\begin{aligned}\bar{\theta}=\theta+\omega_2(I_{tan}(E))\cdot T_1^{tan}+&\tau(I_{tan}(E))\cdot(I-I^\epsilon_{tan}(\theta))+\mathcal{O}_{C^{r-1}}(\epsilon,(I-I^\epsilon_{tan}(\theta))^2)\end{aligned}
\end{cases}\\
I< I^\epsilon_{tan}(\theta): &\begin{cases}
\bar{I}=I+\bar{I}^{\epsilon}_{tan}(\bar{\theta})-I^\epsilon_{tan}(\theta)+\mathcal{O}_{C^{r-2}}(\epsilon\sqrt{I^\epsilon_{tan}(\theta)-I}\ ) \\
\begin{aligned}\bar{\theta}=\theta+\omega_2&(I_{tan}(E))\cdot T_1^{tan}(E)+ G_{s}(I-I^\epsilon_{tan}(\theta))+\mathcal{O}_{C^{r-2}}(\epsilon,(I-I^\epsilon_{tan}(\theta))^{2},\epsilon\sqrt{I^\epsilon_{tan}(\theta)-I})\end{aligned}
\end{cases}
\end{align}
\end{subequations}
where $\bar{I}^{\epsilon}_{tan}(\bar{\theta})$ is the image of $I_{tan}^{\epsilon}(\theta)$ under the map parametrized by $\bar{\theta}$ (see Proposition \ref{prop:perttan}),  \(\bar{I}^{\epsilon}_{tan}(\bar{\theta})-I^\epsilon_{tan}(\theta)=\epsilon\int_0^{T^{tan}_1(E)}\{I,V_c\}\mid_{ z_{tan}(t,\theta;E)} dt+\mathcal{O}_{C^{r}}(\epsilon^{2})\) and \(G_{s}(\cdot)\) is the function  of Eq. (\ref{eq:gsindef}) (continuous with a square root singularity at the origin for negative arguments).
\end{prop}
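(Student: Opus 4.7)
My plan is to split the analysis by the sign of $K:=I-I^\epsilon_{tan}(\theta)$ and in each case to compare the perturbed trajectory emanating from $z^{\theta,\rho,\epsilon}$ to the perturbed tangent trajectory emanating from $z^{\theta,0,\epsilon}$, whose return is already characterized by Proposition~\ref{prop:perttan}. Writing the image $(\bar\theta,\bar I)$ as the tangent image plus a $K$-dependent correction, the form of the expansion follows by tracking the dependence of the return time and of the Melnikov-type integrals on $K$; the fact that $I-I^\epsilon_{tan}(\theta)$ and $\bar I-\bar I^\epsilon_{tan}(\bar\theta)$ differ by at most the same order as the correction itself allows expressing the action increment as $\bar I^\epsilon_{tan}(\bar\theta)-I^\epsilon_{tan}(\theta)$ plus the claimed remainder.

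For the non-impacting case ($K\geq 0$) the trajectory lies entirely in the smooth region, so the return map coincides with the Poincar\'e map of the auxiliary smooth flow $\Phi^\epsilon_t$ and standard $C^r$ dependence on initial conditions gives a Taylor expansion around the tangent orbit. The angle increment picks up the twist correction $\tau(I_{tan}(E))\cdot K$ by expanding $\omega_2(I)\cdot T_1(E-H_2(I))$ in $I$ around $I_{tan}(E)$, see Eq.~(\ref{eq:twist}), while the action correction is obtained by comparing the Melnikov integrals $\int\{I,V_c\}\,dt$ along two nearby smooth trajectories that differ by $O(K)$, which produces the claimed $\mathcal{O}_{C^{r-1}}(\epsilon K)$ remainder.

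For the impacting case ($K<0$) I decompose the impact trajectory into (i) the smooth segment from $\Sigma_E$ to the incoming wall point $(q_1^w,-p_1^w,q_2^w,p_2^w)\in\Sigma_E^w$ with $p_1^w>0$, (ii) the reflection $p_1^w\mapsto -p_1^w$, and (iii) the smooth segment back to $\Sigma_E$. Compared with the auxiliary smooth flow from the same initial condition, the impact return time equals the smooth return time minus the time $\Delta t^\epsilon_{travel}$ the smooth flow would spend in the forbidden region $q_1<q_1^w$. Using $V_1'(q_1^w)\neq 0$ and a direct expansion of the elementary time integral one gets
\begin{equation*}
\Delta t^\epsilon_{travel}=\frac{2\sqrt{2\delta^\epsilon}}{|V_1'(q_1^w)|}+O\!\left((\delta^\epsilon)^{3/2}\right),\qquad \delta^\epsilon:=E-V_1(q_1^w)-H_2(I)-\epsilon V_c(q_1^w,q_2^w).
\end{equation*}
On the singularity curve $\delta^\epsilon$ vanishes, so expanding $H_2$ around $I^\epsilon_{tan}(\theta)$ yields $\delta^\epsilon=-\omega_2(I_{tan}(E))K+O(K^2,\epsilon K)$ and hence $\sqrt{2\delta^\epsilon}=\sqrt{2\omega_2(I_{tan}(E))}\sqrt{-K}\,(1+O(K,\epsilon))$. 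Multiplying by $-\omega_2(I_{tan}(E))$ to convert the missing time into an angle decrement produces exactly the singular term $-(2\omega_2(I_{tan}(E)))^{3/2}|V_1'(q_1^w)|^{-1}\sqrt{-K}$ in $\bar\theta$, while the regular $\tau\cdot K$ piece is inherited from the smooth time-integral dependence on $I$ as in the non-impacting case. The action correction comes from the Melnikov integral along the impact trajectory, which differs from the one along the tangent orbit only on a near-wall boundary layer of duration $O(\sqrt{-K})$, producing the $\mathcal{O}_{C^{r-2}}(\epsilon\sqrt{-K})$ discrepancy.

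The main technical obstacle will be establishing the $C^{r-2}$ regularity of the remainders in the impacting case. The factor $\sqrt{-K}$ loses two derivatives at $K=0$, so each remainder must be shown to have the structure $g(K,\sqrt{-K},\epsilon)$ with $g$ of class $C^r$ in its arguments, so that its $j$-th $K$-derivative grows at worst like $(-K)^{1/2-j}$ as required by the $\mathcal{O}_{C^{r-2}}$ convention explained after Theorem~\ref{thm:mainthm}. This follows by applying the implicit function theorem at both crossings of $\Sigma^*_E$ (using $V_1'(q_1^w)\neq 0$ and the transversality of $\Sigma^*_E$) to obtain smooth dependencies of $p_1^w$, $q_2^w$ and the crossing times on $(I,\theta,\epsilon)$, and then propagating these through the formula for $\Delta t^\epsilon_{travel}$; the required regularity in $\sqrt{-K}$ is inherited from that of $\sqrt{\delta^\epsilon}$, but the bookkeeping needed to verify the claimed remainder estimates rigorously is the main burden of the proof.
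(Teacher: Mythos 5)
Your proposal follows the same decomposition as the paper's proof — split the orbit at the wall into two smooth segments, use regular perturbation theory on each, and extract the $\sqrt{-K}$ singularity from the near-wall flight time. Two points, however, are more than ``bookkeeping'' and constitute genuine gaps.

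First, the statement that the impact return time is the smooth return time minus the time $\Delta t^\epsilon_{travel}$ spent in the forbidden region is exact only at $\epsilon=0$. For $\epsilon>0$ the reflected point $\mathcal{R}_1 z^w$ does not lie on the continuation of the smooth orbit, because $V_c(q_1,q_2)$ breaks the $p_1\mapsto -p_1$ symmetry of the $(q_1,p_1)$-dynamics; the post-impact segment is therefore the smooth flow of a \emph{different} initial condition (the paper's $(\hat\theta,\hat\rho)$), not a time shift of the same one. To make your comparison legitimate you would still need to show $\hat\rho-\rho$ and $\hat\theta-\theta$ are within the error budget, which is precisely the content of the paper's Lemma~\ref{lem:rhorhow} and Eq.~(\ref{eq:thetaminusthetahat}).

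Second, and more seriously, your penetration depth $\delta^\epsilon = E-V_1(q_1^w)-H_2(I)-\epsilon V_c(q_1^w,q_2^w)$ uses the initial action $I$ at $\Sigma_E$, while the quantity that actually controls $\frac{1}{2}(p_1^w)^2$, and hence vanishes exactly at tangency, is built from the action \emph{at the wall}. These differ by the Melnikov integral over the incoming half-period, an $\mathcal{O}(\epsilon)$ quantity that need not vanish when $K=0$, so your $\delta^\epsilon$ is $\mathcal{O}(\epsilon)$ rather than zero on the singularity curve. If $\delta^\epsilon$ contained an additive $\mathcal{O}(\epsilon)$ offset, the flight time would scale like $\sqrt{|K|+\epsilon}$ instead of $\sqrt{-K}$, and the whole expansion (\ref{eq:gsindef}) would fail near $K=0$. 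The paper resolves this by first solving for $\rho$ and $\theta^w$ as smooth functions of $(\sqrt{\rho^w},\theta)$ via the implicit function theorem and then invoking \emph{area preservation} of the flow to force the coefficient of the $\epsilon\sqrt{\rho^w}$ term to vanish, yielding the multiplicative relation $\sqrt{\rho^w}=\sqrt{\rho}\,(1+\epsilon G_\star)$ with no additive offset. That step is not a routine regularity argument, and your sketch (``the required regularity in $\sqrt{-K}$ is inherited from that of $\sqrt{\delta^\epsilon}$'') does not recover it. You should make the wall--versus--section action distinction explicit and supply the argument (area preservation or an equivalent symplectic-structure argument) that eliminates the dangerous $\mathcal{O}(\epsilon)$ offset.
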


\begin{figure}[ht]
\begin{centering}
\includegraphics[scale=0.2]{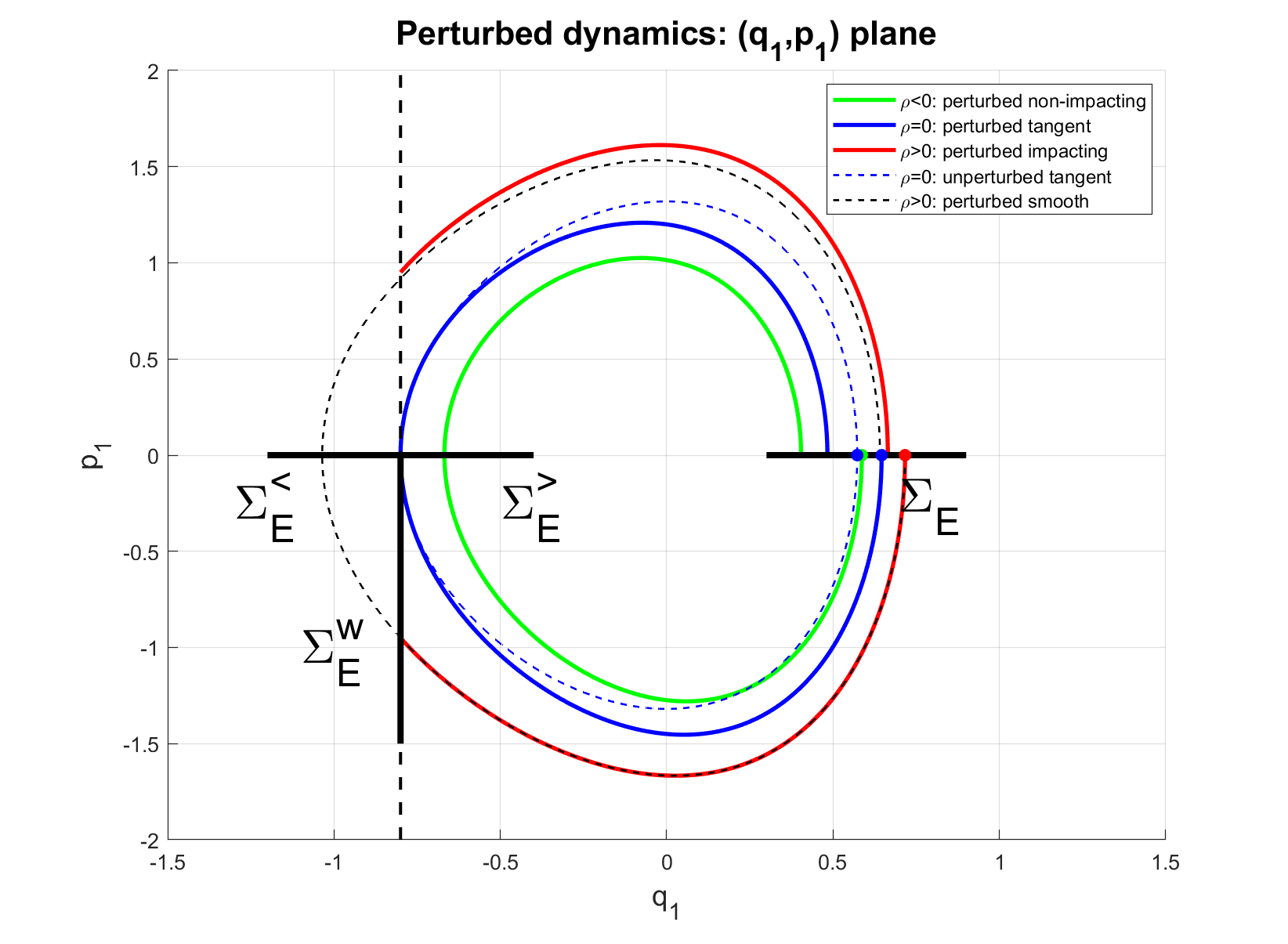}
\includegraphics[scale=0.2]{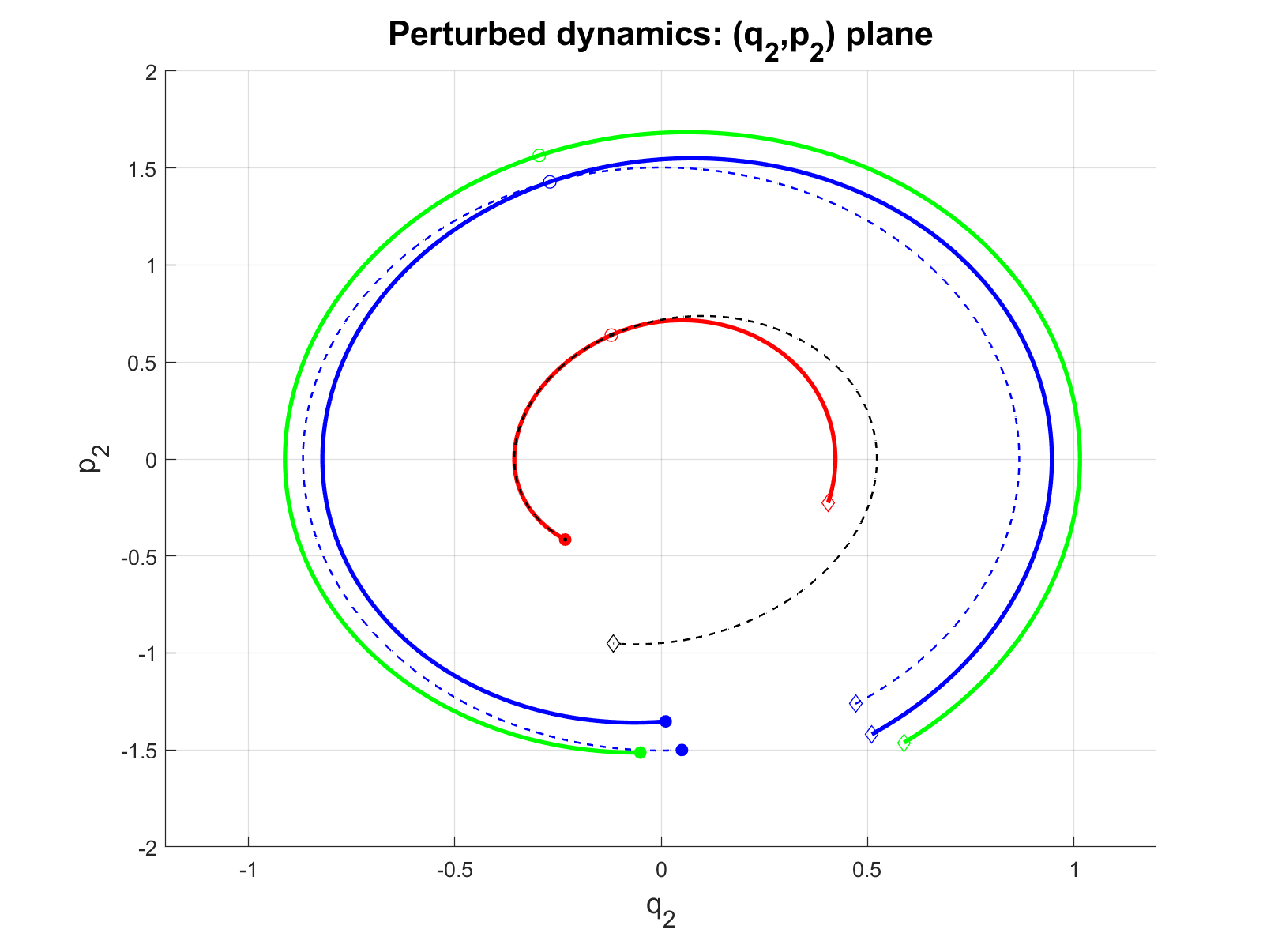}
\par\end{centering}
\protect\caption{\label{fig:perttangency}Iso-energy impacting, tangent and non-impacting  trajectories segments in-between crossings of the return section \(\Sigma_{E}\). (a) Projection to the $(q_1,p_1)$ plane. (b) Projection  to the  $(q_2,p_2)$ plane.    Here, all segments meet \(\Sigma_{E}^*\) with the same angle \(\theta^{w}\) (open circles in (b)), yet,  each initial condition (filled circles), \(z^{\theta,\rho,\epsilon}\), has a different phase \(\theta\)  and a corresponding distinct action distance \(\rho\). Parameters are as in Figure \ref{fig:symmetry}. }
\end{figure}

\begin{proof} Let \(\epsilon_{1}>0\) be sufficiently small so that Theorem  \ref{prop:singularcurve} holds, and consider \(\epsilon\in[0,\epsilon_{1}]\).  Then, i.c. on  \(\Sigma_{E}\) can be parameterized by \(\theta,\rho\) (recall Eq. (\ref{eq:notationzerho})) and:  \begin{equation}\label{eq:impactflow}
\mathcal{F}_\epsilon^{loc}\Pi z^{\theta,\rho,\epsilon}=\Pi\Phi^{\epsilon,im}_{t^\epsilon(z)}z, \quad\Phi^{\epsilon,im}_{t^\epsilon(z)}z =\begin{cases}\Phi^{\epsilon}_{t^\epsilon(z)}z & \text{no impact, }\rho\leqslant 0 \\
\Phi^{\epsilon}_{t^{+,\epsilon}(z)}\circ \mathcal{R}_{1}\circ\Phi^{\epsilon}_{t^{-,\epsilon}(z)}z & \text{with impact, }\rho> 0. \\
\end{cases}
\end{equation}where, for shorthand notation, we omit the labeling of \(z\) when it is inessential. By Theorem \ref{prop:singularcurve}, the division to these two cases occurs exactly at the singularity curve \(\sigma^{\epsilon}(E)\), and by lemma \ref{lem:sigstarcross}, \(t^{-,\epsilon}(z)\),  the crossing time of \(\Sigma_{E}^*\),   and  \(t^{+,\epsilon}(z)\), the travel time from \(\Sigma_{E}^*\) to \(\Sigma_{E}\) are uniquely defined by \(z\) and limit to \(\frac{1}{2}T_1(I_{tan}(E))\) as \(\epsilon\rightarrow0\), thus they are finite. Next, we calculate the integrals of \(\{I,H\}\) and \(\{\theta,H\}\) along the perturbed trajectory segments on the finite time intervals \([0,t^{-,\epsilon}(z)) \) and \((t^{-,\epsilon}(z),\tilde{T}_{1}^{\epsilon}(z)=t^{-,\epsilon}(z)+t^{+,\epsilon}(z)]\), approximate these integrals along some properly defined unperturbed approximations (see below and Figure \ref{fig:unperttangency}), and express the results in terms of the image of the singularity curve.

Denote by   \(z_{sm}^{\theta,\rho,\epsilon}(t)\) the trajectory of  the smooth Hamiltonian system ($b=0$) emanating from \(z^{\theta,\rho,\epsilon}\) (so   $z^{\theta,\rho,\epsilon}_{sm}(0)=z^{\theta,\rho,\epsilon},z_{sm}^{\theta,\rho,\epsilon}(t)=\Phi_{t}^\epsilon z^{\theta,\rho,\epsilon} $).
This solution depends smoothly on i.c. and on \(\epsilon\),  so, for small \(\epsilon,\rho\) and for all $0\leq t \leq T_1^{tan}(E)+\eta, \eta\in[0,\frac{1}{4}T_1^{tan}]$
:

\begin{equation}\label{eq:zsmootht}
z_{sm}^{\theta,\rho,\epsilon}(t)=z_{sm}^{\theta,\rho,0}(t)+\mathcal{O}_{C^r}(\epsilon)=z_{sm}^{\theta,0,0}(t)+\mathcal{O}_{C^{r}}(\epsilon,\rho)=z_{tan}(t,\theta;E)+\mathcal{O}_{C^{r}}(\epsilon,\rho).
\end{equation}
Let \({T}_1^{\theta,\rho,\epsilon}={T}_1^\epsilon(z^{\theta,\rho,\epsilon})\) denote the return time of \(z_{sm}^{\theta,\rho,\epsilon}(t)\) to the cross-section $\Sigma_E$,
\(t^{h,\epsilon}(z^{\theta,\rho,\epsilon})\)  the travel time of  \(z_{sm}^{\theta,\rho,\epsilon}(t)\) from \(\Sigma_{E}\) to the mid section    $\Sigma_{E}^{<}\cup\Sigma_{E}^{>}$ and by   \(t^{\star}(z^{\theta,\rho,\epsilon})\)  its travel time from \(z^{w}(z^{\theta,\rho,\epsilon})=z_{sm}^{\theta,\rho,\epsilon}(t^{-,\epsilon}(z^{\theta,\rho,\epsilon}))\in\Sigma^{w}_{E}\)  to  $\Sigma_{E}^<$, so \(t^{\star}=t^{h,\epsilon}(z^{\theta,\rho,\epsilon})-t^{-,\epsilon}(z^{\theta,\rho,\epsilon})\) and   \(t^{\star}(z^{\theta,\rho,0})=\frac{1}{2}\Delta t_{travel}(J(I_{tan}(E)-\rho ,E))\).
By smooth dependence on i.c. and parameters (Eq. (\ref{eq:zsmootht})):
\begin{equation}
T_1^{\theta,\rho,\epsilon}=T_1^{\theta,\rho,0}+\mathcal{O}_{C^{r}}(\epsilon)=T_1^{tan}(E)-\rho\cdot\frac{d}{dI}T_{1}(J(I))|_{I_{tan}(E)}+\mathcal{O}_{C^{r-1}}(\epsilon,\rho^{2})
\end{equation}
and
\begin{equation}\label{eq:halftexp}
t^{h,\epsilon}(z^{\theta,\rho,\epsilon})=\frac{1}{2}T_1^{\theta,\rho,0}+\mathcal{O}_{C^{r}}(\epsilon)=\frac{1}{2}T_1^{tan}(E)-\frac{1}{2}\rho\cdot\frac{d}{dI}T_{1}(J(I))|_{I_{tan}(E)}+\mathcal{O}_{C^{r-1}}(\epsilon,\rho^{2}).
\end{equation}
 In particular, $T_1^{\theta,0,\epsilon}=T_1^\epsilon(I^{\epsilon}_{tan}(\theta),\theta;E)$ is the return time to $\Sigma_{E}$ of the perturbed tangent trajectory $z_{sm}^{\theta,0,\epsilon}(t)$, and $T_1^{\theta,0,0}=T_1^{tan}(E)=T_1(J(I_{tan}(E),E))$, the return time of unperturbed tangent trajectory $z_{sm}^{\theta,0,0}(t)$ as well $T_1^{\theta,\rho,0}=T_1^{\rho}=T_1(J(I_{tan}(E)-\rho,E))$, the return time of  unperturbed smooth trajectory with actions   \((J,I)=(J(I_{tan}(E)-\rho,E),I_{tan}(E)-\rho)\) are independent of \(\theta\).

\paragraph*{The return map for non-impacting i.c., Eq.  \ref{eq:neartanItheta}a:} For non-positive \(\rho\) (green and blue trajectory segments in Figures \ref{fig:unperttangency} and \ref{fig:perttangency}), the trajectory segment between returns does not impact, so   \(z_{im}^{\theta,\rho,\epsilon}(t)|_{\rho\leqslant0}=z_{sm}^{\theta,\rho,\epsilon}(t)\),   and  smooth standard perturbation theory applies: \begin{equation}\label{eq:ibarcalc1}
\begin{split}
\bar{I}=I(\mathcal{F}_{\epsilon} \Pi z^{\theta,\rho,\epsilon})&=I(\Pi z^{\theta,\rho,\epsilon})+\int_0^{{T}_1^{\theta,\rho,\epsilon}}\{I,H_{int}+\epsilon V_c\}\mid_{z_{sm}^{\theta,\rho,\epsilon}(t)} dt \\
&=I_{tan}^{\epsilon}(\theta)-\rho+\epsilon\int_0^{T_1^{\theta,0,\epsilon}+\mathcal{O}_{C^{r}}(\rho)}\{I,V_c\}\mid_{z_{sm}^{\theta,0,\epsilon}(t)+\mathcal{O}_{C^{r}}(\rho)} dt\\
&=\mathfrak{f}_\epsilon(\theta)-\rho+\mathcal{O}_{C^{r}}(\epsilon\rho) \\
&=I_{tan}^{\epsilon}(\theta)-\rho+\epsilon\int_0^{T_1^{\theta,0,0}+\mathcal{O}_{C^{r}}(\epsilon)}\{I,V_c\}\mid_{z_{sm}^{\theta,0,0}(t)+\mathcal{O}_{C^{r}}(\rho,\epsilon)} dt+\mathcal{O}_{C^{r}}(\epsilon\rho) \\
&=I_{tan}^{\epsilon}(\theta)-\rho+\epsilon\int_0^{T_1^{tan}(E)}\{I,V_c\}\mid_{z_{tan}(t,\theta;E)(t)} dt+\mathcal{O}_{C^{r}}(\epsilon\rho,\epsilon^{2})
\end{split}
\end{equation}
where, as in Proposition \ref{prop:perttan} (i.e. Eq. (\ref{eq:tangentitheta})),
\begin{equation}
\begin{split}
\mathfrak{f}_\epsilon(\theta):=I(\mathcal{F}_{\epsilon} \Pi z^{\theta,0,\epsilon})=\bar I_{tan}^{\epsilon}(\bar \theta_{tan})
&=I_{tan}^{\epsilon}(\theta)+\epsilon\int_0^{{T}_1^{\theta,0,\epsilon}}\{I,V_c\}\mid_{z_{sm}^{\theta,0,\epsilon}(t)} dt
\\&=I_{tan}^{\epsilon}(\theta)+\epsilon f(I_{tan}^{\epsilon}(\theta),\theta;\epsilon) \\
&=I_{tan}^{\epsilon}(\theta)+\epsilon\int_0^{T^{tan}_1(E)}\{I,V_c\}\mid_{z_{sm}^{\theta,0,0}(t)} dt+\mathcal{O}_{C^{r}}(\epsilon^{2})
\end{split}
\end{equation}
is the action of the image of   \(z^{\theta,0,\epsilon}\in\sigma^\epsilon\), namely, \(\mathcal{F}_{\epsilon} \Pi z^{\theta,0,\epsilon}=(\mathfrak{f}_\epsilon(\theta)=\bar{I}_{tan}^{\epsilon}(\bar{\theta}_{tan}),\bar{\theta}_{tan})\in\bar \sigma^\epsilon\). In particular, the leading order term of the action of the singularity curve image is:\begin{equation}
 f(I_{tan}^{\epsilon}(\theta),\theta;0)=\int_0^{T^{tan}_1(E)}\{I,V_c\}\mid_{z_{tan}(t,\theta;E)} dt.
\end{equation}
The leading order dependence of \(\bar \theta \) on \(\rho\) is similarly  found:\begin{equation}\label{eq:thebarnoimpact}\begin{split}
\bar{\theta}=\theta(\mathcal{F}_{\epsilon} \Pi z^{\theta,\rho,\epsilon})&=\theta+\int_0^{T_1^{\theta,\rho,\epsilon}}\{\theta,H_{int}+\epsilon V_c\}\mid_{z_{sm}^{\theta,\rho,\epsilon}(t)} dt\\
&=\theta+\int_0^{T_1^{\theta,\rho,0}+\mathcal{O}_{C^{r}}(\epsilon)}(\{\theta,H_{int}\}+\epsilon\{\theta, V_{c}\})\mid_{z_{sm}^{\theta,\rho,0}(t)+\mathcal{O}_{C^{r}}(\epsilon)}dt \\
&=\theta+\omega_2(I_{tan}(E)-\rho)\cdot T_1^{\rho}+\mathcal{O}_{C^{r}}(\epsilon) \\&=\theta+\omega_2(I_{tan}(E))\cdot T_1^{\rho}-\rho\cdot\omega_2'(I_{tan}(E))\cdot T_1^{\rho}+\mathcal{O}_{C^{r-1}}(\epsilon,\rho^2) \\&=\theta+\omega_2(I_{tan}(E))\cdot T_1^{tan}(E)-\rho\cdot\tau(I_{tan}(E))+\mathcal{O}_{C^{r-1}}(\epsilon,\rho^2)
\end{split}
\end{equation}
where $\tau( I)$ is the twist of the iso-energy return map of the smooth Hamiltonian system (see Eq. (\ref{eq:twist})), so the second line of  Eq.  \ref{eq:neartanItheta}a is established.

Finally, we show that Eq. (\ref{eq:ibarcalc1}) implies the first line of Eq.  \ref{eq:neartanItheta}a. By Proposition \ref{prop:perttan}, on the singularity curve:
\begin{equation}\begin{split}
\bar{\theta}_{tan}=\bar{\theta}( z^{\theta,0,\epsilon})=\theta(\mathcal{F}_{\epsilon} \Pi z^{\theta,0,\epsilon})&=\theta+\int_0^{T_1^{\theta,0,\epsilon}}\omega_2(I(z_{sm}^{\theta,0,\epsilon}(t))+\epsilon \{\theta,V_c\}\mid_{z_{sm}^{\theta,0,\epsilon}(t)} dt\\
&=\theta+\omega_2(I_{tan}(E))\cdot  T_1^{tan}(E)+\epsilon g(I_{tan}^{\epsilon}(\theta),\theta;\epsilon),
\end{split}
\end{equation}
where the \(C^{r }\) smooth function \( g(I_{tan}^{\epsilon}(\theta),\theta;\epsilon)\)  includes integration along the perturbed tangent segment (and does not have a simple approximation by the unperturbed dynamics).
For small negative \(\rho\) we obtain (see first line of Eq. (\ref{eq:thebarnoimpact})) that $$\bar{\theta}( z^{\theta,\rho,\epsilon})=\theta+\int_0^{T_1^{\theta,0,\epsilon}+\mathcal{O}_{C^{r}}(\rho)}\{\theta,H_{int}+\epsilon V_c\}\mid_{z_{sm}^{\theta,0,\epsilon}(t)+\mathcal{O}_{C^{r}}(\rho)} dt
=\bar{\theta}_{tan}+\mathcal{O}_{C^{r}}(\rho),$$so, as \(\bar{I}_{tan}^{0}(\cdot)=I_{tan}(E)\) and  is independent of its argument,
\(
\bar{I}_{tan}^{\epsilon}(\bar{\theta}_{tan})=\bar{I}_{tan}^{\epsilon}(\bar{\theta})+\mathcal{O}_{C^{r-1}}(\epsilon\rho)\), and thus, by Eq. (\ref{eq:ibarcalc1}):
\begin{equation}
\bar{I}( z^{\theta,\rho,\epsilon})=\bar{I}_{tan}^{\epsilon}(\bar{\theta})-\rho+\mathcal{O}_{C^{r-1}}(\epsilon\rho)=I+\bar{I}^{\epsilon}_{tan}(\bar{\theta})-I^\epsilon_{tan}(\theta)+\mathcal{O}_{C^{r-1}}(\epsilon(I_{tan}^{\epsilon}(\theta)-I))
\end{equation}
 thus Eq.  \ref{eq:neartanItheta}a is established.

\paragraph*{The return map for impacting i.c. (Eq.  \ref{eq:neartanItheta}b):}

 For negative \(\rho\) (the red trajectory segment in Figures \ref{fig:unperttangency} and \ref{fig:perttangency}),  we  divide \(z_{im}^{\theta,\rho,\epsilon}(t)\) to two smooth segments, before and after the impact. We  employ regular perturbation theory for estimating the change in \((\theta,I)\) along these segments (see also \cite{pnueli2018near}), showing that these depend smoothly on the travel times to and from the impact. We then study the singular dependence of the travel times and establish  (\ref{eq:neartanItheta}b).
To this aim we introduce the following  notations:

\begin{itemize}

\item Let $z_{im}^{\theta,\rho,\epsilon}(t):=\Phi^{\epsilon,im}_{t}z^{\theta,\rho,\epsilon}$ denote the  wall trajectory segment with initial condition $z^{\theta,\rho,\epsilon}_{im}(0)=z^{\theta,\rho,\epsilon} $ and \(t\in[0,\tilde{T}_1^{\theta,\rho,\epsilon}]\), where \(\tilde{T}_1^{\theta,\rho,\epsilon}=t^\epsilon(z^{\theta,\rho,\epsilon})\) is  the return time of the wall trajectory $z_{im}^{\theta,\rho,\epsilon}(t)$ to the cross-section $\Sigma_E$ and \(t^{-,\epsilon}(z^{\theta,\rho,\epsilon}),t^{+,\epsilon}(z^{\theta,\rho,\epsilon})\) denote the corresponding travel times between \(\Sigma_{E}\) to  \(\Sigma^*_{E}\) and back. Thus,  \(\mathcal{F}_\epsilon^{loc}\Pi z^{\theta,\rho,\epsilon}=\Pi z_{im}^{\theta,\rho,\epsilon}(\tilde{T}_1^{\theta,\rho,\epsilon})\). \item Let \(z^{w}=\Phi^{\epsilon}_{t^{-,\epsilon}(z)}z^{\theta,\rho,\epsilon}=(q_{1}^w,p_{1}^{w},\theta^{w},I^w)\) and let \(\rho ^{w}\) denote the action-distance from tangency at the wall    \({}\rho ^{w}=I_{tan,\Sigma^*_E}^{\epsilon}(\theta^{w})-I^w\), where \(I_{tan,\Sigma^*_E}^{\epsilon}(\theta^{w})\) is the tangency circle at the wall.   \item Let $\hat z_{sm}^{\theta,\rho,\epsilon}(t):=\Phi^{\epsilon}_{t-\hat T_1^{\theta,\rho,\epsilon}}
\circ\Phi^{\epsilon}_{t^{+,\epsilon}(z)}\circ\mathcal{ R}_{1}\circ\Phi^{\epsilon}_{t^{-,\epsilon}(z)}z^{\theta,\rho,\epsilon}$ denote  the  smooth flow from the backward smooth pre-image of  \(z_{im}^{\theta,\rho,\epsilon}(\tilde{T}_1^{\theta,\rho,\epsilon})=\hat z_{sm}^{\theta,\rho,\epsilon}(\hat T_1^{\theta,\rho,\epsilon})\), so
 \(\hat T_1^{\theta,\rho,\epsilon}=T_1^\epsilon(\hat z_{sm}^{\theta,\rho,\epsilon}(0))\). Equivalently, there exists \(\hat \rho= \hat \rho(\theta,\rho,\epsilon), \hat \theta=\hat \theta(\theta,\rho,\epsilon)\) s.t \(\hat z_{sm}^{\theta,\rho,\epsilon}(t)= z_{sm}^{\hat \theta,\hat\rho,\epsilon}(t)\) and \(\hat I= I_{tan}^{\epsilon}(\hat \theta)-\hat \rho\) (the dependence of  \((\hat \theta,\hat\rho)\) is non-smooth near tangency, i.e. near \(\rho=0\), see below). By construction, this is the smooth trajectory segment which coincides with the impacting trajectory after the impact: \(z_{sm}^{\hat \theta,\hat\rho,\epsilon}(t)=z_{im}^{\theta,\rho,\epsilon}(t), \,t\in[t^{+,\epsilon}(z^{\theta,\rho,\epsilon}),\tilde{T}_1^{\theta,\rho,\epsilon}]\).  \item  Let \(t^{\star}\) and \(\hat t^{\star}\) denote the forward and backward flight times from the wall section and its reflection to the auxiliary section  \(\Sigma^{<}_{E}\), so    \(z_{min}:=\Phi^{\epsilon}_{t^{\star}}z^w\) and \(\hat z_{min}:=\Phi^{\epsilon}_{-\hat t^{\star}}\mathcal{R}_1z^w\) are in \(\Sigma^{<}_{E}\),   \(t^{\star}( z^{\epsilon,\rho, \theta})=t^{h,\epsilon}( z^{\theta,\rho,\epsilon})-t^{-,\epsilon}(z^{\theta,\rho,\epsilon})\)  and  \(\hat t^{\star}( z^{\theta,\rho,\epsilon})=T_1^{\hat \theta,\hat\rho,\epsilon} -t^{h,\epsilon}( z^{\hat \theta,\hat\rho,\epsilon})-t^{+,\epsilon}(z^{\hat \theta,\hat\rho,\epsilon})\)).\end{itemize}

When\  $\epsilon=0$ the return times of the  wall and smooth trajectories (i.e. of $z_{im}^{\theta,\rho,0}(t)$ and $z_{sm}^{\theta,\rho,0}(t)$)  do not depend on $\theta$, so  $T_1^{\theta,\rho,0}=T_1^{\rho}=T_1(J(I_{tan}(E)-\rho,E))$, similarly  $\tilde{T}_1^{\theta,\rho,0}=\tilde{T}_1^{\rho}=\tilde{T}_1(J(I_{tan}(E)-\rho,E))$,   \(\hat \rho(\theta,\rho,0)=\rho, \hat \theta(\theta,\rho,0)=\theta-2\Delta t_{travel}(J(I_{tan}(E)-\rho ,E))\omega_{2}(I_{tan}(E)-\rho))\), and \(t^{\star}( z^{\theta,\rho,0})=\hat t^{\star}( z^{\theta,\rho,0})=\Delta t_{travel}(J(I_{tan}(E)-\rho ,E))\).

Next, by integrating the brackets \(\{I,H\},\{\theta,H\}\) along properly chosen segments of \(z_{sm}^{\theta,\rho,\epsilon}(t)\) and \( z_{sm}^{\hat \theta,\hat\rho,\epsilon}(t)\) we establish the leading order behavior of the return map and  show that the error terms are small and depend smoothly on \(t^{\star}\) and \(\hat t^{\star}\). We then establish that these flight times depend smoothly on   \(\sqrt{\rho^{w}}\).  We complete the proof by showing that  functions that are small and depend smoothly on \(\sqrt{\rho^{w}}\) also  depend smoothly on \(\sqrt{\rho}\), so that the error terms are indeed of the required form.

\begin{lem}\label{lem:ibarthetbarimp}
The functions \(\bar{I}=I(\mathcal{F}_{\epsilon} \Pi z^{\theta,\rho,\epsilon}),\bar{\theta}=\theta(\mathcal{F}_{\epsilon} \Pi z^{\theta,\rho,\epsilon})\) and \(I^{w}=I(\Phi^{\epsilon}_{t^{-,\epsilon}(z^{\theta,\rho,\epsilon})}z^{\theta,\rho,\epsilon}),\) \(\theta^w=\theta(\Phi^{\epsilon}_{t^{-,\epsilon}(z^{\theta,\rho,\epsilon})}z^{\theta,\rho,\epsilon})\) are smooth functions of \(t^{\star},\hat t^{\star},\hat \rho,\hat \theta,\theta,\rho,\epsilon\) satisfying:\begin{equation}\label{eq:leadingorder}
\begin{split}
\bar{I}&=I+(\bar{I}_{tan}^{\epsilon}(\bar\theta)-I_{tan}^{\epsilon}(\theta))+\mathcal{O}_{C^{r}}(\rho\epsilon,\hat \rho\epsilon,t^{\star}\epsilon,\hat t^{\star}\epsilon, (\hat \theta-\theta)\epsilon,\rho^{2}) \\\bar{\theta}&=\theta+\omega_2(I_{tan}(E))\cdot T_1^{tan}(E)-\rho\cdot\tau(I_{tan}(E))-\omega_2(I_{tan}(E))\cdot(t^{\star}+\hat t^{\star})\\&
\qquad \qquad +
 \mathcal{O}_{C^{r-1}}(\epsilon,\rho^2,\rho-\hat \rho,(\rho-\hat \rho)\hat  t^{\star},\epsilon t^{\star},\epsilon\hat  t^{\star}).
\end{split}
\end{equation} and \begin{equation}\label{eq:iwtheaw}
\begin{split}
I^w&=I+\epsilon\int_0^{\frac{1}{2}T_1^{tan}}\{I,V_c\}\mid_{z_{sm}^{\theta,0,0}(t)} dt+\mathcal{O}_{C^{r}}(\rho\epsilon,t^{\star}\epsilon,\epsilon^{2})\\
&=\hat I+\epsilon\int_0^{\frac{1}{2}T_1^{tan}}\{I,V_c\}\mid_{z_{sm}^{\hat\theta,0,0}(t)} dt+\mathcal{O}_{C^{r}}(\hat\rho\epsilon,\hat t^{\star}\epsilon,\epsilon^{2}) \\\theta^w&=\theta+\frac{1}{2}\omega_2(I_{tan}(E))\cdot T_1^{tan}(E)-\frac{1}{2}\rho\cdot\tau(I_{tan}(E))-\omega_2(I_{tan}(E))\cdot t^{\star} +
 \mathcal{O}_{C^{r-1}}(\epsilon,\rho^2,\epsilon t^{\star})
\end{split}
\end{equation}\end{lem}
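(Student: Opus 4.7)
The plan is to exploit the splitting (\ref{eq:impactflow}) of the wall flow into two smooth Hamiltonian arcs joined by the reflection $\mathcal{R}_1$, and to apply to each arc the same regular perturbation argument that yielded the non-impact estimates (\ref{eq:ibarcalc1})--(\ref{eq:thebarnoimpact}). The key observation is that $I$ and $\theta$ depend only on $(q_2,p_2)$ and are thus invariant under $\mathcal{R}_1$, so the jump at the wall contributes nothing to either integrated Poisson bracket; and by construction the post-impact arc coincides with the smooth orbit $z_{sm}^{\hat\theta,\hat\rho,\epsilon}(t)$ on its final segment, for which (\ref{eq:zsmootht}) and the non-impact calculation already apply verbatim. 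The two arcs have durations $t^{-,\epsilon}(z) = \tfrac{1}{2}T_1^{tan}(E) - t^\star + \mathcal{O}_{C^r}(\epsilon,\rho^2)$ and $t^{+,\epsilon}(z) = \tfrac{1}{2}T_1^{tan}(E) - \hat t^\star + \mathcal{O}_{C^r}(\epsilon,\hat\rho^2)$, each shorter than half the tangent period by exactly the flight time the unperturbed tangent orbit would have spent beyond $q_1^w$.

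For the action, $\{I,H_{int}\}=0$, so only the coupling contributes and $\bar I - I$ splits as $\epsilon\int_0^{t^{-,\epsilon}}\{I,V_c\}|_{z_{sm}^{\theta,\rho,\epsilon}} dt + \epsilon\int_{\hat T_1^{\theta,\rho,\epsilon}-t^{+,\epsilon}}^{\hat T_1^{\theta,\rho,\epsilon}}\{I,V_c\}|_{z_{sm}^{\hat\theta,\hat\rho,\epsilon}} dt$. Replacing each integrand by its value on the unperturbed tangent orbit $z_{tan}(\cdot,\theta;E)$ (respectively $z_{tan}(\cdot,\hat\theta;E)$) costs $\mathcal{O}_{C^r}(\epsilon\rho,\epsilon\hat\rho)$ by (\ref{eq:zsmootht}); extending each window to a full half-period costs $\mathcal{O}_{C^r}(\epsilon t^\star,\epsilon\hat t^\star)$; and the tangent-base-point mismatch costs $\mathcal{O}_{C^r}(\epsilon(\hat\theta-\theta))$. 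The remaining full-period integral $\epsilon\int_0^{T_1^{tan}}\{I,V_c\}|_{z_{tan}(t,\theta;E)}dt$ equals, by Proposition \ref{prop:perttan}, $\bar I_{tan}^\epsilon(\bar\theta_{tan}) - I_{tan}^\epsilon(\theta)$, and evaluating this graph at $\bar\theta$ rather than $\bar\theta_{tan}$ costs only $\mathcal{O}_{C^r}(\epsilon\rho)$ since $\bar I_{tan}^\epsilon$ is $C^r$ and $\epsilon$-close to a constant. This produces the first line of (\ref{eq:leadingorder}); the $I^w$ estimates in (\ref{eq:iwtheaw}) are the half-trip specializations, truncating at $t^{-,\epsilon}$ from either side, with the two equivalent forms reflecting the invariance of $I$ under $\mathcal{R}_1$.

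For the angle I would integrate $\{\theta,H\}=\omega_2(I)+\epsilon\{\theta,V_c\}$ over the two smooth arcs, obtaining $\omega_2(I_{tan}-\rho)\cdot(\tfrac{1}{2}T_1^\rho - t^\star) + \omega_2(I_{tan}-\hat\rho)\cdot(\tfrac{1}{2}T_1^{\hat\rho} - \hat t^\star)$ up to $\mathcal{O}_{C^{r-1}}(\epsilon,\epsilon t^\star,\epsilon\hat t^\star)$ from the coupling. Taylor-expanding in $\rho,\hat\rho$, pooling symmetric and antisymmetric pieces via $\hat\rho = \rho + (\hat\rho-\rho)$, and using $\tau(I)=\tfrac{d}{dI}[\omega_2(I)T_1(E-H_2(I))]$ to absorb both the $\omega_2'$ and $T_1'$ contributions to the coefficient of $\rho$, collapses this to $\theta+\omega_2(I_{tan})T_1^{tan} -\rho\tau(I_{tan})-\omega_2(I_{tan})(t^\star+\hat t^\star)$ modulo $\mathcal{O}_{C^{r-1}}(\rho^2,\rho-\hat\rho,(\rho-\hat\rho)\hat t^\star)$. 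The truncated version up to $t^{-,\epsilon}$ yields the $\theta^w$ formula in (\ref{eq:iwtheaw}).

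The main obstacle will be the careful bookkeeping of the mixed error terms. Although each smooth arc is individually handled by standard first-order perturbation theory, the pre- and post-impact arcs live on different unperturbed trajectories and the change of base-point $(\theta,\rho,\epsilon)\mapsto(\hat\theta,\hat\rho)$ is itself non-smooth at $\rho=0$, mediated by a tangential reflection near the $q_1$-turning point. The lemma deliberately treats $(t^\star,\hat t^\star,\hat\theta,\hat\rho)$ as \emph{independent} smooth coordinates and postpones the $\sqrt{\rho^w}$ singularity of $t^\star$ to the subsequent step; I must therefore phrase every error bound in these independent variables and avoid implicitly substituting $\hat\rho = \rho + \mathcal{O}(\sqrt{\rho})$ or $t^\star = \mathcal{O}(\sqrt{\rho^w})$ into a term that is supposed to be $C^r$ in its stated arguments, so that the square-root singularity surfaces only when these relations are reinserted in the global composition.
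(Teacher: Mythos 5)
Your proposal follows essentially the same route as the paper's proof: splitting the wall orbit at $\mathcal{R}_1$ into two smooth arcs (with $I,\theta$ continuous across the reflection), applying regular perturbation theory to each arc with the travel time decomposed into the half-period plus the short flight times $t^\star,\hat t^\star$, identifying the full-period coupling integral with $\bar I_{tan}^\epsilon(\bar\theta_{tan})-I_{tan}^\epsilon(\theta)$ via Proposition~\ref{prop:perttan}, and Taylor-expanding the angle increment in $\rho,\hat\rho$ using the twist $\tau$. Your closing observation that $(t^\star,\hat t^\star,\hat\theta,\hat\rho)$ must be treated as independent $C^r$ arguments, with the $\sqrt{\rho^w}$ singularity surfacing only in the subsequent lemmas, is exactly the paper's organizing principle for this step.
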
 \begin{proof}

The leading order change in the action is calculated by integrating the \(\{I,H\}\) bracket along \(z_{sm}^{\theta,\rho,\epsilon}(t)\)  before the impact and along
 \(\hat z_{sm}^{\theta,\rho,\epsilon}(t)=z_{sm}^{\hat\theta,\hat\rho,\epsilon}(t)\) after
the impact, and approximating each segment separately by the corresponding smooth tangent trajectory:\begin{equation}\label{eq:ibarcalc2}
\begin{split}
\bar{I}=I(\mathcal{F}_{\epsilon} \Pi z^{\theta,\rho,\epsilon})&=I(\Pi z^{\theta,\rho,\epsilon})+\int_0^{t^{-,\epsilon}(z^{\theta,\rho,\epsilon})+t^{+,\epsilon}(z^{\theta,\rho,\epsilon})}\{I,H_{int}+\epsilon V_c\}\mid_{z_{im}^{\theta,\rho,\epsilon}(t)} dt \\
& =I+\epsilon\int_0^{t^{-,\epsilon}(z^{\theta,\rho,\epsilon})}\{I,V_c\}\mid_{z_{sm}^{\theta,\rho,\epsilon}(t)} dt+\\
& \qquad \qquad \ +\epsilon\int_{\hat T_1^{\theta,\rho,\epsilon}-t^{+,\epsilon}(z^{\theta,\rho,\epsilon})}^{\hat T_1^{\theta,\rho,\epsilon}}\{I,V_c\}\mid_{ z_{sm}^{\hat\theta,\hat\rho,\epsilon}(t)} dt\\
& =I+\epsilon\int_0^{t^{h,\epsilon}(z^{\theta,\rho,\epsilon})}\{I,V_c\}\mid_{z_{sm}^{\theta,\rho,\epsilon}(t)} dt-\epsilon\int_{t^{h,\epsilon}(z^{\theta,\rho,\epsilon})}^{t^{h,\epsilon}(z^{\theta,\rho,\epsilon})+t^{\star}}\{I,V_c\}\mid_{z_{sm}^{\theta,\rho,\epsilon}(t)}dt\\
& \qquad  \ +\epsilon\int_{ T_1^{\hat\theta,\hat\rho,\epsilon}-t^{h,\epsilon}( z^{\hat\theta,\hat\rho,\epsilon}) }^{ T_1^{\hat\theta,\hat\rho,\epsilon}}\{I,V_c\}\mid_{ z_{sm}^{\hat\theta,\hat\rho,\epsilon}(t)} dt-\epsilon\int_{T_1^{\hat\theta,\hat\rho,\epsilon}-t^{h,\epsilon}( z^{\hat\theta,\hat\rho,\epsilon})}^{T_1^{\hat\theta,\hat\rho,\epsilon}-t^{h,\epsilon}( z^{\hat\theta,\hat\rho,\epsilon})+\hat t^{\star}}\{I,V_c\}\mid_{z_{sm}^{\hat\theta,\hat\rho,\epsilon}(t)}\\
&\\
& =I+\epsilon\int_0^{t^{h,\epsilon}(z^{\theta,0,\epsilon})}\{I,V_c\}\mid_{z_{sm}^{\theta,0,\epsilon}(t)} dt+\mathcal{O}_{C^{r}}(\rho\epsilon)-\epsilon\int_{t^{h,\epsilon}(z^{\theta,\rho,\epsilon})}^{t^{h,\epsilon}(z^{\theta,\rho,\epsilon})+t^{\star}}\{I,V_c\}\mid_{z_{sm}^{\theta,\rho,\epsilon}(t)}dt\\
& \qquad  \ +\epsilon\int_{ T_1^{\hat\theta,0,\epsilon}-t^{h,\epsilon}( z^{\hat\theta,0,\epsilon}) }^{ T_1^{\hat\theta,0,\epsilon}}\{I,V_c\}\mid_{ z_{sm}^{\hat\theta,0,\epsilon}(t)} dt+\mathcal{O}_{C^{r}}(\hat \rho\epsilon)-\epsilon\int_{T_1^{\hat\theta,\hat\rho,\epsilon}-t^{h,\epsilon}( z^{\hat\theta,\hat\rho,\epsilon})}^{T_1^{\hat\theta,\hat\rho,\epsilon}-t^{h,\epsilon}( z^{\hat\theta,\hat\rho,\epsilon})+\hat t^{\star}}\{I,V_c\}\mid_{z_{sm}^{\hat\theta,\hat\rho,\epsilon}(t)}
\\&=I+(\bar{I}_{tan}^{\epsilon}(\bar\theta_{tan})-I_{tan}^{\epsilon}(\theta))+\mathcal{O}_{C^{r}}(\rho\epsilon,\hat \rho\epsilon,t^{\star}\epsilon,\hat t^{\star}\epsilon,\epsilon(\hat \theta- \theta))
\end{split}
\end{equation}
where,  we used the smooth dependence of \(t^{h,\epsilon}(z^{\theta,\rho,\epsilon})\) on parameters and i.c.,  that  all the integrands are smooth and that the integration periods are finite. It follows that the non-smooth behavior arises only from the singular dependence of the lengths of the intervals of integration (\(t^{\star}\) and \(\hat t^{\star}\)) and from the non-smooth dependence of \(\hat \rho\) and \(\hat \theta\) on \(\theta,\rho\).  So, up to the different form of the error terms we have the same expression as in Eq. (\ref{eq:ibarcalc1}), and, to establish the leading order behavior we again only need to show that \(
 \bar{I}_{tan}^{\epsilon}(\bar\theta_{tan}(\theta))\) and \(\bar{I}_{tan}^{\epsilon}(\bar{\theta})
\)
are close in the same sense. Since \(\bar{I}_{tan}^{\epsilon}\) is a smooth function and its dependence on its argument is of order \(\epsilon\), we establish this by showing that \((\bar{\theta}-\bar\theta_{tan})=\mathcal{O}_{C^{r}}(\hat \theta - \theta ,t^{\star},\hat t^{\star},\rho,\hat\rho)\).
Indeed, the image of the angle is:\begin{equation}\label{eq:thetbar1}\begin{split}
\bar{\theta}(z^{\theta,\rho,\epsilon})&=\theta+\int_0^{t^{-,\epsilon}(z^{\theta,\rho,\epsilon})}\{\theta,H\}\mid_{z_{im}^{\theta,\rho,\epsilon}(t)}dt+\int_{t^{+,\epsilon}(z^{\theta,\rho,\epsilon})}^{\tilde{T}_1^{\theta,\rho,\epsilon}}\{\theta,H\}\mid_{z_{im}^{\theta,\rho,\epsilon}(t)}dt  \\
 &=\theta+\int_0^{t^{-,\epsilon}(z^{\theta,\rho,\epsilon})}+\int_{t^{+,\epsilon}(z^{\theta,\rho,\epsilon})}^{\tilde{T}_1^{\theta,\rho,\epsilon}}[\omega_2(I(\cdot))+\epsilon\{\theta,V_{c}\}]\mid_{z_{im}^{\theta,\rho,\epsilon}(t)}dt  \\ &=\theta+\int_0^{t^{h,\epsilon}(z^{\theta,\rho,\epsilon})}
 -\int_{t^{-,\epsilon}(z^{\theta,\rho,\epsilon})}^{t^{h,\epsilon}(z^{\theta,\rho,\epsilon})}[]\mid_{z_{sm}^{\theta,\rho,\epsilon}(t)}
 \\&\qquad\ +
 \int_{T_1^{\epsilon,\hat\rho,\hat \theta}-t^{h,\epsilon}(z^{\epsilon,\hat\rho,\hat \theta})}^{ T_1^{\epsilon,\hat\rho,\hat \theta}}
-\int^{T_1^{\epsilon,\hat\rho,\hat \theta}-t^{+,\epsilon}(z^{\theta,\rho,\epsilon})}_{ T_1^{\epsilon,\hat\rho,\hat \theta}-t^{h,\epsilon}(z^{\epsilon,\hat\rho,\hat \theta})}[]\mid_{z_{sm}^{\epsilon,\hat\rho,\hat \theta}(t)}dt
\end{split}
\end{equation}
Hence, it is close to the tangent angle image, \(\bar\theta_{tan}(\theta)=\bar{\theta}(z^{\theta,0,\epsilon})\):
\begin{equation}\label{eq:thetbar2}\begin{split}
\bar{\theta}(z^{\theta,\rho,\epsilon})&=\theta+\int_0^{t^{h,\epsilon}(z^{\theta,\rho,\epsilon})}
 []\mid_{z_{sm}^{\theta,\rho,\epsilon}(t)} +
 \int_{T_1^{\epsilon,\hat\rho,\hat \theta}-t^{h,\epsilon}(z^{\epsilon,\hat\rho,\hat \theta})}^{ T_1^{\epsilon,\hat\rho,\hat \theta}}
[]\mid_{z_{sm}^{\epsilon,\hat\rho,\hat \theta}(t)}dt+\mathcal{O}_{C^{r}}(t^{\star},\hat t^{\star})
 \\&=\theta+\int_0^{t^{h,\epsilon}(z^{\theta,0,\epsilon})}
 []\mid_{z_{sm}^{\theta,0,\epsilon}(t)} +
 \int_{T_1^{\epsilon,0,\hat \theta}-t^{h,\epsilon}(z^{\epsilon,0,\hat \theta})}^{ T_1^{\epsilon,0,\hat \theta}}
[]\mid_{z_{sm}^{\epsilon,0,\hat \theta}(t)}dt+\mathcal{O}_{C^{r}}(t^{\star},\hat t^{\star},\rho,\hat\rho)
 \\&=\bar{\theta}(z^{\theta,0,\epsilon})+\mathcal{O}_{C^{r}}(\hat \theta - \theta ,t^{\star},\hat t^{\star},\rho,\hat\rho).
\end{split}
\end{equation}Thus, we established the claimed result for \(\bar I\).

The form of \(I^{w}\) follows by the same approximations as in (\ref{eq:ibarcalc2}):
\begin{equation}\begin{split}
I^{w}(z^{\theta,\rho,\epsilon})&=I(\Pi z^{\theta,\rho,\epsilon})+\int_0^{t^{-,\epsilon}(z^{\theta,\rho,\epsilon})}\{I,H_{int}+\epsilon V_c\}\mid_{z_{im}^{\theta,\rho,\epsilon}(t)} dt \\
&=I+\epsilon\int_0^{\frac{1}{2}T_1^{tan}}\{I,V_c\}\mid_{z_{sm}^{\theta,0,0}(t)} dt+\mathcal{O}_{C^{r}}(\rho\epsilon,t^{\star}\epsilon,\epsilon^{2}).
\end{split}
\end{equation}
and similarly\begin{equation}\begin{split}
I^{w}(z^{\theta,\rho,\epsilon})&=I(\Pi z^{\hat\theta,\hat\rho,\epsilon})+\int_0^{ T_1^{\hat\theta,\hat\rho,\epsilon}-t^{+,\epsilon}(z^{\theta,\rho,\epsilon})}\{I,H_{int}+\epsilon V_c\}\mid_{z_{sm}^{\hat\theta,\hat\rho,\epsilon}(t)} dt \\
&=\hat I+\int_0^{ t^{h,\epsilon}(z^{\hat\theta,\hat\rho,\epsilon})+\hat t^{\star}}\{I,H_{int}+\epsilon V_c\}\mid_{z_{sm}^{\hat\theta,\hat\rho,\epsilon}(t)} dt \\
&=\hat I+\epsilon\int_0^{\frac{1}{2}T_1^{tan}}\{I,V_c\}\mid_{z_{sm}^{\hat \theta,0,0}(t)} dt+\mathcal{O}_{C^{r}}(\hat \rho\epsilon,\hat t^{\star}\epsilon,\epsilon^{2}).
\end{split}
\end{equation}

Finally, notice that to leading order in \(\epsilon\), Eq. (\ref{eq:thetbar1}) becomes\begin{equation}
\begin{split}
 \bar{\theta}(z^{\theta,\rho,\epsilon})&=\theta+\int_0^{t^{h,0}(z^{\theta,\rho,0})}-\int_{t^{h,\epsilon}(z^{\theta,\rho,\epsilon})-t^{\star}}^{t^{h,\epsilon}(z^{\theta,\rho,\epsilon})}\omega_2(I(\cdot))\mid_{z_{sm}^{\theta,\rho,0}(t)}dt+\mathcal{O}_{C^{r}}(\epsilon ,\epsilon t^{\star})
\\& \qquad +
 \int_{T_1^{\hat\theta,\hat\rho,0}-t^{h,0}(z^{\hat\theta,\hat\rho,0})}^{ T_1^{\hat\theta,\hat\rho,0}}
-\int^{T_1^{\hat\theta,\hat\rho,\epsilon}-t^{h,\epsilon}(z^{\hat\theta,\hat\rho,\epsilon})+\hat t^{\star}}_{ T_1^{\hat\theta,\hat\rho,\epsilon}-t^{h,\epsilon}(z^{\hat\theta,\hat\rho,\epsilon })}\omega_2(I(\cdot))\mid_{z_{sm}^{\hat\theta,\hat\rho,0}(t)} +\mathcal{O}_{C^{r}}(\epsilon ,\epsilon\hat  t^{\star})
 \\ &\\
&=\theta+\omega_2(I_{tan}(E)-\rho)\cdot(\frac{1}{2}T_1^{\rho}-t^{\star})+\mathcal{O}_{C^{r}}(\epsilon ,\epsilon t^{\star})
\\& \qquad \qquad +\omega_2(I_{tan}(E)-\hat \rho)\cdot( \frac{1}{2}T_1^{\hat \rho} -\hat t^{\star})+\mathcal{O}_{C^{r}}(\epsilon ,\epsilon\hat  t^{\star})\\
&=\theta+\omega_2(I_{tan}(E)-\rho)\cdot (T_1^{\rho}-(t^{\star}+\hat t^{\star}))+\mathcal{O}_{C^{r}}(\epsilon ,\epsilon t^{\star},\epsilon\hat  t^{\star},\rho-\hat \rho,(\rho-\hat \rho)\hat  t^{\star})
\end{split}
\end{equation}

 So we established (compare with Eq. (\ref{eq:thebarnoimpact})):\begin{equation}\label{eq:thetbartstar}\begin{split}
\bar{\theta}(z^{\theta,\rho,\epsilon})&=\theta+\omega_2(I_{tan}(E))\cdot T_1^{tan}(E)-\rho\cdot\tau(I_{tan}(E))-\omega_2(I_{tan}(E))\cdot(t^{\star}+\hat t^{\star})\\&
\qquad \qquad +
 \mathcal{O}_{C^{r-1}}(\epsilon,\rho^2,\rho-\hat \rho,(\rho-\hat \rho)\hat  t^{\star},\epsilon t^{\star},\epsilon\hat  t^{\star}).
\end{split}
\end{equation}
Also,
by the same calculations, the angle at the wall may be approximated by:\begin{equation}\begin{split}
\theta^w(z^{\theta,\rho,\epsilon})&=\theta+\omega_2(I_{tan}(E)-\rho)\cdot(\frac{1}{2} T_1^{\rho}-t^{\star})+
 \mathcal{O}_{C^{r}}(\epsilon,\epsilon t^{\star})\\
&=\theta+\frac{1}{2}\omega_2(I_{tan}(E))\cdot T_1^{tan}(E)-\frac{1}{2}\rho\cdot\tau(I_{tan}(E))-\omega_2(I_{tan}(E))\cdot t^{\star}\\&
\qquad \qquad +
 \mathcal{O}_{C^{r}}(\epsilon,\rho^2,\epsilon t^{\star},\rho t^{\star})
\end{split}
\end{equation}  Integrating backwards from \(\bar{\theta}(z^{\theta,\rho,\epsilon})\) we obtain, by the same arguments, and then by using (\ref{eq:thetbartstar}):\begin{equation}\begin{split}
\theta^w(z^{\hat\theta,\hat\rho,\epsilon})&=\bar{\theta}(z^{\theta,\rho,\epsilon})-\omega_2(I_{tan}(E)-\hat \rho)\cdot(\frac{1}{2} T_1^{\hat \rho}-\hat t^{\star})+
 \mathcal{O}_{C^{r}}(\epsilon,\epsilon\hat  t^{\star})\\
&=\bar{\theta}(z^{\theta,\rho,\epsilon})-\frac{1}{2}\omega_2(I_{tan}(E))\cdot T_1^{tan}(E)+\frac{1}{2}\hat \rho\cdot\tau(I_{tan}(E))+\omega_2(I_{tan}(E))\cdot \hat t^{\star}\\&
\qquad \qquad +
 \mathcal{O}_{C^{r}}(\epsilon,\hat \rho^2,\epsilon\hat  t^{\star},\hat \rho\hat  t^{\star})\\
 &=\theta^w(z^{\theta,\rho,\epsilon})+
 \mathcal{O}_{C^{r-1}}(\epsilon,\rho^2,\rho-\hat \rho,(\rho-\hat \rho)\hat  t^{\star},\epsilon t^{\star},\epsilon\hat  t^{\star},\hat \rho\hat  t^{\star})
\end{split}
\end{equation}

So we indeed established the required forms. \end{proof}
 Next, we find the leading order behavior of \((t^{\star}+\hat t^{\star})\) and establish that all the error terms in Eqs.  (\ref{eq:ibarcalc2}) and (\ref{eq:thetbartstar})  depend smoothly on \(\sqrt{\rho^{w}}\), the square root of the action-distance from tangency at the wall. \begin{lem}\label{lem:tstar}For sufficiently small  $\epsilon$, for any \(z^{w}\in\Sigma^{w}_{E}\),  with \(S_{2}(q_{2}^{w},p^{w}_{2})=(\theta^{w},I^{w}=I_{tan,\Sigma^*_E}^{\epsilon}(\theta^{w})-\rho^{w}) \) and small \(\rho^{w}>0 \), there exist unique flight times, \(t^{\star}\) and \(\hat t^{\star}\) such that   \(z_{min}:=\Phi^{\epsilon}_{t^{\star}}z^w\) and \(\hat z_{min}:=\Phi^{\epsilon}_{-\hat t^{\star}}\mathcal{R}_1z^w\) are in \(\Sigma^{<}_{E}\). The flight times are smooth functions of \(\sqrt{\rho^{w}},\theta^{w}\),\(E\) and \(\epsilon\) and are of the form:\begin{equation}\label{eq:tstarlemma}
t^{\star} ,\hat t^{\star} =
\frac{\sqrt{2\rho^{w}\omega_{2}(I_{tan})}}{-V'_1(q_{1}^{w})}(1+O_{C^{r-1}}(\epsilon,\sqrt{\rho^{w}})),
\end{equation} namely, \( \hat t^{\star} =t^{\star}(1+O_{C^{r-1}}(\epsilon,\sqrt{\rho^{w}})) \) (smoothness here  is w.r.t. \(z^{w}\)).     \end{lem}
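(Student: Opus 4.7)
The plan is to derive $p_1^w$ in terms of $\rho^w$ via energy conservation, then apply the implicit function theorem to the equation $p_1(t)=0$ along the auxiliary smooth flow to obtain $t^\star$ and $\hat t^\star$ with the required regularity, and finally Taylor-expand to read off the leading-order asymptotics.

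First I would combine the energy identity $H(z^w)=E$ with the defining relation of $I_{tan,\Sigma_E^*}^\epsilon(\theta^w)$ from Lemma \ref{lem:tangentcurve}. Subtracting the two identities and using $H_2'=\omega_2$ together with the smooth dependence of $q_2^w$ on $(\theta^w,I^w)$ gives
\begin{equation*}
\tfrac{1}{2}(p_1^w)^2 = \omega_2(I_{tan}(E))\,\rho^w + O_{C^{r-1}}\!\bigl(\epsilon \rho^w,(\rho^w)^2\bigr),
\end{equation*}
and since $p_1^w<0$ on $\Sigma_E^w$,
\begin{equation*}
p_1^w = -\sqrt{2\omega_2(I_{tan}(E))\,\rho^w}\,\bigl(1+O_{C^{r-1}}(\epsilon,\sqrt{\rho^w})\bigr).
\end{equation*}
In particular $p_1^w$ is $C^{r-1}$-smooth as a function of $(\sqrt{\rho^w},\theta^w,E,\epsilon)$, with a simple zero at $\rho^w=0$.

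Next I would set $F(t,z^w,\epsilon):=\pi_{p_1}\Phi^{\epsilon}_{t}z^w$ and locate $t^\star$ as the unique small positive root of $F=0$. From the Hamiltonian equations, $\partial_t F = -V_1'(q_1)-\epsilon\,\partial_{q_1}V_c$, and at $t=0$ this equals $-V_1'(q_1^w)+O(\epsilon)$, which is strictly positive and bounded away from zero by the S3B hypothesis and $q_1^w<0$. Hence $F$ is strictly increasing in $t$ on a fixed-size neighborhood of the origin; since $F(0,z^w,\epsilon)=p_1^w<0$ and an elementary Gronwall-type bound shows $F$ becomes positive once $t$ exceeds a constant multiple of $\sqrt{\rho^w}$, the implicit function theorem supplies a unique $t^\star=t^\star(z^w,\epsilon)$ of class $C^{r}$ solving $F=0$. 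Composing with the $C^{r-1}$ parametrization of $z^w$ by $(\sqrt{\rho^w},\theta^w,E,\epsilon)$ delivers the claimed regularity of $t^\star$. The same argument applied to the backward-in-time analogue $\tilde F(t,z^w,\epsilon):=\pi_{p_1}\Phi^{\epsilon}_{-t}\mathcal{R}_1 z^w$ yields $\hat t^\star$; the initial value is now $-p_1^w>0$, but the monotonicity of $\tilde F$ near $t=0$ is again governed by $-V_1'(q_1^w)>0$, and one obtains the same $O(\sqrt{\rho^w})$ bound.

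Finally I would extract the leading order by a single Taylor expansion of $F$ around $t=0$:
\begin{equation*}
0=F(t^\star,z^w,\epsilon)=p_1^w+\bigl(-V_1'(q_1^w)+O(\epsilon)\bigr)\,t^\star+O_{C^{r-2}}\!\bigl(\sqrt{\rho^w}\,(t^\star)^2\bigr),
\end{equation*}
where the quadratic-in-$t^\star$ remainder inherits the factor $p_1^w=O(\sqrt{\rho^w})$ because $\ddot p_1(0)=-V_1''(q_1^w)\,p_1^w+O(\epsilon)$. Solving for $t^\star$ and substituting the first-step formula for $p_1^w$ yields
\begin{equation*}
t^\star=\frac{\sqrt{2\omega_2(I_{tan}(E))\,\rho^w}}{-V_1'(q_1^w)}\,\bigl(1+O_{C^{r-1}}(\epsilon,\sqrt{\rho^w})\bigr),
\end{equation*}
and the identical expansion for $\tilde F$ gives the same formula for $\hat t^\star$, so in particular $\hat t^\star=t^\star\bigl(1+O_{C^{r-1}}(\epsilon,\sqrt{\rho^w})\bigr)$. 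The main subtlety I anticipate is bookkeeping, not conceptual: the IFT most naturally produces smoothness in the Cartesian variable $p_1^w$, whereas the statement is phrased in the non-smooth variable $\sqrt{\rho^w}$; the square-root character of the conclusion is packaged entirely in the first-step parametrization of $p_1^w$, so the rest of the proof remains a regular perturbation argument.
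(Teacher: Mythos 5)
Your argument is correct and follows the same route as the paper: compute $p_1^w$ from energy conservation to expose the $\sqrt{\rho^w}$ factor, then linearize the flight time via $\dot p_1 = -V_1'(q_1^w)+O(\epsilon)$ (the paper integrates $\dot p_1$ directly over $[0,t^\star]$ rather than Taylor-expanding $F$, and invokes monotonicity of $p_1$ rather than the IFT, but these are the same estimates in different clothing). Your closing remark about the square-root character being carried entirely by the $p_1^w(\rho^w)$ parametrization is exactly how the paper isolates the loss of regularity.
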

\begin{proof}
      Since \(z^{w}=(q_{1}^w,p^{w}_1,q^{w}_{2},p^{w}_2)\in\Sigma^{w}_{E}\), the iso-energy momentum, \(p^{w}_1=- p^{\epsilon}_1(\rho^{w},\theta^{w})\), is given by (the definition of \(\rho^{w}\) implies that  \(p^{\epsilon}_1(0,\theta^{w})=0\)): \begin{displaymath}
\begin{split}
p^{\epsilon}_1(\rho^{w},\theta^{w})&=\sqrt{2(E-V_1(q_1^w)-\frac{(p^{w}_2)^2}{2}-V_2(q^{w}_2)-\epsilon V_c(q_1^w,q_2^{w}(\rho^{w},\theta^{w})))} \\&=\sqrt{2(E-V_1(q_1^w)-H_{2}(I_{tan,\Sigma^*_E}^{\epsilon}(\theta^{w})-\rho^{w})-\epsilon V_c(q_1^w,q^{w}_2(I_{tan,\Sigma^*_E}^{\epsilon}(\theta^{w})-\rho^{w},\theta^{w}))}
\\&=\sqrt{\rho^{w}}\sqrt{2(H_{2}'(I_{tan,\Sigma^*_E}^{\epsilon}(\theta^{w}))+\epsilon \frac{\partial V_c(q_1^w,q^{w}_2(I_{tan,\Sigma^*_E}^{\epsilon}(\theta^{w}),\theta^{w})))}{\partial I})+O_{C^{r-1}}(\rho^{w})}
\\&=\sqrt{\rho^{w}}\sqrt{2\omega_{2}(I_{tan}(E))+O_{C^{r-1}}(\rho^{w},\epsilon)}
\\&
\end{split}\end{displaymath}
 We see that \(p^{\epsilon}_1(\rho^{w},\theta^{w},E)\)  is, to leading order, independent of  \(\theta^{w}\), it is    \(C^{r-1}\) smooth in \(\theta^{w}\) and \(\frac{p^{\epsilon}_1(\rho^{w},\theta^{w},E)}{\sqrt{\rho^{w}}}\) is   \(C^{r-1}\) smooth in \(\rho^{w},\theta^{w}\), namely,  \(p^{\epsilon}_1(\rho^{w},\theta^{w})=
 \mathcal{O}_{C^{r-1}}(\sqrt{\rho^{w}},\epsilon)\).

As \(\dot p_{1}=-V'_1(q_1^w)+ O_{C^{r-1}}(\epsilon,z-z^w )>0\) and  \(p_{1}\) is monotone increasing near \(z^{w}\),  for sufficiently small \(-p_{1}^w\) the first crossing time of \(\Sigma_{E}^<\) (the section \(p_{1}=0,\dot p_{1}>0,q_1<q_1^w\)),  \(t^{\star}\), is uniquely defined and depends smoothly on i.c., namely on \(z^{w}\). Since \(-p_{1}^w= \int_0^{t^{\star}} (-V'_1(q_1^w)+ O_{C^{r-1}}(\epsilon,\Phi^{\epsilon}_{t}z^{w}-z^w ))dt=-t^{\star}V'_1(q_1^w)(1+ O_{C^{r}}(\epsilon, t^{\star}))\), we conclude that\begin{equation}\label{eq:p1epatstar}
t^{\star}=\frac{p^{w}_1}{V'_1(q_1^w)}(1+O_{C^{r}}(\epsilon ,p_{1}^{w}))=\frac{\sqrt{2\rho^{w}\omega_{2}(I_{tan})}}{-V'_1(q_1^w)}(1+\mathcal{O}_{C^{r-1}}(\sqrt{\rho^{w}},\epsilon))
\end{equation} \(\) \

Repeating the same computation for negative time for the initial condition \(\mathcal{R}_1z^w\) provides exactly the same results for \(\hat t^\star\), so, by Eq. (\ref{eq:p1epatstar}),  \(\hat t^\star =t^{\star}(1+\mathcal{O}_{C^{r-1}}(\sqrt{\rho^{w}},\epsilon)) \).       \end{proof}
By uniqueness of solutions, \(t^\star(z^{w})=t^\star(z^{\theta,\rho,\epsilon})\) and \(\hat t^\star(z^{w})=\hat t^\star(z^{\theta,\rho,\epsilon}) \). Hence,
to complete the proof we need to show that to leading order we can  replace in formula (\ref{eq:tstarlemma})  \(\rho^{w}\) by \(\rho\) and that the error terms are small in \(\sqrt{\rho}\).
To relate the smoothness w.r.t. \(z^w\) to smoothness w.r.t. to \(\theta,\rho,\epsilon\) we establish:

\begin{lem}\label{lem:rhorhow}For  sufficiently small  $\epsilon>0$, there exist \(C^{r-2}\) functions \(G_\star, G_{\star \star}\) such that \(\sqrt{\rho^{w}}(\sqrt{\rho},\theta;E,\epsilon)=\sqrt{\rho}(1+\epsilon G_\star(\sqrt{\rho},\theta,E,\epsilon))\) and   \(\hat \rho-\rho=\epsilon\rho^{w} G_{\star \star}(\sqrt{\rho^{w}},\theta,E,\epsilon)=O_{C^{r-2}}(\epsilon(\sqrt{\rho})^2)\).         \end{lem}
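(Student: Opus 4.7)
The plan is to exploit energy conservation at the wall combined with the characterizations of the singularity curve on $\Sigma_E$ and of the wall tangency circle on $\Sigma_E^*$, both of which are defined by vanishing squared horizontal momentum.

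For the first assertion I will compute $(p_1^w)^2$ in two ways. Subtracting the energy identity at $z^w \in \Sigma^w_E$ from the defining identity of the wall tangency circle at the same $\theta^w$ (which exists by Lemma \ref{lem:tangentcurve}) and Taylor expanding in $\rho^w$ yields the clean multiplicative relation
\begin{equation*}
(p_1^w)^2 = 2\rho^w\bigl[\omega_2(I^\epsilon_{tan,\Sigma^*_E}(\theta^w)) + \epsilon\,\partial_I V_c(q_1^w,q_2(I,\theta^w))|_{I=I^\epsilon_{tan,\Sigma^*_E}(\theta^w)}\bigr] + O_{C^{r-1}}((\rho^w)^2),
\end{equation*}
i.e. $(p_1^w)^2 = 2\omega_2(I_{tan})\rho^w\bigl(1 + O_{C^{r-1}}(\rho^w,\epsilon)\bigr)$. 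On the other hand, using the energy identity between $z^{\theta,\rho,\epsilon}$ on $\Sigma_E$ and $z^w$ (with $I^w = I + \epsilon\int_0^{t^{-,\epsilon}}\{I,V_c\}\,dt$) and invoking the defining property of the singularity curve $I^\epsilon_{tan}(\theta)$ from Lemma \ref{lem:tancurvetwist}, one derives
\begin{equation*}
(p_1^w)^2 = 2\omega_2(I_{tan})\rho\bigl(1 + O_{C^{r-2}}(\rho,\epsilon)\bigr),
\end{equation*}
where the key point is that the naively-appearing $\epsilon\sqrt{\rho}$ contribution vanishes: at $\rho=0$ the smooth trajectory is tangent so $(p_1^w)^2\equiv 0$ for all $\epsilon$, which forces the coefficient of $\epsilon\sqrt{\rho}$ in the expansion to be zero. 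Concretely this cancellation reduces to the identity $A_{tan}(\theta^w)\omega_2 = \{I,V_c\}|_{z^w_{tan}}$ where $A_{tan}(\theta^w) = \lim_{\epsilon\to 0}\epsilon^{-1}\partial_{\theta^w}I^\epsilon_{tan,\Sigma^*_E}(\theta^w)$, which follows from the first-order expansion $I^\epsilon_{tan,\Sigma^*_E}(\theta^w) = I_{tan}(E) - \epsilon V_c(q_1^w,q_2(I_{tan},\theta^w))/\omega_2 + O(\epsilon^2)$ and $\partial q_2/\partial\theta = p_2/\omega_2$. Equating the two expressions for $(p_1^w)^2$ and dividing yields $\rho^w = \rho\,(1 + \epsilon\,\tilde g_1(\rho,\theta,\epsilon))$ with $\tilde g_1$ of class $C^{r-2}$, and taking the positive square root (permissible since $1+\epsilon\tilde g_1>0$ for $\epsilon$ small) gives the stated $\sqrt{\rho^w} = \sqrt{\rho}(1 + \epsilon G_\star(\sqrt{\rho},\theta,E,\epsilon))$.

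For the second assertion I observe that the elastic reflection $\mathcal R_1$ fixes $(q_2,p_2)$ and flips $p_1$, so both the wall angle $\theta^w$ and the squared momentum $(p_1^w)^2$ (and hence $\rho^w$) at $\mathcal R_1 z^w$ coincide with their values at $z^w$. By the construction of $\hat z_{sm}$, the smooth trajectory from $z^{\hat\theta,\hat\rho,\epsilon}\in\Sigma_E$ reaches $\mathcal R_1 z^w$ tangentially at the wall, so applying the first assertion to $(\hat\theta,\hat\rho)$ produces
\begin{equation*}
\sqrt{\rho^w} = \sqrt{\hat\rho}\,(1 + \epsilon\, G_\star(\sqrt{\hat\rho},\hat\theta,E,\epsilon)).
\end{equation*}
Equating this with the analogous expression in $(\sqrt{\rho},\theta)$ gives an implicit equation for $\sqrt{\hat\rho}$ whose linearization in $\sqrt{\hat\rho}$ at $\epsilon=0$ equals $1$, so the implicit function theorem furnishes a $C^{r-2}$ solution $\sqrt{\hat\rho} - \sqrt{\rho} = \epsilon\sqrt{\rho}\,\tilde h(\sqrt{\rho},\theta,\epsilon)$. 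Squaring and then replacing the resulting prefactor $\rho$ by $\rho^w(1 + O(\epsilon))$ (from the first assertion) expresses the difference in the claimed form $\hat\rho - \rho = \epsilon\rho^w\,G_{\star\star}(\sqrt{\rho^w},\theta,E,\epsilon)$, which is manifestly $O_{C^{r-2}}(\epsilon(\sqrt{\rho})^2)$.

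The main obstacle is the non-appearance of an $\epsilon\sqrt{\rho}$ term in $\rho^w - \rho$. The wall-crossing time $t^{-,\epsilon}$ is only $\sqrt{\rho}$-smooth (Lemma \ref{lem:tstar}-type behaviour), and the Melnikov-type $\epsilon$-correction to $I^w$ inherits this $\sqrt{\rho}$-dependence; similarly $\theta^w$ carries an $\epsilon$-independent $\sqrt{\rho}$ singularity. A naive estimate would therefore admit an $\epsilon\sqrt{\rho}$ drift in $\rho^w - \rho$ and destroy the multiplicative form. The cancellation of these singular contributions is not coincidental but reflects the structural compatibility between the definitions of $\sigma^\epsilon$ and of $\sigma^\epsilon_{wall}$; verifying this compatibility in the regularity class $C^{r-2}$ (with an explicit control of the remainders in $O_{C^{r-2}}$) is the technical heart of the argument.
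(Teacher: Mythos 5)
Your approach is genuinely different from the paper's. You equate two computations of $(p_1^w)^2$ — one via the wall tangency circle (essentially Lemma~\ref{lem:tstar}), one via the flow from $\Sigma_E$ and the defining identity of the singularity curve — whereas the paper sets up the coupled implicit system (\ref{eq:ibarcalc6}) for $(\theta,\rho)$ in terms of $(\theta^w,\sqrt{\rho^w})$, solves by the implicit function theorem to get $\rho=\rho^w+\epsilon\sqrt{\rho^w}\,\tilde g_\star$, and then rules out $\tilde g_\star(0,\theta)\ne0$ by combining positivity of $\rho$ with area preservation. Your identity $A_{tan}(\theta^w)\omega_2=\{I,V_c\}|_{z^w_{tan}}$ does hold: both sides equal $-\frac{p_2}{\omega_2}\,\partial_{q_2}V_c(q_1^w,q_2)$, and it isolates the leading-order mechanism of the cancellation explicitly, which the paper never does. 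That is a genuine addition. However, there are two gaps.

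First, the stated justification for the vanishing of the $\epsilon\sqrt{\rho}$ term is not a valid inference. A series $(p_1^w)^2 = c_{1/2}(\theta,\epsilon)\sqrt{\rho}+c_1(\theta,\epsilon)\rho+\dots$ with $c_{1/2}\ge 0$ also vanishes at $\rho=0$ for all $\epsilon$ and is non-negative for small $\rho>0$, so ``$(p_1^w)^2\equiv 0$ at $\rho=0$'' by itself does not kill $c_{1/2}$. Your Melnikov identity verifies the cancellation at leading order in $\epsilon$ and $\sqrt{\rho}$, but the lemma requires $\rho^w=\rho(1+\epsilon G_\star)$ in full $C^{r-2}$ regularity; higher-order half-integer powers of $\rho$ with $\epsilon$-prefactors are not excluded by your argument. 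This is exactly what the paper's positivity-plus-area-preservation argument accomplishes in one step, and is the reason it appears in the proof. (A self-contained fix for your route: the overshoot $q_{1,\min}-q_1^w$ is a $C^r$ function of $(\theta,\rho,\epsilon)$ vanishing linearly at $\rho=0$, and $(p_1^w)^2$ equals a smooth positive multiple of this overshoot, so $(p_1^w)^2$ is smooth in $\rho$, not merely in $\sqrt\rho$. This removes all fractional powers at once.)

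Second, the $\hat\rho-\rho$ argument has a gap. You say the smooth trajectory from $z^{\hat\theta,\hat\rho,\epsilon}$ ``reaches $\mathcal R_1 z^w$ tangentially at the wall''; this is false, since the normal momentum at $\mathcal R_1 z^w$ is $-p_1^w\ne 0$. More seriously, ``applying the first assertion to $(\hat\theta,\hat\rho)$'' gives the relation between $\hat\rho$ and the action-distance from tangency at the \emph{first} (inward) wall crossing of $z_{sm}^{\hat\theta,\hat\rho,\epsilon}$, which is some $\tilde z^w$ with $p_1<0$, \emph{not} the outward crossing $\mathcal R_1 z^w$. A priori $\rho^w(\tilde z^w)\neq\rho^w(\mathcal R_1 z^w)$, and bounding their difference again requires the very estimate you are proving. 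The paper avoids this by re-deriving the implicit system for the backward flow from $\mathcal R_1 z^w$ (Eqs.~(\ref{eq:ibarcalc3hat})--(\ref{eq:thetaminusthetatanhat})), which yields a distinct function $\hat g_\star$. In your framework you would analogously need to re-run the second computation of $(p_1^w)^2$ along the segment from $z^{\hat\theta,\hat\rho,\epsilon}$ to the outward crossing $\mathcal R_1 z^w$, rather than invoking the inward-crossing result for $(\hat\theta,\hat\rho)$.
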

    \begin{proof} Recall that  \(z^{w}=\Phi^{\epsilon}_{t^{-,\epsilon}(z)}z^{\theta,\rho,\epsilon}=
    \mathcal{R}_1\Phi^{\epsilon}_{T_1^{\hat \theta,\hat\rho,\epsilon}-t^{+,\epsilon}(z)}z^{\hat \theta,\hat\rho,\epsilon}\) where \(\Pi z^{w}=(\theta^{w},I^{w}=I_{tan,\Sigma^*_E}^{\epsilon}(\theta^{w})-\rho^{w})\).
Let  \(\Pi z_{tan}^{w}=(I_{tan,\Sigma^*_E}^{\epsilon}(\theta^{w}),\theta^{w})\) denote the wall point on the tangency curve with the same angle, \(\theta^{w}\) as of \(z^{w}\). Since the  tangency curve at the wall depends smoothly on \(\theta^{w}\), as is the flight time between \(\Sigma_E\) and  \(\Sigma_{E}^{>}\cup\Sigma_{E}^<\),  the singularity-line backward angle is a  \(C^{r}\) function of  \(\theta^{w}\) :  \(\theta_{tan}(\theta^{w},\epsilon)=\theta(\Phi^{\epsilon}_{-t^{h,\epsilon}(z^{\theta_{tan},0,\epsilon})} z_{tan}^{w})\). By definition\begin{displaymath}
I_{tan,\Sigma^*_E}^{\epsilon}(\theta^{w})=I_{tan}^{\epsilon}(\theta_{tan})+\epsilon\int_0^{t^{h,\epsilon}(z^{\theta_{tan},0,\epsilon})}\{I,V_c\}\mid_{z_{sm}^{\theta_{tan},0,\epsilon}(t)} dt, \quad\theta^{w}=\theta(\Phi^{\epsilon}_{t^{h,\epsilon}(z^{\theta_{tan},0,\epsilon})}z^{\theta_{tan},0,\epsilon}).
\end{displaymath}
Since \(z^{w}=\Phi^{\epsilon}_{t^{-,\epsilon}(z)}z^{\theta,\rho,\epsilon}\) we also obtain: \begin{equation}
{I^w}=I_{tan}^{\epsilon}(\theta)-\rho+\epsilon\int_0^{t^{-,\epsilon}(z^{\theta,\rho,\epsilon})}\{I,V_c\}\mid_{z_{sm}^{\theta,\rho,\epsilon}(t)} dt
\end{equation} so, as \(\rho^{w}=I_{tan,\Sigma^*_E}^{\epsilon}(\theta^{w})-I^{w}\), we get:
\begin{equation}\label{eq:rominusrowexact}\begin{split}
\rho^{w}&=\rho +I_{tan}^{\epsilon}(\theta_{tan})-I_{tan}^{\epsilon}(\theta)\\&\qquad+\epsilon\int_0^{t^{h,\epsilon}(z^{\theta_{tan},0,\epsilon})}\{I,V_c\}\mid_{z_{sm}^{\theta_{tan},0,\epsilon}(t)} dt-\epsilon\int_0^{t^{-,\epsilon}(z^{\theta,\rho,\epsilon})}\{I,V_c\}\mid_{z_{sm}^{\theta,\rho,\epsilon}(t)} dt, \end{split}
\end{equation}
  Recall that \(z_{sm}^{\theta,\rho,\epsilon}(t)=z_{sm}^{\theta,0,\epsilon}(t)+\mathcal{O}_{C^{r}}(\rho)
=z_{sm}^{\theta_{ tan},0,\epsilon}(t)+\mathcal{O}_{C^{r}}(\rho,\theta-\theta_{tan})\), and that its travel time to the transverse section \(\Sigma_E^<\), \(t^{h,\epsilon}(z^{\theta,\rho,\epsilon})\), also depends smoothly on \(\theta,\rho\).   Hence,
  \begin{equation}\label{eq:ibarcalc3}
\begin{split}
{\rho^{w}-\rho}&=I_{tan}^{\epsilon}(\theta_{tan})-I_{tan}^{\epsilon}(\theta)\\&+\epsilon\int_0^{t^{h,\epsilon}(z^{\theta_{tan},0,\epsilon})}\{I,V_c\}\mid_{z_{sm}^{\theta_{tan},0,\epsilon}(t)} dt-\epsilon\int_0^{t^{h,\epsilon}(z^{\theta,0,\epsilon})}\{I,V_c\}\mid_{z_{sm}^{\theta,0,\epsilon}(t)} dt+O_{C^{r}}(\epsilon\rho)\\
& \qquad \qquad+\epsilon \int_{0}^{ t^{\star}(\epsilon,\sqrt{\rho^{w}},\theta^{w})}\{I,V_c\}\mid_{\Phi^{\epsilon}_{t}(z^{w})} dt\\
& =\epsilon \int_{0}^{ t^{\star}(\epsilon,\sqrt{\rho^{w}},\theta^{w})}\{I,V_c\}\mid_{\Phi^{\epsilon}_{t}(z^{w})} dt+\epsilon(\theta-\theta_{tan}(\theta^{w}))\tilde g(\theta,\theta^{w},\epsilon)+O_{C^{r}}(\epsilon\rho)\\
& =\epsilon t^{\star}g_{1}(t^{\star},z^{w})+\epsilon (\theta-\theta_{tan}(\theta^{w}))g_{2}(\theta,\theta^{w},\epsilon)+O_{C^{r}}(\epsilon\rho)
\end{split}
\end{equation}
where \(\tilde g,g_{1},g_2\) are  \(C^{r-1}\) functions of their arguments. Notice that:\begin{equation}\label{eq:thetatan1}\begin{split}
\theta_{tan}(\theta^{w},\epsilon)&=\theta^{w}-\int_0^{t^{h,\epsilon}(z^{\theta_{tan},0,\epsilon})}[\omega_2(I(\cdot))+\epsilon\{\theta,V_{c}\}]\mid_{z_{sm}^{\epsilon,0,\theta_{tan}}(t)}dt) \\
& =\theta^{w}-\frac{1}{2}\omega_2(I_{tan}(E))T_1^{tan}(E)+O_{C^{r}}(\epsilon),
\end{split}
\end{equation}
Hence, \(\theta_{tan}'(\theta^{w},\epsilon)=1+O_{C^{r}}(\epsilon) \), and,
\begin{equation}\label{eq:thetaminusthetatan}\begin{split}
\theta({\rho^{w}},\theta^{w},\epsilon)-\theta_{tan}&(\theta^{w},\epsilon)=\theta^{w}-\int_0^{t^{-,\epsilon}(z^{\theta,\rho,\epsilon})}[\omega_2(I(\cdot))+\epsilon\{\theta,V_{c}\}]\mid_{z_{sm}^{\theta,\rho,\epsilon}(t)}dt\\
&\qquad\qquad-(\theta^{w}-\int_0^{t^{h,\epsilon}(z^{\theta_{tan},0,\epsilon})}[]\mid_{z_{sm}^{\theta_{tan},0,\epsilon}(t)}dt) \\
&= \int_0^{t^{h,\epsilon}(z^{\theta_{tan},0,\epsilon})}[]\mid_{z_{sm}^{\theta_{tan},0,\epsilon}(t)}dt-\int_0^{t^{h,\epsilon}(z^{\theta,\rho,\epsilon})-t^{\star}}[]\mid_{z_{sm}^{\theta,\rho,\epsilon}(t)}dt
\\
&= \int_0^{t^{h,\epsilon}(z^{\theta_{tan},0,\epsilon})}[]\mid_{z_{sm}^{\theta_{tan},0,\epsilon}(t)}dt-
\int_0^{t^{h,\epsilon}(z^{\theta,\rho,\epsilon})}[]\mid_{z_{sm}^{\theta,\rho,\epsilon}(t)}dt\\
&\qquad\qquad+\int_0^{t^{\star}}[]\mid_{z_{sm}^{\theta,\rho,\epsilon}(t^{h,\epsilon}(z^{\theta,\rho,\epsilon})-t^{\star}+t)}dt
\\
&= \int_0^{t^{h,0}(z^{\theta_{tan},0,0})}\omega_2(I(\cdot))\mid_{z_{sm}^{0,0,\theta_{tan}}(t)}dt-\int_0^{t^{h,0}(z^{\theta,\rho,0})}\omega_2(I(\cdot))\mid_{z_{sm}^{\theta,\rho,0}(t)}dt+O_{C^{r}}(\epsilon)\\
&\qquad\qquad+\int_0^{t^{\star}}[]\mid_{z_{sm}^{\theta,\rho,\epsilon}(t^{h,\epsilon}(z^{\theta,\rho,\epsilon})-t^{\star}+t)}dt \\&=-\frac{1}{2}\rho\cdot\tau(I_{tan}(E))+\int_0^{t^{\star}}\omega_2(I(\cdot))\mid_{z_{sm}^{\theta,\rho,\epsilon}(t^{h,\epsilon}(z^{\theta,\rho,\epsilon})-t^{\star}+t)}dt+\mathcal{O}_{C^{r}}(\epsilon,\rho^{2},\epsilon t^{\star})\\
 &=-\frac{1}{2}\rho\cdot\tau(I_{tan}(E))+\omega_2(I_{tan}(E))\cdot t^{\star} +\mathcal{O}_{C^{r}}(\epsilon,\epsilon t^{\star},\rho t^{\star},\rho^{2}).
\end{split}
\end{equation}  so (\ref{eq:ibarcalc3}) and (\ref{eq:thetaminusthetatan}) become:
  \begin{equation}\label{eq:ibarcalc6}
\begin{split}
{\rho}&+O_{C^{r}}(\epsilon\rho)=\rho^{w}-\epsilon t^{\star}g_{1}(t^{\star},z^{w})-\epsilon (\theta-\theta_{tan}(\theta^{w}))g_{2}(\theta,\theta^{w},\epsilon)
\\
\theta&=\theta_{tan}(\theta^{w})+\omega_2(I_{tan}(E))\cdot t^{\star} -\frac{1}{2}\rho\cdot\tau(I_{tan}(E))+\mathcal{O}_{C^{r}}(\epsilon,\epsilon t^{\star},\rho t^{\star},\rho^{2}).\end{split}
\end{equation}
By proposition \ref{prop:singularcurve}, the solution to this set of equations corresponds to the map which maps uniquely  an iso-energy impacting i.c. (\((\theta,\rho), \rho>0)\)  to an impacting phase-point on the wall  (\((\rho^{w},\theta^{w}), \rho^{w}>0)\) and a tangent i.c. (\((\theta,\rho)=(\theta_{tan}(\theta^{w}),0) \) to a phase point on the wall which belongs to the tangency curve -  the point \((\rho^{w}=0,\theta^{w})\). At \(\epsilon=0\), we  have \({\rho}=\rho^{w},\theta=\theta_{tan}(\theta^{w})+\omega_2(I_{tan}(E))\cdot t^{\star}-\frac{1}{2}\rho\cdot\tau(I_{tan}(E))+\mathcal{O}_{C^{r}}(\rho^{2})  \). By lemma \ref{lem:tstar}, \(t^{\star}=\frac{\sqrt{2\rho^{w}\omega_{2}(I_{tan})}}{-V'_1(q_{1}^{w})}(1+\mathcal{O}_{C^{r-1}}(\epsilon,\sqrt{\rho^{w}}))\), so \(\frac{\partial t^{\star}}{\partial\theta^{w}}=\mathcal{O}_{C^{r-2}}(\epsilon\sqrt{\rho^{w}},\rho^{w})\) hence, for small \((\epsilon,\sqrt{\rho^{w}})\), by the Implicit function Theorem for (\ref{eq:ibarcalc6}), viewed as a set of \(C^{r-1}\) smooth implicit equations of the form \(F(\theta,\rho,\sqrt{\rho^w},\theta^w;\epsilon)=0\),  and noticing that \(\left\vert \frac{\partial F}{\partial(\theta,\rho^{w})}\right\vert=\det\begin{pmatrix}1+O_{C^{r}}(\epsilon) & \mathcal{O}_{C^{r-1}}(\epsilon,\epsilon\sqrt{\rho^{w}}) \\
\frac{1}{2}\tau(I_{tan}(E))+\mathcal{O}_{C^{r-1}}(\sqrt{\rho^{w}},\rho) & -\theta_{tan}'(\theta^{w},\epsilon)+\mathcal{O}_{C^{r-1}}(\epsilon\sqrt{\rho^{w}}) \\
\end{pmatrix}=-1+\mathcal{O}_{C^{r-1}}(\epsilon)\)), there exist a smooth \(C^{r-1}\) function
 \(\tilde g_{\star}\) such that
  \begin{equation}\label{eq:ibarcalc7}
\begin{split}
{\rho}(\sqrt{\rho^{w}},\theta,E,\epsilon)&=\rho^{w}+\epsilon\sqrt{\rho^{w}}\tilde g_{\star}(\sqrt{\rho^{w}},\theta;E,\epsilon)\\&=\epsilon\sqrt{\rho^{w}}
\tilde g_{\star}(0,\theta;E,\epsilon)+\rho^{w}(1+\epsilon\frac{\partial\tilde g_{\star}(0,\theta;E,\epsilon)}{\partial\sqrt{\rho^{w}}} )+\mathcal{O}_{C^{r-3}}(\epsilon(\sqrt{\rho^{w}})^{3})
\\
\theta^{w}(\sqrt{\rho^{w}},\theta,E,\epsilon)&=\theta+\frac{1}{2}\omega_2(I_{tan}(E))T_1^{tan}(E)-\omega_2(I_{tan}(E))\cdot \frac{\sqrt{2\rho^{w}\omega_{2}(I_{tan})}}{-V'_1(q_{1}^{w})}\\&\qquad+\mathcal{O}_{C^{r-1}}(\epsilon\sqrt{\rho^{w}},(\rho^{w})^{2})
\end{split}
\end{equation}
and indeed \(\rho\) and \(\theta^{w}\) depend smoothly on \(\sqrt{\rho^{w}}\) (but not on \(\rho^{w}\)). Since the solution must map the upper half plane to the upper half plane, and since for very small \(\sqrt{\rho^{w}}\)  the  term \(\epsilon\sqrt{\rho^{w}}
\)  dominates \({\rho}(\sqrt{\rho^{w}},\theta,E,\epsilon)\), we conclude that for all \(\theta\) and sufficiently small \(\epsilon\), \(\tilde g_{\star}(0,\theta;E,\epsilon)\geqslant0 \). Thus, by area preservation of the flow this term must vanish. Hence,  \({\rho}(\sqrt{\rho^{w}},\theta,E,\epsilon)=\rho^{w}+\epsilon\rho^{w} g_{\star}(\sqrt{\rho^{w}},\theta;E,\epsilon)\) where  \(g_{\star}(\sqrt{\rho^{w}},\theta;E,\epsilon)\) is  \(C^{r-2}\), namely,  \(\sqrt{\rho}(\sqrt{\rho^{w}},\theta;E,\epsilon)=\sqrt{\rho^{w}}(1+\frac{1}{2}\epsilon g_{\star}(\sqrt{\rho^{w}},\theta;E,\epsilon)+\mathcal{O}_{C^{r-2}}(\epsilon^{2}))\) and  \(\frac{\partial (\sqrt{\rho})}{\partial \sqrt{\rho^{w}}}\)  is close to one. It follows that there exists an inverse function of the form:\begin{equation}
\sqrt{\rho^{w}}(\sqrt{\rho},\theta;E,\epsilon)=\sqrt{\rho}(1+\epsilon G_\star(\sqrt{\rho},\theta,E,\epsilon)).
\end{equation}

Similarly,  integrating \(z^{\epsilon,\hat \rho,\hat \theta}(t)\) up to \(\mathcal{R}_1z^w\) (recall that \((I(\mathcal{R}_1z^w),\theta(\mathcal{R}_1z^w))=(I^w,\theta^{w})\) since  \(\mathcal{R}_1\) denotes a reflection in the \((q_{1},p_1)\) plain), we obtain: \begin{equation}
{I^w}=I_{tan}^{\epsilon}(\hat \theta)-\hat \rho+\epsilon\int_0^{t^{h,\epsilon}
(z^{\hat \theta,\hat \rho,\epsilon})+\hat t_{\star}}\{I,V_c\}\mid_{z_{sm}^{\hat \theta,\hat \rho,\epsilon}(t)} dt
\end{equation} so (\ref{eq:ibarcalc3}) becomes:
  \begin{equation}\label{eq:ibarcalc3hat}
\begin{split}
\rho^{w}-\hat \rho &=I_{tan}^{\epsilon}(\theta_{tan})-I_{tan}^{\epsilon}(\hat \theta)\\&+\epsilon\int_0^{t^{h,\epsilon}(z^{\epsilon,0,\theta_{tan}})}\{I,V_c\}\mid_{z_{sm}^{\epsilon,0,\theta_{tan}}(t)} dt-\epsilon\int_0^{t^{h,\epsilon}(z^{\epsilon,0,\hat \theta})}\{I,V_c\}\mid_{z_{sm}^{\epsilon,0,\hat \theta }(t)} dt+\mathcal{O}_{C^{r}}(\epsilon\hat \rho)\\
& \qquad \qquad-\epsilon \int_{0}^{ \hat t^{\star}(\epsilon,\sqrt{\rho^{w}},\theta^{w})}\{I,V_c\}\mid_{\Phi^{\epsilon}_{-t}((\mathcal{R}_1z^{w})} dt\\
& =-\epsilon \int_{0}^{ \hat t^{\star}(\epsilon,\sqrt{\rho^{w}},\theta^{w})}\{I,V_c\}\mid_{\Phi^{\epsilon}_{-t}(\mathcal{R}_1z^{w})} dt+\epsilon(\hat \theta-\theta_{tan}(\theta^{w}))\hat g(\hat \theta,\theta^{w},\epsilon)+\mathcal{O}_{C^{r}}(\epsilon\hat \rho)\\
& =\epsilon \hat t^{\star}\hat g_{1}(\hat t^{\star},z^{w})+\epsilon(\hat \theta-\theta_{tan}(\theta^{w}))\hat g_{2}(\hat \theta,\theta^{w},\epsilon)+\mathcal{O}_{C^{r}}(\epsilon\hat \rho)
\end{split}
\end{equation}and   (\ref{eq:thetaminusthetatan})  becomes
\begin{equation}\label{eq:thetaminusthetatanhat}\begin{split}
\hat \theta-\theta_{tan}&(\theta^{w},\epsilon)=\theta^{w}-\int_0^{t^{h,\epsilon}(z^{\hat \theta,\hat \rho,\epsilon})+\hat t_{\star}}[\omega_2(I(\cdot))+\epsilon\{\theta,V_{c}\}]\mid_{z_{sm}^{\hat \theta,\hat \rho,\epsilon}(t)}dt\\
&\qquad\qquad-(\theta^{w}-\int_0^{t^{h,\epsilon}(z^{\theta_{tan},0,\epsilon})}[]\mid_{z_{sm}^{\theta_{tan},0,\epsilon}(t)}dt) \\
&= \int_0^{t^{h,\epsilon}(z^{\theta_{tan},0,\epsilon})}[]\mid_{z_{sm}^{\theta_{tan},0,\epsilon}(t)}dt-\int_0^{t^{h,\epsilon}(z^{\hat \theta,\hat \rho,\epsilon})+\hat t^{\star}}[]\mid_{z_{sm}^{\hat \theta,\hat \rho,\epsilon}(t)}dt
\\
&
\\
&= \int_0^{t^{h,0}(z^{\theta_{tan},0,0})}\omega_2(I(\cdot))\mid_{z_{sm}^{\theta_{tan},0,0}(t)}dt-\int_0^{t^{h,0}(z^{\hat \theta,\hat \rho,0})}\omega_2(I(\cdot))\mid_{z_{sm}^{\hat \theta,\hat \rho,0}(t)}dt\\
&\qquad\qquad-\int_0^{\hat t^{\star}}[]\mid_{\Phi^{\epsilon}_{-t}(\mathcal{R}_1z^{w}) }dt+\mathcal{O}_{C^{r}}(\epsilon) \\&=-\frac{1}{2}\hat \rho\cdot\tau(I_{tan}(E))-\int_0^{\hat t^{\star}}\omega_2(I(\cdot))\mid_{\Phi^{\epsilon}_{-t}(\mathcal{R}_1z^{w}) }dt+\mathcal{O}_{C^{r}}(\epsilon,\hat \rho^{2},\epsilon\hat  t^{\star})\\
 &=-\frac{1}{2}\hat \rho\cdot\tau(I_{tan}(E))-\omega_2(I_{tan}(E))\cdot\hat  t^{\star} +\mathcal{O}_{C^{r}}(\epsilon,\epsilon\hat  t^{\star},\hat \rho \hat  t^{\star},\hat \rho^{2}).
\end{split}
\end{equation} So, we obtain again implicit equations of the form (\ref{eq:ibarcalc6}) for \((\hat \rho,\hat \theta,\sqrt{\rho^{w}},\theta^{w})\) and by the IFT (w.r.t to \((\hat \rho,\hat \theta)\), we conclude that there exist unique smooth solutions \((\hat \rho(\sqrt{\rho^{w}},\theta^{w},\epsilon) ,\hat \theta(\sqrt{\rho^{w}},\theta^{w},\epsilon))\)  and a function \( \tilde {\hat g}_{\star}(\sqrt{\rho^{w}},\theta^{w};E,\epsilon)\) such that \(
\hat {\rho}(\sqrt{\rho^{w}},\theta^{w},E,\epsilon)=\rho^{w}+\epsilon\sqrt{\rho^{w}}\tilde {\hat g}_{\star}(\sqrt{\rho^{w}},\theta^{w};E,\epsilon)
\). Area preservation again implies that 
 \(\tilde {\hat g}_{\star}(0,\theta^{w};E,\epsilon)\) must vanish for sufficiently small \(\epsilon\), so  \(\hat {\rho}(\sqrt{\rho^{w}},\theta^{w},E,\epsilon)=\rho^{w}(1+\epsilon \hat g_{\star}(\sqrt{\rho^{w}},\theta^{w};E,\epsilon))\) for some \(C^{r-2}\) function \( \hat g_{\star}\). Hence, using the above established solution of  \((\theta,\rho^{w})\)  as a function of \((\sqrt{\rho^{w}},\theta)\), we conclude that   \(\hat \rho-\rho=\epsilon\rho^{w}( \hat g_{\star}(\sqrt{\rho^{w}},\theta^{w}(\sqrt{\rho^{w}},\theta,E,\epsilon);E,\epsilon)-g_{\star}(\sqrt{\rho^{w}},\theta;E,\epsilon)):= \epsilon\rho^{w} G_{\star \star}(\sqrt{\rho^{w}},\theta,E,\epsilon)=O_{C^{r-2}}(\epsilon(\sqrt{\rho})^2).\)   \end{proof}

Now we complete the proof of Proposition \ref{lem:pertreturnmap}: By lemmas \ref{lem:tstar} and  \ref{lem:rhorhow},  \(t^{\star}+\hat t^{\star}=\frac{2\sqrt{2\rho^{w}\omega_{2}(I_{tan})}}{-V'_1(q_{1}^{w})}(1+O_{C^{r-1}}(\epsilon,\sqrt{\rho^{w}}))=\frac{2\sqrt{2\rho\omega_{2}(I_{tan})}}{|V'_1(q_{1}^{w})|}(1+O_{C^{r-2}}(\epsilon,\sqrt{\rho^{}}))\).
  Substituting this expression for \(t^{\star}+\hat t^{\star}\)  in (\ref{eq:leadingorder}) the leading order behavior of (\ref{eq:neartanItheta}b) is established. From the equations of motion and lemma \ref{lem:rhorhow}
 \begin{equation}\label{eq:thetaminusthetahat}\begin{split}
\theta-\hat \theta
 &=-\int_0^{t^{h,\epsilon}(z^{\theta,\rho,\epsilon})-t^{\star}}[\omega_2(I(\cdot))+\epsilon\{\theta,V_{c}\}]\mid_{\Phi^{\epsilon}_{-t}(z^{w})}dt+\int_0^{t^{h,\epsilon}(z^{\theta,\rho,\epsilon})+\hat t^{\star}}[]\mid_{\Phi^{\epsilon}_{-t}((\mathcal{R}_1z^{w})}dt\\&=\mathcal{O}_{C^{r}}(t^{\star},  \hat  t^{\star},|\Phi^{\epsilon}_{t^{\star}}z^{w}-\Phi^{\epsilon}_{-\hat t^{\star}}(\mathcal{R}_1z^{w})|)\\&=\mathcal{O}_{C^{r}}(t^{\star},  \hat  t^{\star},\sqrt{\rho^{w}})=\mathcal{O}_{C^{r-2}}(\sqrt{\rho}).
\end{split}
\end{equation}So, by lemma \ref{lem:rhorhow}  the error terms in  (\ref{eq:leadingorder}),  \(\mathcal{O}_{C^{r}}(\rho\epsilon,\hat \rho\epsilon,\epsilon t^{\star},\epsilon\hat  t^{\star},(\theta-\hat \theta)\epsilon)\) and \(\mathcal{O}_{C^{r}}(\epsilon,\rho^2,\rho-\hat \rho,(\rho-\hat \rho)\hat  t^{\star},\epsilon t^{\star},\epsilon\hat  t^{\star})\) all depend smoothly on \(\sqrt{\rho}\) and are of order   \(\mathcal{O}_{C^{r-2}}(\epsilon\sqrt{\rho})\) and \(\mathcal{O}_{C^{r-2}}(\epsilon,\rho^2,\epsilon\sqrt{\rho})\) respectively, so, by the definition of \(\rho\),   the error terms are as in (\ref{eq:neartanItheta}b).

\end{proof}

\subsection{Proof of Theorem \ref{thm:mainthm}} The final form of the local return map near the singularity line follows, by a symplectic change of coordinates, from the form of \(\mathcal{F}_\epsilon^{loc}\) (Eq. (\ref{eq:neartanItheta})).  Let $S(\theta ,K)=K\theta+\int^{\theta}I_{tan}^{\epsilon}(\theta')d\theta'$ denote a generating function of the second kind. Since  \(I_{tan}^{\epsilon}(\theta)\) is a \(C^{r}\) smooth function, \(S(\theta ,K)\) is \(C^{r+1}\). Then:
\begin{equation}\label{eq:Kphitransform}
\begin{split}
I=\frac{\partial S}{\partial\theta}=K+I_{tan}^{\epsilon}(\theta) \Rightarrow K(\theta,I)&=I-I_{tan}^{\epsilon}(\theta) \\
\phi(\theta,I)&=\frac{\partial S}{\partial K}=\theta
\end{split}
\end{equation}
i.e. this is a \(C^{r}\) smooth symplectic transformation for which in the new coordinates $(\phi ,K)$, $K=0$ represents tangency in the perturbed system. The image of $(\theta,I)$ under the local return map is $(\bar{\theta},\bar{I})=\mathcal{F}^{loc}_{\epsilon}(\theta,I)$ and hence $(\bar{\phi},\bar{K})=(\bar{\theta},\bar{I}-I_{tan}^{\epsilon}(\bar{\theta}))$, so, by Equation (\ref{eq:neartanItheta}), using (\ref{eq:gsindef}) and the definition of \(\Omega\) we find:
\begin{subequations}\label{eq:neartangentKphi}
\begin{equation}
K\geq 0: \begin{cases}
\bar{K}=K-I_{tan}^{\epsilon}(\bar{\phi})+\bar{I}_{tan}^{\epsilon}(\bar{\phi})+\mathcal{O}_{C^{r-1}}(\epsilon K) \\
\bar{\phi}=\phi+\Omega+ G_{s}(K)+\mathcal{O}_{C^{r-1}}(\epsilon ,K^{2})
\end{cases}
\end{equation}
\begin{equation}
K<0: \begin{cases}
\bar{K}=K-I_{tan}^{\epsilon}(\bar{\phi})+\bar{I}_{tan}^{\epsilon}(\bar{\phi})+\mathcal{O}_{C^{r-2}}(\epsilon\sqrt{-K} ) \\
\begin{aligned}
\bar{\phi}=\phi+\Omega&+ G_{s}(K)+\mathcal{O}_{C^{r-2}}(\epsilon,K^{2},\epsilon\sqrt{-K} )
\end{aligned}
\end{cases}
\end{equation}
\end{subequations}
As,  $\epsilon f(\bar{\phi})=\bar{I}^{\epsilon}_{tan}(\bar{\phi})-I^{\epsilon}_{tan}(\bar{\phi})$ and \(G_{s}(K)\) is of order \(K\) for positive \(K \) and of order \( \sqrt{-K} \) for negative \(K\), the form (\ref{eq:pert-return}) is established.
\qed

\medskip

\section{Stable dynamics near tangency}\label{sec:stable}
The local return map of section \ref{sec:returnmap} is valid for $I-I_{tan}(E)=o(1)$. For $I-I_{tan}(E)=\mathcal{O}(1)$, it is known that stable, near integrable behavior arises \cite{pnueli2018near}, namely,  away from the singularity line  orbits are bounded away from being tangent and remain, forever (for 2 d.o.f. systems), in either the impacting regime or the non-impacting regime.  The intermediate boundary layer, at  which the local return map (\ref{eq:pert-return}) has  stable, near integrable behavior is identified next: we show that  for sufficiently small $\gamma>0$ and bounded \(k_s\), initial conditions with  $K=k_s\epsilon^\gamma$ do  not cross the singularity line at $K=0$ and their dynamics are  near integrable. The bounds on the values of $\gamma$ differ between the two sides of the tangency curve; For any \((\alpha>0,\nu>0,\sigma\in\{+1,-1\})\),  set\begin{equation}
\gamma_{c}(\Omega,\alpha,\nu ,\sigma)=\begin{cases}1 &  \Omega \text{ is \((\alpha,\nu)\)-Diophantine, } \sigma =\pm1 \\
\frac{1}{2} &  \Omega \text{ is near resonant (not  \((\alpha,\nu)\)-Diophantine),  } \sigma =+1  \\
\frac{2}{3} &  \Omega \text{ is near resonant  (not  \((\alpha,\nu)\)-Diophantine),  } \sigma =-1
\end{cases}
\end{equation}
 where we recall that \((\alpha,\nu)\)-Diophantine means  that  for all integers \((k_1,k_2)\) with  \(k_{2}\neq 0\), \begin{equation}
|\frac{\Omega}{2\pi}-\frac{k_{1}}{ k_{2}}|\geqslant\frac{\alpha}{|k_{2}|^{1+\nu}((\frac{k_{1}}{ k_{2}})^{2}+1)^{\nu/2}}.
\end{equation}
\begin{thm}\label{thm:stability} For any  \(\Omega,\alpha,\nu>0,k_s\ne0,\gamma\in(0,\gamma_{c}(\Omega,\alpha,\nu ,sign(k_{s}))\),   there exists $\epsilon_c=\epsilon_c(k_s,\gamma,\Omega,\alpha,\nu)>0$ such that for any  $\epsilon\in(0,\epsilon_c)$,   the return map (\ref{eq:pert-return}) has an invariant KAM circle which is at a distance     $k_s \epsilon^\gamma$  from the singularity line at  \(K=0\).
\end{thm}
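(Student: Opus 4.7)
The plan is to reduce the claim to a standard application of Moser's twist theorem on each side of the singularity line, with the value of $\gamma_c$ in each case determined by a balance between the effective perturbation size and the distance from resonances. Fix $K_0=k_s\epsilon^\gamma$ on the appropriate side of $K=0$ and, since $\gamma<\gamma_c\le 1$ keeps $K_0$ far above any scale where the error terms $\mathcal{O}_{C^{r-2}}(\epsilon G_s(K))$ and $\mathcal{O}_{C^{r-2}}(\epsilon\sqrt{-K})$ become comparable to the main terms, work in the smooth half of (\ref{eq:pert-return}). Expand $G_s(K)=G_s(K_0)+T_0(K-K_0)+\mathcal{O}((K-K_0)^2)$ with twist $T_0=G_s'(K_0)$ (equal to $\tau+\mathcal{O}(K_0)$ when $k_s>0$, and to $\tfrac{c}{2\sqrt{-K_0}}(1+o(1))$ with $c=(2\omega_2)^{3/2}/|V_1'(q_1^w)|$ when $k_s<0$), rescale $K-K_0=\delta\,y$ with $\delta=1/T_0$ to normalize the twist, and center the angle by subtracting $\Omega+G_s(K_0)$. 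The map becomes a $C^{r-2}$ symplectic twist map of the form
\begin{equation*}
\bar y=y+\tfrac{\epsilon}{\delta}f(\bar\phi)+R_1(\phi,y;\epsilon),\qquad \bar\phi=\phi+\omega_*+y+R_2(\phi,y;\epsilon),
\end{equation*}
with rotation number $\omega_*=\Omega+G_s(K_0)$ and effective perturbation $\epsilon_{\rm eff}:=\epsilon/\delta$, equal to $\mathcal{O}(\epsilon)$ on the positive side and to $\mathcal{O}(\epsilon^{1-\gamma/2})$ on the negative side.

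Next I verify that $\omega_*$ is Diophantine with a constant large enough to carry Moser's theorem with perturbation $\epsilon_{\rm eff}$. The threshold is roughly $\epsilon_{\rm eff}\ll (\text{Diophantine constant})^2$. In the Diophantine case for $\Omega$, the shift $G_s(K_0)$ is $o(1)$, so $\omega_*$ remains $(\tilde\alpha,\nu)$-Diophantine with $\tilde\alpha\ge\alpha/2$, and the KAM inequality $\epsilon_{\rm eff}\lesssim\alpha^2$ is satisfied for all sufficiently small $\epsilon$, giving $\gamma_c=1$ on both sides (on the negative side, $\epsilon_{\rm eff}=\epsilon^{1-\gamma/2}\to 0$ as long as $\gamma<2$, and the binding constraint is instead that the annulus width in $K$ be smaller than $|K_0|$, which forces $\gamma<1$). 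In the resonant case one chooses $\omega_*$ to lie in a Diophantine gap near the nearest resonance to $\Omega$: the relevant Diophantine constant is then $|G_s(K_0)|$, which equals $|\tau|k_s\epsilon^\gamma$ on the positive side and $c\sqrt{k_s}\,\epsilon^{\gamma/2}$ on the negative side. The KAM condition becomes $\epsilon\lesssim(k_s\epsilon^\gamma)^2$ (positive side), i.e.\ $\gamma<1/2$; and $\epsilon^{1-\gamma/2}\lesssim(\sqrt{k_s}\epsilon^{\gamma/2})^2=k_s\epsilon^\gamma$ (negative side), i.e.\ $\gamma<2/3$.

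Once the thresholds are verified I apply Moser's invariant curve theorem to obtain, for each $\epsilon<\epsilon_c(k_s,\gamma,\Omega,\alpha,\nu)$, a $C^1$ invariant graph $y=\eta(\phi)$ with $\|\eta\|\le C\sqrt{\epsilon_{\rm eff}}$; pulling back via $K=K_0+\delta\,\eta(\phi)$ produces an invariant KAM circle of the original map lying in the annulus $\{K=K_0+o(K_0)\}$, which in particular has signed distance $k_s\epsilon^\gamma(1+o(1))$ from the singularity line, as required. The area-preserving, $C^{r-2}$-smooth, near-integrable twist structure is supplied directly by Theorem~\ref{thm:mainthm}, and the remainder terms $R_1,R_2$ inherit $C^{r-2}$ norms that are small relative to $\epsilon_{\rm eff}$ by the rescaling and by $\gamma<\gamma_c$.

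The main obstacle is the negative, resonant case, where three quantities compete simultaneously: the twist $T_0\sim\epsilon^{-\gamma/2}$ diverges, the distance-from-resonance $|G_s(K_0)|$ shrinks like $\epsilon^{\gamma/2}$, and the direct perturbation $\epsilon$ sits in between. The rescaling above is exactly what equalizes the first two so that the third plays the role of the true small parameter $\epsilon_{\rm eff}=\epsilon^{1-\gamma/2}$, and the $2/3$ threshold is the \emph{self-consistent} solution of the KAM inequality $\epsilon_{\rm eff}\lesssim|G_s(K_0)|^2$. A secondary technical point is that the higher-order twist $G_s''(K_0)\sim\epsilon^{-3\gamma/2}$ blows up, but after the $\delta$-rescaling its contribution to $R_2$ is $\mathcal{O}(\delta^2\,G_s''\,y^2)=\mathcal{O}(\epsilon^{-\gamma/2}y^2)$, still negligible on the unit-$y$ annulus for $\gamma$ small, and controllable in general by restricting to the $y$-scale $|y|\lesssim\epsilon^{\gamma/4}$ on which Moser's theorem is applied; this shrinks the invariant annulus but not below the required order, so the conclusion is unaffected.
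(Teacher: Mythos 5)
Your outline follows the same strategy as the paper's proof: fix $K_0=k_s\epsilon^\gamma$, rescale $K-K_0$ so the local twist is normalized, identify the effective perturbation $\epsilon_{\rm eff}\sim\epsilon/\delta$ (giving $\epsilon$ on the non-impacting side and $\epsilon^{1-\gamma/2}$ on the impacting side), and apply Moser's twist theorem, with the thresholds $\gamma<1$, $\gamma<1/2$, $\gamma<2/3$ determined exactly as the paper does by comparing $\epsilon_{\rm eff}$ against the distance from resonance $|G_s(K_0)|$. The paper implements the rescaling with a second free parameter $\beta$ (so the annulus in the rescaled action is of unit size while the $K$-annulus is $\sim\epsilon^{\gamma+\beta}\ll|K_0|$), whereas you use a single rescaling $K-K_0=\delta y$, $\delta=1/T_0\sim\epsilon^{\gamma/2}$; with this choice the domain in $y$ on which one can work before hitting $K=0$ is of order $\epsilon^{\gamma/2}$ (not the $\epsilon^{\gamma/4}$ you write — that bound is too large and would let $K$ change sign, and at $y\sim\epsilon^{\gamma/4}$ the quadratic twist term $\epsilon^{-\gamma/2}y^2$ is only $O(1)$, not small). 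With $|y|\lesssim\epsilon^{\gamma/2}$ the annulus stays impacting, the quadratic remainder is $o(1)$, and the KAM curve of $y$-deviation $O(\epsilon_{\rm eff})$ fits precisely when $\gamma<1$, so your final exponents and conclusion match the paper; the $\epsilon^{\gamma/4}$ figure is the only slip, and it is cosmetic given that your stated binding constraint ($K$-annulus width below $|K_0|$, i.e.\ $\gamma<1$) is the correct one.
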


\begin{proof}
Let  \(\tilde k_s=2\tau k_s, \tilde{\epsilon}=\epsilon \) for \(k_{s}>0\) and  \(\tilde k_s=k_{s},\tilde{\epsilon}=\epsilon^{1-\gamma/2} \) for $k_s<0$.
We show below that   when  the near integrable smooth twist map:
\begin{equation}\label{eq:zaslavsky}
\begin{cases}
\bar{I}=I+\tilde{\epsilon} \tilde{f}(\bar{\phi}) \\
\bar{\phi}=\phi+\Omega+I
\end{cases}
\end{equation}
with a \(C^{r }\) smooth \(2\pi\)-periodic forcing function  $\tilde{f}(\bar{\phi})=\tau f(\bar{\phi})$   has KAM rotational invariant curves that are at a distance     $\tilde k_s \tilde \epsilon^{ \gamma}$  from \(I=0\) for all \( \gamma\in(0, \gamma_{c})\) and \(\tilde{\epsilon}<\tilde \epsilon_c(\tilde k_s,\Omega,\alpha,\nu)\), then the return map (\ref{eq:pert-return}) has also invariant KAM curves that are at a distance   $K=k_s \epsilon^\gamma$ from the singularity line (meaning that both the minimal and maximal distances of the curve from \(I=0\) are bounded from above and below by some constants time \(k_s \epsilon^\gamma\)).

By  standard KAM theory, the near identity smooth twist  map  (\ref{eq:zaslavsky}) has such invariant curves with \( \gamma\in(0,1) \) for Diophantine \(\Omega\) and  \( \gamma\in(0,\frac{1}{2}) \)  for non-Diophantine \(\Omega\); More precisely,
for \(\)a Diophantine  \(\Omega\) and \(\tilde \epsilon<\tilde \epsilon_{c}\),  there exist a positive measure  set \(\Lambda_{\epsilon}\) of  \(I_{\epsilon}\) values in \([-1,1]\),  so that the KAM curves \((I_{\Omega+I_\epsilon}^\epsilon(\phi),\phi)\) are invariant curves of the map with rotation numbers \(\Omega+I_\epsilon\), and there exists \(M=M(\Omega,\tilde \epsilon_{c})\) such that \(|I_{\Omega+I_\epsilon}^\epsilon(\phi)-I_{\epsilon}|\leqslant \tilde \epsilon M\). In particular, for all \({ \gamma}\in(0,1)\) and \(\tilde  k_{s}>0\), the upper set \([ \tilde k_{s}\tilde \epsilon^{ \gamma}+\tilde{\epsilon}M,2\tilde  k_{s}\tilde \epsilon^{ \gamma}+\tilde{\epsilon}M]\cap\Lambda_{\epsilon}\) and the lower set \([- \tilde k_{s}\tilde \epsilon^{ \gamma}-\tilde{\epsilon}M,-2\tilde  k_{s}\tilde \epsilon^{ \gamma}-\tilde{\epsilon}M]\cap\Lambda_{\epsilon}\) include many invariant curves, and the corresponding invariant curves remain      \(\pm\tilde  k_{s}\tilde \epsilon^{ \gamma}\) away from \(I=0\) and at a maximal distance of \(\pm2\tilde  k_{s}\tilde \epsilon^{\gamma}\pm\tilde{\epsilon}M\) from the  line \(I=0\).
For a non-Diophantine  \(\Omega\), averaging of the twist  map  (\ref{eq:zaslavsky})   brings it to its resonant normal form, and the resulting averaged map has rotational KAM tori surrounding a resonance band of oscillatory orbits of width \(C(\Omega)\sqrt{\tilde{\epsilon}}\) around the origin. Rotational tori that are at a distance larger than  \(\pm\tilde  k_{s}\tilde \epsilon^{ \gamma}\) with some  \( \gamma<\frac{1}{2}\)   from \(I=0\) belong to the Diophantine set of rotational invariant curves of the normal form, and thus, as above, there exists \(M_{R}=M_{R}(\Omega,\tilde \epsilon_{c},\tilde \gamma)\) such that \(|I_{\Omega+I_\epsilon}^\epsilon(\phi)-I_{\epsilon}|\leqslant \tilde \epsilon M_{R}\) for all  \(I_{\epsilon}\in[\pm\tilde  k_{s}\tilde \epsilon^{ \gamma}\pm\tilde{\epsilon}M_{R},\pm 2\tilde  k_{s}\tilde \epsilon^{ \gamma}\pm\tilde{\epsilon}M_{R}]\cap\Lambda_{\epsilon}\), namely, these tori remain at a distance  \( \tilde  k_{s}\tilde \epsilon^{ \gamma}\) from the origin.

Since the  return map (\ref{eq:pert-return}) is \(C^{r-2}\) smooth for positive \(K\) values, for positive \(k_{s}\) the above observations suffice to prove the results; Indeed, recall that for \(K>0\), the function \(G_s(K)\) of Eq. (\ref{eq:gsindef}) is \(C^{r}\) smooth and vanishes at the origin. Thus, rescaling by \(\frac{1}{\tau}\) and shifting  \(K\) by \(O(\epsilon)\) terms brings the map of Eq. (\ref{eq:pert-return})\  to a form which is \(C^{r-2}\) -close to the truncated map  (\ref{eq:zaslavsky}). Since \(r>6\),  if the truncated map has invariant curves so does this map, and, moreover, the invariant curves of these maps remain \(\mathcal{O}_{C^{r-2}}(\epsilon I,\epsilon^{2})\)-close to each other. In particular, for sufficiently small \(\epsilon\), for any \(\gamma>0\), the shifted rescaled map  (\ref{eq:pert-return}) has invariant curves with \(I\) values that remain at a distance \(2\tau k_s \epsilon^\gamma\) away from  \(I=0\), thus, provided \(\gamma<1\), for  sufficiently small \(\epsilon\),  \(K\) remains  at a distance of at least \( k_s \epsilon^\gamma\) from the singularity line (so indeed all iterations remain in the smooth regime).

For negative \(k_{s}\), let
$K(k)=k_s \epsilon^\gamma(1+\frac{\epsilon^{\beta}}{\tau k_{s}}k)$ so \(k=\epsilon^{-(\gamma+\beta)}\tau(K-k_s \epsilon^\gamma)\) with \(0<\gamma<1,\beta>0\). We first derive the map for \((k,\phi)\) and establish  that for \(k_{s}<0\) the resulting truncated map is \(C^{r-2}\) close to the full map, so that if the truncated map has KAM tori so does the full map. We then rescale the truncated map to obtain the required form (\ref{eq:zaslavsky}) with  $\tilde{\epsilon}=\epsilon^{1-\gamma/2}$, and the existence of KAM curves of  (\ref{eq:zaslavsky}) for small shifted \(I\) values completes the proof.
In the resonant case, the requirement that the shifted \(I\) value is below the \(\Omega\) resonance band leads to the additional restriction on \(\gamma\).

Plugging  \(K(k)\) in   Eq. (\ref{eq:pert-return})\  leads to:
 \begin{equation}
\bar{k}=k+\epsilon^{1-(\gamma+\beta)}\tau f(\bar{\phi})+\epsilon^{-(\gamma+\beta)}\tau \breve g(\epsilon G_{s}(K(k)))
\end{equation}where \( \breve g\) denotes a \(C^{r-2}\) function  that  vanishes at the origin.  We next show that the last term is \(C^{r-2}\)-small (as a function of \(k\)). Recall that  \( G_{s}(K)\) (see Eq. (\ref{eq:gsindef})) has a square root singularity for negative \(K\), so \(\epsilon^{-(\gamma+\beta)}\tau \breve g(\epsilon G_{s}(K(k)))=O(\epsilon^{1-(\gamma+\beta)} \sqrt{-K(k)})=O(\epsilon^{1-\frac{1}{2}\gamma-\beta})\), and is indeed smaller by a factor of  \(\epsilon^{\frac{1}{2}\gamma}\) from the leading order term. Moreover, while   \(\frac{d^{j}}{dK^{j}}G_s(K)\) at \(K=K(k)\) is large, of order \(\epsilon^{\gamma(\frac{1}{2}-j)}\), since  \(K'(k)=\frac{\epsilon^{\gamma+\beta}}{\tau }\), its derivatives w.r.t. \(k\) is small, of order \(\epsilon^{\frac{1}{2}\gamma+ j\beta}\), and  the \(j\)th order derivatives of \(\epsilon^{-(\gamma+\beta)}\tau \breve g(\epsilon G_{s}(K(k)))\) is, for  \(1\leqslant j\leqslant r-2\),  small,  of order \(\epsilon^{j-\frac{1}{2}\gamma+ (j-1)\beta}\). Namely, we established  \begin{equation}
\bar{k}=k+\epsilon^{1-(\gamma+\beta)}\tau f(\bar{\phi})+O_{C^{r-2}}(\epsilon^{1-\frac{1}{2}\gamma-\beta}).
\end{equation}

Next, to find the leading order behavior for \(\bar{\phi}\),  we expand Eq.  (\ref{eq:gsindef})
  for negative \(k_{s}\) and \(k=O(1)\):
\begin{equation}
\begin{split}
G_s(k_s \epsilon^\gamma(1+\frac{\epsilon^{\beta}}{\tau k_{s}} k))
&=-\epsilon^{\gamma/2}
\frac{\sqrt{|k_s| }(2\omega_2(I_{tan}(E)))^{3/2}}{|V_1'(q_1^w)|}\sqrt{1+\frac{\epsilon^{\beta}}{\tau k_{s}}k} (1+O_{C^{r-2}}(K))\\
 &+ \tau k_s \epsilon^\gamma(1+\frac{\epsilon^{\beta}}{\tau k_{s}} k)+O_{C^{r-2}}(K^{2})\\
&=-\epsilon^{\gamma/2}
\frac{\sqrt{|k_s| }(2\omega_2(I_{tan}(E)))^{3/2}}{|V_1'(q_1^w)|}\cdot
(1+\frac{1}{2}\frac{\epsilon^{\beta}}{\tau k_{s}}k +O_{C^{r-2}} (\epsilon^{2\beta},\epsilon^{\gamma},k\epsilon^{\gamma+\beta}))
\\
&+ \tau k_s \epsilon^\gamma(1+\frac{\epsilon^{\beta}}{\tau k_{s}} k)+O_{C^{r-2}}(\epsilon^{2\gamma},k\epsilon^{2\gamma+\beta})\\
&=-\epsilon^{\gamma/2}\frac{\sqrt{|k_s| }(2\omega_2(I_{tan}(E)))^{3/2}}{|V_1'(q_1^w)|}
+\tau k_s \epsilon^\gamma+O_{C^{r-2}}(\epsilon^{3\gamma/2}) \\
& +k(\epsilon^{\gamma/2+\beta}
\frac{ \sqrt{|k_s| }(2\omega_2(I_{tan}(E)))^{3/2}}{-2\tau k_{s}|V_1'(q_1^w)|}
+ \epsilon^{\gamma+\beta}+
 O_{C^{r-2}}( \epsilon^{ 3\gamma /2 +\beta,2\gamma+\beta }))
\end{split}
\end{equation}
  Hence, we get  \begin{equation}
\bar{\phi}=\phi+\hat \Omega_{\epsilon}+k(A\epsilon^{\gamma/2+\beta} +\epsilon^{(\gamma+\beta)}+O_{C^{r-2}}(\epsilon^{3\gamma/2}))+O_{C^{r-2}}(\epsilon^{3\gamma/2})
\end{equation}where \(\Delta\Omega_\epsilon:=\hat \Omega_{\epsilon}-\Omega=-\epsilon^{\gamma/2}\frac{\sqrt{|k_s| }(2\omega_2(I_{tan}(E)))^{3/2}}{|V_1'(q_1^w)|}+O(\epsilon^{\gamma})\). Summarizing, the  map near \(k_s \epsilon^\gamma\) is of the form:
\begin{equation}
\label{eq:kexpandmap}\begin{cases}
\bar{k}=k+\epsilon^{1-\gamma-\beta}\tau f(\bar{\phi})+O_{C^{r-2}}(\epsilon^{1-\frac{1}{2}\gamma-\beta}) \\
\bar{\phi}=\phi+\hat \Omega_{\epsilon}+ kA\epsilon^{\gamma/2+\beta}+O_{C^{r-2}}(\epsilon^{\frac{3}{2}\gamma},k\epsilon^{\gamma+\beta})
\end{cases}
\end{equation}
where \(A=\frac{(2\omega_2(I_{tan}(E)))^{3/2}}{2\tau \sqrt{|k_s|}|V_1'(q_1^w)|}\). Hence, if the truncated map (the above map without the \(O_{C^{r-2}}\) terms) has KAM tori, then the  map (\ref{eq:kexpandmap}) has KAM curves which are \(O_{C^{r-2}}(\epsilon^{\min(1-\frac{1}{2}\gamma-\beta,\frac{3}{2}\gamma,\gamma+\beta)})\) close to them.
   Rescaling the truncated map by setting \(J=kA\epsilon^{\gamma/2+\beta}\)
and shifting \(J\) to  \(I=J+\Delta\Omega_\epsilon\) we obtain the map (\ref{eq:zaslavsky}) with $\tilde{\epsilon}=\epsilon^{1-\frac{\gamma}{2}}$ (namely  $\epsilon=\tilde{\epsilon}^{\frac{2}{2-\gamma}}$, and      \(\Delta\Omega_{\epsilon}=O(\epsilon^{\frac{\gamma}{2}})=O(\tilde{\epsilon}^{\frac{\gamma}{2-\gamma}})\)).

For the Diophantine \(\Omega\) case, for sufficiently small \(\tilde{\epsilon}\),    (\ref{eq:zaslavsky}) has a positive measure set of rotational  KAM curves. In particular, taking   \(\gamma\in(\frac{1}{2},1)\), we obtain  \(\Delta\Omega_{\epsilon}<C\sqrt{\tilde{\epsilon}}\), so for sufficiently small \(\tilde \epsilon\) there is a positive measure set of rotational  KAM tori near  \(I_{0}=\Delta\Omega_{\epsilon}\),  and the corresponding shifted \(J\) values satisfy \(|J_{KAM}(\phi,J_{0})-J_{0}|<C_{1}\tilde{\epsilon}=C_{1}\epsilon^{1-\gamma/2}\). Taking  \(J_{0}\) values of order \(\epsilon^{\gamma/2+\beta}=\tilde{\epsilon}^{\frac{\gamma+2\beta}{2-\gamma}}\ll|\Delta\Omega_{\epsilon}\)\textbar, the corresponding \(k\) values satisfy \(|k^{trun}_{KAM}(\phi,k_{0})-k_{0}|<C_{2}\epsilon^{1-\gamma-\beta}\) for the truncated map. The KAM curves of  (\ref{eq:kexpandmap}), \(k_{KAM}(\phi,k_{0})\),  are  \(O_{C^{r-2}}(\epsilon^{\min(1-\frac{1}{2}\gamma-\beta,\frac{3}{2}\gamma,\gamma+\beta)})\)   -close to the invariant curves of the truncated map (even though they may be supported on a different set of \(k_{0}\) values), so  \(|\epsilon ^{\beta}k_{KAM}(\phi,k_{0})-\epsilon ^{\beta}k_{0}|<C_{2}\epsilon^{\min(1-\gamma,\frac{3}{2}\gamma+\beta,\gamma+2\beta)}\ll1\) (recall that  \(\gamma<1,\beta>0\)), hence, there is a positive measure set of order one \(k_{0}\) values such that \(K_{KAM}(\phi,k_{0})=k_s \epsilon^\gamma(1+\epsilon ^{\beta}k_{KAM}(\phi,k_{0}))\)    are invariant curves which are strictly below the singularity line: \begin{equation}\label{eq:KAMks}
|K_{KAM}(\phi,k_{0};k_{s},\gamma,\beta,\Omega_{D})-k_s \epsilon^\gamma|<k_s \epsilon ^{\gamma+\beta}(k_{0}+\epsilon^{\min(1-\frac{1}{2}\gamma-\beta,\frac{3}{2}\gamma,\gamma+\beta)}).
\end{equation}
We conclude that these curves  correspond to rotational KAM curves of  (\ref{eq:pert-return}) that bound the tangency band from below at a distance \(k_s \epsilon^\gamma\) from the singularity line for \(\gamma\) arbitrary close to the critical exponent \(\gamma_{c}(\Omega_{Diophantine},\alpha,\nu ,-1)=1\), which also implies that other rotational invariant curves with smaller \(\gamma\) values remain below that tangency band.

For the resonant \(\Omega\) case,  for  \(\gamma<\frac{2}{3}=\gamma_{c}(\Omega_{Resonance},\alpha,\nu ,-1)\),
  the shift in the action,     \(|\Delta\Omega_{\epsilon}|=O(\tilde{\epsilon}^{\frac{\gamma}{2-\gamma}})>C\sqrt{\tilde \epsilon}\), and thus, for sufficiently small \(\tilde \epsilon\) there is again a positive measure set of rotational  KAM tori near  \(I_{0}=\Delta\Omega_{\epsilon}\). Following exactly the same steps as in the Diophantine case with negative \(k_{s}\),  we conclude that the map  (\ref{eq:pert-return}) has invariant curves of the form (\ref{eq:KAMks}) for all \(\gamma<\frac{2}{3}\).
\end{proof}

Theorem \ref{thm:stability} implies that the  tangency band boundaries, \(I^\epsilon_{\pm}(\theta)=K^\epsilon_\pm(\theta)+I_{tan}^{\epsilon}(\theta)\) (see Def. \ref{def:tangencyband}) are at most a distance      $k_s \epsilon^\gamma$  from \(I_{tan}^{\epsilon}(\theta)\) which is order \(\epsilon\) close to  \(I_{tan}\), thus implies Theorem \ref{thm:mainthm3}. From the form of the map (\ref{eq:pert-return}) it is clear that for $K=o(\epsilon)$ small or even $K=0$, the next iteration in the return map is of $\mathcal{O}(\epsilon)$, and hence a lower bound on the width of the tangency band is  $\mathcal{O}(\epsilon)$.

\section{The truncated tangency map\label{sec:truncatedmap} }
To demonstrate the stability results of Theorem \ref{thm:stability} and to support claims \ref{conj:bandwidth} -\ref{conj:resislands}  we  derive first some basic properties of the truncated tangency map, \(F_{TTM}\)  of Eq. (\ref{eq:truncatedreturn}). This map corresponds to taking  the leading order terms of \(G_{s}(K)\)  in the   map (\ref{eq:pert-return}). Notice that in (\ref{eq:pert-return})  \(\alpha>0\) and, by rescaling \(K\), we can take \(\tau\in\{\pm1\}\); rescaling \(K\) by \(|\tau(I_{tan}(E))|\)   shows that the parameters \((\alpha,\epsilon,\tau)\) in  (\ref{eq:truncatedreturn})  are related to those of (\ref{eq:pert-return}) by : \(\alpha\leftarrow \frac{(2\omega_2(I_{tan}(E)))^{3/2}}{\sqrt{|\tau(I_{tan}(E))|}|V_1'(q_1^w)|}>0\),  \( \epsilon\leftarrow\epsilon|\tau(I_{tan}(E))|,\tau\leftarrow\frac{\tau(I_{tan}(E))}{|\tau(I_{tan}(E))|}\ \in\{\pm1\}\) and \(K\leftarrow|\tau(I_{tan}(E))| K\) (recall that we study here the behavior near a regular torus which is tangent to the wall, so \(V_1'(q_1^w),\omega_2(I_{tan}(E)),\tau(I_{tan}(E))\neq0\)).  The symmetry property of Lemma \ref{lem:symmetry} shows that $f(\bar{\phi})=\bar{I}_{tan}^{\epsilon}(\bar{\phi})-I_{tan}^{\epsilon}(\bar{\phi})$ is an odd function of $\bar{\phi}$.
We first establish a few properties of  \(F_{TTM}\)  and then, taking the first non-trivial Fourier mode of \(f\),  we set  \( f({\phi})=\sin \phi\) and  study numerically  the resulting map, establishing claims  \ref{conj:bandwidth} -\ref{conj:nonresconnectingorbit}  for this specific realization.
\subsection{Twist and fixed points of the truncated tangency map \label{sec:basicprop}}

The twist of the unperturbed truncated tangency map   (Eq. (\ref{eq:truncatedreturn}) at \(\epsilon=0\)) is piecewise smooth:
\begin{equation}
\frac{\partial\bar{\phi}}{\partial K}=\begin{cases}
\tau & K> 0 \\
\tau+\frac{\alpha}{2\sqrt{-K}} & K<0
\end{cases}
\end{equation}
Hence, for $\tau>0$ the map  \(F_{TTM}\)  is a piecewise smooth twist map: it has a strictly positive twist for all values of $K$. For $\tau<0$, the twist changes sign and a unique non-twist circle is created at \(K<0\) (on the impacting side):
\begin{equation}
\tau+\frac{\alpha}{2\sqrt{-K_{NT}}}=0\Rightarrow K_{NT}=-\frac{\alpha^2}{4\tau^2}.
\end{equation}For  \(K\) values which are bounded away from \(0, K_{NT}\) the twist is bounded away from \(0\). For both positive and negative shear, the twist diverges when the singularity line is approached from below, at \(K\rightarrow0^{-}\). The stability results of section \ref{sec:stable} utilize this twist property for  circles which are bounded away from the singularity set and twist-less circle.

The limits of small and large  \(\alpha\)  are interesting and deserve a separate study -  recall that \(\alpha= \frac{(2\omega_2(I_{tan}(E)))^{3/2}}{\sqrt{|\tau(I_{tan}(E))|}|V_1'(q_1^w)|}\), so large \(\alpha\) appears when \(\sqrt{|\tau(I_{tan}(E))|}|V_1'(q_1^w)|\) is small - either when the wall is approaching an elliptic fixed point and then   \(\alpha\)  is large for all energies, or, when the energy is such that the tangent torus is close to a twist-less torus of the smooth system.

The fixed points of the map, as listed in Appendix B  and summarized in  the bifurcation diagram   in the \((\Omega,K)\) plane presented in Figure \ref{fig:bifomega}, appear, for each zero \(\phi^{*}\) of \(f(\phi)\), as a countable set of continuous  branches that emanate from \(K=0, \Omega=2\pi j,\:\ j\in\mathbb{Z}\). For positive shear these branches are monotone deceasing in \(\Omega\). They correspond to unstable fixed points when   \(f'(\phi^{*})>0\). When     \(f'(\phi^{*})<0\) they correspond to centers, with the exception of order \(\epsilon\) interval of negative \(K\) values at which the fixed point is a saddle with negative multipliers. For negative shear, each branch includes a  saddle node bifurcation at  \(K=K_{NT}\), so there is a range of \(\Omega\) values at which each branch contributes three fixed points, two of which with nearby \(K\) values.
\begin{figure}[!htp]
\begin{centering}
\includegraphics[scale=0.25]{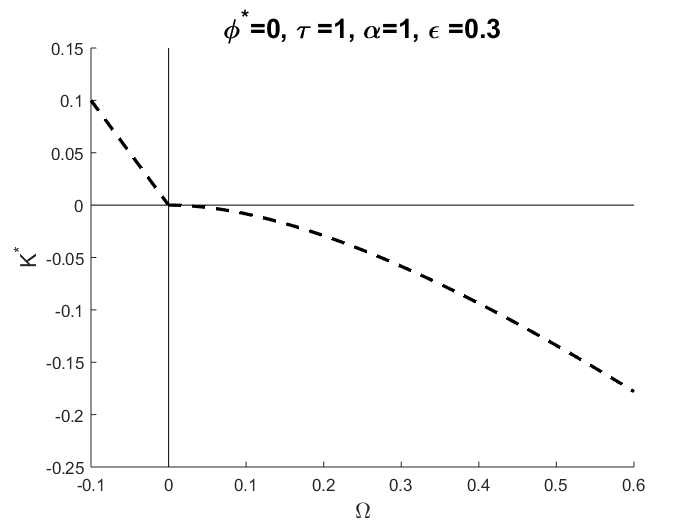}
\includegraphics[scale=0.25]{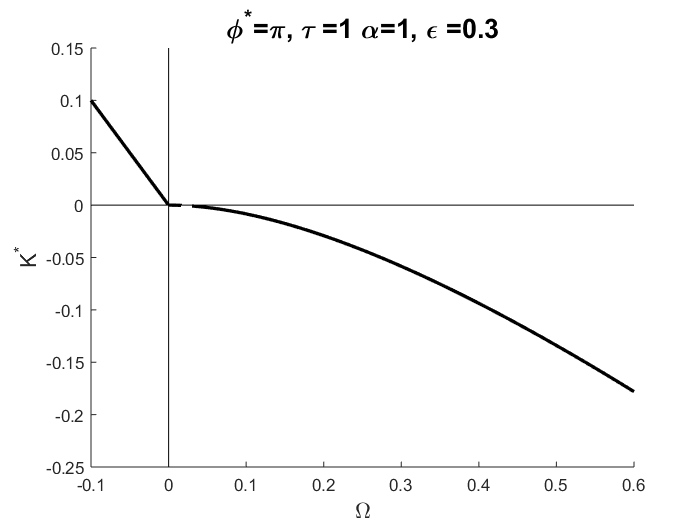}\\
\includegraphics[scale=0.25]{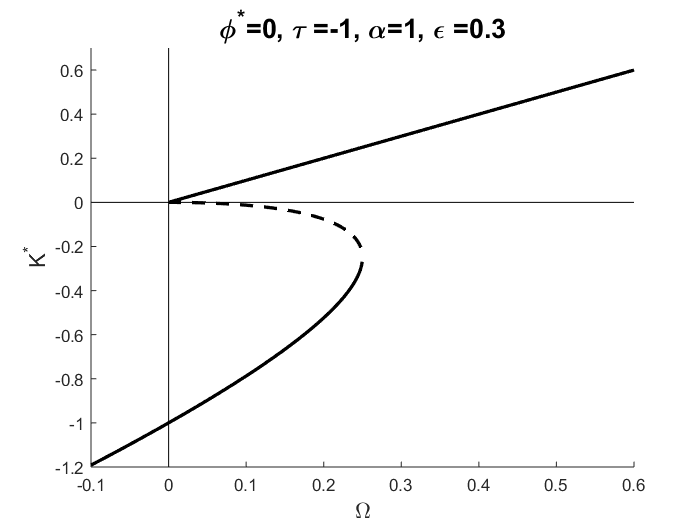}
\includegraphics[scale=0.25]{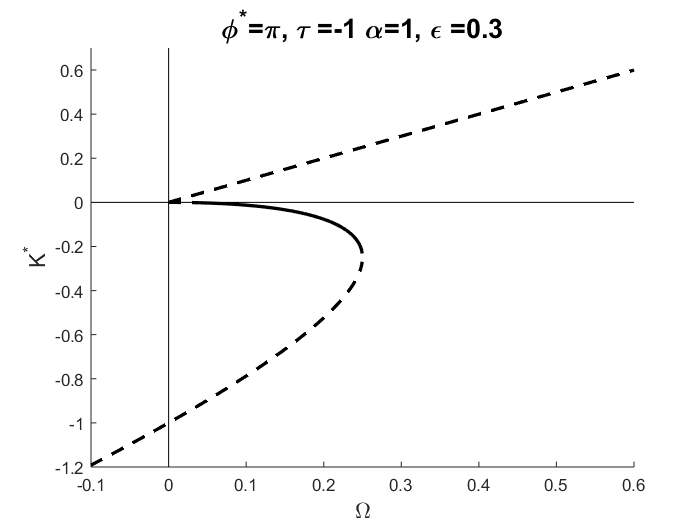}\\
\includegraphics[scale=0.25]{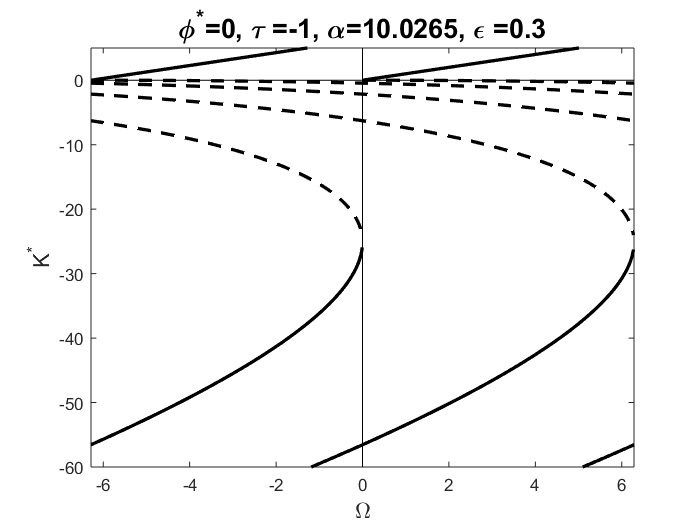}
\includegraphics[scale=0.25]{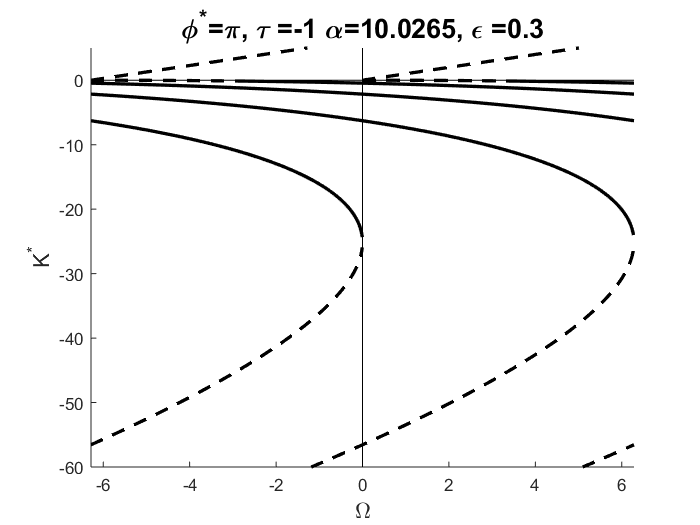}
\par\end{centering}
\protect\caption{\label{fig:bifomega} Bifurcation diagrams  for the  map (\ref{eq:truncatedreturn}).  First column  \(f'(\phi^{*})>0\), second column  \(f'(\phi^{*})<0\). First two rows  $ \alpha=1,$ last row  $ \alpha=2\sqrt{8\pi}$. First row \(\tau=1\), second and third rows \(\tau=-1\). In all figures  $\epsilon=0.3$.   Notice that the branches of centers at negative \(K^{* }\) values undergo period-doubling bifurcation at small \(\Omega-2\pi j\)  values (second column).}
\end{figure}

When the fixed points spacing in \(K\) becomes small, we expect chaotization of the map  by three main mechanisms: overlap of resonance regions around stable fixed points,  heteroclinic tangles of nearby saddles and sequences of period doubling bifurcations.  Such proximity occurs in two situations: first, for \(\tau=-1\), at  \(\Omega=2\pi j+\delta\), the positive and negative  branches of the fixed points are \(\delta\) close (a singular saddle-center bifurcation, the positive branch is at  \(K=\delta\) whereas the negative branch is close to
\(-\frac{\delta^{2}}{\alpha^2}\)), so near the bifurcation point complex dynamics is always observed. Second,    in the large \(\alpha\) limit the negative branches are close to each other and to the singularity line, and undergo, as \(\epsilon\) is increased, sequences of period doubling bifurcations. For    \(\alpha\gg\sqrt{8\pi}\approx5.01\) and small \(l\ll \left\lfloor\frac{\alpha^{2}}{8\pi}\right\rfloor\), we observe that the spacing between the first \(l\) branches is of order \(\frac{1}{\alpha^{2}}\). Overlap between \(\sqrt{\epsilon}\) resonances around stable points is thus expected when  \(\epsilon> \frac{C}{\alpha^{4}}\).  Moreover, these branches become  unstable  when \(\epsilon>C\frac{2\pi l+\Omega\text{ mod }2\pi}{\alpha^2}\), for some constant \(C\), so for such values of \(\epsilon\) sequences of period doubling bifurcations occur close to the singularity line. These observations may explain the fast mixing which is observed at high \(\alpha\) values (see Figures  .

\subsection{\label{sec:tangwidth} The dynamics in the tangency band  }

 Figures \ref{fig:stability1} demonstrates  the main claim of Theorem \ref{thm:stability}:  for small \(\epsilon\), KAM curves with strictly positive \(K\) values and KAM curves with strictly negative \(K \) values are observed for both Diophantine and resonant rotation numbers \(\Omega\). The tangency band, \(\mathcal{B}_{\epsilon}\) is bounded in between these curves. On the other hand, as is demonstrated in Figures \ref{fig:bandwidthres}-\ref{fig:tangency-res1-alphadependence},  which show zoom-ins and longer trajectory segments in the tangency band, trajectories that pass close the singularity line do not lie on rotational invariant curves.

\noindent\textbf{Tangency band width (Claim \ref{conj:bandwidth}):}  Figure \ref{fig:bandwidthres} shows the dependence of the maximal upper and lower widths of the tangency band on \(\epsilon\). We iterate  100 tangent initial conditions   for \(10^{5}\) iterates,  plotting, for each \(\epsilon\),   $\max(K):=\max_{\phi_{0},j}{K_{j}} \leqslant\ W^{+},\min(K)=\min_{{\phi_{0},j}}{K_j}\geqslant -W^{-}$ where \(j=1,...,10^{5}\),  \(K_{0}=0\) and   $\phi_{0}\in[-\pi,\pi]$ is evenly spaced (in the next subsections we study the  dynamics and the width of individual trajectories in the band and provide a tighter numerical bound on the band widths). Figure \ref{fig:bandwidthres}a,b summarises the findings: for regular \(\Omega\), \(W^{\pm}\) are linear in \(\epsilon\) and are essentially symmetric. Figure \ref{fig:bandwidthres}c shows that for $\Omega= 2\pi$ the \(\sqrt{\epsilon}\) scaling of \(W^{+}\) and the \(\epsilon^{2/3}\) of \(W^{-}\) suggested  from Theorem \ref{thm:stability} are realized. On the other hand, Figure \ref{fig:bandwidthres}a,b demonstrate that the resonant $\Omega$ values which are not strongly resonant produce \(W^{\pm}\)  which appear to be linear in  \(\epsilon\), with widths close to those  of the nearby  irrational \(\Omega\).\\ The above scalings are valid for sufficiently small \(\epsilon\): as $\epsilon$ is increased,  the tangency band overlaps with other resonances, reminiscent of Chirikov's resonance-overlap phenomenon, and the  dependence on $\epsilon$ changes: see the width curve in Figure \ref{fig:bandwidthres}a  for the case \(\Omega=\pi/3,\tau=1\)   and Figure \ref{fig:bandwidthres}d,e,f showing the corresponding phase space plots. Similarly, the width curve of Figure \ref{fig:bandwidthres}b for  \(\Omega=2\pi,\tau=-1\) indicates  an overlap transition around \(\epsilon=0.06\) due to the growth of the 1-1 resonance band centered at \(K=-1\).

\begin{figure}[hp]
\begin{centering}
\includegraphics[scale=0.2]{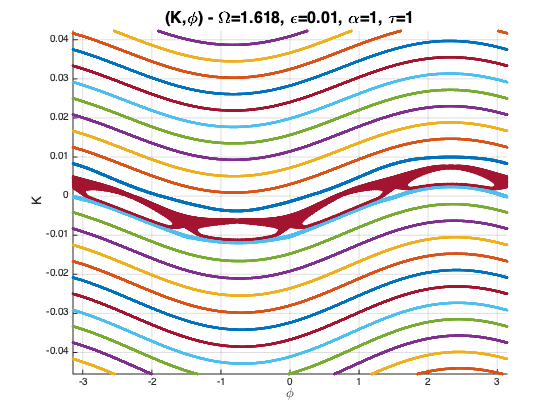}
\includegraphics[scale=0.2]{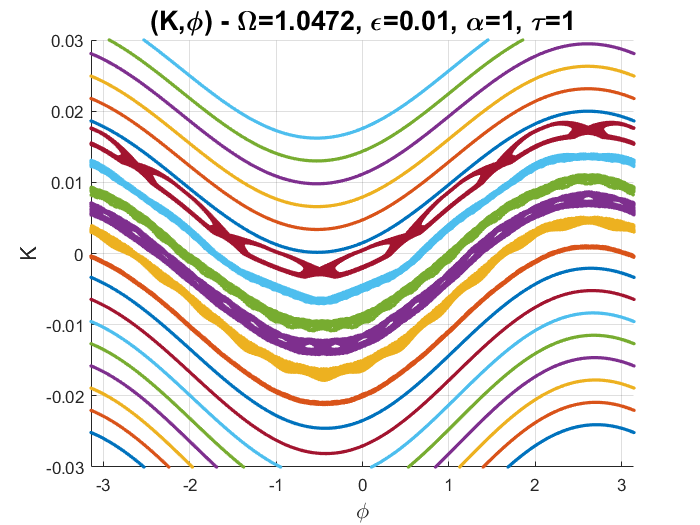}
\includegraphics[scale=0.2]{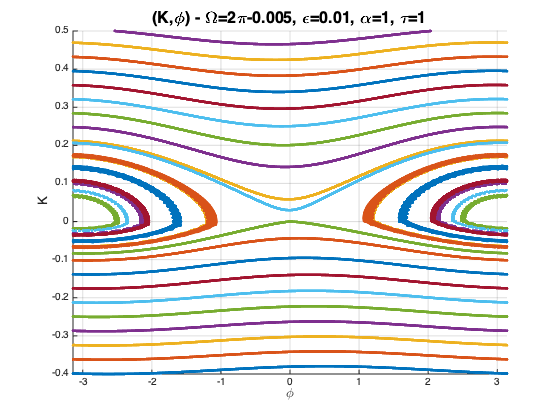} \\
\includegraphics[scale=0.2]{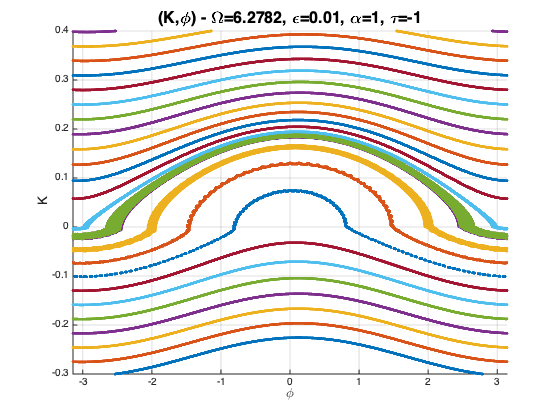}
\includegraphics[scale=0.25]{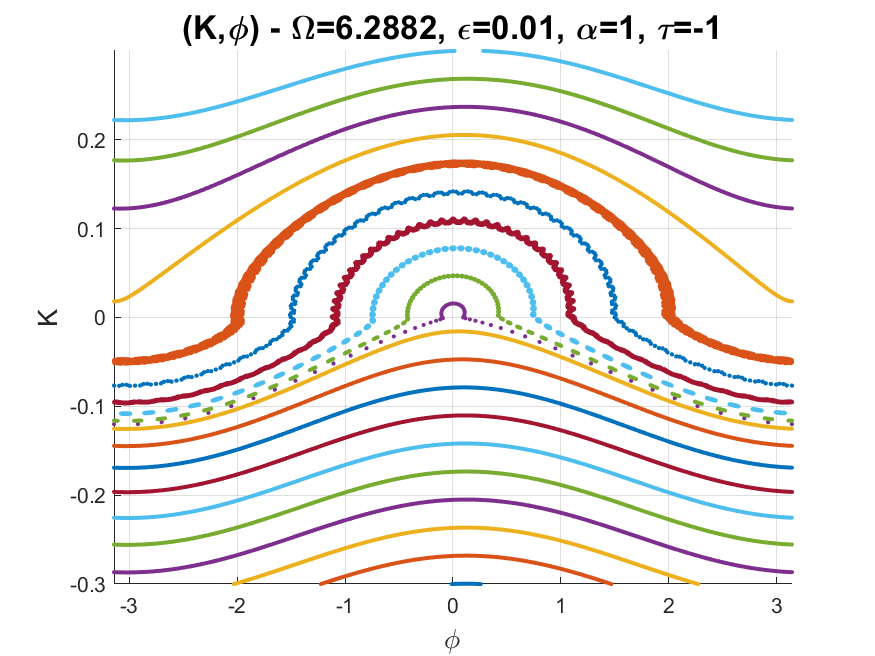}
\par\end{centering}
\protect\caption{\label{fig:stability1}The tangency band for Diophantine, resonant and strong resonant rotation numbers. (a) Diophantine case: $\Omega=\frac{\sqrt{5}+1}{2}\approx1.618,\ \tau=1$ (b) Resonant case: $\Omega=\frac{\pi}{3}\approx1.047,\ \tau=1$ (c) Strong resonance case with positive shear: $\Omega=2\pi-0.005$, $\tau=1$ (d) Strong resonance case with negative shear, sub-resonance case: $\Omega=2\pi-0.005$, $\tau=-1$. (e) Strong resonance case with negative shear, super-resonance case: $\Omega=2\pi+0.005$, $\tau=-1$ (4 fixed points with small \(K\) values co-exist). For all simulations  $\epsilon=0.01$, $\alpha=1$ and $10^5$ iterations of $20$ i.c. starting at \(\phi_{0}=0\) and at equally spaced \(K \) values  are plotted. Notice that  in the strong resonance cases (c,d,e) the tangency band is at least ten fold larger than the other cases (a,b) and that its extent for positive \(K\) values is much larger than its extent for negative \(K\) values.}
\end{figure}

\begin{figure}[!htp]
\begin{centering}
(a)\includegraphics[scale=0.2]{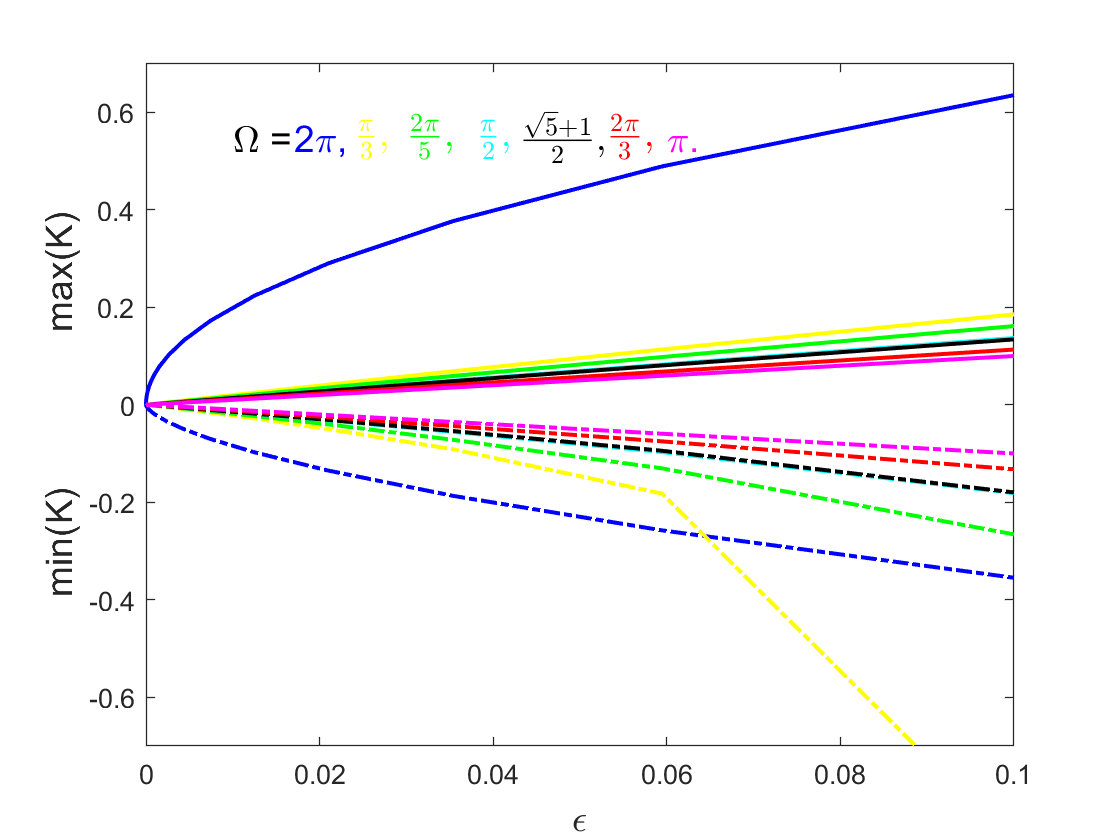}
(b)\includegraphics[scale=0.2]{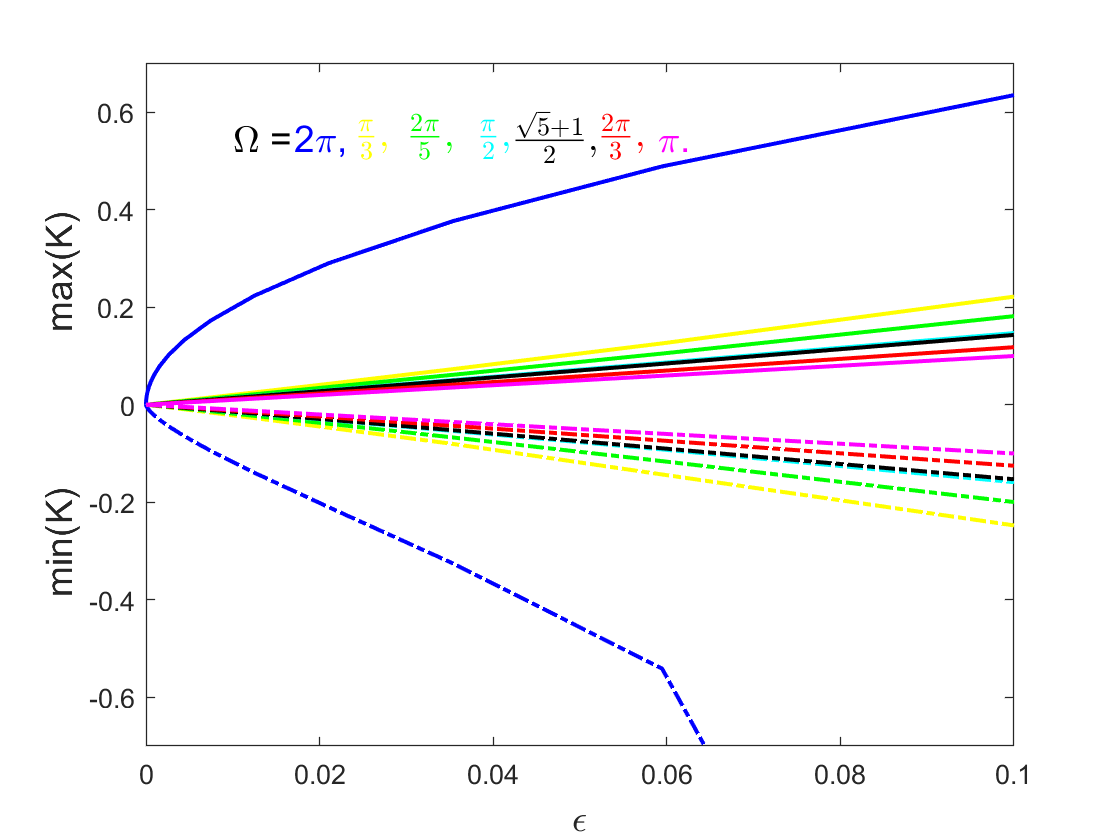}
(c) \includegraphics[scale=0.2]{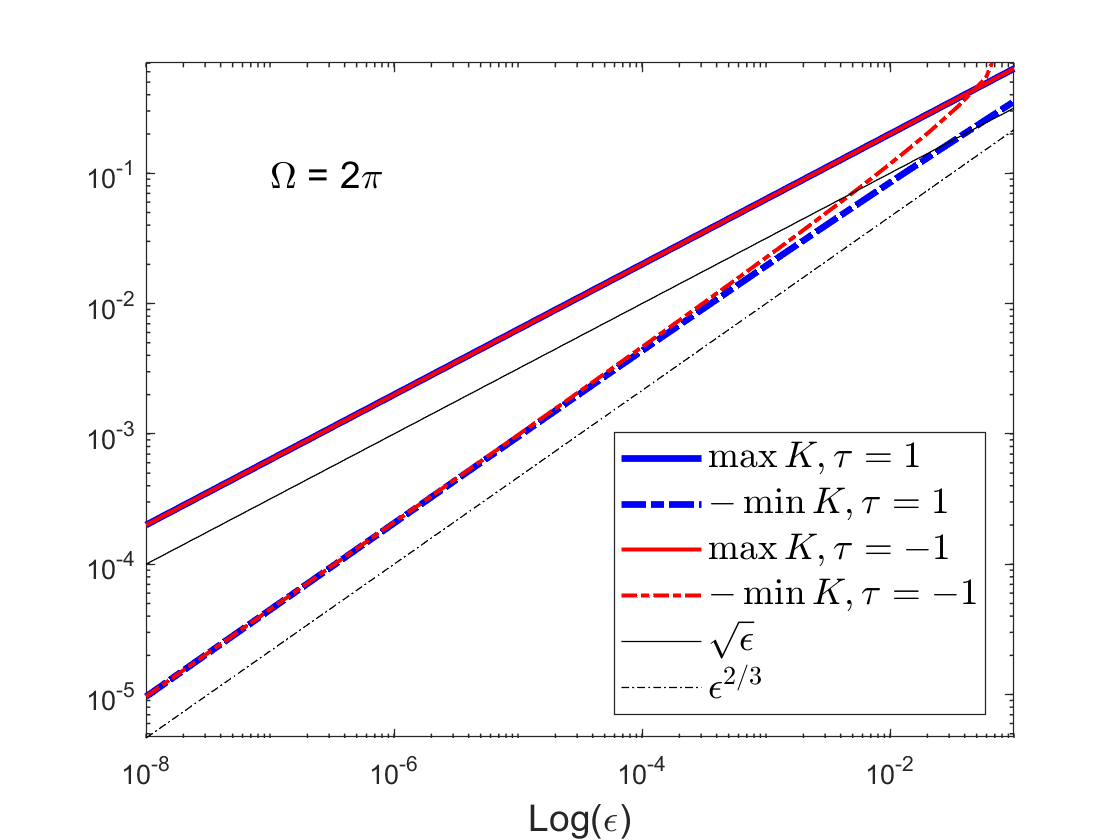}\\
(d)\includegraphics[scale=0.2]{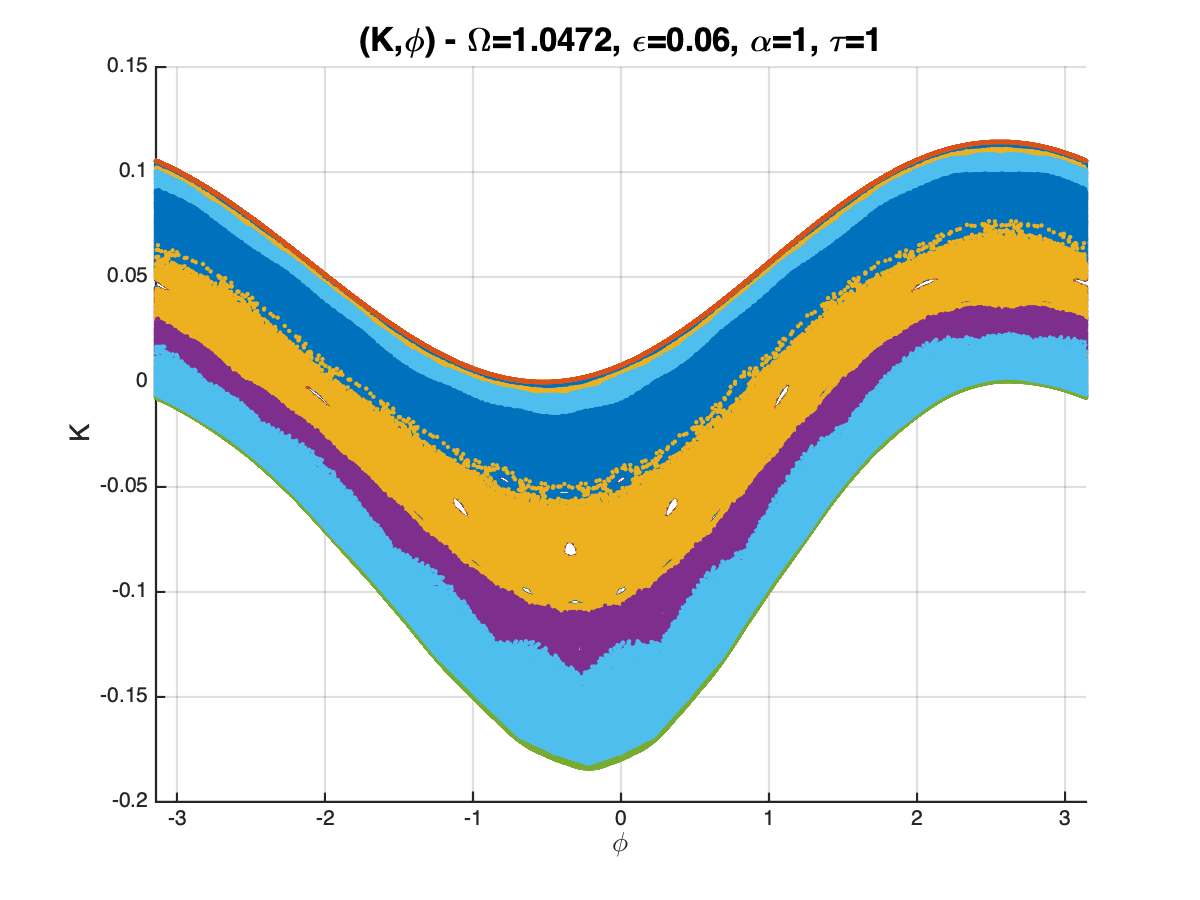}
(e)\includegraphics[scale=0.2]{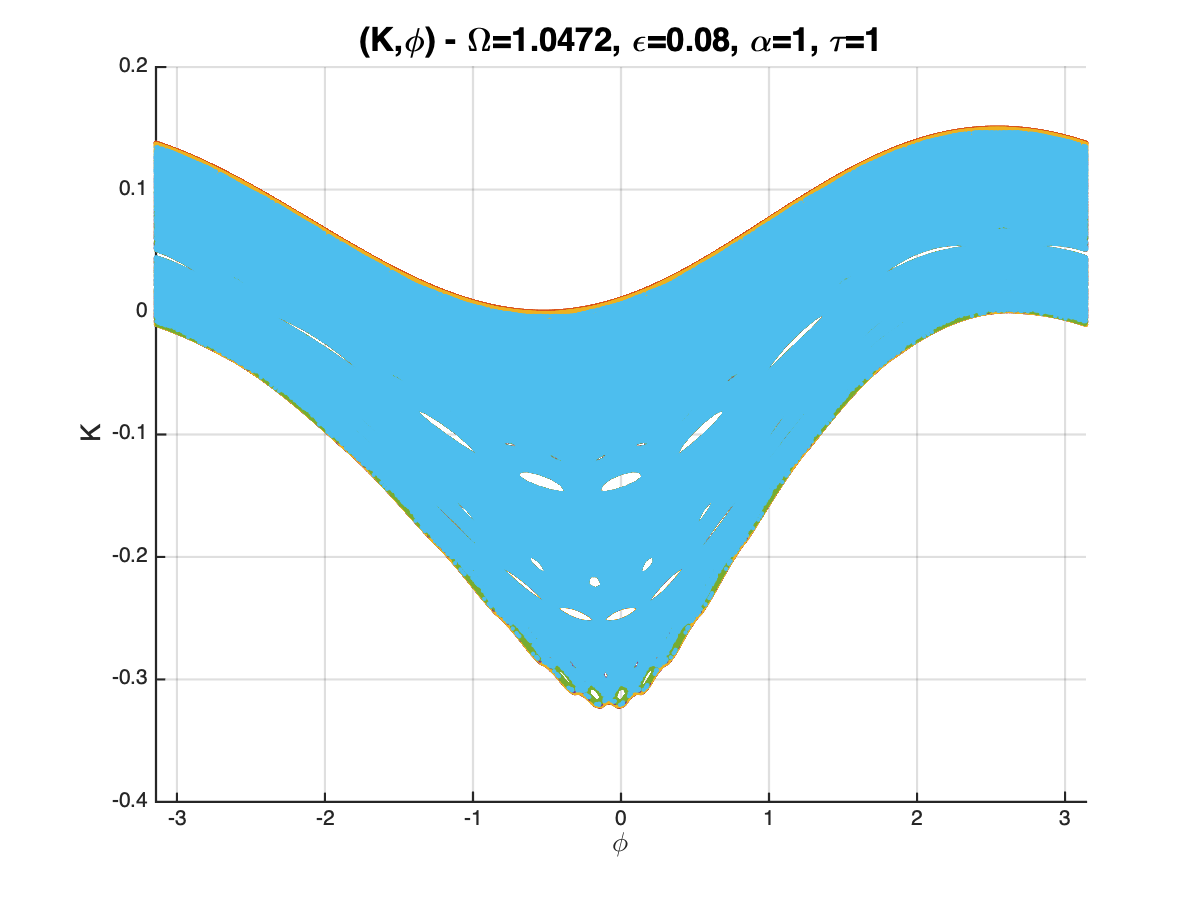}
(f)\includegraphics[scale=0.2]{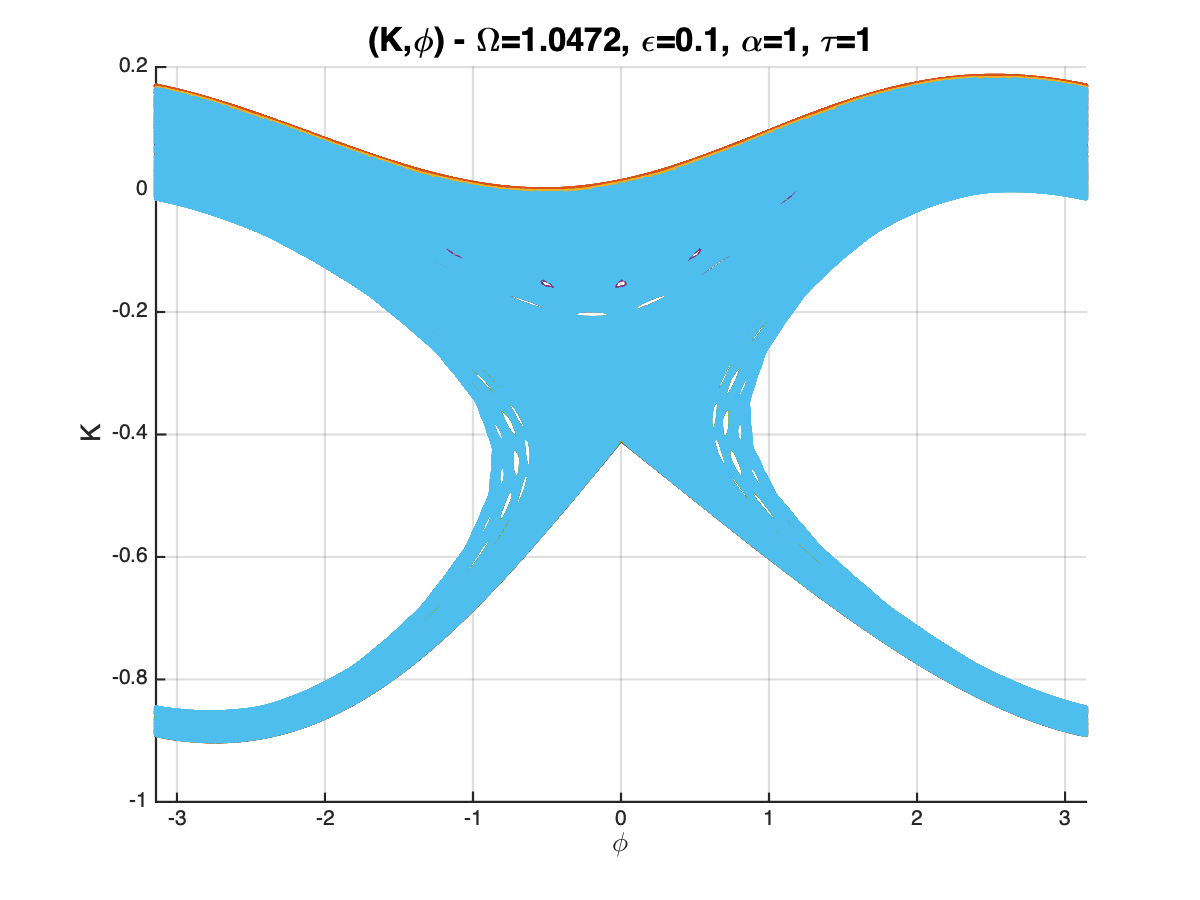} \\
\par\end{centering}
\protect\caption{\label{fig:bandwidthres}The  upper and lower tangency band widths.   (a) Positive shear, \(\tau =1\),(b) Negative shear, \(\tau =-1 \), (c) Strong resonance scaling:  \(\Omega=2\pi\),  \(\tau =\pm1\).  At regular \(\Omega\) values, for sufficiently small   \(\epsilon\), the upper and lower total widths are linear in \(\epsilon\),  are essentially symmetric, and  nearby resonant and Diophantine rotations behave similarly  (e.g. the cyan curve of \(\Omega=\pi/2\approx1.57 \)  and the black curve of  \(\Omega=(\sqrt{5}+1)/2\approx1.618\) almost overlap). At the strong resonance, the upper width exhibits square-root dependence on $\epsilon$ whereas the lower width scales as \(\epsilon^{2/3}\). Panels (d-f) demonstrate that the width jump in slope for    \(\Omega=\pi/3\approx1.0472\) around $\epsilon=0.06$ is a finite \(\epsilon\) effect associated with the overlap of the tangency band  and  the 1-1 resonance band centered around \(K=-0.4.\)  }
\end{figure}

\noindent
\textbf{Singularity band islands (Claim \ref{conj:resislands}):} stable, smooth quasi-periodic motion on a finite collection of invariant non-dividing circles   which do not intersect the singularity line and have  components with both positive and negative \(K\) values are shown in Figures  \ref{fig:tan11long}, \ref{fig:tangency1}(b), \ref{fig:tangency-res1}(c), \ref{fig:tangency-res1-zoom}(c--h).
These structures persist even in the long term simulations and correspond to the holes that appear in the singularity set, namely these belong to the set \(\mathcal{B}^{\epsilon}\setminus\mathcal{S}^\epsilon\). The  singularity-band islands are stable structures with no proper unperturbed limit, similar to the tangle islands that appear near tangent homoclinic bifurcations \cite{rom1999islands}.

\

\noindent\textbf{Long transients and connecting orbits  (Claim \ref{conj:nonresconnectingorbit}):}
 Figure \ref{fig:tan11long} shows the long term dynamics at small \(\epsilon\) value for a Diophantine frequency  of three tangent initial conditions (\(K_{0}=0\)). It demonstrates that the transient dynamics  may exhibit seemingly separate ergodic components (e.g. the blue trajectory appears, up to \(N\approx2\cdot10^{9}\) as a  very narrow invariant band) and  overlapping regions that do not coincide (e.g. the green and maroon trajectories have large overlapping regions, yet, they do not fully coincide up to  \(N\approx2\cdot10^{9}\)). Here, as simulation length grows, all the tangent trajectories seem to cover  uniformly the same region. Indeed, Figure  \ref{fig:tan11long}f shows the width of each tangent trajectory as a function of the iteration number:    \begin{equation}\label{eq:trajwidthdef}
W(n;\phi_0,\bigtriangleup):=\max_{i}\left(\max_{j=1,..n}K_{j}|_{\phi_{j}\in (\theta_i-\bigtriangleup,\theta_i+\bigtriangleup)} \ -\min _{j=1,..n}K_{j}|_{\phi_{j}\in (\theta_i-\bigtriangleup,\theta_i+\bigtriangleup)} \ \right)\end{equation}with windows of size   \(\bigtriangleup=0.005\) set at some random phases \(\theta_{i}\).  The total width of the tangency band is estimated as the maximal width of all trajectories in each of the chosen random windows at the simulation end: \[W\approx\max_{i}\left (\max_{j=1,..N;\phi_0}K_{j}|_{\phi_{j}\in (\theta_i-\bigtriangleup,\theta_i+\bigtriangleup)}-\min_{j=1,..N;\phi_0} K_{j}|_{\phi_{j}\in (\theta_i-\bigtriangleup,\theta_i+\bigtriangleup)}
\right) \] (dashed black line in Figure \ref{fig:tan11long}f). Observe that the width of the individual trajectories has long plateaus (on logarithmic scale) and sudden growth sprees until finally, after almost \(10^{10}\) iterates, it coincides with the band width. This suggests that here the singularity set,   \(\bar{\mathcal{S}^\epsilon}\), is ergodic and that the tangency band consists of  the union of this single ergodic component and  other invariant sets (like the singularity band islands) which do not include any tangent trajectory. In particular, this means that the singularity set has a connecting orbit which visits arbitrarily closely the upper and lower boundary of the tangency band.
Similar behavior is observed for other \(\Omega,\alpha\) values, even when \(\Omega\) is rational, as long as \(\Omega\)  is not strongly resonant.
\\ \\ The strongly resonant cases with positive shear  are shown in Figure \ref{fig:tan39long} ( $\Omega=2\pi-0.005$, $\tau=1$) and Figures \ref{fig:tan41long}-\ref{fig:tan41longzoom} depict the sub-strong resonance negative shear case ($\Omega=2\pi-0.005,\tau=-1$). In the positive shear case, due to the 1-1 resonance structure typical trajectories cover a circular band (see  Figure \ref{fig:tan39long}), so we refine the definition of a trajectory width by restricting the right hand side in Eq. (\ref{eq:trajwidthdef}) to non-negative \(K\).  \\ Figure \ref{fig:tan39long} suggests that the strong resonance positive shear singularity set, which covers most of the 1-1 resonance band, is composed of one ergodic component as well; even though  after \(2\cdot10^{10}\) iterations the resonance band is not uniformly covered, all trajectories have some overlapping regions and it is suspected that this is sufficient to establish the existence of connecting orbit. On the other hand,  Figures \ref{fig:tan41long}-\ref{fig:tan41longzoom} suggest that for some cases of negative shear and subcritical \(\Omega\), the tangency band has several ergodic components - the zoom-ins of Figure \ref{fig:tan41longzoom} show that  the borders between two nearby trajectories is sharp even  after \(2\cdot10^{10}\) iterations, suggesting (yet not proving..) that this division persists. 

 Our findings on the dependence of the transient time on \(\epsilon\) and \(\alpha\) is presented in Figure \ref{fig:widthepsdep-long}. Since the width of the band and the slope of the limiting KAM tori are of order \(\epsilon\), to get a meaningful width definition in (\ref{eq:trajwidthdef}), the window size needs to be decreased as \(\epsilon\) is decreased, so we choose \(\bigtriangleup=\frac{\epsilon }{2}\). For a Diophantine \(\Omega\), the return time to a window of size \(2\bigtriangleup\) is approximately \(N_\bigtriangleup=\frac{2\pi}{2\bigtriangleup}\), namely \(N_\frac{\epsilon }{2}=\frac{2\pi}{\epsilon}\). Figure \ref{fig:widthepsdep-long}b,d show that  \(\frac{1}{\alpha^{2}\epsilon}\frac{max_{\phi_0}W(N;\phi_0,\frac{\epsilon }{2})}{W(\epsilon,\alpha)}\approx 10^{-1.25}N^{2/5}\)  so, \(N_{c}(\epsilon,\alpha,\nu)\), the minimal number of iterations needed for obtaining a coverage of  a fraction \(\nu\) of the layer  in an \(\epsilon\)-window, is  \(N_{c}(\epsilon,\alpha,\nu)\approx10^{25/8}\frac{\nu^{5/2}}{\alpha^{5}\epsilon^{5/2}}\), so \(\frac{N_{c}(\epsilon,\alpha,\nu)}{N_\frac{\epsilon }{2}}\approx\frac{10^{3.125}}{2\pi}\frac{\nu^{5/2}}{\alpha^{5}\epsilon^{3/2}}\)  diverges with small \(\epsilon\) and with small \(\alpha\), whereas for  \(\alpha\gtrsim \alpha_c(\epsilon,\nu)= \frac{\sqrt{\nu}}{\epsilon^{0.3}} \frac{10^{5/8}}{(2\pi)^{1/5}}\), we expect to see a \(\nu\) coverage of the layer at  order   \(\frac{1}{1-\nu}N_\frac{\epsilon }{2}\) iterates. Interestingly, for   \(\alpha\approx\alpha_c(0.001,.9)\approx23\) the tangency band and the 1-resonance layer merge, hence, for larger values of \(\alpha \) the structure of the band becomes more complex.

Defining the distance from the boundary as:    \begin{equation}\label{eq:trajdistance}
d(n;\phi_0,\bigtriangleup):=\max\left(\min_{i}\frac{K_{i}^{max}-\max_{j=1,..n}K_{j}|_{\phi_{j}\in (\theta_i-\bigtriangleup,\theta_i+\bigtriangleup)}}{K_{i}^{max}-K_{i}^{min}}, \min_{i}\frac{\min _{j=1,..n}K_{j}|_{\phi_{j}\in (\theta_i-\bigtriangleup,\theta_i+\bigtriangleup)} -K_{i}^{min}}{K_{i}^{max}-K_{i}^{min}} \ \right)\end{equation}where  \(K_{i}^{min,max}\) are the minimal and maximal \(K \) values of the band at the \(ith\) window, a connecting orbit corresponds to \(d(n;\phi_0,\bigtriangleup)\rightarrow0\) with \(O(\frac{\pi }{\bigtriangleup})\) windows. Thus, minimizing \(d\) over \(\phi_{0}\) provides the rate at which a connection is approached, and this minimum is \(1/2\) when the curves are roughly invariant.  For a fixed number of windows, the joint probability to visit the upper most and lowest most slots is  our 5 windows numerical experiment is expected to be of order \(N^{2}_\frac{\epsilon }{2}\).  We find numerically  that \(min_{\phi_{0}}d(N;\phi_0,\frac{\epsilon }{2})\approx0.5-log(N\alpha ^{4}/N^{2}_\frac{\epsilon }{2})^{-1/7}\) , namely,   to get an orbit which visits \(1-\nu\) close to both boundaries of the tangency band we need \(N\approx(\alpha ^{-4}N^{2}_\frac{\epsilon }{2})10^{-3.5+7\nu}\) iterates. In conclusion, both measures indicate that the connection times diverges as a power law in   \(\epsilon\alpha^{2}\) and that for a fixed small \(\epsilon\) and large \(\alpha\) fast mixing is achieved (as long as there is no merge with resonances, which increase the size of the tangency band and thus slows down the connection time). Notably, the dependence of the tangency band width on \(\alpha\)  is rather mild for this range of \(\alpha\) values (for \(\epsilon=0.001\) we find \(W\approx\frac{\epsilon}{0.73-0.014\alpha}\)).  The source for these power law dependencies is unclear and is left for future studies.
\begin{figure}[ht]
\begin{centering}
\includegraphics[scale=0.2]{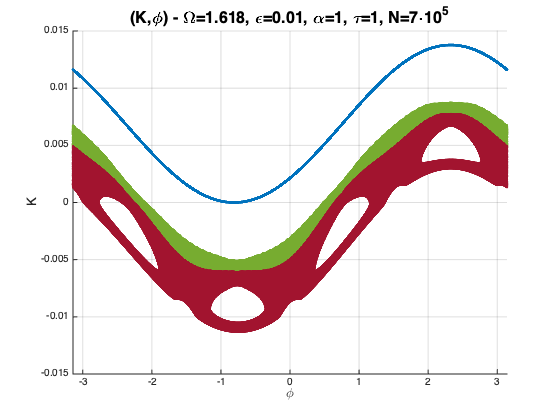}
\includegraphics[scale=0.2]{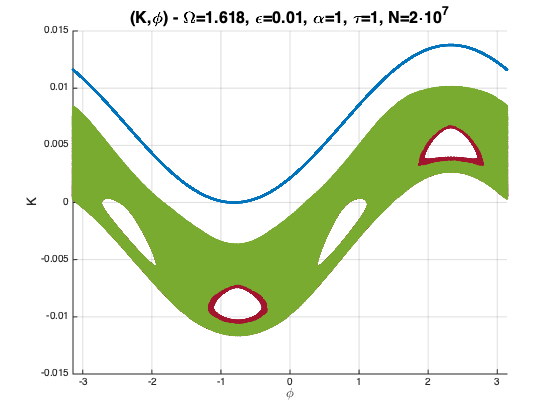}
\includegraphics[scale=0.2]{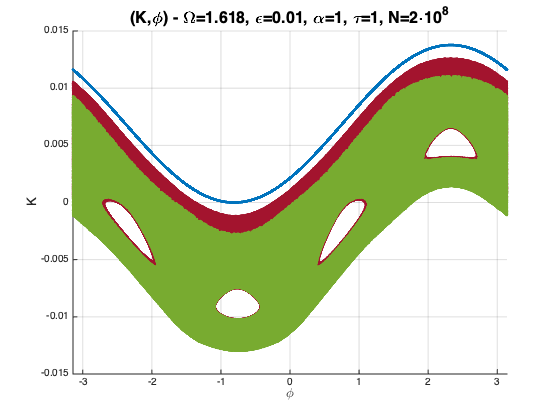}
\includegraphics[scale=0.2]{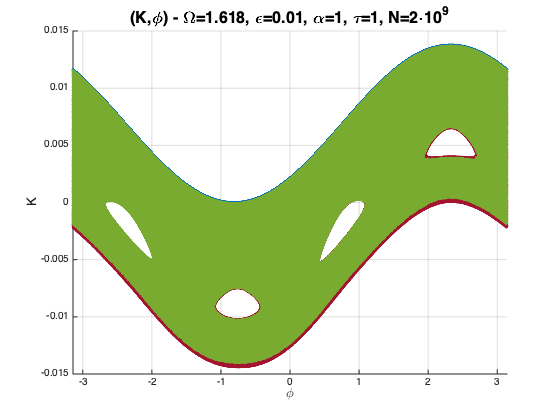}
\includegraphics[scale=0.2]{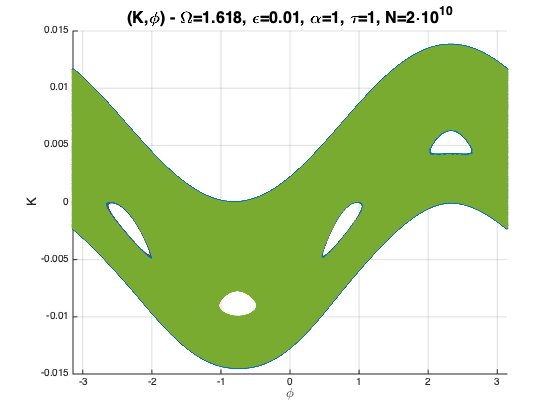}
\includegraphics[scale=0.2]{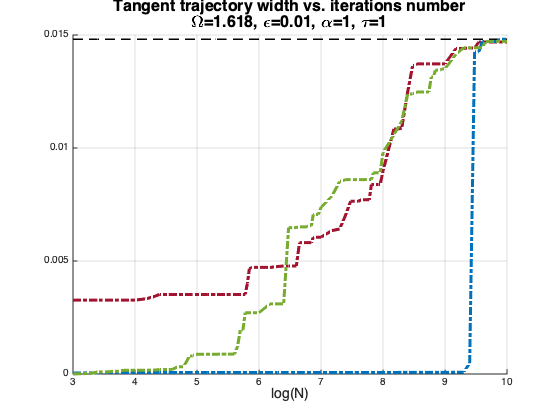}
\par\end{centering}
\protect\caption{\label{fig:tan11long}Long term simulations for a Diophantine rotation number.  (a-e) Three trajectories  marked by blue, green and maroon, shown for an increased number of iterations:  $N=7\cdot 10^5,2\cdot 10^7,2\cdot 10^8,2\cdot 10^9,2\cdot 10^{10} $ (f) The trajectories width as a function of \(\log N\),  with the dashed black line indicating the numerical tangent band width estimate.  Only at   $2\cdot 10^{10}$ iterations all three trajectories cover the entire tangency band. Here \(\Omega=\frac{\sqrt{5}+1}{2}, \epsilon=0.01, \alpha=1,\tau=1\), i.c. are \(K_{0}=0, \phi_0=-2.8109,   -0.8267    ,0.4960. \) Hereafter, points in the figures are diluted by a factor of $1, 10, 100, 1000$ and $10000$ respectively, due to limits of the graphic representation.}
\end{figure}

\begin{figure}[ht]
\begin{centering}
\includegraphics[scale=0.15]{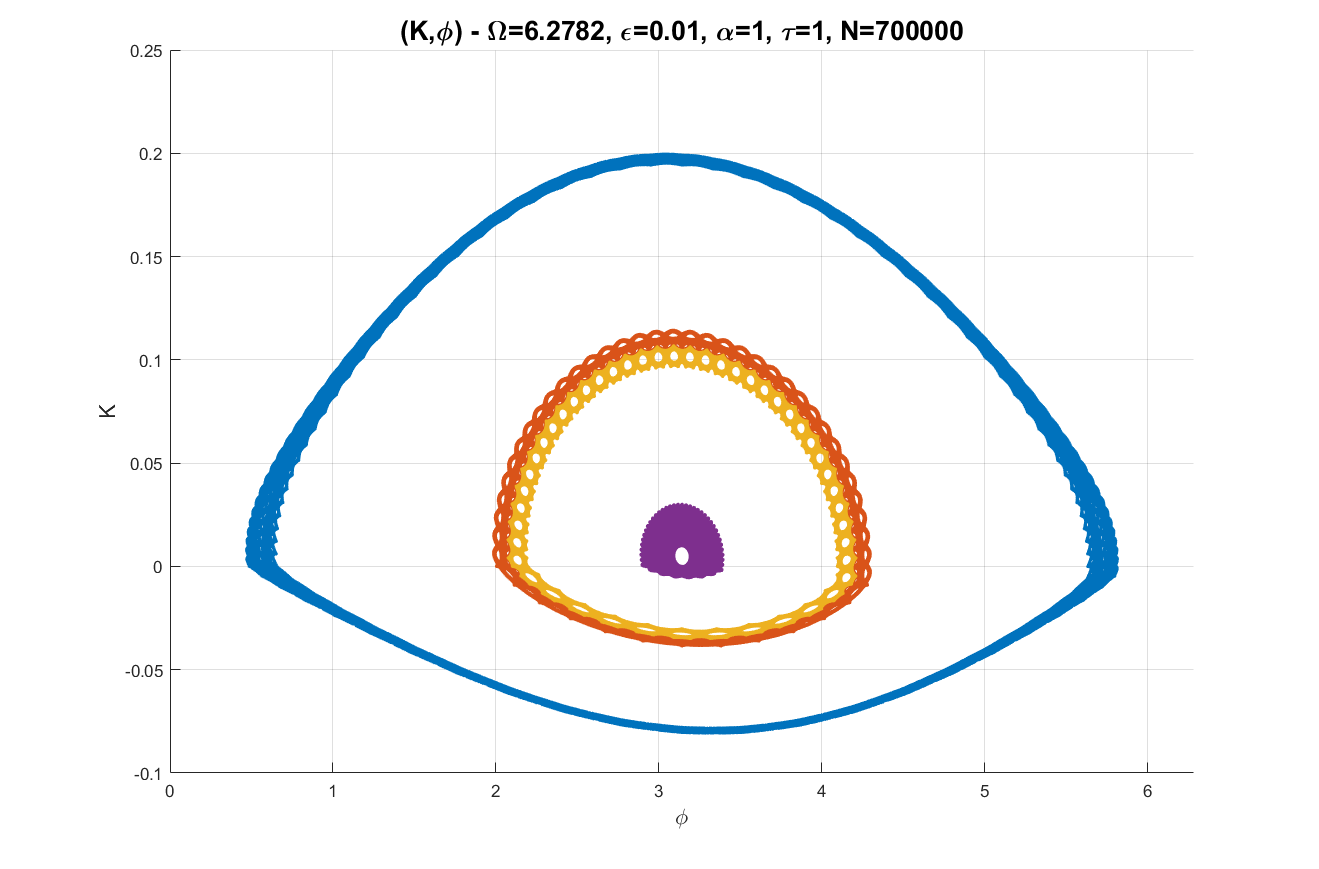}
\includegraphics[scale=0.25]{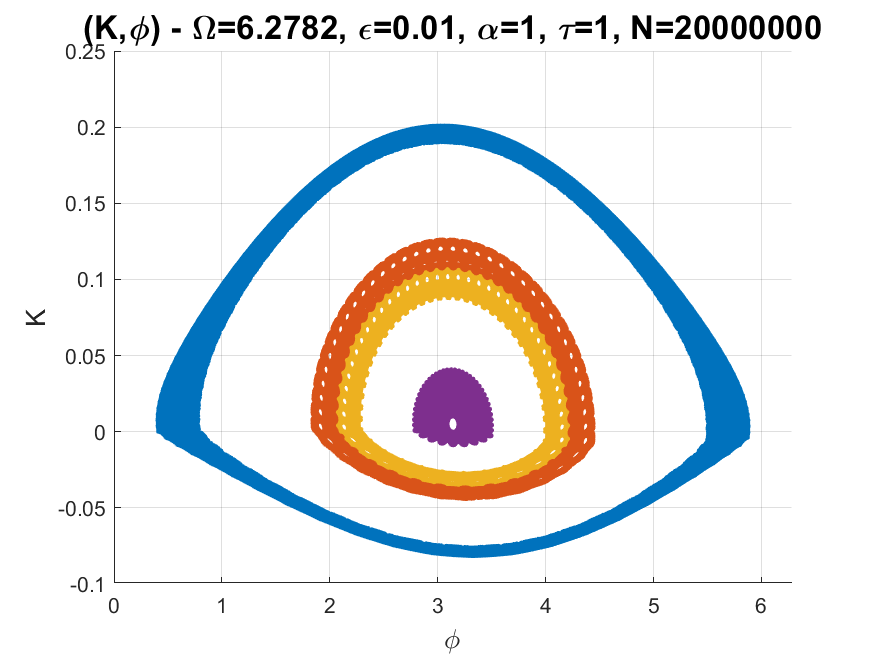}
\includegraphics[scale=0.25]{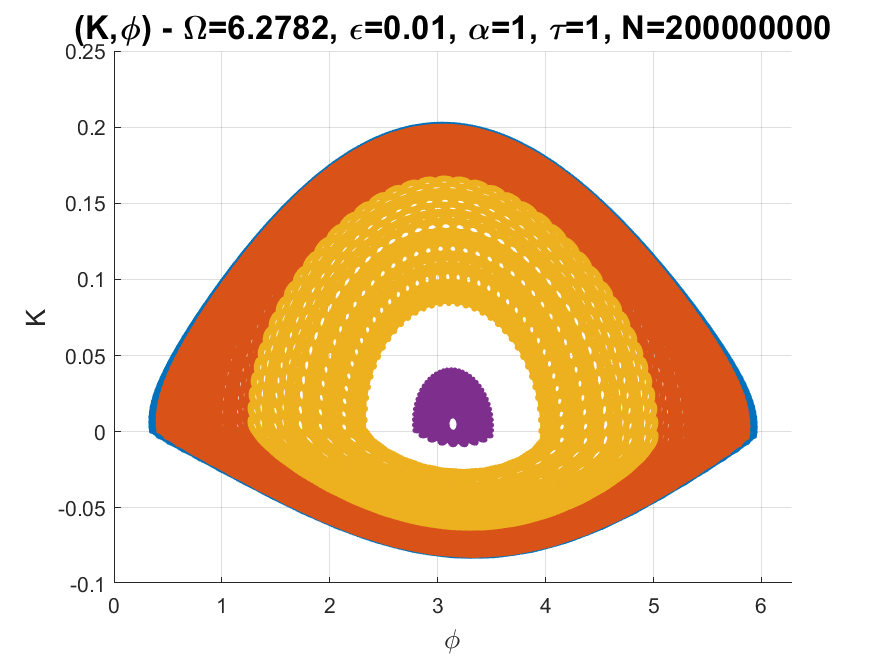}
\includegraphics[scale=0.25]{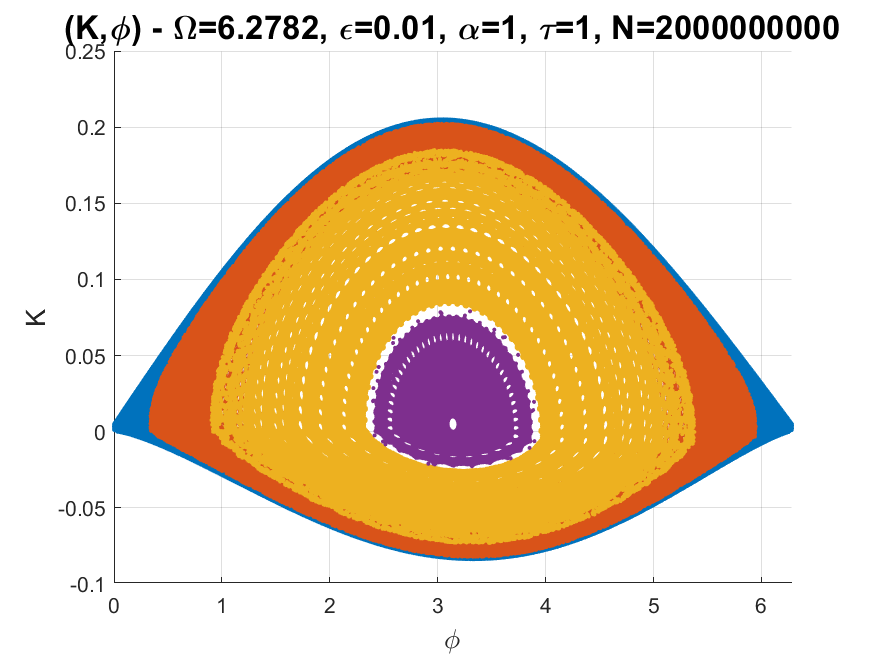}
\includegraphics[scale=0.25]{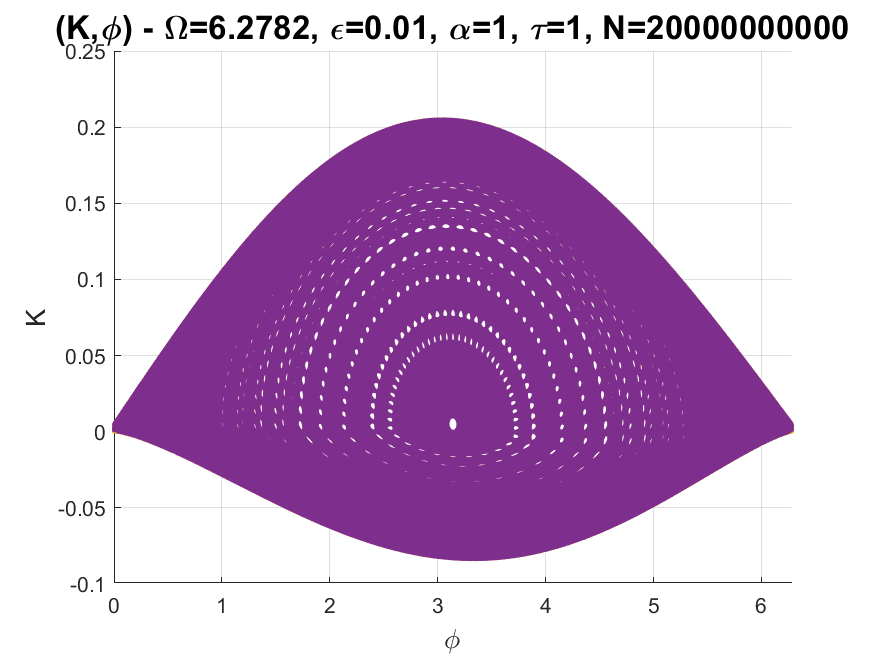}
\includegraphics[scale=0.2]{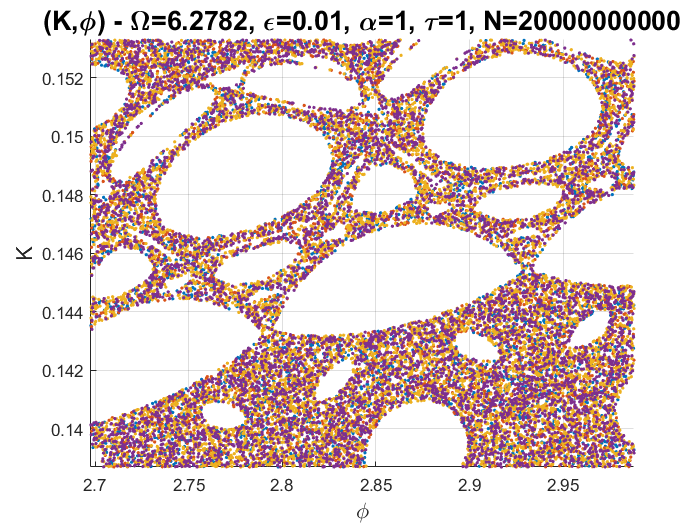}
\includegraphics[scale=0.15]{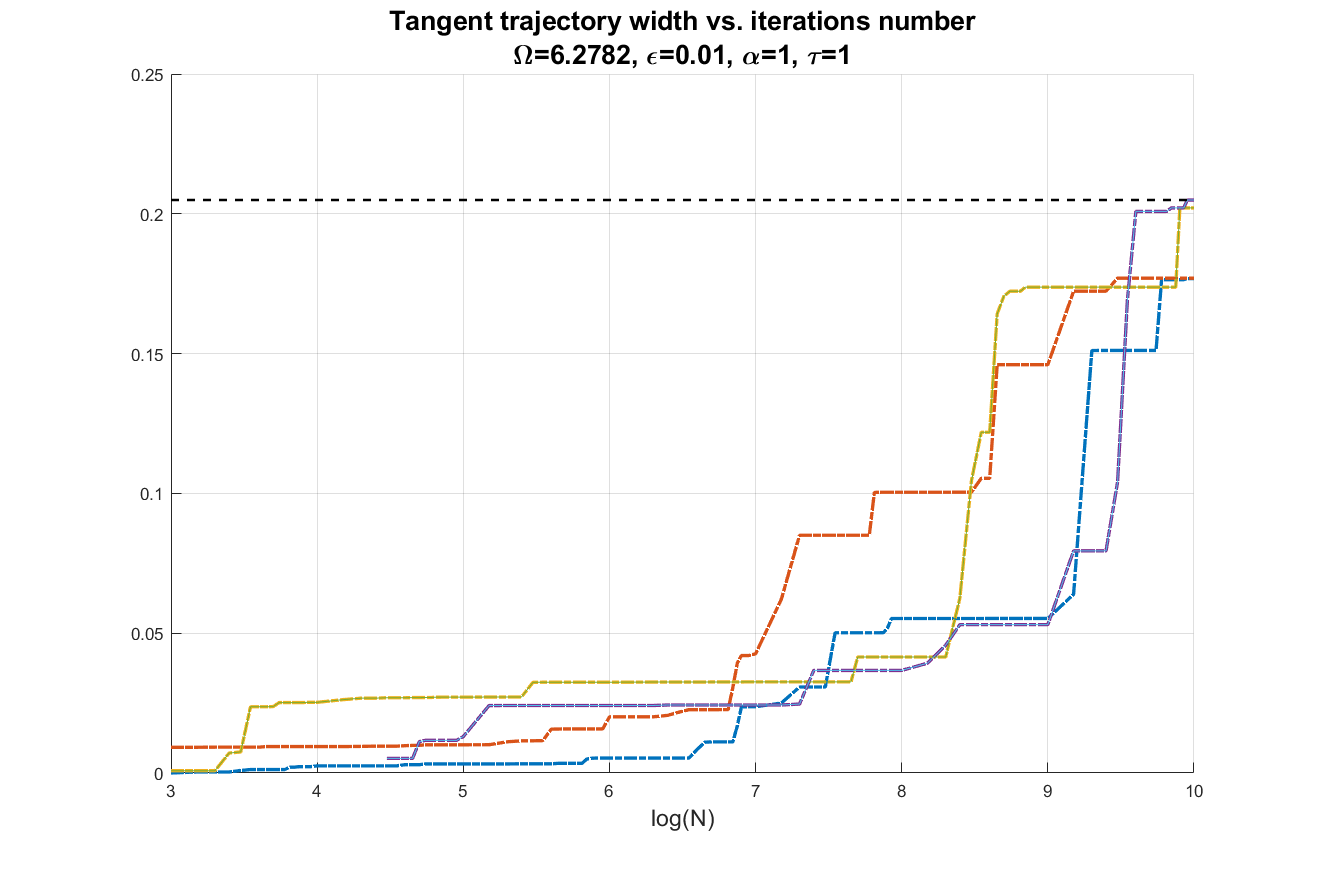}
\par\end{centering}
\protect\caption{\label{fig:tan39long}Long term dynamics for strong resonant and positive shear case.   Separate transient layers  overlap only at large \(N\), and all trajectories  tend to cover the entire singularity set. Nonetheless, even at $2 \cdot 10^{10}$ iterations, uniform covering has not been achieved (here the trajectory and band widths are defined for positive \(K \) values).}
\end{figure}

\begin{figure}[ht]
\begin{centering}
\includegraphics[scale=0.2]{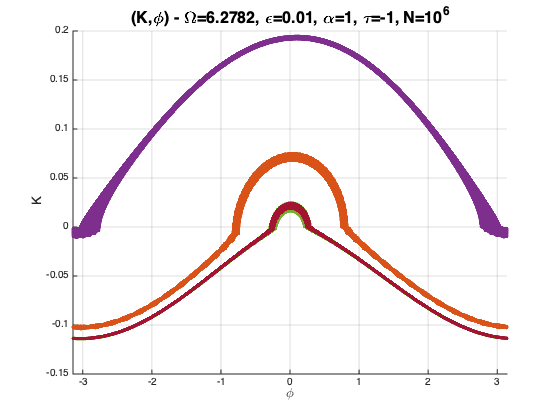}
\includegraphics[scale=0.2]{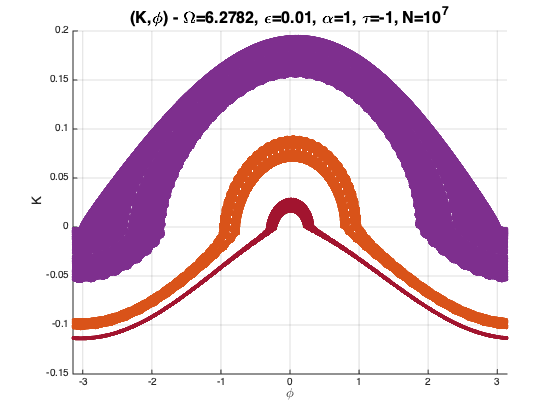}
\includegraphics[scale=0.2]{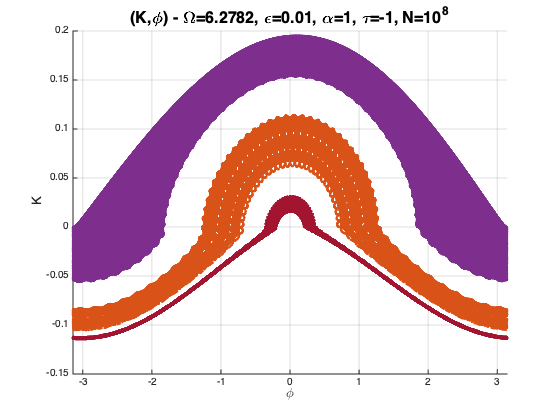}
\includegraphics[scale=0.2]{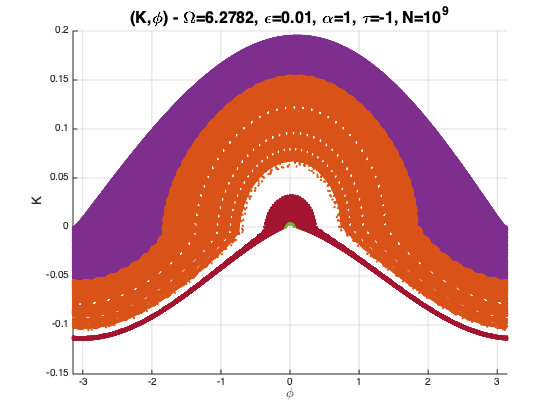}
\includegraphics[scale=0.2]{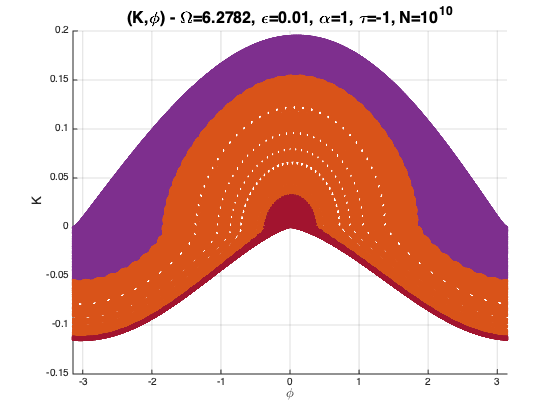}
\includegraphics[scale=0.2]{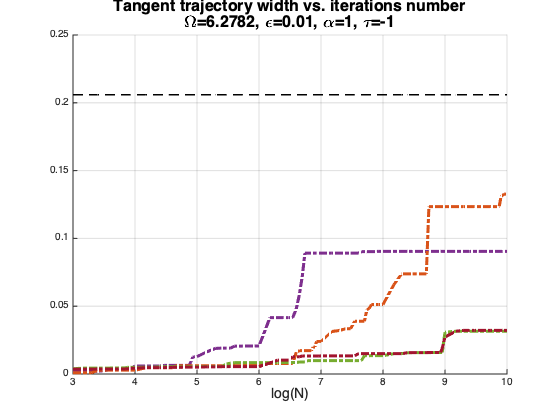}
\par\end{centering}
\protect\caption{\label{fig:tan41long}Long term dynamics for strong resonance with negative shear. The lack of overlapping regions (see zoom-in in Figure \ref{fig:tan41longzoom}) suggests that the tangency zone is divided to three distinct ergodic components.}
\end{figure}

\begin{figure}[ht]
\begin{centering}
\includegraphics[scale=0.2]{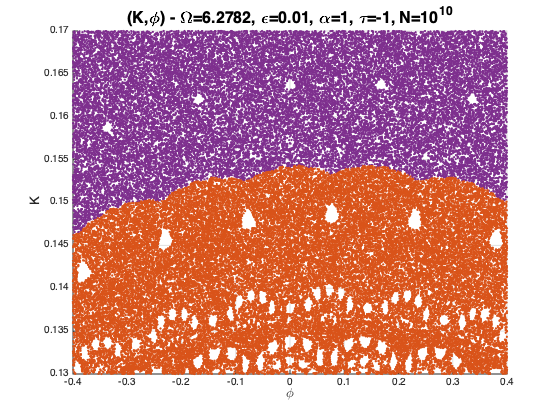}
\includegraphics[scale=0.2]{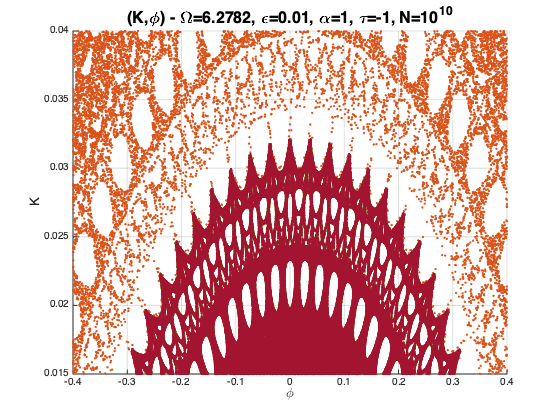}
\includegraphics[scale=0.2]{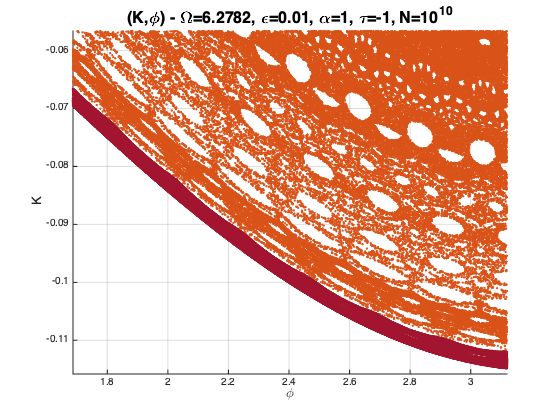}
\includegraphics[scale=0.2]{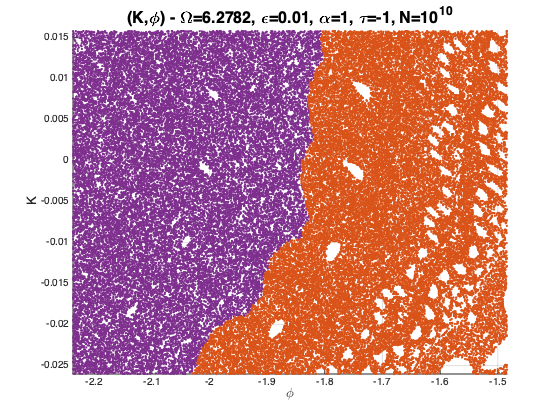}
\par\end{centering}
\protect\caption{\label{fig:tan41longzoom}Separate ergodic components in the tangency zone. Zoom-ins of Figure \ref{fig:tan41long}e demonstrate that there is no overlap between the maroon, purple and orange trajectories (up to \(10^{10}\) iterates).}
\end{figure}

\begin{figure}[ht]
\begin{centering}
\includegraphics[scale=0.2]{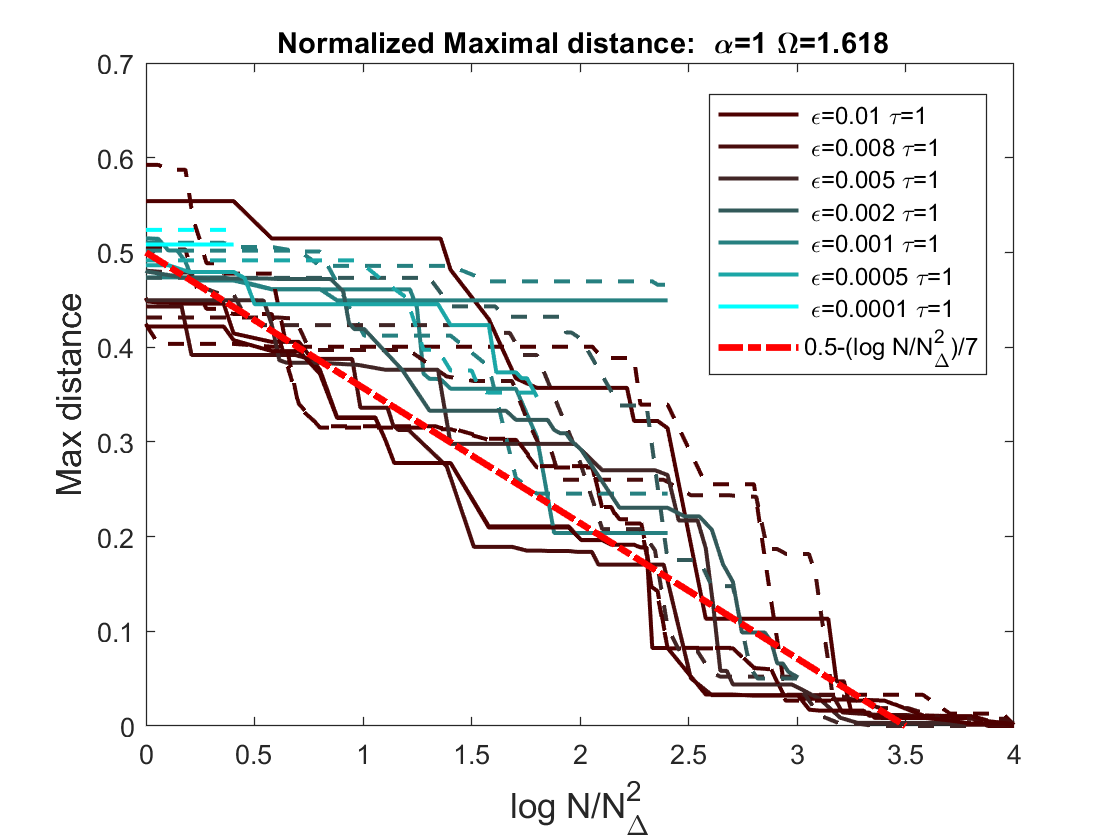} 
\includegraphics[scale=0.2]{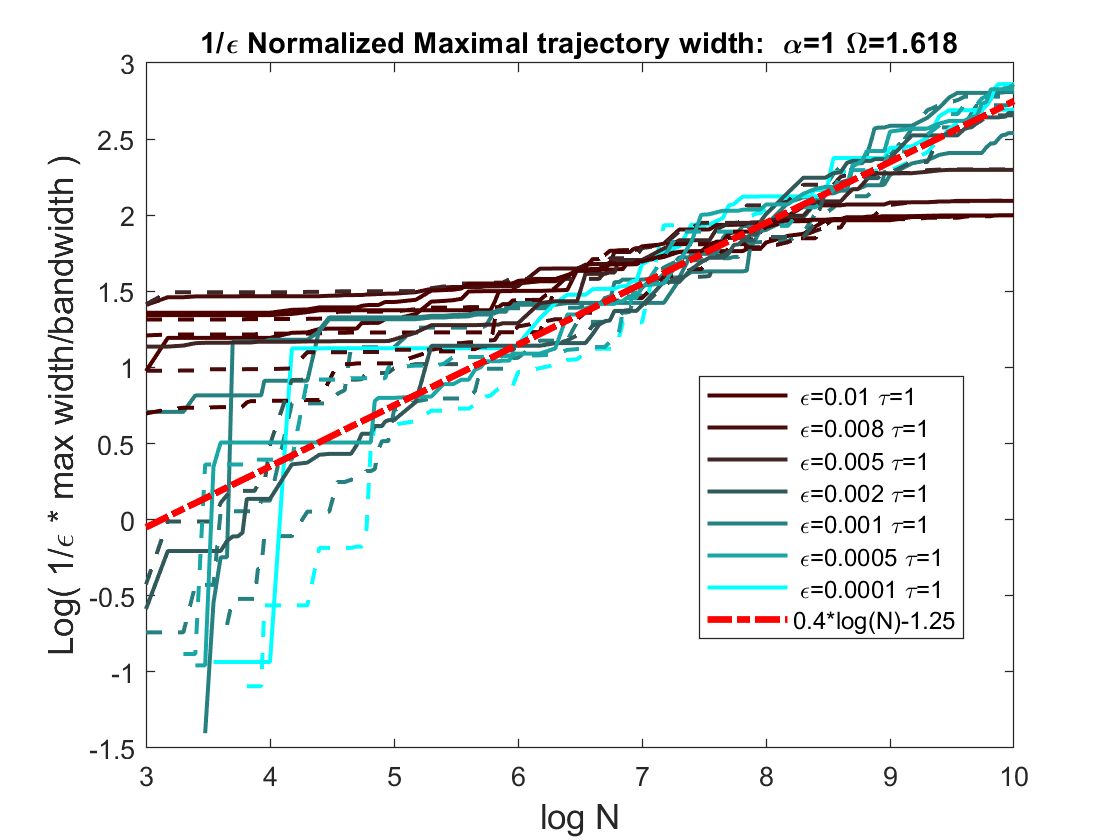} \\
\includegraphics[scale=0.2]{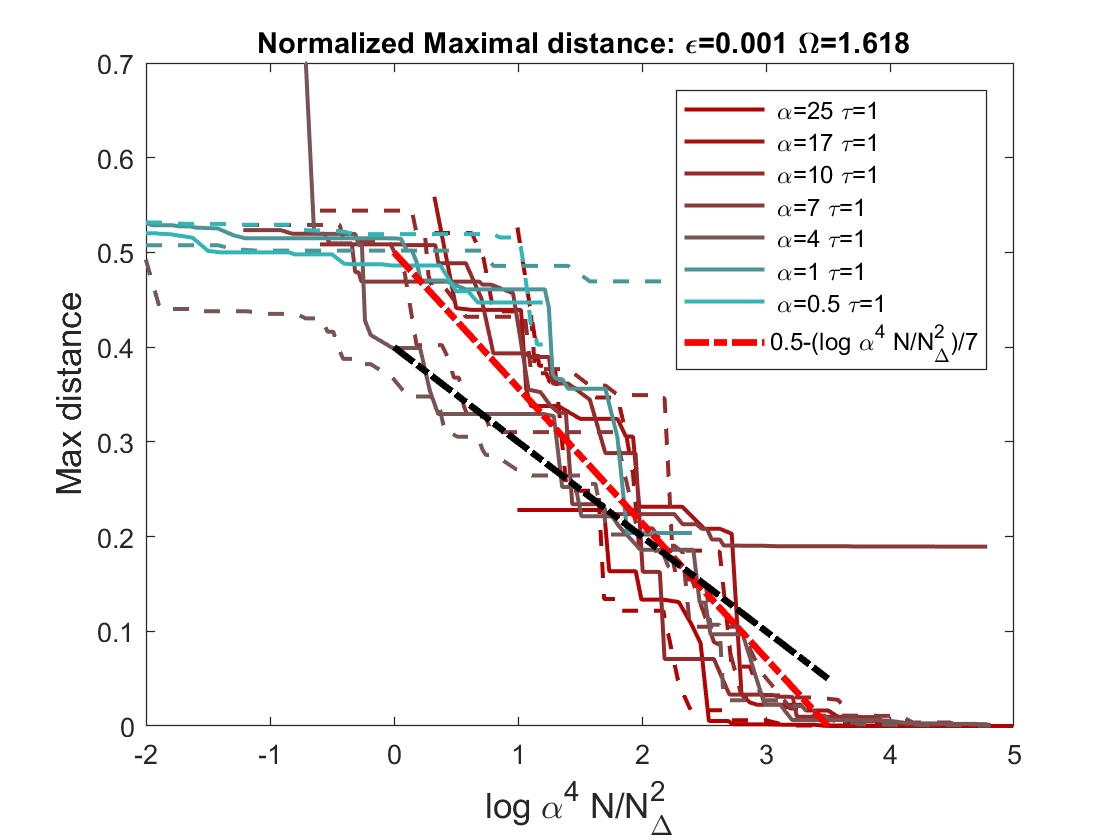}
\includegraphics[scale=0.2]{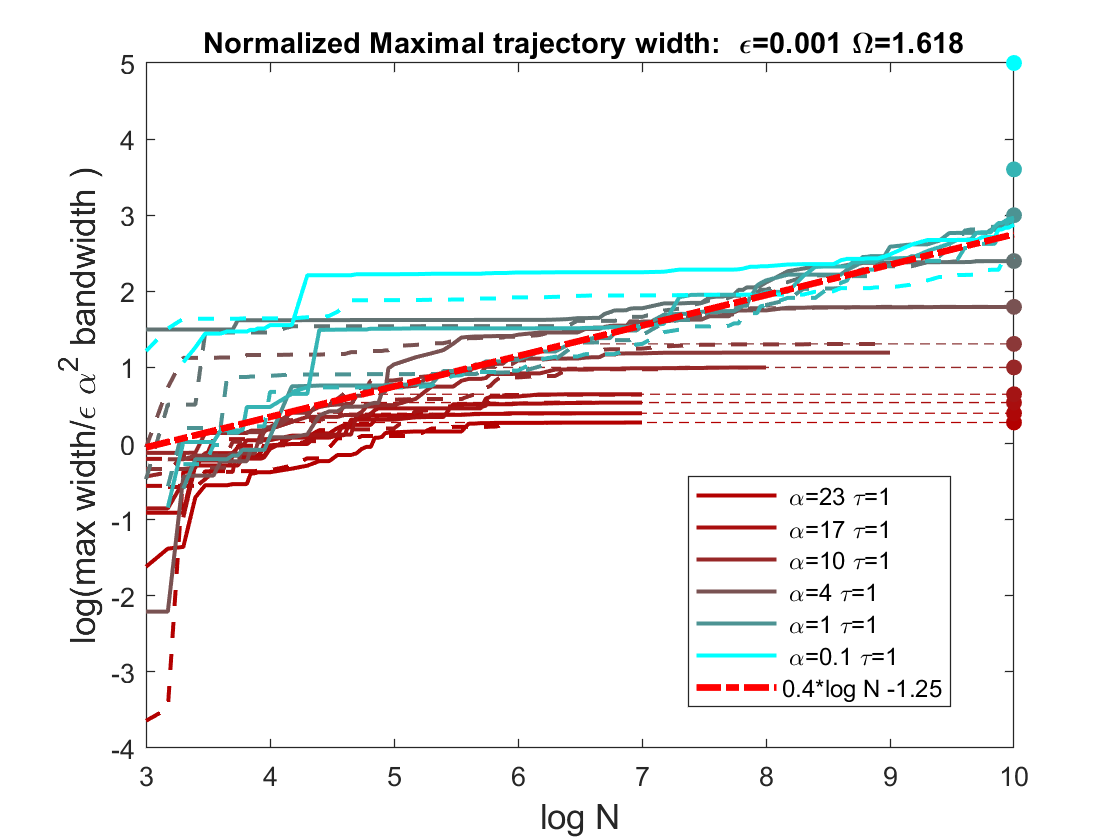}
\par\end{centering}
\protect\caption{\label{fig:widthepsdep-long}  Width and distance from boundary growth - dependence on    $\epsilon$ (a,b) and on \(\alpha\) (c,d). It is observed that that  \(\frac{1}{\alpha^{2}\epsilon}\frac{max_{\phi_0}W(N;\phi_0,\frac{\epsilon }{2})}{W(\epsilon,\alpha)}\approx 10^{-1.25}N^{2/5}\) and that   that \(min_{\phi_{0}}d(N;\phi_0,\frac{\epsilon }{2})\approx0.5-log(N\alpha ^{4}/N^{2}_\frac{\epsilon }{2})^{-1/7}\) ,  }
\end{figure}

\noindent\textbf{Overlaps of transiently-invariant regions}
    The  plateaus of the trajectories width function (over logarithmic time scale) in Figures \ref{fig:tan11long}f,\ref{fig:tan39long}g and \ref{fig:tan41long}f demonstrate the existence of transiently-invariant phenomena (see also additional figures in \cite{PnueliPhd}).  In particular, we observe the following structures:
\begin{itemize}
\item Transiently-invariant \textit{bands}: bands  limited between   dividing circles that cross the singularity line in a close to tangent manner, and appear to provide a partial barrier to the motion: the bands  appear to be  densely covered on intermediate time scales until  the partial boundaries are crossed (e.g.  the blue band in Figure  \ref{fig:tan11long}). \item  Motion in narrow  transiently-invariant \textit{rings} which are densely covered and are limited between  smooth non-dividing circles which cross the singularity line in a close to tangent manner, surrounding singularity band islands (e.g. the blue rings in Figures  \ref{fig:tangency1}(b,e,f) and   \ref{fig:tangency-res1-zoom}(c-e)).

\item \textit{Chaotic bands} that surround  resonant islands and cross the singularity line  for a significant range of angles. Their boundary is typically non-smooth and in some cases appears to be fractal, see Figures \ref{fig:tangency1}(b,e,f), \ref{fig:tangency-res1}(b-e), \ref{fig:tangency-res1-zoom}(a,b), \ref{fig:tangency-res1-alphadependence}(d). The width of these bands is enhanced by overlap of resonances  (similar phenomenon was observed  in 1.5 d.o.f impact systems \cite{nordmark1992effects,lamba1995chaotic}). Figure \ref{fig:tangency-res1}(c) demonstrates that within the limiting non-smooth island, smooth, ordered behavior does occur. This has also been observed in other piecewise smooth systems (e.g. \cite{lee2014dynamics,cao2008limit,altmann2018intermittent}). \item Overlapping of the three transient structures (bands, rings and chaotic bands): there are long transient periods at which orbits covering  densely different transient sets overlap (Figures \ref{fig:tangency1}(e-f), \ref{fig:tangency-res1}(d-e), \ref{fig:tangency-res1-zoom}(b), \ref{fig:tangency-res1-alphadependence}(d)). The resulting transient density is non-uniform.\end{itemize}

  These phenomenon may be possibly explained by adiabatic theory with chaotic jumps at the singularity line, the details of which are left for future studies (briefly, in each half plane the motion follows approximately level sets of an averaged system with jumps that occur only at the singularity line).

\begin{figure}[ht]
\begin{centering}
\includegraphics[scale=0.2]{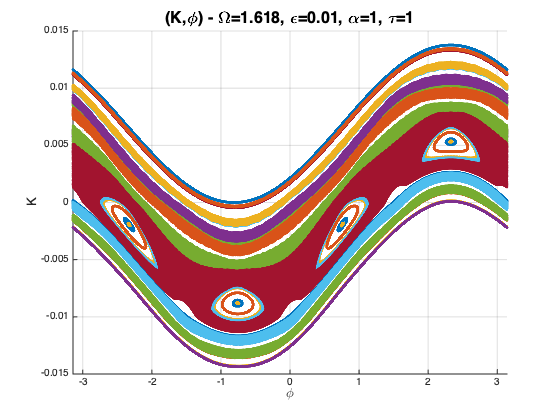}
\includegraphics[scale=0.2]{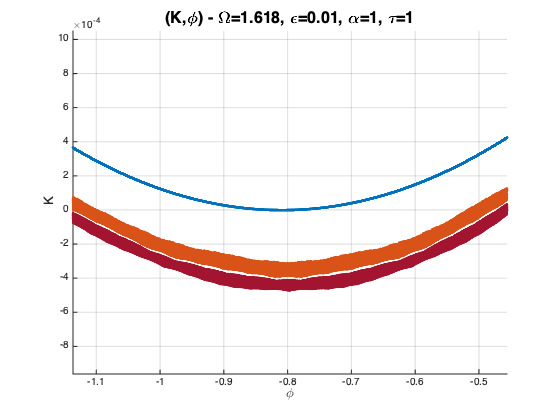}
\includegraphics[scale=0.2]{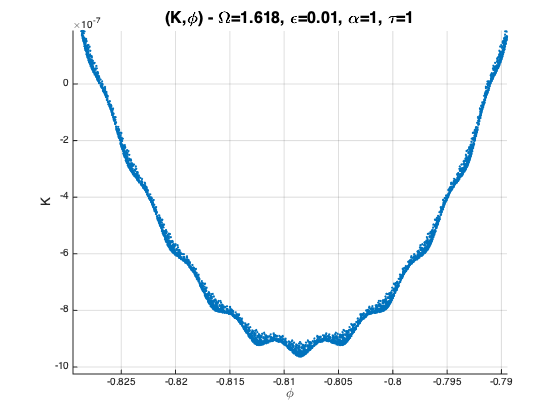}
\includegraphics[scale=0.2]{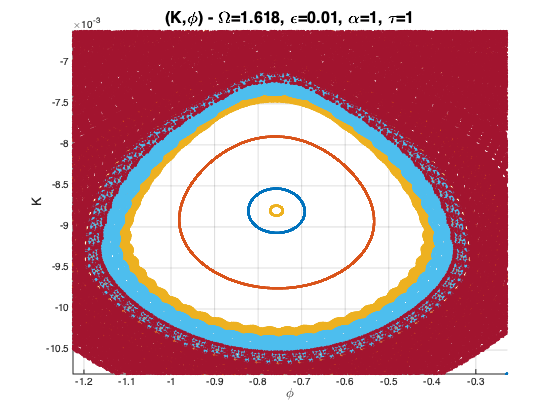}
\includegraphics[scale=0.2]{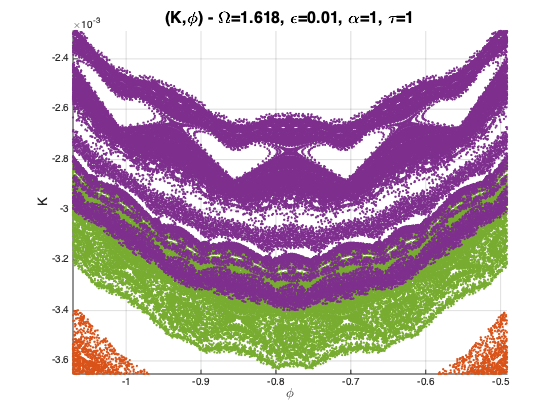}
\includegraphics[scale=0.2]{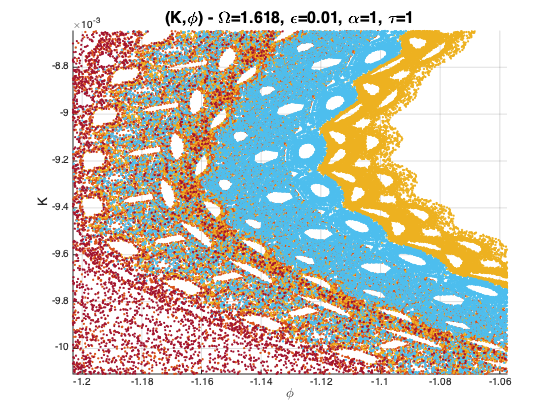}
\par\end{centering}
\protect\caption{\label{fig:tangency1}The singularity set transient structures at a general \(\Omega\). (a) The  band of near tangent trajectories includes transient bands and islands.  (b) Transient bands persist. (c) Zoom-in on the uppermost thinnest band which has a very small negative \(K\) region. (d) The interior of the singularity island has smooth invariant curves that remain bounded away from the singularity line. (e) Nonsmooth islands in various shapes appear. (f) Trajectories covering different regions with an overlap are typical. }
\end{figure}

\begin{figure}[ht]
\begin{centering}
\includegraphics[scale=0.2]{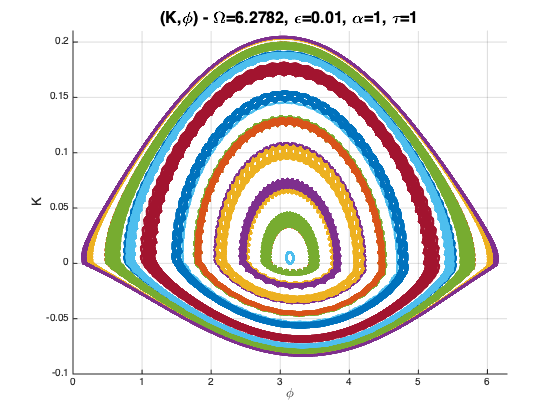}
\linebreak
\includegraphics[scale=0.2]{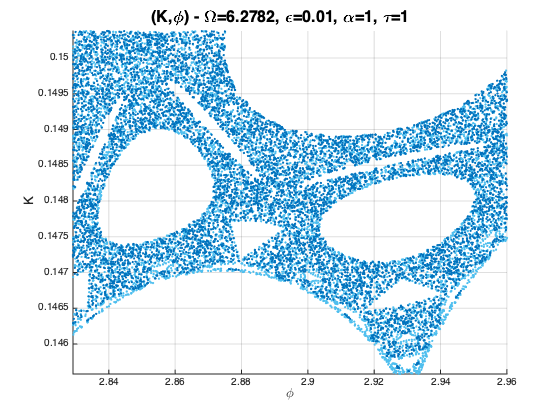}
\includegraphics[scale=0.2]{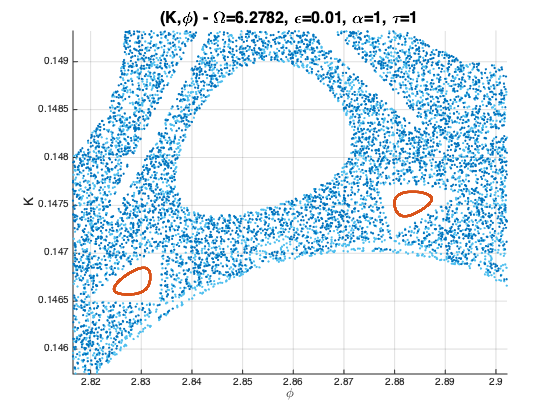}
\includegraphics[scale=0.2]{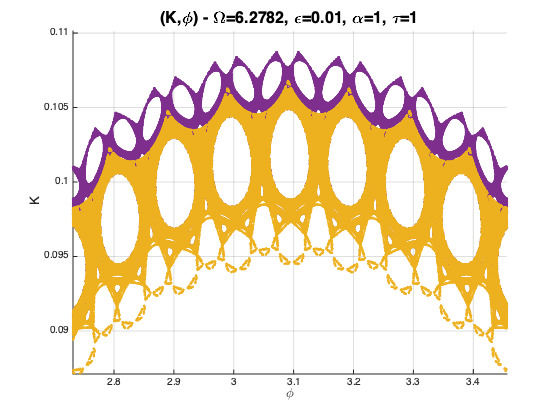}
\includegraphics[scale=0.2]{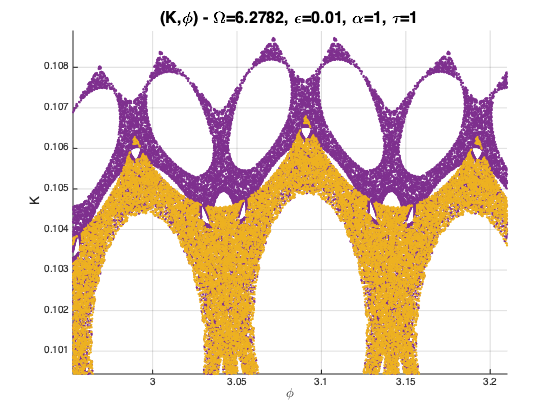}
\par\end{centering}
\protect\caption{\label{fig:tangency-res1}Small islands near tangent dynamics  at strong resonance.  The island-like structures around the main resonance appear to have a positive measure. (b) Smooth and non-smooth island shapes within a chaotic region. (c) Smooth singularity islands appear within the non-smooth islands' holes. (d) Secondary resonances in a chaotic zone. (e) Zoom in on (d) - two chaotic trajectories sharing part of the chaotic region, but not its entirety. Long term simulations (see Figures \ref{fig:tan39long}) show that such trajectories completely mix after \(10^{10}\) iterates.}
\end{figure}

\begin{figure}[ht]
\begin{centering}
\includegraphics[scale=0.2]{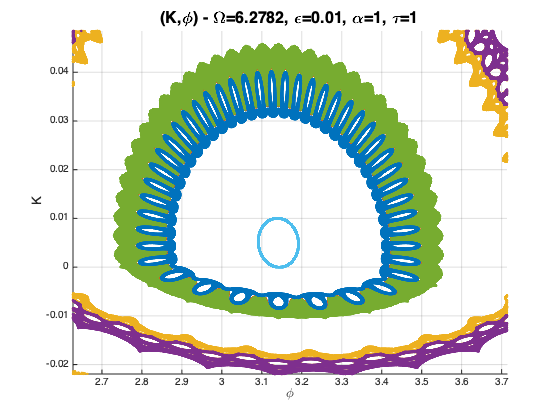}
\includegraphics[scale=0.2]{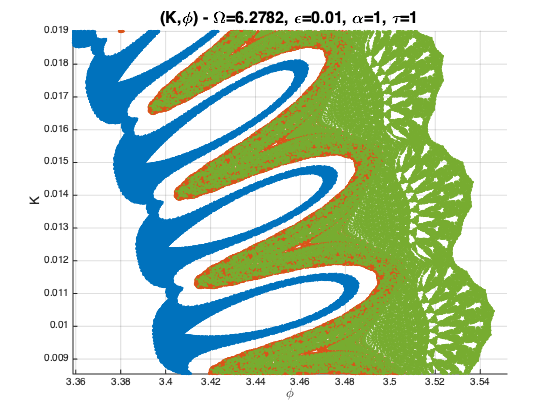}
\linebreak
\includegraphics[scale=0.2]{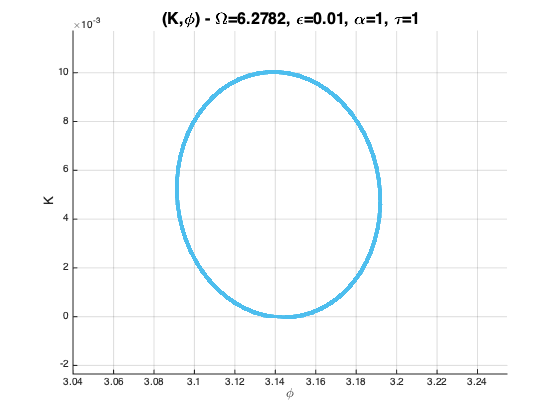}
\includegraphics[scale=0.2]{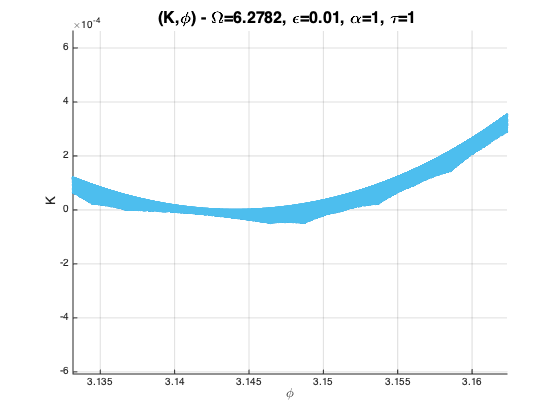}
\includegraphics[scale=0.2]{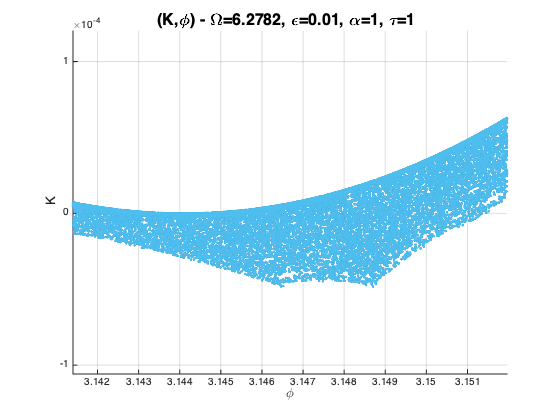}
\includegraphics[scale=0.2]{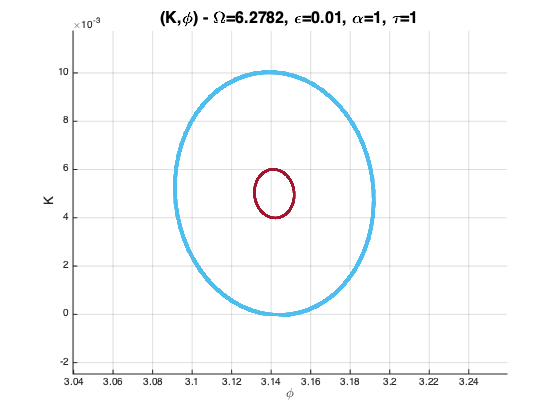}
\includegraphics[scale=0.2]{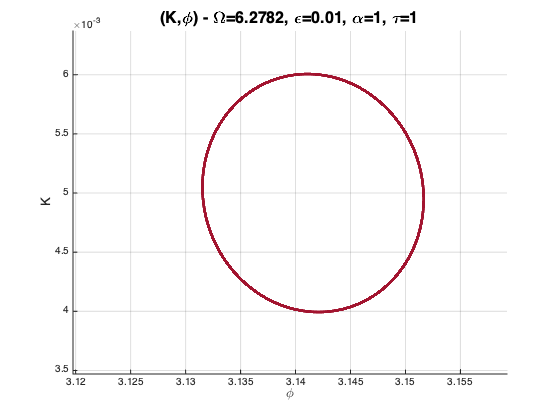}
\includegraphics[scale=0.2]{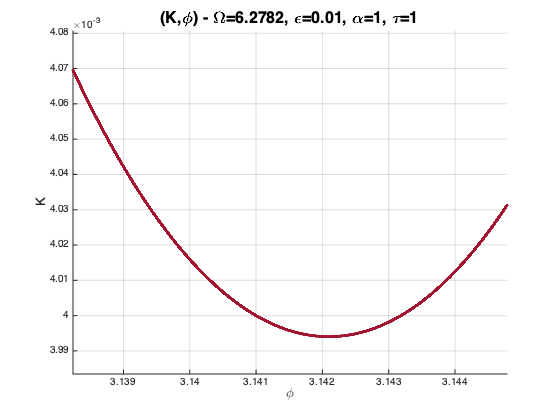}
\par\end{centering}
\protect\caption{\label{fig:tangency-res1-zoom}Floating and grazing boundaries. Zoom-in on parts of Figure \ref{fig:tangency-res1}a show the difference between curves that are bounded away from the singularity curve and those which graze it;  (a) Chaotic layered islands around the elliptic resonant point. (b) Zoom in on chaotic layer - the appearance of multiple ergodic components observed, as well as non-smooth island shapes in the chaotic layer. (c-e) Similar to the boundary layers in Figure \ref{fig:tangency1}(c,d), this island has less intersection with the singularity line and hence smaller width - however it is still chaotic as can be seen zooming in (d-e). (f-h) Additional simulation of a non-impacting, smooth island within the non-smooth island of (c-e).}
\end{figure}

\begin{figure}[ht]
\begin{centering}
\includegraphics[scale=0.2]{tangency-res1}
\includegraphics[scale=0.2]{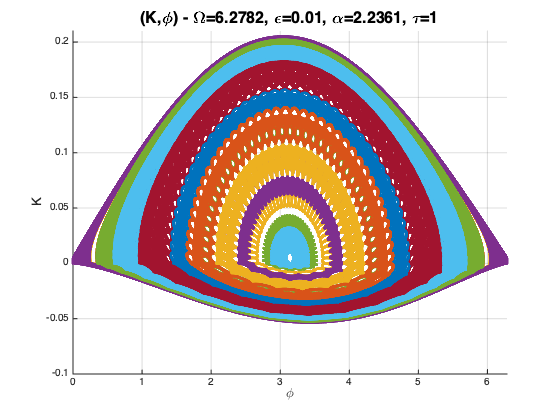}
\includegraphics[scale=0.2]{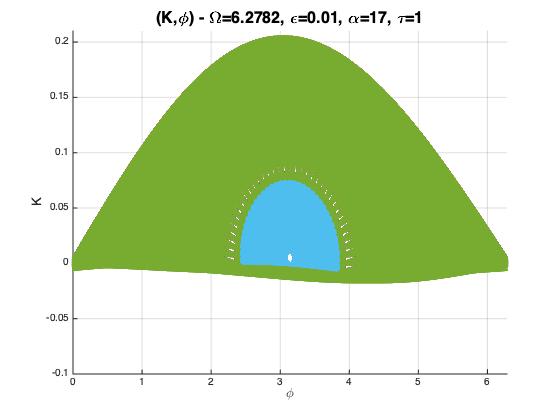}
\includegraphics[scale=0.2]{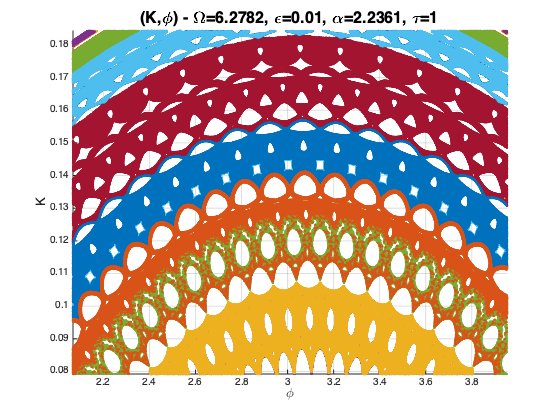}
\par\end{centering}
\protect\caption{\label{fig:tangency-res1-alphadependence}Dependence on $\alpha$ value for the case of elliptic resonance. Parameters are as in Figure \ref{fig:tangency-res1}, except for $\alpha$. (a) $\alpha=1$ (as in Figure \ref{fig:tangency-res1}a). (b) $\alpha=\sqrt{5}$. (c) $\alpha=17$. (d) $\alpha=\sqrt{5}$ - zoom in on the structure of secondary resonances. For convenience of presentation, the angles are wrapped around $[0,2\pi]$}
\end{figure}

\begin{figure}[ht]

\end{figure}

\subsection{Simulations of a near-integrable impact system  }

The perturbed return map (\ref{eq:pert-return}) found by integrating numerically  a  near-integrable Hamiltonian impact flow is shown in Figure \ref{fig:neartangencyflow2}, demonstrating that similar transient structures appear. The separable potential of the system is the Duffing-Center potential:
\(V(q_1,q_2)=-\frac{\lambda}{2}\cdot(q_{1}-q_{1s})^{2}+
\frac{1}{4}\cdot(q_{1}-q_{1s})^{4}+\frac{\omega^{2}}{2}\cdot(q_{2}-q_{2c})^{2}
\), and
the perturbation is a combination of the small, smooth term $\epsilon_r V_c=\epsilon_r(q_1-q_{1s})\cdot(q_2-q_{2c})$ and a small tilt to the perpendicular wall - $q_1^w=\epsilon_w\cdot q_2$, $q_2\in\mathbb{R}$, see  \cite{pnueli2018near} for the analysis of this system away from the tangency region.
The return map to the section \(\Sigma_{E}\)   is computed numerically, and since the potential is quadratic in \(q_2\) the action angle coordinates are simply \( (\theta,I)=(\frac{1}{2\omega}p^{2}_2+\frac{\omega}{2}\cdot(q_{2}-q_{2c})^{2}, \text{arctan}(\frac{\omega\cdot(q_{2}-q_{2c})}{p_2})) \). Due to limitations on the integration accuracy, the feasible number of iterations of the return map is of few thousands, well below  the truncated map intermediate time scale. Indeed, we observe similar qualitative behavior to the one observed in the truncated return map transient time simulations:  chaotic dynamics with non-smooth boundaries surrounding islands, narrow chaotic bands and elliptic islands, and overlapping transiently invariant regions (see Figure \ref{fig:neartangencyflow2}).

\begin{figure}[ht]
\begin{centering}
\includegraphics[scale=0.2]{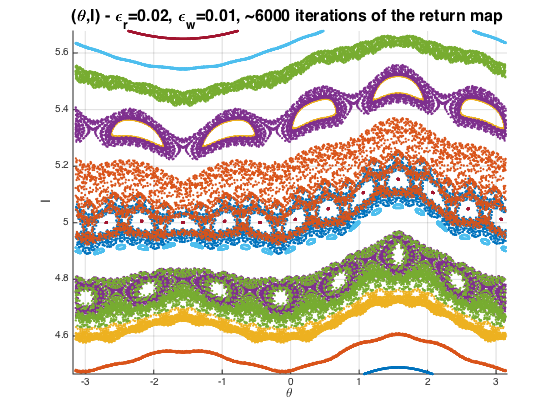}
\includegraphics[scale=0.2]{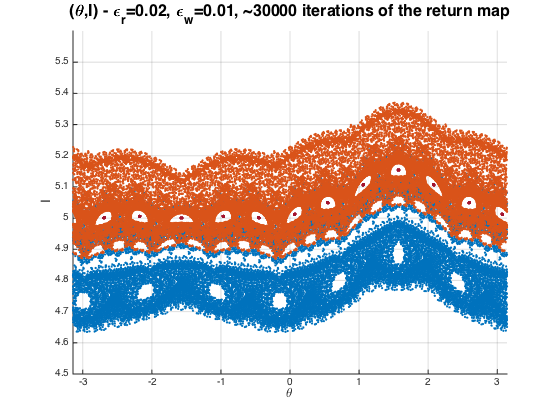}
\par\end{centering}
\protect\caption{\label{fig:neartangencyflow2}Return map of $(\theta,I)$ for near tangent initial conditions. Chaos, non-smooth resonant islands, narrow chaotic bands and overlapping transiently invariant regions are observed.  Here $H=10$, $\lambda=\sqrt{5}-1$, $\omega=1$, $\epsilon_w=0.01,\epsilon_r=0.02, q_{1s}=2.5$, $q_{2c}=0$.   a) $ 6000$ returns b)  $ N=30000$  returns.   }
\end{figure}

\section{Discussion}\label{sec:discussion}
We derived the near tangent return map for a class of near-integrable Hamiltonian impact systems introduced in \cite{pnueli2018near,pnueli2019structure}. To this aim, we defined the tangency curve - the singular curve on the Poincar\'{e} section which separates between impacting and non-impacting trajectories, and compute the perturbed behavior on its two sides using Menikov type integrals.  The near integrable structure implies that this curve is a perturbation of a tangent level set of the system - a torus of tangent trajectories (see  \cite{chillingworth2013periodic,di2001normal,lamba1995chaotic,turaev1998elliptic} for  analysis  of the return map to a neighborhood of  the singularity line/discontinuity surface  near isolated tangent periodic orbits). The resulting map is a piecewise smooth near-integrable symplectic map, exhibiting a square-root singularity along one circle, which is invariant  at the integrable limit.

The  construction of the map allows to refine previous stability results regarding the width of the tangency zone  \cite{pnueli2018near} and to study analytically the behavior near the main resonances of the return map.
 Moreover, it allows to perform  long simulations which reveal the non-trivial asymptotic behavior;  Chaos near tangency always appears as expected, yet,  in some cases  fast chaotization of the full tangency region (up to small stability islands), similar to the behavior in slow-fast systems  \cite{neishtadt2008jump}, is observed, whereas in other cases the number of iterations required to identify how many visible ergodic components  are included in the tangency zone is large. In particular, we observe that the number of iterations needed to cover the tangency band diverges with \(\epsilon\), the perturbation parameter and  with \(\alpha\), the singularity strength coefficient.
In the intermediate time scales, a myriad of chaotic transient phenomena are observed:  thin chaotic bands, non-smooth chaotic resonant islands, and overlapping thick chaotic zones persist for a significant number of iterations. The mechanism for these long transients appears to be different from the standard stickiness to partial barriers which appears in the smooth near-integrable setting  \cite{mackay1984transport,meiss1994transient}, and may be related to   partial barriers in an adiabatic approximation of the motion.
 For example, for the main resonances, at which \(\Omega=2\pi j+\tau\cdot\delta(\epsilon)\) and \(K \) is small, rescaling \(K\) transforms the truncated map   to a  near identity map. The proper scaling for positive \(K\) and negative  \(K\)  is different, so the dynamics may be described as adiabatic in each half plane, with a transition and an adiabatic jump at the singularity line.
Comparison of the truncated map with ODE simulations of near tangent trajectories of a perturbed HIS flow confirm that similar  intermediate time structures emerge, and suggest that for some systems the intermediate time-scale dynamics may be most relevant. The proper mathematical formulation of  intermediate time-scale behavior remains, as usual,   an open  challenge.

\appendix
\section*{Appendices}

\section{The angle variable of \((q,\pm p)\) }\label{appendix:aapsign}

Consider a 1 DOF integrable mechanical Hamiltonian $H(q,p)$ with bounded energy level sets. Let $(\theta,I)$ denote  action-angle variables associated with the system.  Define the \(\pm\) generating functions of the second kind:  \[F(q,I,\pm)=\pm\int_{q_{max}(I)}^{q}|p(x,H(I))|dx\]  where \(|p(x,H(I))|=\sqrt{2(H-V(q))}\). Then, the transformation \(S(q,p)=(\theta,I)\) is produced by \(F(q,I,sign(p))\); first, notice that \(p_{\pm}=\frac{\partial F(q,I,\pm)}{\partial q}=\pm|p(x,H(I))|\)  and \(\theta(q,p_{\pm})=\frac{\partial F(q,I,\pm)}{\partial I}=\mp\omega(I)t(q)\)    where  \(t(q)\in [0,\frac{\pi }{\omega(I)}]\) is the monotone increasing function of \(q\in[q_{min}(I),q_{max}(I)]\) given by \(t(q)=\int^{q_{max}(I)}_{q}\frac{dx}{|p(x,H(I))|}\) (see e.g. \cite{Arnold2007CelestialMechanics}). 
   We see that \(\theta(q,p_{+})=-\omega(I)t(q)=-\theta(q,p_{-})\) and that \(\theta(q,p_{-})\in[0,\pi]\). Since \(\theta\) is  cyclic, the transformation \(S\) is smooth, and we can rewrite this relation as \(\theta(q,p)=2\pi-\theta(q,-p)\) for all \(\theta\) defined on the \(2\pi\) circle and any \(p\in\mathbb{R}\).

\section{Fixed points of the truncated tangency map.}

Define the following functions and bifurcation values:\begin{equation}
\begin{split}K_{+}^*(j,\Omega,\tau)&:=\frac{2\pi j-\Omega}{\tau}, \\ K^{*}_{-,\pm}(j,\Omega,\tau)&:=
-\frac{\alpha^{2}}{4\tau^{2}}\left(-1\mp\sqrt{1+\frac{4\tau(\Omega-2\pi j)}{\alpha^2}}\right)^2, \\
\Omega_{c,j}&:=2\pi j,  \\
\Omega^{\pm}_{pd,j}(\phi^{*})&:=\Omega_{c,j}+\epsilon\frac{\alpha^2|f'(\phi^{*})|}{8}\frac{ 1\mp\epsilon| f'(\phi^{*})|/8}{(1\mp\epsilon| f'(\phi^{*})|/4)^{2}} \\
\omega_{\alpha}&=\frac{\alpha^2}{4} \text{ mod }2\pi\\
\Omega_{sn,j}&:=\begin{cases}\emptyset\ & \tau=1 \\
2\pi j+\frac{\alpha^2}{4}=2\pi (j+\left\lfloor\frac{\alpha^{2}}{8\pi}\right\rfloor)+\omega_{\alpha} & \tau=-1 \\
\end{cases}
 \end{split}
\end{equation}
Notice that \(K_{+}^*(j,\Omega_{c,j},\tau)=K_{-,-}^*(j,\Omega_{c,j},\tau)=0 \) and \(K^*_{-,-}(j,\Omega_{sn,j},-1)=K^*_{-,+}(j,\Omega_{sn,j},-1)=-\frac{\alpha^{2}}{4}\).
Bifurcation diagrams for fixed points of the   map  \(F_{TTM}\)  as a function of \(\Omega\) are plotted in Fig. \ref{fig:bifomega} and their properties are summarized below:

\begin{lem}
For each zero \(\phi^{*}\)  of \(f(\phi)\) and \(j\in\mathbb{Z}\) the   map \(F_{TTM}\)   has a unique continuous branch of fixed points, \((\phi^{*},\mathbf{K}_{j}=\mathbf{K}_{j}(\Omega,\tau))_{\Omega\in\mathbb{R}}\) emanating from \(\Omega_{c,j}\) (so \(\mathbf{K}_{j}(\Omega_{c,j},\tau)=0\)).  The branch \(\mathbf{K}_{j}(\Omega,1)\) is a singled valued function which is monotonically decreasing in \(\Omega\): \begin{equation}
\mathbf{K}_{j}(\Omega,1)=\{K_{+}^*(j,\Omega,1)\}_{\Omega\leqslant\Omega_{c,j}}\cup
   \{K^*_{-,-}(j,\Omega,1)\}_{\Omega\geqslant\Omega_{c,j}}, \quad j\in\mathbb{Z}.
\end{equation} The branch \(\mathbf{K}_{j}(\Omega,-1)\)  is multi-valued in the range \(\Omega\in[\Omega_{c,j},\Omega_{sn,j}]\) and is composed of 3 connecting, monotonic in \(\Omega\) parts (\(K_{+}^*(j,\Omega,-1)\) and \(K_{-,+}^*(j,\Omega,-1)\) are increasing and \(K^*_{-,-}(j,\Omega,-1)\) is decreasing):\begin{equation}
\mathbf{K}_{j}(\Omega,-1)=\{K_{+}^*(j,\Omega,-1)\}_{\Omega\geqslant\Omega_{c,j}}\cup
   \{K^*_{-,-}(j,\Omega,-1)\}_{\Omega\in[\Omega_{c,j},\Omega_{sn,j}]}\cup
   \{K^*_{-,+}(j,\Omega,-1)\}_{\Omega\leqslant\Omega_{sn,j}} \quad
\end{equation}For sufficiently small \(\epsilon\), the \(K_{+}^*\) and \(K_{-,+}^*\) parts of \(\mathbf{K}_{j}(\Omega,\tau)|_{\Omega\neq\Omega_{sn,j}}\) correspond to  centers for negative \(\tau \cdot f'({\phi^{*}})\) and saddles for  positive \(\tau \cdot f'({\phi^{*}})\) and  \((\phi^{*},K_{-,+}^*(j,\Omega_{sn,j},-1)) \) is parabolic. The  \(K_{-,-}^*\) part corresponds to a saddle for positive \(f'({\phi^{*}})\). For  negative \(f'({\phi^{*}})\),   \(K_{-,-}^*\) corresponds to  a center away from the singularity line, undergoing a period doubling bifurcation at \(\Omega^{\text{sign}(\tau)}_{pd,j}(\phi^{*})\), so,   close to the singularity line, when \(\Omega\in(\Omega_{c,j},\Omega^{\text{sign}(\tau)}_{pd,j}(\phi^{*}))\), it is a  saddle. \end{lem}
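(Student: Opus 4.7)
The plan is to reduce the fixed-point condition for $F_{TTM}$ to two scalar equations, solve them piecewise in the sign of $K$, and then read off the branch structure, the saddle-node and period-doubling bifurcations from a direct linearization.

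First, imposing $\bar K=K$ in the first line of (\ref{eq:truncatedreturn}) forces $f(\bar\phi)=0$, so $\bar\phi=\phi^*$ is a zero of $f$. The second line then gives the scalar equation
\begin{equation*}
\Omega+G(K)=2\pi j,\qquad G(K):=\tau K-\alpha\sqrt{-K}\,\mathrm{Heav}(-K),\qquad j\in\mathbb{Z}.
\end{equation*}
I will solve this separately for $K\geq 0$ and $K<0$. For $K\geq 0$ the equation is linear and yields $K=K_+^*(j,\Omega,\tau)=(2\pi j-\Omega)/\tau$, valid exactly when $\tau(2\pi j-\Omega)\geq 0$. For $K<0$, setting $u=\sqrt{-K}\geq 0$ turns the equation into the quadratic $\tau u^2+\alpha u-(\Omega-2\pi j)=0$, whose non-negative roots give precisely $K^*_{-,\pm}(j,\Omega,\tau)$; a case split on $\tau=\pm1$ together with the sign and discriminant conditions determines the $\Omega$-intervals of existence. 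In particular, for $\tau=-1$ the discriminant vanishes exactly at $\Omega=\Omega_{sn,j}=2\pi j+\alpha^2/4$, producing a saddle-node at $K=-\alpha^2/4$, while for $\tau=+1$ the quadratic has at most one non-negative root so no such bifurcation occurs. Matching the branches at the common point $K=0$ (when $\Omega=\Omega_{c,j}=2\pi j$) and at $K=-\alpha^2/4$ (when $\Omega=\Omega_{sn,j}$), and checking monotonicity of each piece in $\Omega$ (a direct implicit-differentiation of $\Omega+G(K)=2\pi j$, using $G'(K)=\tau+\tfrac{\alpha}{2\sqrt{-K}}\mathrm{Heav}(-K)$), yields the branch description stated in the lemma.

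For the stability part, I will compute the Jacobian $DF_{TTM}$ along the natural ordering $(\phi,K)\mapsto(\bar\phi,\bar K)$, obtaining
\begin{equation*}
DF_{TTM}=\begin{pmatrix}1 & G'(K)\\ \epsilon f'(\bar\phi) & 1+\epsilon f'(\bar\phi)G'(K)\end{pmatrix},
\end{equation*}
with determinant $1$ (confirming symplecticity) and trace $\mathrm{tr}=2+\epsilon f'(\phi^*)G'(K^*)$. The fixed-point type is then governed by the sign and size of $\epsilon f'(\phi^*)G'(K^*)$: saddle when the trace leaves $[-2,2]$, center when inside, parabolic on the boundary. For the $K_+^*$ and $K_{-,+}^*$ pieces (away from $\Omega_{sn,j}$) $G'(K^*)$ has the same sign as $\tau$ (for $K_{-,+}^*$ this follows since it lies below the non-twist line $K_{NT}=-\alpha^2/(4\tau^2)$), so the classification by the sign of $\tau f'(\phi^*)$ follows at once. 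Parabolicity of $(\phi^*,K^*_{-,+}(j,\Omega_{sn,j},-1))$ is inherited from $G'(K_{NT})=0$, matching the saddle-node scenario found in the first part.

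For the $K^*_{-,-}$ piece with $f'(\phi^*)<0$, $G'(K^*)>0$ is large near the singularity line because $G'(K)\to+\infty$ as $K\to 0^-$. Solving the period-doubling condition $\mathrm{tr}=-2$, i.e.\ $\epsilon f'(\phi^*)G'(K^*)=-4$, gives $G'(K^*)=4/(\epsilon|f'(\phi^*)|)$, and inverting $G'$ produces an explicit $\sqrt{-K^*}$ at the bifurcation; substituting back into $\Omega+G(K^*)=2\pi j$ will yield $\Omega^{\mathrm{sign}(\tau)}_{pd,j}(\phi^*)$. The main technical obstacle is arranging this inversion so that the exact rational expression in the definition of $\Omega^{\pm}_{pd,j}$ emerges (rather than just its leading order $\Omega_{c,j}+\epsilon\alpha^2|f'(\phi^*)|/8$); this is a careful but routine algebraic manipulation of $G$ restricted to the $K^*_{-,-}$ branch, once the sign of $\tau$ is used to select the correct root in both $\tau=\pm1$ cases. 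With this bifurcation value in hand, the remaining statement (that $K^*_{-,-}$ gives a center away from $\Omega^{\mathrm{sign}(\tau)}_{pd,j}$ and a saddle with negative multipliers for $\Omega\in(\Omega_{c,j},\Omega^{\mathrm{sign}(\tau)}_{pd,j})$ whenever $f'(\phi^*)<0$) follows from the continuity and monotonicity of $\mathrm{tr}$ as $\Omega$ varies along the branch, together with the boundary values $\mathrm{tr}=2$ at $\Omega=\Omega_{c,j}$ (since $G'\to\infty$ there but is weighted by the small factor $\epsilon f'(\phi^*)$; a more careful look shows $\mathrm{tr}$ first dips below $2$ and eventually crosses $-2$ at $\Omega^{\mathrm{sign}(\tau)}_{pd,j}$).
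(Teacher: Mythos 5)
Your proposal follows the paper's argument step by step: deduce $f(\bar\phi)=0$ from $\bar K=K$, reduce to the scalar condition $\Omega+G(K)\in 2\pi\mathbb{Z}$, solve the linear/quadratic pieces to obtain $K_+^*$ and $K_{-,\pm}^*$ with their $\Omega$-domains of existence, and then classify stability via the trace $\mathrm{tr}\, DF_{TTM}=2+\epsilon f'(\phi^*)G'(K^*)$, which is exactly the paper's Eq.~(\ref{appendix:eq:traceJ}). The monotonicity discussion, the appearance of the saddle--node at $K_{NT}$ for $\tau=-1$, and the identification of the period-doubling condition $G'(K^*)=4/(\epsilon|f'(\phi^*)|)$ followed by substitution into the fixed-point equation to get $\Omega_{pd,j}^{\pm}$ are all the same computations carried out in the paper's appendix.

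One small but real slip: you assert that $\mathrm{tr}=2$ at $\Omega=\Omega_{c,j}$ on the $K^*_{-,-}$ branch because the divergence of $G'$ is ``weighted by the small factor $\epsilon f'(\phi^*)$.'' For any fixed $\epsilon>0$ this is false: as $K^*\to 0^-$ one has $G'(K^*)=\tau+\tfrac{\alpha}{2\sqrt{-K^*}}\to+\infty$, so $\epsilon f'(\phi^*)G'(K^*)$ diverges to $\mp\infty$ (sign determined by $f'(\phi^*)$) and hence $\mathrm{tr}\to\mp\infty$. For $f'(\phi^*)<0$ the trace therefore starts at $-\infty$ near $\Omega_{c,j}$, increases monotonically with $|K^*_{-,-}|$, and crosses $-2$ at $\Omega_{pd,j}^{\mathrm{sign}(\tau)}$; it does not ``first dip below 2.'' The conclusion (flip saddle for $\Omega\in(\Omega_{c,j},\Omega_{pd,j})$, center beyond) is still correct, but the boundary value and monotonicity picture should be restated as above.
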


\begin{proof}  Calculating the solutions to \(\Omega+\tau\cdot K^{*}=2\pi j\) for positive \(K^*\), and to  \(\Omega+\tau\cdot K^{*}-\alpha\sqrt{-K^{*}}=2\pi j\) for negative \(K^*\) we find the dependence of \(K^{*}\) on \(\Omega ,j\) and the stability of \((\phi^{*},K^*(\Omega,j;\tau))\). Figure \ref{fig:bifomega} shows the corresponding bifurcation diagram for the map (\ref{eq:truncatedreturn}) for the solutions with the smallest \(|K^{*}|\).

For \(\tau>0\), each zero of \(f\) and any integer \(j\) produce a branch of positive fixed points at  \((\phi^{*},K_{+}^*(j,\Omega,\tau))=\frac{2\pi j-\Omega}{\tau}\) (the positive branches), which emanates from the bifurcation frequencies  \(\Omega_{c,j}=2\pi j\), and exist for all \(\Omega\leqslant\Omega_{c,j}\).  Fixed points with negative \(K^{*}\)  appear when the solutions, \(\sqrt{-K^{*}_{-,\pm}(j,\Omega,\tau)}\), to the quadratic equation    \(-\tau\cdot( \sqrt{-K^{*}})^{2}-\alpha\sqrt{-K^{*}}+\Omega-2\pi j=0\) are non-negative.  Formally, the quadratic equation for  \(\sqrt{-K^{*}}\), has, for all \(\Omega>\Omega_{sn,j}=2\pi j-\frac{\alpha^2}{4\tau}\)  solutions  \(\sqrt{-K^{*}_{-,\pm}(j,\Omega,\tau)}
 =\frac{\alpha}{2\tau}\left(-1\mp\sqrt{1+\frac{4\tau(\Omega-2\pi j)}{\alpha^2}}\right)\), yet, since   \(\sqrt{-K^{*}}\) must be positive, we conclude that the  \(K^{*}_{-,+}(j,\Omega,\tau)\) branch is not a valid solution and that the   \(K^{*}_{-,-}(j,\Omega,\tau)\) branch is a valid solution only for \(\Omega\geqslant2\pi j\) with \(K^{*}_{-,-}(j,\Omega,\tau)=
-\frac{\alpha^{2}}{4\tau^{2}}\left(-1+\sqrt{1+\frac{4\tau(\Omega-2\pi j)}{\alpha^2}}\right)^2\) monotonically decreasing in \(\Omega\) and \(K^{*}_{-,-}(j,\Omega_{c,j},\tau)=0\).

For \(\tau<0\), the positive branch of positive fixed points at  \((\phi^{*},K_{+}^*(j,\Omega,\tau))=\frac{\Omega-2\pi j}{|\tau|}\)  exists for all \(\Omega\geqslant\Omega_{c,j}\), whereas two negative branches exist for a range of \(\Omega\) values;  The quadratic equation for  \(\sqrt{-K^{*}}\), has, for all \(\Omega<\Omega_{sn,j}=2\pi j+\frac{\alpha^2}{4|\tau|}\)  solutions
\(\sqrt{-K^{*}_{-,\pm}(j,\Omega,\tau)} = \frac{\alpha}{2|\tau|}\left(1\pm\sqrt{1+\frac{4|\tau|(2\pi j-\Omega)}{\alpha^2}}\right) \). Since, again,    \(\sqrt{-K^{*}}\) must be positive,  the  \(K^{*}_{-,-}(j,\Omega,\tau)\) branch is  valid only for \(\Omega \in [\Omega_{c,j},\Omega_{sn,j}]\) with    \(K^{*}_{-,-}(j,\Omega,\tau)\in[0,-\frac{\alpha^{2}}{4\tau^{2}}]\) monotonically decreasing in this interval and the  other branch is valid solution for all  \(\Omega<\Omega_{sn,j}\) with \(K^{*}_{-,+}(j,\Omega,\tau)\in(-\infty,-\frac{\alpha^{2}}{4\tau^{2}})\)  monotonically increasing in \(\Omega\) with \(K^{*}_{-,+}(j,\Omega_{sn,j},\tau)=-\frac{\alpha^{2}}{4\tau^{2}}\), see Fig.  \ref{fig:bifomega}b.

For all the fixed points that are not on the singularity line,  stability is determined by the trace of the map: \begin{equation}\label{appendix:eq:traceJ}
trDF_{TTM}|_{(\phi^{*},K^*)}=2+\epsilon f'({\phi^{*}})\cdot(\tau+\frac{\alpha}{2\sqrt{-K^{*}}}Heav(-K^{*})).
\end{equation}
and, as usual  for symplectic maps, when \(|trDF_{TTM}|_{(\phi^{*},K^*)}|<2\) the fixed point \({(\phi^{*},K^*)}\) is a center, undergoing a center-saddle bifurcation when \(trDF_{TTM}|_{(\phi^{*},K^*)}=2\) and a period doubling bifurcation when \(trDF_{TTM}|_{(\phi^{*},K^*)}=-2\). Since \(f'\) is bounded and \(|\tau|=1\), for the positive branch, \(K_{+}^*(j,\Omega,\tau)  \), and for the negative-plus branch, for which \(K_{-,+}^*(j,\Omega,\tau)\leqslant-\left(\frac{\alpha}{2\tau}\right)^{2}\  \) so \(\frac{\alpha}{2\sqrt{-K_{-,+}^{*}}}\in(0,1]\), we see that  \(trDF_{TTM}|_{(\phi^{*},K^*)}=2+O(\epsilon)\). In particular, for sufficiently small \(\epsilon\),  the stability of the positive branch, \(K_{+}^*(j,\Omega,\tau)  \) and of  the negative-plus branch, \(K_{-,+}^*(j,\Omega,\tau)\) (which exists only for \(\tau=-1\)), is determined by \(sign( f'({\phi^{*}})\cdot\tau)\): the fixed point \((\phi^{*},K_{+}^*(j,\Omega,\tau))\) is a center if \( f'({\phi^{*}})\cdot\tau<0\) and is a saddle if  \( f'({\phi^{*}})\cdot\tau<0\) and, for all \(\Omega\neq\Omega_{sn,j}\) the same statement applies to the fixed point \((\phi^{*},K_{-,+}^*(j,\Omega,\tau)) \) ( whereas  \((\phi^{*},K_{-,+}^*(j,\Omega_{sn,j},\tau)) \) is parabolic.

For the negative-minus branch, for \(\tau=1\), away from the singularity, when  \(\frac{\alpha}{2\sqrt{-K_{-,-}^{*}(j,\Omega,1)}}\in(0,\frac{4}{\epsilon| f'(\phi^{*})|}-1]\) the stability is  determined, as above, by the sign of \(f'({\phi^{*}})\), so for positive \(f'({\phi^{*}})\) the branch is a saddle and for negative  \(f'({\phi^{*}})\) it is a center ( Fig. \ref{fig:bifomega}a,b). The unstable branch remains a saddle also close to the singularity, whereas for negative  \(f'({\phi^{*}})\)  the center changes its stability to a saddle via a period doubling bifurcation at \(\Omega^{+}_{pd,j}=\Omega_{c,j}+\delta^{+}\) when
\(
\frac{\alpha\epsilon| f'(\phi^{*})|}{2(4-\epsilon| f'(\phi^{*})|)}=\sqrt{-K_{-,-}^{*}(j,\Omega^{+}_{pd,j},1)}
=\frac{\alpha}{2}\left(-1+\sqrt{1+\frac{4\delta}{\alpha^2}}\right) \)
i.e. when \(\delta^{+}=\frac{\alpha^2}{8}\frac{\epsilon |f'(\phi^{*})|-(\epsilon f'(\phi^{*}))^{2}/8}{(1-\epsilon| f'(\phi^{*})|/4)^{2}}=\frac{\alpha^2}{8}(\epsilon |f'(\phi^{*})|+O(\epsilon^2))\).
\\    \\ For \(\tau=-1\), recall that the negative branch satisfies     \(K^{*}_{-,-}(j,\Omega,\tau)\in[0,-\frac{\alpha^{2}}{4}]\), so,    \(1<\frac{\alpha}{2\sqrt{-K_{-,-}^{*}(j,\Omega,\tau)}}\)  and thus, as above, positive   \( f'({\phi^{*}})\) produce a saddle branch whereas negative    \( f'({\phi^{*}})\) produce a center branch which changes its stability via period doubling bifurcation when        \(\frac{\alpha}{2\sqrt{-K_{-,-}^{*}(j,\Omega_{pd,j},\tau)}}=\frac{4}{\epsilon| f'(\phi^{*})|}+1\), i.e., setting  \(\Omega^{-}_{pd,j}=\Omega_{c,j}+\delta^{-}\), we get
\(
\frac{\alpha\epsilon| f'(\phi^{*})|}{2(4+\epsilon| f'(\phi^{*})|)}=\sqrt{-K_{-,-}^{*}(j,\Omega^{-}_{pd,j},1)}
=\frac{\alpha}{2}\left(1-\sqrt{1-\frac{4\delta}{\alpha^2}}\right) \), so
\(
\delta^{-}=\frac{\alpha^2}{8}\frac{\epsilon| f'(\phi^{*})|+\epsilon^{2}| f'(\phi^{*})|^{2}/8}{(1+\epsilon| f'(\phi^{*})|/4)^{2}}=\frac{\alpha^2}{8}(\epsilon |f'(\phi^{*})|+O(\epsilon^2))\), i.e. we get up to second order in \(\epsilon\) the same bifurcation points as for \(\tau=1\).
\end{proof}

\begin{cor}
Each simple zero \(\phi^{*}\) of \(f\) produces, for sufficiently small \(\epsilon\),   a countable number of fixed points \( Fix|_{\phi^{*}}=\{(\phi^{*},K^{*})|K^{*}\in \mathcal{K}(\Omega,\tau)\}\) of  \(F_{TTM}\)   :\begin{description}
\item[Positive shear: ]
each branch \(\mathbf{K}_{j},j\in\mathbb{Z}\) contributes a single fixed point to \(\mathcal{K}\):  \[
\mathcal{K}(\Omega,1)=
   \bigcup_{j\leqslant\left\lfloor\frac{\Omega}{2\pi}\right\rfloor}K^*_{-,-}(j,\Omega,1)\cup\bigcup_{j>\left\lfloor\frac{\Omega}{2\pi}\right\rfloor}K_{+}^*(j,\Omega,1)\]
   \item[ Negative shear, super-resonance case (\(\Omega<\frac{\alpha^2}{4}(\text{mod }2\pi)\)):] \((\left\lfloor\frac{\alpha^{2}}{8\pi}\right\rfloor+1)\) branches of \(\mathbf{K}_{j}\) contribute 3 fixed points to \(\mathcal{K}\):  \[\begin{split}\mathcal{K}(\Omega,-1)|_{\Omega<\frac{\alpha^2}{4}(\text{mod }2\pi)}&=
   \bigcup_{j\leqslant\left\lfloor\frac{\Omega}{2\pi}\right\rfloor}K_{+}^*(j,\Omega,-1))\cup
   \bigcup_{j>\left\lfloor\frac{\Omega}{2\pi}\right\rfloor}K^*_{-,+}(j,\Omega,-1))
   \\&\qquad\ \cup
   \bigcup_{l=0,..,\left\lfloor\frac{\alpha^{2}}{8\pi}\right\rfloor,\sigma=\pm }K^*_{-,\sigma}(\left\lfloor\frac{\Omega}{2\pi}\right\rfloor-l,\Omega,-1)\end{split}\]     \item[ Negative shear, sub-resonance case (\(\Omega>\frac{\alpha^2}{4}(\text{mod }2\pi)\)):]   \(\left\lfloor\frac{\alpha^{2}}{8\pi}\right\rfloor\)  branches of \(\mathbf{K}_{j}\)  contribute 3 fixed points to \(\mathcal{K}\): \[\begin{split}\mathcal{K}(\Omega,-1)|_{\Omega<\frac{\alpha^2}{4}(\text{mod }2\pi)}&=
   \bigcup_{j\leqslant\left\lfloor\frac{\Omega}{2\pi}\right\rfloor}K_{+}^*(j,\Omega,-1))\cup
   \bigcup_{j>\left\lfloor\frac{\Omega}{2\pi}\right\rfloor}K^*_{-,+}(j,\Omega,-1))
   \\& \qquad \cup
   \bigcup_{l=0,..,\left\lfloor\frac{\alpha^{2}}{8\pi}\right\rfloor -1,\sigma=\pm}K^*_{-,\sigma}(\left\lfloor\frac{\Omega}{2\pi}\right\rfloor-l,\Omega,-1)
   \end{split}\]     where the last set  is empty if \(\alpha^{2}<8\pi\).\end{description}     \end{cor}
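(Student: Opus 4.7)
The plan is to derive this corollary as a direct consequence of the preceding lemma, which already provides, for each integer $j$ and each simple zero $\phi^*$ of $f$, a complete description of the branch $\mathbf{K}_j(\Omega,\tau)$ of fixed points emanating from $(K,\Omega)=(0,2\pi j)$. What remains is purely a bookkeeping exercise: for a fixed $\Omega$, I need to identify which values of the branch index $j$ contribute which of the sub-branches $K_+^*$, $K_{-,-}^*$, $K_{-,+}^*$ to the fixed-point set, and then sum these contributions into the three disjoint unions in the statement.

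First I would treat the positive-shear case $\tau=1$. By the lemma, each branch $\mathbf{K}_j(\Omega,1)$ is single-valued and consists of $K_+^*(j,\Omega,1)$ on $\Omega\le 2\pi j$ and $K_{-,-}^*(j,\Omega,1)$ on $\Omega\ge 2\pi j$. For fixed $\Omega$ this gives exactly one fixed point per $j$, and the split between the two forms is governed by the sign of $j-\Omega/(2\pi)$, which, provided $\Omega\notin 2\pi\mathbb{Z}$, is encoded by the condition $j\le \lfloor\Omega/(2\pi)\rfloor$ vs.\ $j>\lfloor\Omega/(2\pi)\rfloor$. Reassembling the per-$j$ contributions over all integers yields the stated union for $\mathcal{K}(\Omega,1)$.

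Next, for $\tau=-1$, each branch $\mathbf{K}_j(\Omega,-1)$ is multi-valued on the overlap interval $[\Omega_{c,j},\Omega_{sn,j}]=[2\pi j,\,2\pi j+\alpha^2/4]$. Unpacking the lemma, for a fixed $\Omega$ the branch with index $j$ contributes (i) the point $K_+^*(j,\Omega,-1)$ iff $\Omega\ge 2\pi j$, i.e.\ iff $j\le\lfloor\Omega/(2\pi)\rfloor$; (ii) the point $K_{-,+}^*(j,\Omega,-1)$ iff $\Omega\le 2\pi j+\alpha^2/4$, i.e.\ iff $j\ge\Omega/(2\pi)-\alpha^2/(8\pi)$; and (iii) the additional point $K_{-,-}^*(j,\Omega,-1)$ precisely when both bounds hold, i.e.\ when $j$ lies in the overlap interval
\[
J(\Omega)=\left[\frac{\Omega}{2\pi}-\frac{\alpha^2}{8\pi},\,\frac{\Omega}{2\pi}\right].
\]
After rewriting the ``$K_+^*$'' indices as $j\le\lfloor\Omega/(2\pi)\rfloor$ and the ``$K_{-,+}^*$'' indices as $j>\lfloor\Omega/(2\pi)\rfloor$ (these two conditions are automatically satisfied on the overlap part), the only remaining task is to parametrize the set $J(\Omega)\cap\mathbb{Z}$ in terms of $l=\lfloor\Omega/(2\pi)\rfloor-j\ge 0$.

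The crux of the proof, and the only step requiring any care, is this integer count: the length of $J(\Omega)$ is exactly $\alpha^2/(8\pi)$, so $|J(\Omega)\cap\mathbb{Z}|$ equals either $\lfloor\alpha^2/(8\pi)\rfloor$ or $\lfloor\alpha^2/(8\pi)\rfloor+1$ depending on whether the fractional part $\Omega/(2\pi)-\lfloor\Omega/(2\pi)\rfloor$ exceeds the fractional part $\alpha^2/(8\pi)-\lfloor\alpha^2/(8\pi)\rfloor=\omega_\alpha/(2\pi)$. Equivalently, writing $\Omega\bmod 2\pi$ and comparing with $\omega_\alpha=\alpha^2/4\bmod 2\pi$, one finds that $l$ ranges over $\{0,1,\dots,\lfloor\alpha^2/(8\pi)\rfloor\}$ in the super-resonance regime $\Omega\bmod 2\pi<\omega_\alpha$ (the left endpoint of $J(\Omega)$ just reaches an extra integer) and over $\{0,1,\dots,\lfloor\alpha^2/(8\pi)\rfloor-1\}$ in the sub-resonance regime $\Omega\bmod 2\pi>\omega_\alpha$. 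Combining these counts with the $K_+^*$ and $K_{-,+}^*$ contributions above gives the two displayed unions. I do not anticipate any analytic difficulty beyond this floor-function accounting; the only subtlety is the non-generic case when $\Omega$ coincides with a bifurcation value $\Omega_{c,j}$ or $\Omega_{sn,j}$, where two of the sub-branches merge, and it suffices to note that the countable union formulas remain correct with the convention that coincident sub-branches contribute a single point.
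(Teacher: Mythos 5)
Your derivation is correct and is essentially the argument the paper leaves implicit (the corollary is stated without an explicit proof, as a direct bookkeeping consequence of the preceding lemma on the branches $\mathbf{K}_j$). You correctly reduce the problem to enumerating, for fixed $\Omega$, the integers $j$ in the overlap interval $J(\Omega)=\bigl[\tfrac{\Omega}{2\pi}-\tfrac{\alpha^2}{8\pi},\,\tfrac{\Omega}{2\pi}\bigr]$, parametrize them by $l=\lfloor\Omega/(2\pi)\rfloor-j\ge 0$, and verify the floor-function count that distinguishes the sub- and super-resonance regimes via the comparison of fractional parts $\{\Omega/(2\pi)\}\lessgtr\{\alpha^2/(8\pi)\}=\omega_\alpha/(2\pi)$; your note on the non-generic bifurcation values is an appropriate aside.
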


\section{Notation}

\begin{itemize}
\item  \( g(x)=\mathcal{O}_{C^{r}}(f(x))\)  means that  \( g(x)=G(f(x),x)\)  where \(G\) is \(C^{r}\) smooth in its arguments and \(G(0,x)=0\).  \item    \(\mathcal{R}_{1}z^{w}=\mathcal{R}_{1}(q_{1}^{w},p^{w}_1,q^{w}_2,p^{w}_2)=(q_{1}^{w},-p^{w}_1,q^{w}_2,p^{w}_2)\) the elastic reflection from the wall (and, more generally, by \(\mathcal{R}_{i}\) the reflection of the \(i\) th momentum). \item $(\theta,I)=S_2(q_2,p_2)$ the unperturbed smooth transformation to action-angle coordinates
\item \(\Pi\) - projection to  the $(q_2,p_2)$ plane.
 \item \(\sigma^{\epsilon}=\{(\theta,I_{tan}^{\epsilon}(\theta)),\theta\in[-\pi,\pi]\}\) -the singularity curve, \(I_{tan}^{0}(\theta)=I_{tan}(E)=H_{2}^{-1}(E-V_{1}(q_1^w))\).
\item \( \sigma_{wall}^{\epsilon}(E)=\{(\theta ,I_{tan,\Sigma^*_E}^{\epsilon}(\theta))|\theta\in[-\pi,\pi]\}\)- The projection to the \(q_2,p_2)\) plane of the singularity curve at the wall:   \(\sigma_{wall}^{\epsilon}(E)=\Pi \Xi_{wall}^{\epsilon}(E)\) \item   \(\mathcal{ S}^{\epsilon}\) - the singularity set\item   \(\mathcal{B}^{\epsilon}\) - the tangency band,  \(W\)-its width and \(W^{\pm}\) its lower and upper widths.\item
 \(\Phi^{\epsilon,im}_{t}\)  the wall system flow,  \(\Phi^{\epsilon}_{t}\)  the auxiliary smooth flow.
 \item   \(\mathcal{F}^{loc}_\epsilon\)    - the local return map
(Eq. \ref{eq:pert-return})\item \(F_{TTM}\) - the truncated tangency map (Eq. \ref{eq:truncatedreturn})
\item \(z^{\theta,\rho,\epsilon}\) - an initial condition on \(\Sigma_{E}\) which is at an action distance $\rho$ from the singularity line, so \(\Pi z^{\theta,\rho,\epsilon}=(\theta,I^{\epsilon}_{tan}(\theta))\).
\item $z_{tan}(t,\theta; E)=\Phi_t^0 z^{\theta,0,0}$ the unperturbed tangent trajectory with phase \(\theta\) on $\Sigma_E$.
\item  \(z_{im}^{\theta,\rho,\epsilon}(t)=\Phi_t^{\epsilon,im} z^{\theta,\rho,\epsilon}\) the wall system trajectory, \(z_{sm}^{\theta,\rho,\epsilon}(t)=\Phi_t^\epsilon z^{\theta,\rho,\epsilon}\) - the perturbed smooth trajectory  (without impacts).

\item Return times and crossing times: $\tilde {T}_1^{\theta,\rho,\epsilon}$ (respectively ${T}_1^{\theta,\rho,\epsilon}$ ) the return time of $z^{\theta,\rho,\epsilon}$ under the wall flow (respectively under the smooth flow) to $\Sigma_E$. The unperturbed return times are independent of $\theta$, and are denoted by  $T^{tan}_1(E)={T}_1^{\theta,0,0}, T_1^{\rho}=T_1^{\theta,\rho,0}=T_1(J(I_{tan}(E)-\rho,E))$ and for the impact unperturbed flow: $\tilde T_1^{\rho}=\tilde T_1^{\theta,\rho,0}= T_1^{\rho}-\Delta t_{travel}(E-H_2(I_{tan}(E)-\rho))$. .
\end{itemize}

%

\end{document}